\newtheorem{definition}{Definition}[section]
\newtheorem{theorem}[definition]{Theorem}
\newtheorem*{theorem*}{Theorem}
\newtheorem{proposition}[definition]{Proposition}
\newtheorem{lemma}[definition]{Lemma}
\newtheorem{corollary}[definition]{Corollary}
\newtheorem{conjecture}[definition]{Conjecture}
\newtheorem{deflemma}[definition]{Definition-Lemma}
\newtheorem{remark}[definition]{Remark}
\newcommand{\nd}{\noindent}
\newcommand{\dC}{{\mathds C}}
\newcommand{\dQ}{{\mathds Q}}
\newcommand{\dN}{{\mathds N}}
\newcommand{\dZ}{{\mathds Z}}
\newcommand{\dP}{{\mathds P}}
\newcommand{\dW}{{\mathds W}}
\newcommand{\bD}{{\mathbb D}}
\newcommand{\dD}{{\mathds D}}
\newcommand{\astdD}{{^*}\!\dD}
\newcommand{\cA}{\mathcal{A}}
\newcommand{\cB}{\mathcal{B}}
\newcommand{\cC}{\mathcal{C}}
\newcommand{\cD}{\mathcal{D}}
\newcommand{\cE}{\mathcal{E}}
\newcommand{\cF}{\mathcal{F}}
\newcommand{\cG}{\mathcal{G}}
\newcommand{\cH}{\mathcal{H}}
\newcommand{\cI}{\mathcal{I}}
\newcommand{\cJ}{\mathcal{J}}
\newcommand{\cL}{\mathcal{L}}
\newcommand{\cM}{\mathcal{M}}
\newcommand{\cN}{\mathcal{N}}
\newcommand{\cO}{\mathcal{O}}
\newcommand{\cQ}{\mathcal{Q}}
\newcommand{\cR}{\mathcal{R}}
\newcommand{\cV}{\mathcal{V}}
\newcommand{\fm}{\mathfrak{m}}
\newcommand{\D}{\displaystyle}
\DeclareMathOperator{\Hom}{\!\cH\!\textit{om}}
\DeclareMathOperator{\Ext}{\!\cE\!\textit{xt}}
\DeclareMathOperator{\RastHom}{\mathbf{R}\!\astHom}
\DeclareMathOperator{\astotimes}{{^*}\!\!\otimes}
\DeclareMathOperator{\astHom}{{^*}\!\!\Hom}
\DeclareMathOperator{\Sp}{\textup{Sp}}
\DeclareMathOperator{\im}{\textup{im}}
\DeclareMathOperator{\Gr}{\textup{Gr}}
\DeclareMathOperator{\gr}{\textup{Gr}}
\newcommand{\Supp}{\textup{Supp}}
\newcommand{\ind}{\operatorname{ind}}
\newcommand{\can}{\operatorname{can}}
\newcommand{\ord}{\operatorname{ord}}
\newcommand{\MHM}{\operatorname{MHM}}
\newcommand{\dd}{\partial}
\newcommand{\ra}{\rightarrow}
\long\def\inhibe#1\endinhibe{\relax}
\newcommand{\upartial}{\underline{\partial}}
\DeclareMathOperator{\SP}{\rm Sp}
\DeclareMathOperator{\U}{\mathcal{U}}
\DeclareMathOperator{\Sym}{\textup{Sym}}
\newcommand{\Lotimes}{\stackrel{\mathbf{L}}{\otimes}}
\newcommand{\Lastotimes}{\stackrel{\mathbf{L}}{\astotimes}}
\newcommand{\wt}{\widetilde}
\newcommand{\Psibar}{\overline{\Psi}}
\newcommand{\Phibar}{\overline{\Phi}}
\newcommand{\Der}{\cD\!\textit{er}}
\DeclareMathOperator{\Homgr}{\cH\!\textit{omgr}}
\newcommand{\wcD}{\widetilde{\cD}}
\newcommand{\wcE}{\widetilde{\cE}}
\newcommand{\wcO}{\widetilde{\cO}}
\newcommand{\wcV}{\widetilde{\cV}}
\newcommand{\wcL}{\widetilde{\cL}}
\newcommand{\wcM}{\widetilde{\cM}}
\newcommand{\wcN}{\widetilde{\cN}}
\newcommand{\wcB}{\widetilde{\cB}}
\newcommand{\wcQ}{\widetilde{\cQ}}
\newcommand{\womega}{\widetilde{\omega}}
\begin{document}
\title{Hodge ideals of free divisors}
\author{Alberto Casta\~no Dom\'{\i}nguez, Luis Narv\'{a}ez Macarro and Christian Sevenheck}

\maketitle

\begin{abstract}

We consider the Hodge filtration on the sheaf of meromorphic functions along  free divisors for which the logarithmic comparison theorem holds.
We describe the Hodge filtration steps as submodules of the order filtration on a cyclic presentation in terms of a special factor of the Bernstein-Sato polynomial of the divisor
and we conjecture a bound for the generating level of the Hodge filtration. Finally, we develop an algorithm to compute Hodge ideals of such divisors and we apply it to some examples.
\end{abstract}

\footnote{\noindent 2010 \emph{Mathematics Subject Classification.}
32C38, 14F10,  32S35, 32S40\\
The first two authors are partially supported by the ERDF, PID2020-114613GB-I00, P20\_01056 and US-1262169. The first author is also partially supported by VI PPIT US-2018-II.5. The third author is partially supported by the DFG project Se 1114/3-2. \\
The third author would like to thank the Institute of Mathematics of the University of Seville for its hospitality during his visit in 2019.}

\section{Introduction}
\label{sec:Introduction}

For a reduced divisor $D$ in a complex manifold $X$ of dimension $n$,
we consider the sheaf $\cO_X(*D)$ of meromorphic functions along $D$. It is well-known that this is a coherent and holonomic left $\cD_X$-module, which underlies a mixed Hodge module on $X$ (see \cite{Saito1, SaitoMHM} and more specifically \cite{Saitobfunc}). The latter can be constructed in a functorial way as $j_* \dQ_U^H[n]$, where $U:=X\backslash D$, $j:U\hookrightarrow X$ is the canonical embedding, and $\dQ_U^H[n]$ denotes the constant pure Hodge module on $U$.  The aim of this paper is to describe the  Hodge filtration on the mixed Hodge module $j_* \dQ_U^H[n]$ for certain divisors and to compute it explicitly in some examples. This question has some history, but has recently been reconsidered in a series of articles (see \cite{PopaMustata, MP-QDiv-I, MP-Inv, MP-QDiv-II}) by Musta\c{t}\u{a} and Popa (in the algebraic setting though),
from a birational point of view. The authors of these papers introduce the so-called \emph{Hodge ideals:} these are coherent sheaves of ideals $\cI_k(D)\subset \cO_X$ measuring the difference between the Hodge filtration $F^H_\bullet \cO(*D)$ and the pole order filtration $P_\bullet \cO_X(*D)$. The latter consists of locally free $\cO_X$-modules of rank one given by $P_k \cO_X(*D):=\cO_X((k+1)D)$. Indeed, by a classical result of Saito (\cite[Proposition 0.9]{Saitobfunc}) we always have $F^H_k \cO_X(*D) \subset P_k\cO_X(*D)$, with equality if and only if $D$ is smooth (the ``only if'' part of the latter statement is also due to Musta\c{t}\u{a} and Popa, see \cite[Theorem A]{PopaMustata}) and then one puts
$$
\cI_k(D)\cdot P_k(D) := F^H_k \cO_X(*D).
$$
It is known (see \cite[Proposition 10.1]{PopaMustata} as well as
\cite[Theorem 0.4]{SaitoOnTheHodgeFiltration}) that the zeroth Hodge ideal $\cI_0(D)$ coincides with the multiplier ideal $\cJ((1-\varepsilon)D)$. Notice that the latter, although originally defined either analytically or via birational methods (see, e.g., \cite[Section 9]{Lazarsfeld2} and the references given therein) was already known to have a description via $\cD$-modules, see \cite[Theorem 0.1]{SaitoBudur}. For $X$ projective, there is the celebrated Nadel vanishing theorem for multiplier ideals, and one looks for similar statements for the higher Hodge ideals (see \cite[\S\S~G and H]{PopaMustata} and also \cite{Dutta}); these have applications e.g. if $X=\dP^n$ or if $X$ is an abelian variety.

The known results on Hodge ideals are mostly either global in nature, or concern the case of isolated singularities, see e.g. \cite{SaitoJungKimYoon, Zhang}. In this paper, we are interested in a specific class of divisors with highly non-isolated singular loci (namely, we will have $\textup{codim}_D(D_{\textup{sing}})=1$, in particular, these divisors are not normal). These are the so-called \emph{free divisors}, introduced and first studied by K.~Saito almost 40 years ago (see \cite{KS1}). They often appear as discriminants in a generalized sense, e.g., discriminants of singularities of maps (e.g. isolated hypersurface or complete intersection singularities, see, e.g. \cite{Looijenga},
or of reduced space curves \cite{DucoSpaceCurve}) or discriminants in quiver representation spaces (see, e.g., \cite{GMNS}). Another important class of free divisors are free hyperplane arrangements (see, e.g., \cite[Chapter 8]{DimcaHypArrangementsBook}); we will perform some computations for Hodge ideals for low-dimensional examples of free arrangements in section \ref{sec:Examples} below.

Although our methods are adapted to the special situation of free divisors, we believe that they have the potential to give insights into the structure of the Hodge filtration steps on $\cO_X(*D)$ in  more general situations. As an example, we consider cases of divisors that are not free, but close to it at the very end of the paper. More generally, it is clear that the Hodge filtration on $\cO_X(*D)$ can always be accessed using the approach of our paper, that is, by some non-trivial use of the $V$-filtration along this divisor (see below), however, the concrete calculation of this $V$-filtration might sometimes be difficult.
A related, though different approach, also using the $V$-filtration is found in \cite{MP-Inv, MP-QDiv-II}.
In concrete classes of examples, such as hyperplane arrangements, we hope that our results, and more generally the study of Hodge ideals will be useful for classical questions, like Terao's conjecture, or the homological characterization of freeness (\cite{UliEtAlLocCohom}).

Let us briefly recall the definition of free divisors. For a complex manifold $X$, we denote by $\Omega^p_X$ resp. by $\Theta_X$
the locally free $\cO_X$-module of holomorphic $p$-forms resp. the locally free $\cO_X$-module of tangent vector fields or of $\dC$-derivations of $\cO_X$. If $D\subset X$ is a reduced divisor,
then we write $\Omega^1_X(\log\, D)$ resp. $\Theta_X(-\log\,D)$ for the sheaf of logarithmic one-forms resp. of logarithmic vector fields on $X$, that is:
$$
\Omega^1_X(\log\, D):=\left\{\alpha\in\Omega^1_X(D)\,|\,d\alpha\in\Omega^2_X(D)\right\},
\quad\quad\quad\quad
\Theta_X(-\log\,D):=\left\{\theta\in\Theta_X\,|\,\theta(\cI(D))\subset\cI(D)\right\},
$$
where $\cI(D)\subset\cO_X$ is the ideal sheaf of $D$. These are coherent and reflexive $\cO_X$-modules. The examples of divisors that we are studying in this paper are given by the following condition.
\begin{definition}[see \cite{KS1}]
A divisor $D\subset X$ is called free if the sheaf $\Omega^1_X(\log\,D)$ (or, equivalently, the sheaf $\Theta_X(-\log\,D)$) is a locally free $\cO_X$-module.
\end{definition}

If $D$ is free, then we have the equality
$$
\bigwedge^p\Omega^1_X(\log\,D)\cong \Omega^p_X(\log\,D) := \left\{\alpha\in \Omega^p_X(D)\,|\,d\alpha\in \Omega^{p+1}_X(D)\right\}.
$$
Hence the terms of the so-called \emph{logarithmic de Rham complex} of $(X,D)$, i.e., the complex  $(\Omega^\bullet_X(\log\,D),d)$, are locally free $\cO_X$-modules. Notice that the most basic example of a free divisor is a divisor with simple normal crossings, in which case the logarithmic de Rham complex is a well studied object. It is particularly useful for the construction (due to Deligne, see, e.g., \cite[II.4]{PeSt}) of a mixed Hodge structure on the cohomology of the complement $U=X\backslash D$, in case it is quasi-projective. If $D$ has simple normal crossings, then it is classical that the logarithmic de Rham complex computes the cohomology of $U$, in other words, we have a quasi-isomorphism $Rj_* \dC_U \cong (\Omega^\bullet_X(\log\,D),d)$. This is not always true for any free divisor, but if it is, we say that the \emph{logarithmic comparison theorem holds} for $D$ (see \cite{CaNaMo_LCT}). This is in particular the case under a condition called \emph{strongly Koszul} (see \cite[Corollary 4.5]{nar_symmetry_BS}), which we recall in the next section (see Definition \ref{defi:SK} below). Strongly Koszul free divisors are the objects of study of this paper. Many interesting free divisors, such as free hyperplane arrangements, or more generally locally quasi-homogeneous free divisors satisfy the strong Koszul hypothesis.
A nice feature of divisors in this class is that we have a natural isomorphism $\cD_X \otimes_{\cV_X^D} \cO_X(D) \cong \cO_X(*D)$, where $\cV_X^D$ is the sheaf of logarithmic differential operators with respect to $D$, from which we obtain an explicit representation of $\cO_X(*D)\cong \cD_X/\cI$, where $\cI$ is a left ideal in $\cD_X$. As a consequence, we can consider another filtration (besides the Hodge filtration) $F_\bullet^{\ord}\cO_X(*D)$, called the order filtration, which is simply given by
the image of $F_\bullet \cD_X$ (the filtration by order of differential operators) under the isomorphism $\cD_X \otimes_{\cV_X^D} \cO_X(D) \stackrel{\cong}{\rightarrow} \cO_X(*D)$ (or, equivalently, is induced from $F_\bullet \cD_X$ when writing $\cO_X(*D)\cong \cD_X/\cI$ for the ideal $\cI$ mentioned above).

The main tool to describe the Hodge filtration $F^H_\bullet \cO_X(*D)$ is  to look at the graph embedding $i_h:X\hookrightarrow \dC_t\times X$, where $h$ is a local defining equation of $D$, and to consider $i_{h,+} \cO_X(*D)$. It also underlies a mixed Hodge module (on $\dC_t\times X$), and it is well-known that the Hodge filtration on $\cO_X(*D)$ can be deduced from the one on $i_{h,+} \cO_X(*D)$ (up to a shift by one), and vice versa. More precisely,
recall that
$i_{h,+}\cO_X(*D) \cong \cO_X(*D)[\partial_t]$ (we recall the $\cD_{\dC_t\times X}$-module structure on
$\cO_X(*D)[\partial_t]$ in Formula \eqref{eq:LeftActionGraph} on page \pageref{page:RecallStructure} below), and that under this isomorphism, we have
\begin{equation}\label{eq:IntroHodgeNhMh}
F_\bullet^H \cO_X(*D) = \left(F_{\bullet+1}^H i_{h,+} \cO_X(*D)\right) \cap (\cO_X(*D)\otimes 1).
\end{equation}
Hence we are reduced to determine $F^H_\bullet i_{h,+} \cO_X(*D)$. In order to do so, we  use a key property of mixed Hodge modules, which is known as \emph{strict specializability}. It can be rephrased as a formula (see \cite[Proposition 4.2]{Saitobfunc}) describing the Hodge filtration on a module which is the extension of its restriction outside a smooth divisor (which is the hyperplane $\{t=0\}\subset \dC_t\times X$ in our case). In order to use it, we have to
compute some steps of the canonical $V$-filtration along $\{t=0\}$ of the module $i_{h,+}\cO_X(*D)$, denoted by
$V^\bullet_{\can} i_{h,+}\cO_X(*D)$. Here we rely crucially on a previous result of the second named author
(\cite[Theorem 4.1]{nar_symmetry_BS}), namely, that the roots of the Bernstein-Sato polynomial
$b_h(s)$ of $h$ are contained in $(-2,0)$ and that they are symmetric around $-1$. As we will see below, the set of roots of $b_h(s)$ bigger than $-1$ plays a particular role, and we put
\begin{equation}\label{eq:RootsMinusOneIntro}
B'_h:=\left\{\alpha_i\in \dQ \cap (0,1)\,|\, b_h(\alpha_i-1)=0\right\}.
\end{equation}
for later reference.
For an element $\alpha_i\in B_h'$, we write $l_i$ for the multiplicity of the root $\alpha_i-1$ in $b_h$. The polynomial $\prod_{\alpha_i\in B_h'}(s-\alpha_i+1)^{l_i}\in\dC[s]$ is the special factor of the Bernstein-Sato polynomial $b_h(s)$
mentioned in the abstract.

We can consider another $V$-filtration on the module $i_{h,+}\cO_X(*D)$, induced from $V^\bullet \cD_{\dC_t\times X}$ for a cyclic presentation $\cD_{\dC_t\times X}/\cI'$ of $i_{h,+}\cO_X(*D)$ obtained from the cyclic presentation $\cD_X/\cI$ of $\cO_X(*D)$ mentioned above. We will denote it by $V^\bullet_{\ind} i_{h,+}\cO_X(*D)$. Knowing that $b_h(s)$ is closely related to the $b$-function of $V^\bullet_{\ind} i_{h,+}\cO_X(*D)$ (see Lemma \ref{lem:RootsElement1GraphEmbed} below), we can describe $V^k_{\can} i_{h,+}\cO_X(*D)$ at least for integer values of $k$ as a submodule of $V^k_{\ind} i_{h,+}\cO_X(*D)$.

Our main result can then be summarized as follows:
\begin{theorem*}(see Proposition \ref{prop:PreciseDescrCanVFilt}  and Theorem \ref{thm:HodgeOnMeromorphic} below)
Let $D\subset X$ be a strongly Koszul free divisor, and suppose that it is globally given by a reduced equation $h\in \cO_X$. Then:
\begin{enumerate}
    \item We have the following inclusions of coherent $\cO_X$-modules:
    $$
    F^H_\bullet \cO_X(*D) \subset F^{\ord}_\bullet \cO_X(*D) \subset P_\bullet \cO_X(*D).
    $$
    \item
    The zeroth step of the canonical $V$-filtration on $i_{h,+}\cO_X(*D)$ can be described as follows:
    $$
    V^0_{\can} i_{h,+} \cO_X(*D) \cong
    \D V_{\ind}^1 i_{h,+} \cO_X(*D) + \prod_{\alpha_i\in B'_h} (\partial_t t +\alpha_i)^{l_i} V_{\ind}^0 i_{h,+} \cO_X(*D).
    $$
    \item
    For all $k\in \dZ$, we have the following recursive formula for the Hodge filtration on $\cO_X(*D)$ (recall the shift convention between $F_\bullet^H\cO_X(*D)$ and $F_\bullet^H i_{h,+}\cO_X(*D)$):
    \begin{equation}\label{eq:MainFormulaIntro}
    F_k^H \cO_X(*D) \cong
    \left[
    \partial_t F^H_k i_{h,+}\cO_X(*D)+V^0_{\can} i_{h,+}\cO_X(*D)
    \right]
    \cap
    \left( F_k^{\ord}\cO_X(*D)\otimes 1\right).
    \end{equation}
\end{enumerate}
\end{theorem*}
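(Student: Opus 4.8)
The plan is to prove the three assertions in the order (2), then (3), then (1). Assertion (2) is the geometric core — it is what makes the canonical $V$-filtration computable — while (3) is essentially a formal consequence of it via strict specializability; the chain of inclusions in (1) then drops out, the only non-obvious one, $F^H_\bullet\cO_X(*D)\subset F^{\ord}_\bullet\cO_X(*D)$, being a by-product of the recursion in (3).

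For (2), set $N:=i_{h,+}\cO_X(*D)\cong\cD_{\dC_t\times X}/\cI'$ with cyclic generator $m$, so that $V^k_{\ind}N=V^k\cD_{\dC_t\times X}\cdot m$ and $V^1_{\ind}N=t\,V^0_{\ind}N\subseteq V^0_{\ind}N$; in particular every question about the induced filtration reduces to one about the class of $m$ in the cyclic $\cD_X[\partial_t t]$-module $\overline N:=V^0_{\ind}N/V^1_{\ind}N$. By Lemma~\ref{lem:RootsElement1GraphEmbed} this class is annihilated by $b_h$ (up to the standard reparametrisation $\partial_t t\leftrightarrow s$), and by \cite[Theorem~4.1]{nar_symmetry_BS} the roots of $b_h$ lie in $(-2,0)$ and are symmetric about $-1$, so $b_h$ factors as a polynomial with roots in $(-2,-1]$ times $\prod_{\alpha_i\in B'_h}(s-\alpha_i+1)^{l_i}$; correspondingly $\overline N$ decomposes into generalized $\partial_t t$-eigenspaces. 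Now one invokes the defining property of the canonical $V$-filtration on a mixed Hodge module — control of the minimal polynomial of $\partial_t t$ on each graded piece — to see that $\prod_{\alpha_i\in B'_h}(\partial_t t+\alpha_i)^{l_i}$ is precisely the operator cutting $\overline N$ down to the part admissible at level $0$; together with $V^1_{\ind}N$ this produces a good $V^0\cD$-submodule of $N$ with the correct graded behaviour which still generates $N$ over $\partial_t$, and hence equals $V^0_{\can}N$. The symmetry of the roots is what ensures that the ``lower half'' of $b_h$ contributes nothing here. The verification is a double inclusion: that the right-hand side of the formula lies in $V^0_{\can}N$ follows from the spectral property of its two summands, and the reverse from the minimality of $V^\bullet_{\can}$.

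For (3), combine the comparison~\eqref{eq:IntroHodgeNhMh} between $F^H_\bullet\cO_X(*D)$ and $F^H_{\bullet+1}N$ with Saito's strict specializability formula \cite[Proposition~4.2]{Saitobfunc}. Since $\cO_X(*D)$ is already localized along $D$, $N$ equals its own localization along $\{t=0\}$, and Saito's formula then writes $F^H_p N$ as $\sum_{j\ge0}\partial_t^{\,j}\bigl(F^H_{p-j}\text{-part of }V^0_{\can}N\bigr)$, the Hodge filtration on $V^0_{\can}N$ being determined by $F^H$ on the finitely many $V$-graded pieces via the strictness of $t$ and $\partial_t$. Feeding in (2) replaces $V^0_{\can}N$ by the explicit expression in terms of $V^\bullet_{\ind}$, and since the pushforward formula for the graph embedding (with the shift convention recalled above) expresses $F^H_k N$ through the steps $F^H_{j}\cO_X(*D)$ with $j<k$ only, \eqref{eq:MainFormulaIntro} becomes a genuine recursion, started by $F^H_{<0}\cO_X(*D)=0$. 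The one substantial point is to show that on $V^0_{\can}N$ — equivalently, on the $V$-graded pieces of $N$ — the Hodge filtration is computed by the order filtration up to the relevant shift; this is exactly what turns Saito's pole-order bound $P_\bullet$ into $F^{\ord}_\bullet$ in~\eqref{eq:MainFormulaIntro}, and it is here that the \emph{strongly Koszul} hypothesis is essential, since it supplies the isomorphism $\cD_X\otimes_{\cV_X^D}\cO_X(D)\cong\cO_X(*D)$ and the Koszul-type exactness that forces $\gr_V N$ to be well-behaved enough for the two filtrations to agree on it.

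Finally, for (1): the inclusion $F^{\ord}_\bullet\cO_X(*D)\subset P_\bullet\cO_X(*D)$ is elementary — under the isomorphism above $F^{\ord}_0\cO_X(*D)=\cO_X\!\cdot h^{-1}=\cO_X(D)=P_0\cO_X(*D)$, and a vector field raises the pole order by at most one, since $\theta(h^{-j})=-j\,\theta(h)\,h^{-j-1}$, so by induction $F^{\ord}_k\cO_X(*D)\subset\cO_X((k+1)D)=P_k\cO_X(*D)$. The inclusion $F^H_\bullet\cO_X(*D)\subset F^{\ord}_\bullet\cO_X(*D)$ then follows from (3) by induction on $k$: it is trivial for $k<0$, and \eqref{eq:MainFormulaIntro} exhibits $F^H_k\cO_X(*D)$ as an intersection one of whose factors is $F^{\ord}_k\cO_X(*D)\otimes1$. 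I expect the identification of $F^H$ with $F^{\ord}$ on $V^0_{\can}N$ inside the proof of (3) to be the main obstacle; the remaining steps, though not altogether formal, are standard manipulations with the $V$-filtration, the graph embedding, and Bernstein--Sato polynomials.
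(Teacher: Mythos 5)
Your outline of part (2) is essentially the paper's argument: one uses $b^{N(h)}_{[1]}(s)=b_h(-s-1)$ and the symmetry result of \cite{nar_symmetry_BS} to place the roots in $(-1,1)$, and then corrects the induced filtration by the factor $\prod_{\alpha_i\in B'_h}(\partial_t t+\alpha_i)^{l_i}$. The only caveat is your closing ``double inclusion via the spectral property and minimality of $V^\bullet_{\can}$'': as stated this is not a proof. What actually closes the argument is to define the whole shifted filtration $\overline{V}^k N(h)=V^{k+1}_{\ind}N(h)+\prod_i(\partial_t t-k-\lambda_i)^{l_i}V^k_{\ind}N(h)$, check that it is a \emph{good} $V$-filtration whose Bernstein polynomial has all roots in $[0,1)$, and invoke the uniqueness statement of Maisonobe--Mebkhout; this is Lemma \ref{lem:VcanInVind} and Corollary \ref{cor:PreciseDescrCanVFilt}, and your sketch is compatible with it.

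The genuine gap is in part (3), and it propagates to the first inclusion of part (1), which you derive from (3). Saito's formula gives $F^H_k\cN(h)=\sum_{i\geq 0}\partial_t^i\bigl(V^0_{\can}\cN(h)\cap j'_*(j')^*F^H_{k-i}\cN(h)\bigr)$, and on $\{t\neq 0\}$ the Hodge filtration is indeed the shifted order filtration (this is fine, and it is where the LCT/cyclic presentation enters). But to arrive at \eqref{eq:MainFormulaIntro} one must replace $j'_*$ of the order filtration of the \emph{open restriction} by $F^{\ord}_{k-1-i}\cN(h)$ itself, i.e.\ prove $V^0_{\ind}\cN(h)\cap j'_*\bigl(F^{\ord}_r\cN(h)|_{t\neq 0}\bigr)\subset F^{\ord}_r\cN(h)$, and one must prove $F^H_k\cN(h)\subset F^{\ord}_{k-1}\cN(h)$ (Proposition \ref{prop:HodgeInclusionNh}), which is exactly the inclusion $F^H_\bullet\subset F^{\ord}_\bullet$ of part (1) after the shift. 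Your proposal replaces this by the assertion that ``on $V^0_{\can}N$ the Hodge filtration is computed by the order filtration'', attributed to an unspecified ``Koszul-type exactness'' making $\gr_V N$ well-behaved. That assertion is equivalent to what has to be proved (it is a consequence of the theorem, not an available input), and no mechanism for it is offered. The actual mechanism is a compatibility statement for the three filtrations $F_\bullet$, $V^\bullet$ and the ideal $I(h)$: the surjectivity of $F_l\cD\cap V^k\cD\to F^{\ord}_l\cN(h)\cap V^k_{\ind}\cN(h)$ (Proposition \ref{prop:CompatibilityVandF}), which is where the strongly Koszul hypothesis really enters, via involutive (standard) bases of $I(h)$ with respect to both the order filtration and the induced $\widetilde V$-filtration on $\gr^F$ (Lemma \ref{lem:Involutive_F-Filt}, Corollary \ref{cor:Involutive_V-Filt}), combined with Saito's compatibility criterion \cite[Corollaire 1.2.14]{Saito1}, and supplemented by a coherence/$t$-torsion argument to pass from sections controlled only on $t\neq 0$ back into $F^{\ord}_r\cN(h)$. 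Without this step the recursion \eqref{eq:MainFormulaIntro} is only obtained with $j'_*(j')^*$-terms in place of $F^{\ord}\cN(h)$, and neither (3) nor the inclusion $F^H_\bullet\cO_X(*D)\subset F^{\ord}_\bullet\cO_X(*D)$ in (1) is established. (The second inclusion $F^{\ord}_\bullet\subset P_\bullet$ you give is fine and matches Remark \ref{rem:HodgeOrderPoleOrder}.)
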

Notice that part 2 actually holds under weaker assumptions on $D$, see Proposition \ref{prop:PreciseDescrCanVFilt} below for more details.

Combining these three results and taking into account equation \eqref{eq:IntroHodgeNhMh} allow us to determine the Hodge ideals of $D$.
Theorem \ref{theo:reduction_to_D_X[s]} below (and specifically Formula
\eqref{eq:recur-FHNh}) gives a concrete and explicit way to calculate the Hodge filtration steps on $i_{h,+}\cO_X(*D)$ resp. on $\cO_X(*D)$ and the Hodge ideals of $D$. In the second part of section \ref{sec:Examples}, we  apply our method to certain interesting examples.

In applications, it is sometimes useful to study the behaviour of a Hodge module under the duality functor. It is defined for objects of the category $\MHM$, but it restricts to the usual holonomic dual of the underlying $\cD$-module.
We give (see Theorem \ref{theo:DualFiltration}) some statements estimating the Hodge filtration on the dual Hodge module with underlying $\cD_X$-module $\dD\cO_X(*D)$, following standard convention, this $\cD_X$-module is denoted by $\cO_X(!D)$. Interestingly, our results on the dual Hodge filtration also involve the cardinality of the set $B'_h$, i.e. the number of roots (counted with multiplicity) of $b_h$ lying strictly in the interval $(-1,0)$. Finally, we state a conjecture (see Conjecture \ref{con:GenLevel}) estimating the \emph{generating level} of the Hodge filtration $F^H_\bullet \cO_X(*D)$, namely, we expect that it is always generated at level $|B'_h|$. Computation of examples in section \ref{sec:Examples} supports this conjecture.

\medskip
The paper is organized as follows: In section \ref{sec:FFilt}, we first introduce rigorously the class of divisors we are interested in, and define the order filtration on $\cO_X(*D)$ globally. We prove an important fact for the filtered module $(\cO_X(*D),F_\bullet^{\ord})$, which is known as the Cohen-Macaulay property (see Proposition \ref{prop:StrictnessDualOrder}). It is known from Saito's theory that the same property holds for $(\cO_X(*D),F_\bullet^H)$, i.e., the filtered module underlying the mixed Hodge module $j_* \dQ_U^H[n]$. We are using the Cohen-Macaulay property of both filtrations to give the estimation of the Hodge filtration on the dual module of $\cO_X(*D)$ (see Theorem \ref{theo:DualFiltration}) referred to above.

In section \ref{sec:VFilt}, we recall some general facts on $V$-filtrations and then describe (see Proposition \ref{prop:PreciseDescrCanVFilt}) the canonical $V$-filtration on the graph embedding module of $\cO_X(*D)$. In the subsequent section \ref{sec:HodgeFiltration}, we use these results to prove formula \eqref{eq:MainFormulaIntro} (see Theorem \ref{thm:HodgeOnMeromorphic}). Finally, in section \ref{sec:Examples}, we develop techniques for the computation of Hodge ideals, and perform them for some significant examples.

\textbf{Acknowledgements:} We would like to thank the anonymous referee for valuable suggestions to improve the readability, as well as for pointing out
a mistake in the proof of Corollary \ref{cor:Involutive_V-Filt} in a former version of this paper.

\section{Filtration by order on $\cO_X(*D)$}
\label{sec:FFilt}

The purpose of this section is to introduce the class of divisors we are going to study in this paper, these are free divisors
satisfying the strong Koszul hypothesis. In that case, we can define a particular good filtration $F_\bullet^{\ord}$ on the sheaf $\cO_X(*D)$. We call it \emph{order filtration}, because if locally we choose a reduced equation $h$ for $D$, then the strong Koszul hypothesis implies that there is a canonical representation of $\cO_X(*D)$ as a cyclic left $\cD_X$-module (generated by $h
^{-1}$) $\cD_X/\cI(h)$, and then our order filtration is the filtration on $\cD_X/\cI(h)$ induced by the filtration on $\cD_X$ by the order of differential operators. Nevertheless, as we will see below,
$F_\bullet^{\ord} \cO_X(*D)$ is globally well defined.
As mentioned in the introduction, one of our main results
is that for each $k\in \dZ$ the Hodge filtration $F_k^H \cO_X(*D)$ is
a coherent $\cO_X$-submodule of $F_k^{\ord}\cO_X(*D)$ as well as a precise description of the inclusion
$F_k^H \cO_X(*D) \subset F_k^{\ord}\cO_X(*D)$ (see Theorem \ref{thm:HodgeOnMeromorphic} below). We will also show in this section that the order filtration $F^{\ord}_\bullet \cO_X(*D)$ shares a key feature with $F_\bullet^H \cO_X(*D)$, known as the
\emph{Cohen-Macaulay property}. This is used in section \ref{sec:HodgeFiltration} (see the proof of Theorem \ref{theo:DualFiltration}) to give some statement about the dual Hodge filtration.

For the remainder of this paper (except in subsection \ref{subsec:Whitney}, where we explicitly relax these assumptions), we will assume $X$ to be an $n$-dimensional complex manifold, and $D\subset X$ a free divisor. We will be specifically working with those free divisors
satisfying an additional hypothesis called \emph{strongly Koszul},
that we define now.

Denote as before by $\Theta_X$ the locally free $\cO_X$-module of rank $n$ of vector fields and by $\cD_X$ the sheaf of linear differential operators with holomorphic coefficients, endowed with the filtration $F_\bullet \cD_X$ by the order of differential operators. For each divisor $D\subset X$ we write $\cO_X(*D)$ for the sheaf of meromorphic functions with poles along $D$ and $\Omega_X^\bullet(*D)$ the meromorphic de Rham complex. If $D$ is a free divisor, we denote by
$\Theta_X(-\log D) \subset \Theta_X$ the locally free $\cO_X$-module of rank $n$ of logarithmic vector fields and by $\Omega_X^\bullet(\log\,D)$   the logarithmic de Rham complex. Moreover, we let
$$
\cV_X^D:=
\left\{
P\in \cD_X\,|\, P(\cI(D)^k)\subset \cI(D)^k\ \forall k\in\dZ
\right\} \subset \cD_X
$$
be the sheaf of rings of logarithmic differential operators with respect to $D$. It is filtered by the order of differential operators as well, namely $F_k \cV_X^D := \cV_X^D \cap F_k \cD_X$ for all $k\in \dZ$, and there is a canonical isomorphism of graded $\cO_X$-algebras (see \cite[Remark 2.1.5]{calde_ens})
$$
\Sym^\bullet_{\cO_X}\Theta_X(-\log D) \xrightarrow{\sim} \gr_\bullet^F \cV_X^D.
$$
The sheaf $\cO_X(D)\subset \cO_X(*D)$ of meromorphic functions with poles of order $\leq 1$ is a left $\cV_X^D$-module by the definition of $\cV_X^D$ and we have a canonical $\cD_X$-linear map (\cite[\S 4]{calde_nar_fou})
\begin{equation} \label{eq:LCT-map}
    \cD_X \Lotimes_{\cV_X^D} \cO_X(D) \longrightarrow \cO_X(*D).
\end{equation}
We quote the following result (see  \cite[Corollaire 4.2]{calde_nar_fou}):
\begin{theorem}\label{theo:LCT}
Let $D\subset X$ be a free divisor.
The following properties are equivalent:
\begin{enumerate}
    \item[(i)] The logarithmic comparison theorem (LCT) holds for $D$ (i.e. the inclusion $\Omega_X^\bullet(\log\,D) \hookrightarrow \Omega_X^\bullet(*D)$ is a quasi-isomorphism of complexes of sheaves of $\dC$-vector spaces).
    \item[(ii)] The map (\ref{eq:LCT-map}) is a quasi-isomorphism of complexes of left $\cD_X$-modules.
\end{enumerate}
\end{theorem}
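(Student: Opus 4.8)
\emph{Sketch of proof.} The plan is to apply the de Rham functor, under which both conditions turn into statements comparing the logarithmic and the meromorphic de Rham complexes, and then to reverse the implication by means of a duality theory for the logarithmic de Rham complex of a free divisor.

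Recall the de Rham functor $\DR_X(-)=\Omega_X^\bullet\otimes_{\cO_X}(-)[n]$ on complexes of left $\cD_X$-modules and the side-changing formula $\DR_X(M)\simeq\omega_X\Lotimes_{\cD_X}M[n]$. Applied to the target of \eqref{eq:LCT-map} it yields, up to the shift, the meromorphic de Rham complex $\Omega_X^\bullet(*D)$, which is the classical computation of $Rj_*\dC_U$. Applied to the source, the key is the base-change identity
$$
\DR_X\bigl(\cD_X\Lotimes_{\cV_X^D}\cO_X(D)\bigr)\;\simeq\;\omega_X\Lotimes_{\cV_X^D}\cO_X(D)[n]\;\simeq\;\Omega_X^\bullet(\log D)[n]
$$
with the ordinary differential $d$: since $D$ is free, $\Theta_X(-\log D)$ is locally free of rank $n$, so $\Omega_X^n(\log D)\otimes_{\cO_X}\cV_X^D\otimes_{\cO_X}\bigwedge^{-\bullet}\Theta_X(-\log D)$ is a finite locally free (logarithmic Spencer) resolution of the rank-one right $\cV_X^D$-module $\Omega_X^n(\log D)$ used for side-changing; tensoring it over $\cV_X^D$ with $\cO_X(D)$ and using the contractions $\Omega_X^n(\log D)\otimes_{\cO_X}\bigwedge^{p}\Theta_X(-\log D)\cong\Omega_X^{n-p}(\log D)$ produces $\Omega_X^\bullet(\log D)$, the twist by $\cO_X(D)$ being cancelled exactly by the difference between $\omega_X$ and $\Omega_X^n(\log D)$ as right $\cV_X^D$-modules. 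Under these identifications $\DR_X$ sends \eqref{eq:LCT-map} to the inclusion $\Omega_X^\bullet(\log D)\hra\Omega_X^\bullet(*D)$; since $\DR_X$ preserves quasi-isomorphisms, (ii) $\Rightarrow$ (i) is immediate.

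For the converse, write $C$ for the cone of \eqref{eq:LCT-map} in the bounded derived category of $\cD_X$-modules; by the previous paragraph, (i) is exactly the vanishing $\DR_X(C)\simeq 0$, and the task is to deduce $C\simeq 0$. I would establish a duality for the logarithmic de Rham complex of a free divisor: namely that $\cV_X^D$ is self-dual up to the dualizing twist by $\Omega_X^n(\log D)$, just as $\cD_X$ is self-dual up to $\omega_X$, and that this duality corresponds under $\DR_X$ to topological Verdier duality, compatibly with the classical statement for $\cD_X$. Concretely, $\cO_X(D)$ is an $\cO_X$-coherent $\cV_X^D$-module (a logarithmic connection) whose $\cV_X^D$-dual is the logarithmic connection $\cO_X(-D)$; both $\cD_X\Lotimes_{\cV_X^D}\cO_X(D)$ and $\cD_X\Lotimes_{\cV_X^D}\cO_X(-D)$ are perfect complexes over $\cD_X$, dual to one another, whose de Rham complexes are $\Omega_X^\bullet(\log D)$ and its Verdier dual. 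Applying $\DR_X$ to $C$ and to its $\cD_X$-dual and inserting the duality then yields a perfect pairing which upgrades the acyclicity of $\DR_X(C)$ to the acyclicity of $C$. (Alternatively, one could first show that $\cD_X\Lotimes_{\cV_X^D}\cO_X(D)$ has holonomic cohomology — so that $C$ does too — and then invoke the conservativity of $\DR_X$ on complexes with holonomic cohomology.) In either case one simultaneously recovers, when (i) holds, the isomorphism $\cD_X\otimes_{\cV_X^D}\cO_X(D)\cong\cO_X(*D)$ and the vanishing of the higher Tor sheaves.

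The main obstacle is exactly this logarithmic duality — making rigorous the principle that, for a free divisor, the ring $\cV_X^D$ of logarithmic differential operators behaves, for the purposes of de Rham and duality theory, like a ring of twisted differential operators along the Lie algebroid $\Theta_X(-\log D)$: identifying its dualizing module as $\Omega_X^n(\log D)$, proving that the logarithmic Spencer complex is a finite locally free resolution, and matching it with Verdier duality for $Rj_*$. Freeness of $D$ is precisely what makes $\Theta_X(-\log D)$ locally free of rank $n$ with $\bigwedge^p\Theta_X(-\log D)$ the $\cO_X$-dual of $\Omega_X^p(\log D)$ — which is what gives the Spencer resolution the right length and validates the contraction isomorphisms used above — and none of this has an analogue for a non-free divisor. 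Granting the duality, the equivalence (i) $\Leftrightarrow$ (ii) follows formally from the two preceding paragraphs.
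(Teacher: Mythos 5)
First, a point of comparison: the paper does not prove this statement — it quotes it verbatim from \cite[Corollaire 4.2]{calde_nar_fou} — so there is no in-paper argument to measure yours against. Judged on its own, your treatment of (ii) $\Rightarrow$ (i) is correct and standard: freeness gives the length-$n$ locally free logarithmic Spencer resolution, side-changing identifies $\DR_X(\cD_X\Lotimes_{\cV_X^D}\cO_X(D))$ with $\Omega^\bullet_X(\log D)[n]$ (your twist bookkeeping between $\omega_X$ and $\Omega^n_X(\log D)$ is right), and $\DR_X$ carries \eqref{eq:LCT-map} to the inclusion of the logarithmic into the meromorphic de Rham complex. You have also correctly located the crux of (i) $\Rightarrow$ (ii) in a duality theorem for the logarithmic side; that duality is indeed the content of the cited reference and reappears in this paper as Proposition \ref{prop:A32-grad} (Theorem (A.32) of \cite{nar_symmetry_BS}).

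Two genuine problems remain in the hard direction. First, your duality statement has the wrong twist: since the relative dualizing module of the inclusion $\Theta_X(-\log D)\subset \Theta_X$ is $\cO_X(D)$, the formula reads $\dD\bigl(\cD_X\Lotimes_{\cV_X^D}\cE\bigr)\simeq \cD_X\Lotimes_{\cV_X^D}\bigl(\cO_X(D)\otimes_{\cO_X}\cE^*\bigr)$, so the $\cD_X$-dual of $\cD_X\Lotimes_{\cV_X^D}\cO_X(D)$ is $\cD_X\Lotimes_{\cV_X^D}\cO_X$, not $\cD_X\Lotimes_{\cV_X^D}\cO_X(-D)$. Second, and more seriously, the final descent step does not close as described. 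Granting the duality, your "perfect pairing" yields at best that both $\DR_X(C)$ and $\DR_X(\dD C)\simeq \RHom_{\cD_X}(C,\cO_X)[n]$ vanish (Mebkhout's local duality for coherent complexes); for a complex $C$ whose cohomology is only known to be coherent, this does not formally force $C\simeq 0$, and the identification of $\DR_X\circ\dD$ with Verdier duality applied to $\DR_X$ presupposes constructibility of $\DR_X(C)$, which is part of what is in question. Your parenthetical alternative — holonomicity of $\cD_X\Lotimes_{\cV_X^D}\cO_X(D)$ plus conservativity of $\DR_X$ on complexes with holonomic cohomology — is the clean way out, but proving that holonomicity for an \emph{arbitrary} free divisor is not easier than (ii) itself. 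It does hold when $D$ is Koszul free (the symbols $\sigma(\delta_i)$ form a regular sequence, so the derived tensor product is concentrated in degree $0$ with characteristic variety of dimension $n$), which covers every application of the theorem made in this paper, but not the statement as given for general free divisors; closing that general case is exactly the work done in \cite{calde_nar_fou}.
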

Let us notice that property (ii) in the above theorem means that the complex $\cD_X \Lotimes_{\cV_X^D} \cO_X(D)$ is concentrated in degree $0$ and the canonical $\cD_X$-linear map
$$
\alpha:\cD_X \otimes_{\cV_X^D} \cO_X(D) \longrightarrow \cO_X(*D)
$$
is an isomorphism of left $\cD_X$-modules.

\begin{definition} \label{def:F-ord}
Assume that $D\subset X$ is free and that the logarithmic comparison theorem holds for $D$. We define the {\em order filtration} $F_\bullet^{\ord} \cO_X(*D)$ by
$$
F_\bullet^{\ord} \cO_X(*D) := \alpha\left( \left( F_\bullet\cD_X\right) \otimes_{\cV_X^D} \cO_X(D)\right) \subset \cO_X(*D),
$$
which by definition is a filtration by $\cO_X$-coherent submodules.
\end{definition}

Let us give a more explicit local description. Let $p\in D$, and take a basis $\delta_1,\dots,\delta_n$  of $\Theta_X(-\log D)_p$ as well as a local equation $h\in \cO_{X,p}$ of $D$ around $p$, such that $\delta_i(h)=\alpha_i h$. Assume that the LCT holds for $D$, then we have the following explicit local presentation
\begin{equation}\label{eq:CyclicPresent-1}
\cD_{X,p}/\cD_{X,p}(\delta_1+\alpha_1,\dots,\delta_n+\alpha_n) \cong \cO_{X,p}(*D),
\end{equation}
where the class $[1]$ is send to $h^{-1}$ (see the explanation after Corollaire 4.2 in \cite{calde_nar_fou}).
Under this isomorphism, the filtration $F_\bullet^{\ord} \cO_X(*D)_p$ is the induced filtration on $ \cD_{X,p}/\cD_{X,p}(\delta_1+\alpha_1,\dots,\delta_n+\alpha_n)$ by $F_\bullet \cD_{X,p}$, which explains its name.
\medskip

We now introduce the strong Koszul hypothesis. It implies that the LCT holds for $D\subset X$, but it is stronger. Its additional assumptions will be needed in the next section.
\begin{definition} \label{defi:SK}
Let $D\subset X$ be a free divisor and let $p\in D$. Let $h\in \cO_{X,p}$ be a local reduced equation of $(D,p)$ and
let $\delta_1,\ldots, \delta_n$ be any $\cO_{X,p}$-basis of $\Theta_{X,p}(-\log\,D)$ with $ \delta_i(h) = \alpha_i h$.
\begin{enumerate}
\item
We say that $D$ is \emph{Koszul} at $p\in D$ if the sequence
$$ \sigma(\delta_1),\ldots, \sigma(\delta_n)
$$
is regular in $\Gr^F_\bullet \cD_{X,p}$. (Here and at later occasions, for an element $P$ in a filtered ring, we denote by
$\sigma(P)$ its symbol, i.e. its class in the associated graded ring.)
\item
We say that $D$ is \emph{strongly Koszul} at $p\in D$ if
the sequence
$$ h, \sigma(\delta_1)-\alpha_1 s,\dots, \sigma(\delta_n)-\alpha_n s
$$
is regular in $\Gr^T_\bullet \cD_{X,p}[s]$ (where $T_\bullet \cD_{X,p}[s]$ is the filtration on $\cD_{X,p}[s]$ for which vector fields on $X$ as well as the variable $s$ have order $1$, we will call it the total order filtration on $\cD_{X,p}[s]$).
\end{enumerate}
We say that $D$ is Koszul resp. strongly Koszul if it is so at any $p\in D$. Since the strong Koszul assumption will be our main hypothesis later, we will sometimes abbreviate it by saying that $D$ is an SK-free divisor.
\end{definition}

Let us notice that, in the above definition, the ordering of the sequences is not relevant because their elements are homogeneous. Let us also notice that if $D$ has an Euler local equation $h\in \cO_{X,p}$, i.e. $h$ belongs to its gradient ideal (with respect to some local coordinates), then there is a basis $\delta_1,\dots,\delta_{n-1},\chi$ of $\Theta_X(-\log D)_p$ with $\delta_i(h)=0$ and $\chi(h)=h$. In such a case, $D$ is strongly Koszul at $p$ if and only if $ h, \sigma(\delta_1),\dots, \sigma(\delta_{n-1})$ is a regular sequence in $\Gr
^F_\bullet\cD_{X,p}$ (compare with Definition 7.1 and Proposition 7.2 of \cite{granger_schulze_rims_2010} in the case of linear free divisors). For $D$ to be strongly Koszul is equivalent to be of ``linear Jacobian type'', i.e. the ideal $(h,h'_{x_1},\dots,h'_{x_1})$ is of linear type, and any strongly Koszul free divisor is Koszul (see Propositions (1.11) and (1.14) of \cite{nar_symmetry_BS}).

Any plane curve is a Koszul free divisor.
Any locally quasi-homogeneous free divisor is strongly Koszul \cite[Theorem 5.6]{calde_nar_fou}, and so are
free hyperplane arrangements or discriminants of stable maps in Mather’s ``nice dimensions''. In particular, any normal crossing divisor is strongly Koszul free.

Let us recall the following properties of SK-free divisors.

\begin{proposition}\label{prop:SummarySKLCT}
If $D$ is a strongly Koszul free divisor, then we have:
\begin{enumerate}
  \item $D$ is locally strongly Euler homogeneous, that is, for each $p\in D$ there is a vector field $\chi\in \Theta_{X,p}$ which vanishes at $p$
  and such that $\chi(h)=h$ for some reduced local equation $h$ of $D$ at $p$.
  \item The LCT holds for $D$, in particular, we have the local cyclic presentation of $\cO_X(*D)$ from formula \eqref{eq:CyclicPresent-1} above. More specifically,
  if for each $p\in D$, we take $h$ and $\chi$ as in point 1 and if we let $\delta_1,\dots,\delta_{n-1}$ to be a basis of germs at $p$ of vector fields vanishing on $h$, in such a way that $ \delta_1,\dots,\delta_{n-1},\chi$ is a basis of $\Theta_X(-log\,D)_p$, then there is
  an isomorphism of left $\cD_{X,p}$-modules
  \begin{equation}\label{eq:CyclicPresent-2}
    \frac{\cD_{X,p}}{\cD_{X,p}\left(\delta_1,\ldots,\delta_{n-1},\chi+1\right)} \cong   \cO_{X,p}(*D),
  \end{equation}
  sending the class of $1\in \cD_{X,p}$ to $h^{-1}\in\cO_{X,p}(*D)$.
  \item Let $p\in D$ be a point, $h\in \cO_{X,p}$ be a reduced local equation of $D$ at $p$ and $b_h(s)$ the Bernstein-Sato polynomial of $h$ (i.e., the monic generator of the ideal
  of polynomials $b(s)\in \dC[s]$ satisfying $P(s) h^{s+1} = b(s) h^s$ for some $P(s)\in \cD_{X,p}[s]$). Then we have $b_h(-s-2) = \pm b_h(s)$. In particular, if $\{\alpha_1,\ldots,\alpha_k\} \subset \dQ$ is the set of roots of $b_h(s)$,
  with $\alpha_1\leq\ldots\leq \alpha_k$, we have $\alpha_i\in (-2,0)$ and
  $\alpha_i+\alpha_{k+1-i}=-2$ for all $i$.
\end{enumerate}
\end{proposition}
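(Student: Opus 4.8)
The plan is to derive all three assertions from the structure theory of strongly Koszul free divisors developed in \cite{nar_symmetry_BS} and \cite{calde_nar_fou}, together with Theorem~\ref{theo:LCT}; the only point that requires more than bookkeeping is the upgrade from Euler to \emph{strong} Euler homogeneity in part~(1), which I expect to be the real obstacle (and which is most economically handled by quoting it from \cite{nar_symmetry_BS}). Once part~(1) is in hand, parts~(2) and~(3) are essentially reformulations of \cite[Cor.~4.5 and Thm.~4.1]{nar_symmetry_BS}.

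For part~(1), first recall from \cite[Prop.~(1.11) and~(1.14)]{nar_symmetry_BS} that being strongly Koszul at $p$ is equivalent to the ideal $(h,h'_{x_1},\dots,h'_{x_n})$ being of linear type; in particular $h\in(h'_{x_1},\dots,h'_{x_n})$, so $D$ is Euler homogeneous at $p$, say $\chi_0(h)=h$ with $\chi_0\in\Theta_{X,p}$. To obtain a field vanishing at $p$, write $\chi_0=c+\chi_1$ with $c$ a constant-coefficient field (the value $\chi_0(p)$) and $\chi_1\in\fm_p\Theta_{X,p}$; comparing lowest-degree parts in $\chi_0(h)=h$ forces $c$ to annihilate the initial form of $h$, and if $c\neq 0$ one may straighten $c$ to a coordinate field $\partial_{z_1}$, so that $\partial_{z_1}h=h-\chi_1'(h)$ integrates to $h(z_1,z')=e^{z_1}h(0,z')$. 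Since $e^{z_1}$ is a unit, $D$ is a cylinder over the divisor $\{h(0,z')=0\}$ in one fewer variable, and an induction on $n$ (the base case, $D$ smooth, being clear) yields the desired $\chi$ with $\chi(p)=0$, $\chi(h)=h$. Alternatively, strong Euler homogeneity of SK-free divisors may simply be cited from \cite{nar_symmetry_BS}.

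For part~(2), the LCT for SK-free divisors is \cite[Cor.~4.5]{nar_symmetry_BS}, so by Theorem~\ref{theo:LCT} (and the discussion following it) the canonical map $\alpha\colon \cD_{X,p}\otimes_{\cV_{X,p}^D}\cO_{X,p}(D)\to\cO_{X,p}(*D)$ is an isomorphism of left $\cD_{X,p}$-modules. Now $\cO_{X,p}(D)=h^{-1}\cO_{X,p}$ is cyclic over $\cV_{X,p}^D$, generated by $h^{-1}$, whose annihilator contains $\delta_i+\alpha_i$ (because $\delta_i(h)=\alpha_i h$ gives $(\delta_i+\alpha_i)(h^{-1})=0$); extending scalars along $\cD_{X,p}\otimes_{\cV_{X,p}^D}(-)$ therefore produces a surjection onto $\cO_{X,p}(*D)$ from the left-hand side of \eqref{eq:CyclicPresent-1}, which is an isomorphism by the graded/regular-sequence argument of \cite[\S4]{calde_nar_fou} (the symbols $\sigma(\delta_i)$ forming a regular sequence in $\gr^F\cD_{X,p}$). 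For the refined presentation \eqref{eq:CyclicPresent-2}, take $\chi$ as produced in part~(1); the $\cO_{X,p}$-linear map $\Theta_{X,p}(-\log D)\to\cO_{X,p}$, $\delta\mapsto\delta(h)/h$, is well defined (logarithmic fields preserve $\cI(D)$) and split surjective via $\chi\mapsto 1$, so $\Theta_{X,p}(-\log D)=\cO_{X,p}\chi\oplus K$ with $K$ the free rank-$(n-1)$ module of fields annihilating $h$. Choosing an $\cO_{X,p}$-basis $\delta_1,\dots,\delta_{n-1}$ of $K$ makes $\alpha_i=0$ for $i<n$, and \eqref{eq:CyclicPresent-1} becomes exactly \eqref{eq:CyclicPresent-2}, with $[1]\mapsto h^{-1}$.

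For part~(3), the functional equation $b_h(-s-2)=\pm b_h(s)$ is \cite[Thm.~4.1]{nar_symmetry_BS}. Granting it, write $b_h(s)=\prod_{i=1}^{k}(s-\alpha_i)$ with $\alpha_1\le\dots\le\alpha_k$; then $b_h(-s-2)=\pm\prod_{i=1}^{k}\bigl(s-(-2-\alpha_i)\bigr)$, so matching the two root multisets forces $\{\alpha_i\}=\{-2-\alpha_i\}$, that is $\alpha_i+\alpha_{k+1-i}=-2$ for all $i$. Finally, Kashiwara's theorem on roots of Bernstein--Sato polynomials gives $\alpha_i<0$, and together with the symmetry just established this yields $\alpha_i=-2-\alpha_{k+1-i}>-2$, hence $\alpha_i\in(-2,0)$.
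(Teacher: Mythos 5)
Your proposal is correct and follows essentially the same route as the paper, whose entire proof consists of citing Propositions (1.9), (1.11) and Corollaries (4.2), (4.5) of \cite{nar_symmetry_BS} for the three parts (together with the discussion after Corollaire 4.2 of \cite{calde_nar_fou} for the cyclic presentation); your splitting argument for the basis $\delta_1,\dots,\delta_{n-1},\chi$ and the deduction of the root symmetry from the functional equation are exactly the intended bookkeeping. The only caveat is your sketched induction for part (1): the inductive step tacitly needs that the lower-dimensional divisor $\{h(0,z')=0\}$ again satisfies the hypothesis (i.e.\ that cylinders over SK-free divisors are SK-free), which you do not verify --- but since you fall back on citing strong Euler homogeneity from \cite{nar_symmetry_BS}, as the paper itself does, this does not affect the correctness of the proof.
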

\begin{proof}
Part 1 is a consequence of Propositions (1.9) and (1.11) of \cite{nar_symmetry_BS}. Part 2 is a consequence of Corollary (4.5) of \cite{nar_symmetry_BS}. Part 3 is a consequence of Corollary (4.2) of \cite{nar_symmetry_BS}.
\end{proof}

\begin{remark} Definition \ref{defi:SK} is purely algebraic and makes sense in rings other than analytic local rings $\cO_{X,p}$, for instance in polynomial rings $R:=\dC[x_1,\dots,x_n] $, algebraic local rings of polynomial rings at maximal ideals or formal power series rings.
Under this scope, if $h\in \dC[x_1,\dots,x_n] $ is a non-constant reduced polynomial, $D= \cV(h) $ is the affine algebraic  hypersurface with equation $h=0$ and $D^{an}\subset \dC^n$ is the corresponding analytic hypersurface, we know that $D$ is a (algebraic) free divisor if and only if $D^{an}$ is a (analytic) free divisor, and if so, the following properties are equivalent:
\begin{enumerate}
    \item[(a)] The (affine algebraic) divisor $D$ is strongly Koszul, i.e. for some (and hence for any) basis $\delta_1,\dots,\delta_n$ of $\Der(-\log D) = \{\delta\in \Der_\dC(R)\ |\ \delta(h) \in (h)\}$, the sequence
    $$ h, \sigma(\delta_1)-a_1 s,\ldots, \sigma(\delta_n)-a_n s,\quad \text{with}\quad \delta_i(h) = a_i h,\quad a_i\in R,
$$
is regular in $\Gr^T_\bullet \dW[s]$, where $\dW=R\langle \partial_1,\dots,\partial_n\rangle$ is the Weyl algebra and $\Gr^T_\bullet\dW[s]= R[\sigma(\partial_1),\dots, \sigma(\partial_n),s]$ is the graded ring of $\dW[s]$ with respect to the total order filtration.
\item[(b)] For each maximal ideal $\fm\subset R$ containing $h$, $D$ is strongly Koszul at $\fm$, i.e. for some (and hence for any) basis $\delta_1,\dots,\delta_n$ of $\Der(-\log D)_\fm = \Der(-\log D_\fm) = \{\delta\in \Der_\dC(R_\fm)\ |\ \delta(h) \in (h)\}$, the sequence
    $$ h, \sigma(\delta_1)-a_1 s,\ldots, \sigma(\delta_n)-a_n s,\quad \text{with}\quad \delta_i(h) = a_i h, \quad a_i\in R_\fm,
$$
is regular in $\Gr^T_\bullet \dW_\fm[s]$, where $\dW_\fm=R_\fm\langle \partial_1,\dots,\partial_n\rangle$, and $\Gr^T_\bullet\dW_\fm[s]= R_\fm[\sigma(\partial_1),\dots, \sigma(\partial_n),s]$ is the graded ring of $\dW_\fm[s]$ with respect to the total order filtration.
\item[(c)] The (analytic) divisor $D^{an}$  is strongly Koszul (in the sense of Definition \ref{defi:SK} above).
\end{enumerate}
\end{remark}

The next step is to discuss some deeper properties of the
order filtration $F^{\ord}_\bullet \cO_X(*D)$. The main result is Proposition \ref{prop:StrictnessDualOrder} below, which is concerned with the dual
\emph{filtered} module $\dD(\cO_X(*D), F^{\ord}_\bullet)$. In order to do this, we need to recall a few facts about the duality theory of filtered modules. The original reference is \cite[Section 2.4]{Saito1}, but for what follows below, \cite[\S~29]{Sch14} provides enough background information.

For any complex manifold $X$, we consider the (sheaf of) Rees ring(s) $\cR_F \cD_X$ of the filtered ring $(\cD_X,F_\bullet)$, that is $\cR_F \cD_X:=\oplus_{k\in \dZ}F_k\cD_X\cdot z^k \subset \cD_X[z]$. In local coordinates $x_1,\ldots,x_n$ on $U\subset X$,
we have
\begin{equation}\label{eq:DefRU}
R_U=
\Gamma(U,\cR_F \cD_X)=\cO_U[z]\langle z\partial_{x_1},\ldots, z\partial_{x_n} \rangle \subset
\Gamma(U,\cD_X[z]) = \cO_U[z]\langle \partial_{x_1},\ldots, \partial_{x_n} \rangle.
\end{equation}
The ring $\cR_F \cD_X$ will be denoted by $\wcD_X$. For what follows, we will need an interpretation of this sheaf in terms of
Lie algebroids, more precisely, we will use the fact that $\wcD_X$ is
the enveloping algebra of the Lie algebroid
$
z\Theta_X[z] \subset \Der_{\dC[z]}(\cO_X[z])
$
of rank $n=\dim X$
over the $\dC[z]$-algebra $\cO_X[z]$.

Lie algebroids are the sheaf version of Lie-Rinehart algebras (see \cite{Rinehart-1963}). They were originally studied in the setting of Differential Geometry (the book \cite{Mack-2005} is a complete reference here), and for instance in \cite{Chemla-1999} the complex analytic case is considered.

For the ease of the reader, let us recall the notions of Lie-Rinehart algebras and Lie algebroids. Let us take a commutative base ring $k$ and a commutative $k$-algebra $A$ (resp. a sheaf of commutative $k$-algebras $\cA$ over a topological space $X$). A {\em Lie-Rinehart algebra} over $(k,A)$ is a (left) $A$-module $L$ which is also a $k$-Lie algebra, endowed with an {\em anchor map} $\varrho: L \to \Der_k(A)$: a (left) $A$-linear morphism of $k$-Lie algebras such that
$$
[\lambda,a\lambda'] = a [\lambda,\lambda'] + \varrho(\lambda)(a) \lambda'
$$
for all $\lambda,\lambda'\in L$ and $a\in A$. Respectively, a {\em Lie algebroid} over $(k,\cA)$ is a (left) $\cA$-module $\cL$ which is also a sheaf of $k$-Lie algebras, endowed with an {\em anchor map} $\varrho: \cL \to \Der_k(\cA)$: a (left) $\cA$-linear morphism of sheaves of $k$-Lie algebras such that
$$
[\lambda,a\lambda'] = a [\lambda,\lambda'] + \varrho(\lambda)(a) \lambda'
$$
for all local sections $\lambda,\lambda'$ of $\cL$ and $a$ of $\cA$.

We usually assume that $L$ is a projective $A$-module  (resp. $\cL$ is a locally free $\cA$-module) of finite rank.
It is clear that for each point $p\in X$, by using the natural morphism $\Der_k(\cA)_p \to \Der_k(\cA_p)$, any Lie algebroid $\cL$ over $(k,\cA)$ gives rise to a Lie-Rinehart algebra $\cL_p$ over $(k,\cA_p)$.

For any Lie-Rinehart algebra $L$ over $(k,A)$ (resp. for any Lie algebroid $\cL$ over $(k,\cA)$), there is a universal $k$-algebra $\U ( L)$ (resp. sheaf of $k$-algebras $\U (\cL)$), endowed with canonical maps $A\to \U (L)$ and $L\to \U( L)$ (resp. $\cA\to \U (\cL)$ and $\cL\to \U( \cL)$), called the {\em enveloping algebra} of $L$ (resp. of $\cL$). One easily proves that for each $p\in X$, there is a canonical isomorphism
$\U(\cL_p) \cong \U(\cL)_p$. Enveloping algebras are naturally (positively) filtered.

Over a complex analytic manifold $X$, our basic example is $k=\dC$, $\cA=\cO_X$ and $\cL= \Theta_X = \Der_\dC(\cO_X)$, where the anchor is the identity. In this case, the corresponding enveloping algebra is the sheaf of differential operators $\cD_X$ (cf. \cite[\S~2]{Chemla-1999}).
The fact that $\cR_F \cD_X$ is the enveloping algebra of the Lie algebroid $z\Theta_X[z]$ over $(\dC[z], \cO_X[z])$, where the anchor map is given by the inclusion $z\Theta_X[z]\subset \Der_{\dC[z]}(\cO_X[z])$, is easy (it is basically \cite[Exercise 8.7]{MHM}). In general, the enveloping algebra of any Lie algebroid comes equipped with a natural filtration, but in the case of $\cR_F \cD_X$, we have an additional structure: the grading by $z$. All the objects $\dC[z],\cO_X[z],z\Theta_X[z]$ are graded by the powers of $z$, and the enveloping algebra of $z\Theta_X[z]$ inherits a graded structure which coincides with the original one in $\cR_F \cD_X$. Later, we will come back to the role of graded structures on our Lie algebroid (or our Lie-Rinehart algebra).

On the other hand, the natural filtration on $\wcD_X$ as enveloping algebra of $z\Theta_X[z]$ can be derived from its graded structure in the following way (notice that it is not the usual filtration associated to the grading):
$$
F_k \wcD_X = \left( \bigoplus_{i=0}^{k-1} F_i\cD_X\cdot z^i\right) \bigoplus \left( \bigoplus_{i=k}^\infty F_k\cD_X\cdot z^i\right).
$$

We have a canonical commutative diagram of graded $\cO_X[z]$-algebras
\begin{equation}\label{eq:DiagSym}
\begin{tikzcd}
\Sym^\bullet_{\cO_X[z]} (z\Theta_X[z])   \ar[r, "\cong"]  \ar[hook]{d} &
\gr_\bullet^F \wcD_X  \ar[hook]{d}\\
\Sym^\bullet_{\cO_X[z]} (\Theta_X[z]) \ar[r, "\cong"] \ar[d, "\cong"']  &  \gr_\bullet^F (\cD_X[z])
\ar[d, "\cong"]
\\
\Sym^\bullet_{\cO_X} (\Theta_X)[z]  \ar[r, "\cong"] & \gr_\bullet^F (\cD_X)[z],
\end{tikzcd}
\end{equation}
where $F_\bullet (\cD_X[z]):=(F_\bullet \cD_X)[z]$ and
where the horizontal arrows are isomorphisms by the Poincar\'e-Birkhoff-Witt theorem for Lie algebroids resp. Lie-Rinehart algebras (see, e.g., \cite[Theorem 3.1]{Rinehart-1963}). Let us notice that all objects in the above diagram are bigraded (where the additional grading is given by the $z$-degree), and all maps in the diagram are morphisms of bigraded algebras.

For a filtered left (resp. right) $\cD_X$-module $(\cM,F_\bullet)$, we write
\label{page:defReesFunctor}$\cR(\cM,F_\bullet):=\cR_F \cM:=\oplus_{k\in \dZ} F_k\cM\cdot z^k$
for its associated Rees module, which is naturally a graded (by degree in $z$) left (resp. right)  $\wcD_X$-module. A graded left (or right) $\wcD_X $-module $\widetilde{\cM}$ is called {\em strict} if it has no $z$-torsion.
Associating $\cR(\cM,F_\bullet)$ to $(\cM,F_\bullet)$ defines a faithful functor from the category of filtered left (resp. right) $\cD_X$-modules to the category of graded left (resp. right) $\wcD_X $-modules.
It is readily verified that a graded left (resp. right) $\wcD_X $-module
$\widetilde{\cM}$ is the Rees module of a filtered left (resp. right) $\cD_X$-module $(\cM,F_\bullet)$ iff it has no $z$-torsion. Notice that in this case we naturally have
\begin{equation}\label{eq:FunctorReesDModGr}
\widetilde{\cM}/(z-1)\widetilde{\cM}\cong \cM
\quad\quad\text{and}
\quad\quad
\widetilde{\cM}/z\widetilde{\cM}\cong \gr_\bullet^F\cM.
\end{equation}
Moreover, for a sequence of filtered $\cD_X$-modules
$$
(\cM',F'_\bullet) \xrightarrow{i} (\cM,F_\bullet) \xrightarrow{p} (\cM'',F''_\bullet),\ \text{with}\ p\circ i=0,
$$
the following properties are equivalent:
\begin{enumerate}
    \item The underlying sequence of $\cD_X$-modules
    $\cM' \xrightarrow{i} \cM \xrightarrow{p} \cM''$ is exact and strict with respect to $F_\bullet$
    (i.e. $F_k \cM\cap (\ker p)  = i(F'_k \cM')$ for all $k\in \dZ $).
                \item The associated sequence of $\wcD_X$-modules
$
\cR_{F'} \cM' \xrightarrow{\cR\, i} \cR_{F} \cM \xrightarrow{\cR\, p} \cR_{F''} \cM''
$
is exact.
    \item The associated sequence of $\gr_\bullet^F \cD_X$-modules
$
\gr^{F'}_\bullet \cM' \xrightarrow{\gr i} \gr^{F}_\bullet \cM \xrightarrow{\gr p} \gr^{F''}_\bullet \cM''
$
is exact.
\end{enumerate}

The left $\cD_X$-module $\cO_X$ will be always endowed with the trivial filtration
$$
F_k \cO_X = \left\{ \begin{array}{lcl}
0 & \text{if} & k < 0\\
\cO_X & \text{if} & k \geq 0,
\end{array}\right.
$$
such that $\cR_F \cO_X = \cO_X[z]$ is a graded left $\wcD_X $-module with the usual $z$-grading, that will be denoted by $\wcO_X$.

The canonical right $\wcD_X $-module (see \cite[Example 8.1.9]{MHM})
$$
\womega_X := \bigwedge^n \left( z \Theta_X[z] \right)^* = z^{-n} \omega_X[z],
$$
can also be seen as the Rees module $\cR_F \omega_X$ associated with the canonical right $\cD_X$-module $\omega_X$ endowed with the filtration
\begin{equation}\label{eq:FiltrOmega}
F_k \omega_X = \left\{ \begin{array}{lcl}
0 & \text{if} & k < -n\\
\omega_X & \text{if} & k \geq -n.
\end{array}\right.
\end{equation}
The main advantage to work with (graded) $\wcD_X$-modules rather than with filtered $\cD_X$-modules is that the former category
is abelian whereas the latter is not.
\medskip

Let $k$ be a commutative graded ring (e.g., $k=\dC[z]$) and $\wcB$ a sheaf of graded $k$-algebras over a topological space $X$ (e.g., $\wcB=\wcD_X$ or $\wcB=\wcO_X$ for $X$ a complex manifold). We recall the sheaf version of some well-known definitions for graded modules over graded rings (cf. \cite[\S 1]{Foss-Fox-1974}).

For any graded left resp. right $\wcB$-modules $\wcM$, $\wcN$, let us denote by $\astHom_{\wcB}(\wcM,\wcN)$ the sheaf of graded $k$-modules
$$\bigoplus_{i\in\dZ} \Homgr_{\wcB}(\wcM,\wcN(i)) \subset \Hom_{\wcB}(\wcM,\wcN),
$$
where the $\wcN(i)$ is the shifted graded module defined as $\wcN(i)_j = \wcN(i+j) $ for all $i,j\in \dZ$.
The above inclusion is an equality whenever $\wcM$ is locally of finite presentation.

For any graded right $\wcB$-module $\wcQ$ and any left $\wcB$-module $\wcM$, the graded tensor product $\wcQ \astotimes_{\wcB} \wcM$ is the sheaf of  $k$-modules $\wcQ \otimes_{\wcB} \wcM$ endowed with the grading
$$
\left[\wcQ_p \astotimes_{\wcB_p} \wcM_p \right]_d = \langle x \otimes y\ |\ x\in [\wcQ_p]_i, y\in [\wcM_p]_j, i+j = d \rangle,\quad \forall p\in X.
$$
Clearly, $\astHom_{\wcB}(\wcM(i),\wcN(j))= \astHom_{\wcB}(\wcM,\wcN)(j-i)$ and $\wcQ(i) \astotimes_{\wcB} \wcM(j) = \left(\wcQ \astotimes_{\wcB} \wcM\right)(i+j)$ for all integers $i,j$.
If $\wcM$ is a left resp. right $\wcB$-module, then $\astHom_{\wcB}(\wcM,\wcB)$ is a graded right resp. left $\wcB$-module.

The complex $\RastHom_{\wcD_X}(\wcO_X,\wcD_X)$ can be computed by means of the Spencer resolution of $\wcO_X$
$$
\Sp^\bullet_{\wcD_X}(\wcO_X) :=  \cdots\to\wcD_X \otimes_{\wcO_X} \bigwedge^n z\Theta_X[z]  \to \cdots \to \wcD_X \otimes_{\wcO_X}  z\Theta_X[z]   \to \wcD_X
\to 0,
$$
graded by
$$
 \left[ \wcD_X \otimes_{\wcO_X} \bigwedge^i z\Theta_X[z] \right]_k = \left[\wcD_X\right]_{k-i} \otimes_{\cO_X} \bigwedge^i \Theta_X,\ i=0,\dots,n.
$$
The complex $\RastHom_{\wcD_X}(\wcO_X,\wcD_X)$ is concentrated in homological degree $n$ and we have a canonical isomorphism of graded right $\wcD_X$-modules
$$
\womega_X \cong \cH^n \RastHom_{\wcD_X}(\wcO_X,\wcD_X) =: {{^*}\!\Ext}_{\wcD_X}^n(\wcO_X,\wcD_X).
$$

As for the non-graded case, we have natural equivalences of categories between graded left $\wcD_X$-modules and graded right $\wcD_X$-modules:
\begin{enumerate}
\item For any graded left $\wcD_X$-module $\widetilde{\cM}$, we define
$$
\widetilde{\cM}^{\text{right}} := \womega_X \astotimes_{\wcO_X} \widetilde{\cM} \cong
\omega_X \otimes_{\cO_X} \widetilde{\cM}
$$
with grading $\left[\widetilde{\cM}^{\text{right}}\right]_i = \omega_X \otimes_{\cO_X} \widetilde{\cM}_{i+n}. $

\item For any graded right $\wcD_X$-module $\widetilde{\cM}$, we define
$$
\widetilde{\cM}^{\text{left}} := {^*}\!\Hom_{\widetilde{\cO}_X}(\womega_X,\widetilde{\cM}) \cong
\Hom_{\cO_X}(\omega_X,\widetilde{\cM})
$$
with grading $\left[\widetilde{\cM}^{\text{left}}\right]_i = \Hom_{\cO_X}(\omega_X,\widetilde{\cM}_{i-n})$.
\end{enumerate}

For a complex of graded left (resp. right) $\wcD_X $-modules $\widetilde{\cM}$ (which may not come from a complex of filtered $\cD_X$-modules $(\cM,F_\bullet)$), we define its dual complex to be
\begin{equation}
\label{eq:DualRMod-left}
\astdD \widetilde{\cM} := \RastHom_{\wcD_X}(\widetilde{\cM}, \wcD_X )^{\text{left}}[\dim\,X]
\end{equation}
\begin{equation}
\label{eq:DualRMod}
(\text{resp.}\
\astdD \widetilde{\cM} :=
\RastHom_{\wcD_X}(\widetilde{\cM}, \wcD_X )^{\text{right}}[\dim\,X] =
\RastHom_{\wcD_X}(\widetilde{\cM}, \womega_X\astotimes_{\wcO_X} \wcD_X )[\dim\,X] ).
\end{equation}
We have canonical isomorphisms of graded left (resp. right) $\wcD_X$-modules (actually, complexes concentrated in degree $0$)
\begin{equation} \label{eq:grdual-O-omega}
    \astdD \wcO_X \cong \wcO_X\quad (\text{resp. } \astdD \womega_X \cong \womega_X).
\end{equation}
We also have canonical isomorphisms
\begin{equation}\label{eq:ShiftCMProp-1}
\astdD (\wcM(i)) \cong (\astdD \wcM)(-i)
\end{equation}
for all $i\in \dZ$.

Given a filtered holonomic module, one can consider the dual of its Rees module. However, this is not in general the Rees module of a filtered module. The next lemma gives a criterion to know when this is the case.
\begin{lemma}[{see, e.g., \cite[8.8.22]{MHM}}]  \label{lemma:8.8.22.MHM}
Let $(\cM,F_\bullet)$ be a filtered left or right holonomic $\cD_X$-module, and $\widetilde{\cM}:=\cR(\cM,F_\bullet)$ its associated Rees module.
Then
$\cH^i(\astdD \widetilde{\cM})=0$ for all $i\neq 0$ and $\cH^0(\astdD \widetilde{\cM})$ has no $z$-torsion iff $(\cM,F_\bullet)$ has the \emph{Cohen-Macaulay} property, that is, iff $\gr^F_\bullet \cM$ is a Cohen-Macaulay
$\gr^F_\bullet \cD_X$-module.

In this case, $\cH^0(\astdD \widetilde{\cM})$  is the Rees module of a filtered module, the underlying $\cD_X$-module of which is $\dD \cM$ and we denote the filtration thus defined by
$F^\dD_\bullet \dD \cM$. We  call $(\dD \cM, F^\dD_\bullet)$ the dual filtered module, and
$F^\dD_\bullet \dD \cM$ the dual filtration on $F_\bullet \cM$.
\end{lemma}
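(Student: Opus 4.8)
\emph{Sketch of the argument (this is \cite[8.8.22]{MHM}; we follow the duality formalism of \cite[\S 2.4]{Saito1} and \cite[\S 29]{Sch14}).}
The plan is to compute $\astdD\widetilde{\cM}$ by a finite locally free resolution over $\wcD_X$ and then to compare the outcome with the two central specialisations $z\mapsto 0$ and $z\mapsto 1$, tracking $z$-torsion throughout. First, $\wcD_X$ is a sheaf of Noetherian rings of finite global homological dimension: locally it is $\cO_X[z]\langle z\partial_{x_1},\dots,z\partial_{x_n}\rangle$, whose graded ring for the order filtration is the regular ring $\Sym^\bullet_{\cO_X[z]}(z\Theta_X[z])$ (see \eqref{eq:DiagSym}). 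Hence the coherent graded module $\widetilde{\cM}=\cR_F\cM$ has, locally, a finite resolution $\widetilde{P}_\bullet\to\widetilde{\cM}$ by finitely generated locally free graded $\wcD_X$-modules, and $\RastHom_{\wcD_X}(\widetilde{\cM},\wcD_X)$ is computed by $C^\bullet:=\astHom_{\wcD_X}(\widetilde{P}_\bullet,\wcD_X)$. Since $\widetilde{\cM}=\cR_F\cM$ has no $z$-torsion it is torsion-free, hence flat, over the principal ideal domain $\dC[z]$; therefore $\widetilde{P}_\bullet/z\widetilde{P}_\bullet$ is a finite locally free resolution of $\widetilde{\cM}/z\widetilde{\cM}\cong\gr^F_\bullet\cM$ over $\wcD_X/z\wcD_X\cong\gr^F_\bullet\cD_X$, and $\widetilde{P}_\bullet/(z-1)\widetilde{P}_\bullet$ resolves $\widetilde{\cM}/(z-1)\widetilde{\cM}\cong\cM$ over $\wcD_X/(z-1)\wcD_X\cong\cD_X$ (cf. \eqref{eq:FunctorReesDModGr}).

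Set $E^i:=\cH^i\RastHom_{\wcD_X}(\widetilde{\cM},\wcD_X)$, $G^i:=\cH^i\RastHom_{\gr^F\cD_X}(\gr^F_\bullet\cM,\gr^F_\bullet\cD_X)$ and $H^i:=\cH^i\RHom_{\cD_X}(\cM,\cD_X)$. For finitely generated locally free modules, forming $\astHom$ into $\wcD_X$ commutes with the reductions $\otimes_{\wcD_X}\wcD_X/z\wcD_X$ and $\otimes_{\wcD_X}\wcD_X/(z-1)\wcD_X$, so $C^\bullet/zC^\bullet$ and $C^\bullet/(z-1)C^\bullet$ compute the $G$'s and the $H$'s respectively. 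Applying cohomology to $0\to C^\bullet\xrightarrow{z}C^\bullet\to C^\bullet/zC^\bullet\to0$ and to $0\to C^\bullet\xrightarrow{z-1}C^\bullet\to C^\bullet/(z-1)C^\bullet\to0$ gives exact sequences
\[
0\longrightarrow E^i/zE^i\longrightarrow G^i\longrightarrow E^{i+1}[z]\longrightarrow0,\qquad 0\longrightarrow E^i/(z-1)E^i\longrightarrow H^i\longrightarrow E^{i+1}[z-1]\longrightarrow0,
\]
where $\widetilde{N}[z]:=\ker(z\colon\widetilde{N}\to\widetilde{N})$ and $\widetilde{N}[z-1]$ is defined analogously. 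Two classical inputs enter now: (a) $\gr^F_\bullet\cD_X\cong\Sym^\bullet_{\cO_X}\Theta_X$ is regular, hence Cohen--Macaulay, and $\cM$ holonomic implies that $\Supp\gr^F_\bullet\cM$ (the characteristic variety of $\cM$) has codimension $n=\dim X$ in $T^*X$; consequently $\gr^F_\bullet\cM$ is Cohen--Macaulay if and only if $G^i=0$ for all $i\ne n$; (b) $\cM$ holonomic implies $H^i=0$ for $i\ne n$ and that $\RHom_{\cD_X}(\cM,\cD_X)^{\text{left}}[\dim X]=\dD\cM$ is holonomic, hence concentrated in degree $0$.

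Now the equivalence follows by a short squeeze whose only delicate point is that each $E^i$ is a coherent graded $\wcD_X$-module, hence bounded below in degree (as $\wcD_X=\bigoplus_{k\ge0}F_k\cD_X\cdot z^k$ and $\widetilde{P}_\bullet$ is finitely generated), so $E^i=zE^i$ forces $E^i=0$ by looking at the lowest nonzero graded piece. If $\gr^F_\bullet\cM$ is Cohen--Macaulay, then $G^i=0$ for $i\ne n$; the first exact sequence gives $E^i/zE^i=0$, hence $E^i=0$, for all $i\notin\{n-1,n\}$, and then, taking $i=n-1$, also $E^{n-1}=0$ and $E^n[z]=0$. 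Since $\astdD\widetilde{\cM}=\RastHom_{\wcD_X}(\widetilde{\cM},\wcD_X)^{\text{left}}[\dim X]$ (see \eqref{eq:DualRMod-left}, resp. \eqref{eq:DualRMod} in the right case) and the side-changing functor $(-)^{\text{left}}$ is a $\dC[z]$-linear exact equivalence, this means precisely that $\cH^i(\astdD\widetilde{\cM})=0$ for $i\ne 0$ and that $\cH^0(\astdD\widetilde{\cM})=(E^n)^{\text{left}}$ has no $z$-torsion. Conversely, $\cH^i(\astdD\widetilde{\cM})=0$ for $i\ne 0$ says $E^i=0$ for $i\ne n$, and $z$-torsion-freeness of $\cH^0$ says $E^n[z]=0$; the first exact sequence then forces $G^i=0$ for all $i\ne n$, so $\gr^F_\bullet\cM$ is Cohen--Macaulay. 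Finally, in this case $\astdD\widetilde{\cM}$ reduces to the single $z$-torsion-free graded $\wcD_X$-module $\widetilde{\cN}:=(E^n)^{\text{left}}$, which by the criterion recalled just before the lemma is the Rees module $\cR_F\cN$ of a filtered $\cD_X$-module $(\cN,F_\bullet)$; as $\widetilde{\cN}$ is $\dC[z]$-flat, the second exact sequence (with $E^i=0$ for $i\ne n$ and $E^{n+1}=0$) gives
\[
\cN=\widetilde{\cN}/(z-1)\widetilde{\cN}=\cH^0\big(\astdD\widetilde{\cM}\Lotimes_{\dC[z]}\dC[z]/(z-1)\big)=\cH^0\big(\RHom_{\cD_X}(\cM,\cD_X)^{\text{left}}[\dim X]\big)=\dD\cM,
\]
which identifies the resulting filtration with $F^{\dD}_\bullet\dD\cM$.

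The substantive difficulty is making the two base changes $z\mapsto0$ and $z\mapsto1$ rigorous: one must verify that the chosen $\wcD_X$-resolution remains a resolution after each reduction (this is exactly the flatness of $\cR_F\cM$ over $\dC[z]$, i.e.\ where $z$-torsion-freeness of a Rees module is used) and that $\astHom(-,\wcD_X)$ commutes with these reductions for finite locally free modules. Everything else — regularity of $\gr^F_\bullet\cD_X$, the grade/codimension characterisation of the Cohen--Macaulay property over it, the codimension of the characteristic variety of a holonomic module, and the behaviour of $\dD$ on holonomic $\cD_X$-modules — is standard, and I would simply cite it (together with \cite[8.8.22]{MHM} for the statement as a whole).
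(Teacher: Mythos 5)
The paper offers no proof of this lemma: it is quoted from \cite[8.8.22]{MHM} (ultimately Saito's Lemme 5.1.13 in \cite{Saito1}), so there is no in-paper argument to compare against. Your sketch is a correct reconstruction of that standard proof: resolve $\widetilde{\cM}$ by finite locally free graded $\wcD_X$-modules, dualize, and compare the two central specialisations $z\mapsto 0$ and $z\mapsto 1$ via the universal-coefficient short exact sequences, using that holonomicity forces $\cE xt^i_{\cD_X}(\cM,\cD_X)$ to vanish for $i\neq n$ and that, over the regular ring $\gr^F_\bullet\cD_X$, the Cohen--Macaulay property of $\gr^F_\bullet\cM$ (whose support has pure codimension $n$ by holonomicity) is equivalent to the concentration of $\cE xt^i_{\gr^F\cD_X}(\gr^F_\bullet\cM,\gr^F_\bullet\cD_X)$ in degree $n$. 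The two points you flag as delicate are indeed the only ones needing care, and both go through — though each for a reason slightly more specific than you state: the implication ``no $z$-torsion $\Rightarrow$ torsion-free, hence flat, over $\dC[z]$'' is false for general $\dC[z]$-modules (they may have $(z-1)$-torsion) and holds here only because $\cR_F\cM$ is graded, so that any torsion element would produce a $z$-torsion element in its lowest-degree component; and the Nakayama-type step $E^i=zE^i\Rightarrow E^i=0$ likewise relies on coherent graded modules over the non-negatively graded ring $\wcD_X$ being locally bounded below in degree. With those two observations made explicit, the argument is complete.
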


From \eqref{eq:grdual-O-omega}, the canonical left (resp. right) $\cD_X$-module $\cO_X$ (resp. $\omega_X$) is self-dual as filtered module with the trivial filtration (resp. with the filtration \eqref{eq:FiltrOmega}).

It is clear that if $(\cM,F_\bullet)$ is a filtered holonomic $\cD_X$-module having the Cohen-Macaulay property, then $(\cM,F(i)_\bullet)$ also has the Cohen-Macaulay property and (see \eqref{eq:ShiftCMProp-1} above)
\begin{equation}\label{eq:ShiftCMProp-2}
(F_{\bullet+i})^\dD = F^\dD_{\bullet-i}
\end{equation}
for any integer $i$.
\medskip

Let us come back to the situation of a free divisor $D\subset X$. The sheaf of logarithmic differential operators $\cV_X^D \subset \cD_X$ is the enveloping algebra of the Lie algebroid $\Theta_X(-\log D) \subset \Theta_X$  over the $\dC$-algebra $\cO_X$ (this is basically \cite[Corollary 2.2.6]{calde_ens}).
The Rees ring $\wcV_X^D$
of the filtered ring $(\cV_X^D, F_\bullet)$ turns out to be the enveloping algebra of the Lie algebroid $z\Theta_X(-\log D)[z] \subset \Der_{\dC[z]}(\cO_X[z])$
of rank $n$ over the $(\dC[z],\cO_X[z])$.

The Lie algebroids $z\Theta_X[z]$ and $z\Theta_X(-\log D)[z]$ with their $z$-grading are {\em graded Lie algebroids} in the sense that
they are graded as $\cO_X[z]$-modules and as sheaves of $\dC[z]$-Lie algebras, and that their anchor maps
$$
z\Theta_X(-\log D)[z],\ z\Theta_X[z] \xrightarrow{\text{nat.}} {^*}\!\Der_{\dC[z]}(\cO_X[z])
$$
are graded too. Moreover, the inclusion $ z\Theta_X(-\log D)[z]\subset z\Theta_X[z]$ is graded and so is a {\em map of graded Lie algebroids} in the sense we leave the reader to write down.

We know that the {\em relative dualizing module} (see Definition (A.28) of \cite{nar_symmetry_BS}) for the inclusion of Lie algebroids $\cL_0:=\Theta_X(-\log D) \subset \cL:=\Theta_X$ is $\omega_{\cL/\cL_0} = \Hom_{\cO_X}\left(\omega_\cL,\omega_{\cL_0}\right)$ with $\omega_\cL = \bigwedge^n \cL^* = \omega_X $, $\omega_{\cL_0} = \bigwedge^n \cL_0^* = \omega_X(\log D) $, and so $
\omega_{\cL/\cL_0} = \cO_X(D)$.

In a completely similar way, the relative dualizing module of the inclusion $\wcL_0:=z\Theta_X(-\log D)[z] \subset \wcL:=z\Theta_X[z]$ is $\omega_{\wcL/\wcL_0} = \Hom_{\wcO_X}\left(\omega_{\wcL},\omega_{\wcL_0}\right) $ with $\omega_{\wcL} = \bigwedge^n \wcL^*  = z^{-n} \omega_X[z] = \womega_X $, $\omega_{\wcL_0} = \bigwedge^n \wcL_0^* = z^{-n}\omega_X(\log D)[z]) =: \womega_X(\log D)$, and so we have a canonical isomorphism $
\omega_{\wcL/\wcL_0} = \cO_X(D)[z]$,
but in this case, due to the graded structures, this relative dualizing module is also naturally graded since it can be defined by using $\astHom_{\wcO_X}$ instead of $\Hom_{\wcO_X}$, and of course, this grading coincides with the $z$-grading of $\cO_X(D)[z]$.
\medskip

We have seen before (see Theorem \ref{theo:LCT}) that if $D \subset X$ is such that the LCT holds (e.g. if $D$ is strongly Koszul), then there is an isomorphism of left $\cD_X$-modules $\cD_X\otimes_{\cV_X^D} \cO_X(D) \cong \cO_X(*D)$. Clearly, $\cO_X(D)$ is a left $\cV_X^D$-module
that is locally free (actually, of rank one) over $\cO_X$.

This situation can be generalized by replacing $\cO_X(D)$ by an \emph{integrable logarithmic connection} (ILC) $\cE$ with respect to $D$, which by definition is a locally free $\cO_X$-module of finite rank endowed with a left $\cV_X^D$-module structure. This notion is useful
when one aims at studying the cohomology of $X\backslash D$
with respect to some local coefficient system (i.e., the local system of horizontal sections of $\cE$).
The next two results will be stated and proved in this greater generality. Although we will use them in this paper only for the case $\cE=\cO_X(D)$, we believe that they may be useful for future applications. Moreover, for the final proof of Proposition \ref{prop:StrictnessDualOrder} below (even if we are only interested in the case $\cE=\cO_X(D)$) some intermediate steps need to be carried out for an arbitrary ILC.

Given an ILC $\cE$ with respect to $D$, we define (in complete analogy to
Definition \ref{def:F-ord}) the {\em order filtration} on $\cD_X \otimes_{\cV_X^D} \cE$ to be:
$$
F_k^{\ord} \left(\cD_X \otimes_{\cV_X^D} \cE \right) := \im \left(  F_k\cD_X \otimes_{\cO_X} \cE\right) \subset \cD_X \otimes_{\cV_X^D}\cE.
$$
In other words, $(\cD_X \otimes_{\cV_X^D} \cE, F_\bullet^{\ord})$ is the filtered tensor product of $\cD_X$ with its order filtration and $\cE$ with its trivial filtration given by $F_k \cE = \cE$ for $k\geq 0$ and $F_k \cE = 0$ for $k< 0$.

We have a natural $\wcD_X$-linear graded map
\begin{equation} \label{eq:map-rees-ilc}
\wcD_X \astotimes_{\wcV_X^D} \wcE \longrightarrow \cR(\cD_X \otimes_{\cV_X^D} \cE, F_\bullet^{\ord})
\end{equation}
induced by the $\wcV_X^D$-linear graded map
$$
\begin{array}{rcl}
  \wcE = \cE[z] & \longrightarrow & \cR(\cD_X \otimes_{\cV_X^D} \cE, F_\bullet^{\ord}) \\ \\
 \sum_i e_i z^i & \longmapsto &\sum_i (1\otimes e_i) z^i.
\end{array}
$$
Notice that $\cR(\cD_X \otimes_{\cV_X^D} \cE, F_\bullet^{\ord})$ is a left graded $\wcV_X^D$-module through the inclusion $\wcV_X^D\subset \wcD_X$.

\begin{lemma} \label{lemma:rees-ilc}
Let $D$ be a Koszul free divisor, and let $\cE$ be an ILC with respect to $D$. Then:
\begin{enumerate}
    \item The map (\ref{eq:map-rees-ilc}) is an isomorphism of graded $\wcD_X$-modules
\item The complex $\wcD_X \Lastotimes_{\wcV_X^D} \wcE$ is concentrated in degree $0$ and so
we have an isomorphism in the derived category of complexes
 of left graded $\wcD_X$-modules
$$
\wcD_X \Lastotimes_{\wcV_X^D} \wcE \stackrel{\cong}{\longrightarrow}
\wcD_X \astotimes_{\wcV_X^D} \wcE.
$$
\end{enumerate}
\end{lemma}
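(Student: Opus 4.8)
The plan is to compute the derived tensor product via an explicit \emph{logarithmic Spencer resolution} and then control it by reduction modulo $z$; the key point is that the Koszul hypothesis on $D$ is precisely what turns this reduction into the Koszul complex of a regular sequence. Both assertions are local on $X$, so I would fix $p\in D$, a reduced local equation $h\in\cO_{X,p}$ and an $\cO_{X,p}$-basis $\delta_1,\dots,\delta_n$ of $\Theta_{X,p}(-\log D)$ (so $\delta_i(h)=a_i h$ with $a_i\in\cO_{X,p}$). Since $\wcV_X^D$ is the enveloping algebra of the graded Lie algebroid $z\Theta_X(-\log D)[z]$ over $(\dC[z],\cO_X[z])$ and $\wcE=\cE[z]$ is a graded $\wcV_X^D$-module which is locally free over $\wcO_X$, the associated graded Spencer complex with coefficients in $\wcE$,
$$
\Sp^\bullet_{\wcV_X^D}(\wcE):\quad 0\to \wcV_X^D\otimes_{\wcO_X}\textstyle\bigwedge^n z\Theta_X(-\log D)[z]\otimes_{\wcO_X}\wcE\to\cdots\to\wcV_X^D\otimes_{\wcO_X}\wcE\to\wcE\to0 ,
$$
is a resolution of $\wcE$ whose terms are of the form $\wcV_X^D\otimes_{\wcO_X}(\text{locally free }\wcO_X\text{-module})$, hence locally $\wcV_X^D$-free (this is the graded avatar of the usual Spencer resolution; it rests on the Poincar\'e--Birkhoff--Witt isomorphism of diagram \eqref{eq:DiagSym}, cf.\ \cite{nar_symmetry_BS}). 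Applying $\wcD_X\astotimes_{\wcV_X^D}(-)$ and using $\wcD_X\astotimes_{\wcV_X^D}(\wcV_X^D\otimes_{\wcO_X}(-))=\wcD_X\otimes_{\wcO_X}(-)$, one obtains a complex of graded left $\wcD_X$-modules
$$
K^\bullet:\quad 0\to \wcD_X\otimes_{\wcO_X}\textstyle\bigwedge^n z\Theta_X(-\log D)[z]\otimes_{\wcO_X}\wcE\to\cdots\to\wcD_X\otimes_{\wcO_X}\wcE\to0
$$
representing $\wcD_X\Lastotimes_{\wcV_X^D}\wcE$, with $\cH^0(K^\bullet)=\wcD_X\astotimes_{\wcV_X^D}\wcE$. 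Every term of $K^\bullet$ is strict (no $z$-torsion) and concentrated in non-negative $z$-degrees.

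Next I would reduce $K^\bullet$ modulo $z$. Using $\wcD_X/z\wcD_X\cong\gr^F_\bullet\cD_X$, $\wcE/z\wcE\cong\cE$, and the observation that the generators $z\delta_i$ of $z\Theta_X(-\log D)[z]$ lie in $z$-degree $1$ — so that the bracket terms and the connection terms of the Spencer differential acquire a factor of $z$ and die, while the principal terms reduce to left multiplication by $\sigma(\delta_i)\in\gr^F_1\cD_X$ — one identifies $K^\bullet/zK^\bullet$ with the Koszul complex of the sequence $\sigma(\delta_1),\dots,\sigma(\delta_n)$ on the $\gr^F_\bullet\cD_X$-module $\gr^F_\bullet\cD_X\otimes_{\cO_X}\cE$. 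As $D$ is Koszul at $p$, this sequence is regular in $\gr^F_\bullet\cD_X$ (Definition \ref{defi:SK}(1)), and $\cE$ is $\cO_X$-locally free, so the Koszul complex is a resolution and $\cH^i(K^\bullet/zK^\bullet)=0$ for $i\neq0$.

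A standard bootstrap then does the rest. The terms of $K^\bullet$ being strict, the sequence of complexes $0\to K^\bullet\xrightarrow{z}K^\bullet\to K^\bullet/zK^\bullet\to0$ is exact; its long exact cohomology sequence, together with $\cH^i(K^\bullet/zK^\bullet)=0$ for $i\neq0$, shows that $z$ acts surjectively on $\cH^i(K^\bullet)$ for $i\neq0$ and injectively on $\cH^0(K^\bullet)$. A coherent graded $\wcD_X$-module concentrated in non-negative $z$-degrees on which $z$ acts surjectively vanishes (consider an element of minimal degree), so $\cH^i(K^\bullet)=0$ for $i\neq0$; this is (2), and then $\wcD_X\Lastotimes_{\wcV_X^D}\wcE=\cH^0(K^\bullet)=\wcD_X\astotimes_{\wcV_X^D}\wcE$, which is strict (injectivity of $z$) and hence the Rees module of a filtered $\cD_X$-module with underlying module $(\wcD_X\astotimes_{\wcV_X^D}\wcE)/(z-1)=\cD_X\otimes_{\cV_X^D}\cE$. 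For (1), the map \eqref{eq:map-rees-ilc} is a graded $\wcD_X$-linear surjection onto $\cR(\cD_X\otimes_{\cV_X^D}\cE,F^{\ord}_\bullet)$ — surjectivity in each $z$-degree is immediate from the definition of $F^{\ord}_\bullet$ — and it becomes the identity of $\cD_X\otimes_{\cV_X^D}\cE$ upon setting $z=1$; its kernel $N$ is coherent, strict (a submodule of the strict module $\wcD_X\astotimes_{\wcV_X^D}\wcE$), concentrated in non-negative $z$-degrees, and satisfies $N=(z-1)N$ because $z-1$ is a non-zerodivisor on the strict target. Expanding $n=(z-1)n'$ in graded components for a nonzero $n\in N$ of minimal $z$-degree forces $z^k n=0$ for $k\gg0$, contradicting strictness; hence $N=0$ and \eqref{eq:map-rees-ilc} is an isomorphism.

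The main obstacle is the identification of $K^\bullet/zK^\bullet$ with the Koszul complex of the symbols $\sigma(\delta_i)$: one must keep careful track of which summands of the Spencer differential survive the reduction mod $z$ and verify that what remains is exactly left multiplication by the $\sigma(\delta_i)$, so that the Koszul hypothesis applies verbatim. The remaining steps — the $z$-torsion bootstrap and the comparison of the two filtrations through \eqref{eq:map-rees-ilc} — are routine homological algebra with the $z$-grading.
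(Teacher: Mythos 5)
Your proof is correct, and it shares the paper's skeleton (the graded Spencer resolution of $\wcE$ over $\wcV_X^D$, with the Koszul hypothesis entering through the regularity of the symbols $\sigma(\delta_i)$), but the execution of both parts is genuinely different. For part (2) the paper simply cites the Koszul-pair machinery of Calder\'on--Narv\'aez (\cite[Proposition 1.18]{calde_nar_lct_ilc}), applied once to $\Theta_X(-\log D)\subset\Theta_X$ and once to $z\Theta_X(-\log D)[z]\subset z\Theta_X[z]$; you instead make the mechanism explicit in the Rees language by reducing $K^\bullet$ modulo $z$ --- which is exactly ``pass to $\gr^F$'' --- identifying the quotient with the Koszul complex of $\sigma(\delta_1),\dots,\sigma(\delta_n)$ tensored with the locally free $\cE$, and then running the standard $z$-torsion bootstrap on $0\to K^\bullet\xrightarrow{\,z\,}K^\bullet\to K^\bullet/zK^\bullet\to 0$. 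Your bookkeeping of which summands of the Spencer differential survive mod $z$ is right: the connection term $\lambda_j\cdot e=z(\delta_j e)$ and the bracket term $[z\delta_j,z\delta_k]=z\cdot z[\delta_j,\delta_k]$ both acquire an extra factor of $z$, leaving only (right) multiplication by $\sigma(\delta_j)$ in the commutative ring $\gr^F_\bullet\cD_X$. For part (1) the divergence is more substantial: the paper extracts from the proof of the cited result the \emph{strictness} of the differentials of the filtered complex $\Sp^\bullet_{\cV_X^D,\cD_X}(\cE)$, hence the exactness of the sequence of Rees modules \eqref{eq:StrictSeq}, and concludes by a commutative diagram with the tail of $\Sp^\bullet_{\wcV_X^D,\wcD_X}(\wcE)$; you instead check surjectivity of \eqref{eq:map-rees-ilc} degree by degree and kill its kernel $N$ by observing that $N$ is $z$-torsion-free (as a submodule of $\cH^0(K^\bullet)$, whose strictness your bootstrap already delivered), bounded below in $z$-degree, and satisfies $N=(z-1)N$ because $\mathrm{Tor}_1^{\dC[z]}$ of the $(z-1)$-torsion-free target vanishes and the map becomes the identity at $z=1$. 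Both routes are sound; yours is more self-contained and trades the filtered-strictness input for the strictness of $\cH^0(K^\bullet)$ established in part (2), while the paper's diagram argument has the advantage of exhibiting the exact Rees presentation \eqref{eq:StrictSeq} explicitly, which is reused later.
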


\begin{proof} Both properties can be proved by forgetting the graded structures.
It is easy to see that under the Koszul hypothesis on $D$, the inclusion of Lie algebroids (over the $\dC[z]$-algebra $\cO_X[z]$)
$$
z\Theta_X(-\log\,D)[z]\subset z\Theta_X[z]
$$
is a  \emph{Koszul pair} in the sense of \cite[Definition 1.16]{calde_nar_lct_ilc}, i.e.,
some (or any) local $\cO_X[z]$-basis of $z\Theta_X(-\log\,D)[z]$ forms a regular sequence in $\Sym^\bullet_{\cO_X[z]} (z\Theta_X[z])$, namely, by
using the inclusion $\Sym^\bullet_{\cO_X[z]}(z\Theta_X[z]) \hookrightarrow \Sym^\bullet_{\cO_X}(\Theta_X)[z])$ (see diagram \eqref{eq:DiagSym} above) and the fact that the Koszul property means exactly that the inclusion of Lie algebroids (over $\cO_X$) $\Theta_X(-\log\,D)\subset \Theta_X$ is a Koszul pair.

Following
\cite[\S~1.1.2]{calde_nar_lct_ilc}, we can define for both inclusions of Lie algebroids
$\Theta_X(-\log\,D) \subset \Theta_X$ and
$z\Theta_X(-\log\,D)[z] \subset z\Theta_X[z]$, for an ILC $\cE$ (which is a left $\cV_X^D$-module) and
the corresponding Rees module $\wcE$ (which is a $\wcV_X^D$-module) the Spencer complexes
$\Sp^\bullet_{\cV_X^D}(\cE)$ and
$\Sp^\bullet_{\wcV_X^D}(\wcE)$, which are, respectively,
$$
\begin{array}{l}
\Sp^\bullet_{\cV_X^D}(\cE):\\ \\  \cdots\to\cV^D_X \otimes_{\cO_X} \bigwedge^n \Theta_X(-\log D) \otimes_{\cO_X} \cE \to \cdots \to \cV^D_X \otimes_{\cO_X}  \Theta_X(-\log D) \otimes_{\cO_X} \cE \to \cV^D_X \otimes_{\cO_X} \cE \to 0,
\end{array}
$$
and
$$
\begin{array}{l}
\Sp^\bullet_{\wcV_X^D}(\wcE):\\ \\  \cdots\to\wcV^D_X \otimes_{\wcO_X} \bigwedge^n z\Theta_X(-\log D)[z] \otimes_{\wcO_X} \wcE \to \cdots \to \wcV^D_X \otimes_{\wcO_X}  z\Theta_X(-\log D)[z] \otimes_{\wcO_X} \wcE \to \wcV^D_X \otimes_{\wcO_X} \wcE
\to 0.
\end{array}
$$
They have augmentations to $\cE$ resp. to $\wcE$ and are a locally free $\cV_X^D$- resp. $\wcV_X^D$-resolution of $\cE$ resp. $\wcE$.

We can also consider the complex $\Sp^\bullet_{\cV_X^D,\cD_X}(\cE):=\cD_X \otimes_{\cV_X^D} \Sp^\bullet_{\cV_X^D}(\cE)$ resp.
$\Sp^\bullet_{\wcV_X^D,\wcD_X}(\wcE):=\wcD_X \otimes_{\wcV_X^D} \Sp^\bullet_{\wcV_X^D}(\wcE)$, which are
$$
\begin{array}{l}
\Sp^\bullet_{\cV_X^D,\cD_X}(\cE):\\ \\  \cdots\to\cD_X \otimes_{\cO_X} \bigwedge^n \Theta_X(-\log D) \otimes_{\cO_X} \cE \to \cdots \to \cD_X \otimes_{\cO_X}  \Theta_X(-\log D) \otimes_{\cO_X} \cE \to \cD_X \otimes_{\cO_X} \cE \to 0,
\end{array}
$$
and
$$
\begin{array}{l}
\Sp^\bullet_{\wcV_X^D,\wcD_X}(\wcE):\\ \\  \cdots\to\wcD_X \otimes_{\wcO_X} \bigwedge^n z\Theta_X(-\log D)[z] \otimes_{\wcO_X} \wcE \to \cdots \to \wcD_X \otimes_{\wcO_X}  z\Theta_X(-\log D)[z] \otimes_{\wcO_X} \wcE \to \wcD_X \otimes_{\wcO_X} \wcE
\to 0,
\end{array}
$$
respectively. According to  \cite[Proposition 1.18]{calde_nar_lct_ilc}, since
both inclusions of Lie algebroids are Koszul pairs, the cohomology of both complexes is concentrated in degree $0$, and equal to $\cD\otimes_{\cV_X^D} \cE$ and
$\wcD\otimes_{\wcV_X^D} \wcE$, respectively. This proves in particular the second statement.

But actually the proof of this result in {\em loc.~cit.} gives us an additional strictness property for
$\Sp^\bullet_{\cV_X^D,\cD_X}(\cE)$. Namely,
if we filter this complex as
$$
F^i_k \left( \cD_X \otimes_{\cO_X} \bigwedge^i \Theta_X(-\log D) \otimes_{\cO_X} \cE\right) = F_{k-i} \cD_X \otimes_{\cO_X} \bigwedge^i \Theta_X(-\log D) \otimes_{\cO_X} \cE,\ i=0,\dots,n,
$$
we obtain that
$$\gr^F_\bullet \left( \cD_X \otimes_{\cV_X^D} \SP^\bullet_{\cV_X^D}(\cE) \right) \cong \left(\gr^F_\bullet \cD_X \otimes_{\cO_X} \bigwedge^\bullet \Theta_X(-\log D) \right) \otimes_{\cO_X} \cE,
$$
and this complex is concentrated in degree 0
by the Koszul hypothesis (and so is $\Sp^\bullet_{\cV_X^D,\cD_X}(\cE)$), but this result also implies that the differentials
$$
\cD_X \otimes_{\cO_X} \bigwedge^i \Theta_X(-\log D) \otimes_{\cO_X} \cE \xrightarrow{d^{-i}} \cD_X \otimes_{\cO_X} \bigwedge^{i-1} \Theta_X(-\log D) \otimes_{\cO_X} \cE,\ i=1,\dots,n
$$
are strict for the above filtrations. In particular, the right exact sequence
$$
\left(\cD_X \otimes_{\cO_X}  \Theta_X(-\log D) \otimes_{\cO_X} \cE, F^1\right) \xrightarrow{d^{-1}} \left(\cD_X \otimes_{\cO_X} \cE,F^0\right) \to \left(\cD_X \otimes_{\cV_X^D} \cE, F_\bullet^{\ord}\right) \to 0
$$
is strict, or equivalently, the sequence
\begin{equation}\label{eq:StrictSeq}
\cR_{F^1}(\cD_X \otimes_{\cO_X}  \Theta_X(-\log D) \otimes_{\cO_X} \cE) \to \cR_{F^0}(\cD_X \otimes_{\cO_X} \cE) \to \cR_{F_\bullet^{\ord}}(\cD_X \otimes_{\cV_X^D} \cE) \to 0
\end{equation}
is exact. Notice also that we clearly have
\begin{equation}\label{eq:CommuteRees}
\cR_{F^i}
\left(
\cD_X\otimes_{\cO_X}\bigwedge^i\Theta_X(-\log\,D)\otimes_{\cO_X}\cE
\right)
\cong
\wcD_X\otimes_{\wcO_X}\bigwedge^i z\Theta_X(-\log\,D)[z]\otimes_{\wcO_X}\wcE
\end{equation}
for all $i\in\{0,\ldots, n\}$, since $F_k\cD_X$, $\Theta_X(-\log D)$ as well as $\cE$ are $\cO_X$-locally free (hence, flat over $\cO_X$).

To finish, consider the commutative diagram of graded left $\wcD_X$-modules

$$
\xymatrix{\ar @{} [dr] |{} \wcD_X \otimes_{\wcO_X} (z\Theta_X(-\log D)[z]) \otimes_{\wcO_X}
\wcE \ar[r] \ar[d] & \wcD_X \otimes_{\wcO_X}  \wcE \ar[r] \ar[d] & \wcD_X \otimes_{\wcV_X^D}  \wcE \ar[r] \ar[d] & 0\\
\cR_{F^1}(\cD_X \otimes_{\cO_X}  \Theta_X(-\log D) \otimes_{\cO_X} \cE) \ar[r] & \cR_{F^0}(\cD_X \otimes_{\cO_X} \cE) \ar[r] & \cR_{F_\bullet^{\ord}}(\cD_X \otimes_{\cV^D_X} \cE) \ar[r] & 0.
}
$$

The first row is exact since $\Sp^\bullet_{\wcV_X^D,\wcD_X}(\wcE)$ is a resolution of $\wcD_X\otimes_{\wcV_X^D}\wcE$, the second row is so as we have just explained (sequence \eqref{eq:StrictSeq} above). By \eqref{eq:CommuteRees},
the first and second vertical arrows are isomorphisms and so is the third one.
\end{proof}

To prove our main result in this section, we will use a graded (and Lie-algebroid) version of Theorem (A.32) of \cite{nar_symmetry_BS}. However, instead of stating it in full generality (for general graded Lie algebroids or Lie-Rinehart algebras), we will only state it for the case we need, namely, the inclusion of graded Lie algebroids $\wcL_0=z\Theta_X(-\log\,D)[z]\subset \wcL=z\Theta_X[z]$ and the corresponding map of graded enveloping algebras $\wcV_X^D \subset \wcD_X$.

\begin{proposition} \label{prop:A32-grad}
Let $\cF$ be a graded locally free $\wcO_X$-module of finite rank endowed with a graded left module structure over $\wcV_X^D$. We have a canonical isomorphism in the derived category of graded $\wcD_X$-modules
$$
\wcD_X \Lastotimes_{\wcV_X^D}  \left(  \cO_X(D)[z] \astotimes_{\wcO_X} \cF^*\right) \cong
\astdD_{\wcD_X} \left(\wcD_X \Lastotimes_{\wcV_X^D} \cF   \right),
$$
where $\cF^*= \astHom_{\wcO_X}(\cF,\wcO_X)$ as graded left $\wcV_X^D$-module.
\end{proposition}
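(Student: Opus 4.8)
The plan is to recognise the statement as the graded, Lie-algebroid avatar of Theorem (A.32) of \cite{nar_symmetry_BS}, specialised to the inclusion of graded Lie algebroids $\wcL_0 = z\Theta_X(-\log\,D)[z] \subset \wcL = z\Theta_X[z]$ over $(\dC[z],\wcO_X)$, to the induced inclusion of graded enveloping algebras $\wcV_X^D \subset \wcD_X$, and to the graded $\wcV_X^D$-module $\cF$, together with the identification $\omega_{\wcL/\wcL_0} \cong \cO_X(D)[z]$ of the relative dualizing module recalled above. I would then reproduce the proof of that theorem while carrying the $z$-grading along. This is legitimate because every ingredient of that proof — the Spencer (Koszul) resolution attached to the enveloping algebra of a locally free Lie algebroid, the Poincar\'e-Birkhoff-Witt isomorphisms of diagram \eqref{eq:DiagSym}, the relative dualizing module, and the left/right conversions — makes sense verbatim in the graded setting once $\Hom_{\wcO_X}$, $\otimes_{\wcO_X}$ and $\Hom_{\wcD_X}$ are replaced by $\astHom_{\wcO_X}$, $\astotimes_{\wcO_X}$ and $\astHom_{\wcD_X}$, and these agree with their ungraded counterparts on the locally finitely presented modules that occur; only the freeness of $D$ is used.

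In detail: first resolve $\cF$ over $\wcV_X^D$ by its Spencer complex $\Sp^\bullet_{\wcV_X^D}(\cF)$, with graded locally free terms $\wcV_X^D \otimes_{\wcO_X} \bigwedge^i \wcL_0 \otimes_{\wcO_X}\cF$, and induce: $\wcD_X \otimes_{\wcV_X^D}\Sp^\bullet_{\wcV_X^D}(\cF)$ is a bounded complex of graded, locally projective $\wcD_X$-modules representing $\wcD_X \Lastotimes_{\wcV_X^D}\cF$. Hence $\RastHom_{\wcD_X}(\wcD_X \Lastotimes_{\wcV_X^D}\cF, \wcD_X)$ is computed by applying $\astHom_{\wcD_X}(-,\wcD_X)$ termwise; using the canonical isomorphism $\astHom_{\wcD_X}(\wcD_X \otimes_{\wcO_X} P,\wcD_X) \cong \astHom_{\wcO_X}(P,\wcO_X)\astotimes_{\wcO_X}\wcD_X$ for $P$ graded $\wcO_X$-locally free of finite rank, and the identification $\astHom_{\wcO_X}(\bigwedge^i \wcL_0,\wcO_X) \cong \bigwedge^{n-i}\wcL_0 \astotimes_{\wcO_X}\omega_{\wcL_0}$ with $\omega_{\wcL_0} = \bigwedge^n \wcL_0^*$, one rewrites the dual complex, after the reindexing $i \mapsto n-i$, as the complex obtained by inducing to $\wcD_X$ the Spencer-type resolution of the graded right $\wcV_X^D$-module $\omega_{\wcL_0}\astotimes_{\wcO_X}\cF^*$. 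The reindexing shifts cohomological degree by $[n] = [\dim\,X]$, cancelling the shift in the definition of $\astdD_{\wcD_X}$; applying the right-to-left conversion and using its compatibility with induction along $\wcV_X^D\hookrightarrow\wcD_X$ — which introduces precisely the twist by $\astHom_{\wcO_X}(\womega_X,\omega_{\wcL_0}) = \omega_{\wcL/\wcL_0} = \cO_X(D)[z]$, so that $\astHom_{\wcO_X}(\womega_X,\,\omega_{\wcL_0}\astotimes_{\wcO_X}\cF^*)\cong\cO_X(D)[z]\astotimes_{\wcO_X}\cF^*$ — one arrives at a bounded complex of graded locally projective $\wcD_X$-modules representing $\wcD_X \Lastotimes_{\wcV_X^D}(\cO_X(D)[z]\astotimes_{\wcO_X}\cF^*)$, hence at the asserted isomorphism in $D^bG(\wcD_X)$.

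It then remains to verify that the graded left $\wcV_X^D$-module structure that this chain of isomorphisms puts on $\cF^* = \astHom_{\wcO_X}(\cF,\wcO_X)$ is the natural dual one, that the structure it puts on $\omega_{\wcL_0}\astotimes_{\wcO_X}\cF^*$, and thus on $\cO_X(D)[z]\astotimes_{\wcO_X}\cF^*$, is the natural tensor-product one, and that the compatibility isomorphism between right-to-left conversion and induction is the canonical one; this is carried out exactly as in the ungraded situation of \cite[Theorem (A.32)]{nar_symmetry_BS}, with the $z$-grading simply transported. I expect the main obstacle to be not any single deep step but precisely this bookkeeping: keeping track of the passages between left and right modules over $\wcO_X$, $\wcV_X^D$ and $\wcD_X$, of the various $z$-degree shifts, and of the reversal and reindexing of the two Spencer complexes, while checking at each stage that the maps in play are the canonical ones. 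Everything else is a grading-compatible transcription of the cited statement.
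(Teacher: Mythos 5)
Your proposal is correct, but it follows a different route from the one the paper actually writes down. You carry out the ``straightforward translation'' of the proof of Theorem (A.32) of \cite{nar_symmetry_BS}: resolve $\cF$ by the Spencer complex over $\wcV_X^D$, induce to $\wcD_X$, dualize termwise via $\astHom_{\wcD_X}(\wcD_X\otimes_{\wcO_X}P,\wcD_X)\cong \astHom_{\wcO_X}(P,\wcO_X)\astotimes_{\wcO_X}\wcD_X$ and the exterior-power duality $\astHom_{\wcO_X}(\bigwedge^i\wcL_0,\wcO_X)\cong\bigwedge^{n-i}\wcL_0\astotimes_{\wcO_X}\omega_{\wcL_0}$, and recognize the outcome as the induced Spencer resolution of $\omega_{\wcL_0}\astotimes_{\wcO_X}\cF^*$, with the side-change producing the twist by $\omega_{\wcL/\wcL_0}=\cO_X(D)[z]$. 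The paper explicitly acknowledges this route as available but opts for a shorter, more functorial argument: for an arbitrary graded left $\wcV_X^D$-module $\cN$ it writes down a single explicit $\wcV_X^D$-linear map $\omega_{\wcL/\wcL_0}\astotimes_{\wcO_X}\astHom_{\wcO_X}(\womega_X(\log D),\astHom_{\wcV_X^D}(\cN,\wcV_X^D))\to\astHom_{\wcO_X}(\womega_X,\astHom_{\wcV_X^D}(\cN,\wcD_X))$, $f\otimes g\mapsto i\circ g\circ f$, which induces a natural morphism $\wcD_X\astotimes_{\wcV_X^D}(\omega_{\wcL/\wcL_0}\astotimes_{\wcO_X}\astdD_{\wcV_X}(\cN))\to\astdD_{\wcD_X}(\wcD_X\Lastotimes_{\wcV_X^D}\cN)$ in the derived category; this is an isomorphism for coherent $\cN$ by standard d\'evissage, and the statement follows from $\astdD_{\wcV_X}(\cN)\simeq\cN^*$ for $\cN$ locally free over $\wcO_X$. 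What your approach buys is an explicit, self-contained identification of both sides by matching two Koszul-type complexes, at the cost of the sign and differential bookkeeping you rightly flag as the main burden; what the paper's approach buys is that the canonical map is exhibited once and for all by a formula, so that no reindexing or comparison of differentials is needed, the verification being pushed into a local check on free modules. Both arguments use only the freeness of $D$ (so that $\wcL_0$ is locally free of rank $n$ and the Spencer complex is a resolution), and both defer the same amount of routine verification to the ungraded Appendix of \cite{nar_symmetry_BS}, so your outline is at the same level of rigour as the paper's own ``Proof (Outline)''.
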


\begin{proof}[Proof (Outline)] This proof is a straightforward translation of the proof of  Theorem (A.32) of \cite{nar_symmetry_BS} to the graded case. It essentially consists of replacing functors $\Hom$ and $\otimes$ by $\astHom$ and $\astotimes$ in all the constructions in the Appendix of {\em loc.~cit.}, as well as the fact that the relative dualizing module $\omega_{\wcL/\wcL_0}$ is naturally graded. We sketch it for the convenience of the reader, and, actually, we propose a shorter and slightly different approach.

Let us recall that $\omega_{\wcL/\wcL_0} = \astHom_{\wcO_X}\left(\womega_X,\womega_X(\log D)\right) \cong \cO_X(D)[z] $.
For each left graded $\wcV_X^D$-module $\cN$, the natural graded $\wcO_X$-linear map
$$
\begin{array}{rcl}
\D
\omega_{\wcL/\wcL_0} \astotimes_{\wcO_X} \astHom_{\wcO_X}\left(\womega_X(\log D), \astHom_{\wcV_X^D}(\cN,\wcV_X^D)\right) & \longrightarrow &
\D \astHom_{\wcO_X}\left(\womega_X,\astHom_{\wcV_X^D}(\cN,\wcD_X)\right)\\ \\
\D f \otimes g & \longmapsto & \D i \circ g \circ f,
\end{array}
$$
where $i:\wcV_X^D \to \wcD_X$ is the inclusion, turns out to be left $\wcV_X^D$-linear. It induces a natural graded $\wcD_X$-linear map
$$
\wcD_X \astotimes_{\wcV_X^D} \left(\omega_{\wcL/\wcL_0} \astotimes_{\wcO_X} \left[\astHom_{\wcV_X^D}(\cN,\wcV_X^D)\right]^\text{left}
\right) \longrightarrow \left[\astHom_{\wcV_X^D}(\cN,\wcD_X)\right]^\text{left} =
\left[\astHom_{\wcD_X}(\wcD_X \astotimes_{\wcV_X^D}\cN,\wcD_X)\right]^\text{left},
$$
and so a natural map in the derived category of left graded $\wcD_X$-modules
$$
\wcD_X \astotimes_{\wcV_X^D} \left( \omega_{\wcL/\wcL_0} \astotimes_{\wcO_X} \astdD_{\wcV_X} (\cN) \right) \longrightarrow
 \astdD_{\wcD_X} \left( \wcD_X \Lastotimes_{\wcV_X^D}\cN \right),
$$
which, by standard reasons, is an isomorphism whenever $\cN$ is coherent. To finish, notice that if $\cN$ is a graded locally free $\wcO_X$-module, then we have a canonical isomorphism
$$
\astdD_{\wcV_X} (\cN) \cong \astHom_{\wcO_X}(\cN,\wcO_X) = \cN^*
$$
(cf.  Corollary (A.24) of \cite{nar_symmetry_BS}).
\end{proof}

We are now ready to prove the announced result on the dual of the filtered module $(\cD_X\otimes_{\cV_X^D}\cE,F^{\ord}_\bullet)$.

\begin{proposition} \label{prop:StrictnessDualOrder}
Assume that $D\subset X$ is a Koszul free divisor and that $\cE$ is an ILC with respect to $D$. Consider the filtered holonomic $\cD_X$-module $\cM:=(\cD_X \otimes_{\cV_X^D} \cE, F_\bullet^{\ord})$, and its corresponding Rees module
$\widetilde{\cM}:=\cR(\cD_X \otimes_{\cV_X^D} \cE, F_\bullet^{\ord})$. Then the dual module
$\astdD \widetilde{\cM}$ is strict, that is,
$\cH^i(\astdD \widetilde{\cM})= 0 $ for $i\neq 0$
and $\cH^0(\astdD \widetilde{\cM})$ has no $z$-torsion.
Moreover, we have an isomorphism of graded left $\cD_X$-modules
$$
\cH^0(\astdD \widetilde{\cM})
=\cR(\cD_X \otimes_{\cV_X^D} \cE^*(D), F_\bullet^{\ord}).
$$
In particular, the filtered module $(\cD_X \otimes_{\cV_X^D} \cE, F_\bullet^{\ord})$ satisfies the Cohen-Macaulay property and its dual filtered module is isomorphic to $(\cD_X \otimes_{\cV_X^D} \cE^*(D), F_\bullet^{\ord})$.
\end{proposition}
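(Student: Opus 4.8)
The plan is to deduce the statement formally from Lemma \ref{lemma:rees-ilc} and Proposition \ref{prop:A32-grad}, combined with the Cohen--Macaulay criterion of Lemma \ref{lemma:8.8.22.MHM}; essentially no new computation is required, only a careful identification of one graded twist. The starting observation is that $\wcE = \cE[z]$ is precisely a graded locally free $\wcO_X$-module of finite rank equipped with a graded left $\wcV_X^D$-module structure, namely $\cR(\cE, F^{\mathrm{triv}}_\bullet)$ for the trivial filtration on $\cE$; hence it is an admissible input for Proposition \ref{prop:A32-grad}. We will likewise use that for any ILC $\cF$ one has $\cR(\cF, F^{\mathrm{triv}}_\bullet) = \cF[z]$, and that the $\cO_X$-dual with its dual connection, as well as the tensor product with $\cO_X(D)$, are again ILCs with respect to $D$.

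First I would run the chain of isomorphisms in $D^bG(\wcD_X)$. By Lemma \ref{lemma:rees-ilc} applied to $\cE$, one has $\widetilde{\cM} = \cR(\cD_X\otimes_{\cV_X^D}\cE, F^{\ord}_\bullet) \cong \wcD_X\astotimes_{\wcV_X^D}\wcE$, which moreover coincides with $\wcD_X\Lastotimes_{\wcV_X^D}\wcE$. Feeding $\cF = \wcE$ into Proposition \ref{prop:A32-grad} then gives
$$
\astdD\widetilde{\cM}\ \simeq\ \astdD_{\wcD_X}\!\bigl(\wcD_X\Lastotimes_{\wcV_X^D}\wcE\bigr)\ \simeq\ \wcD_X\Lastotimes_{\wcV_X^D}\bigl(\cO_X(D)[z]\astotimes_{\wcO_X}\wcE^*\bigr),
$$
with $\wcE^* = \astHom_{\wcO_X}(\wcE,\wcO_X)$ as graded left $\wcV_X^D$-module. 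The crucial identification is that $\cO_X(D)[z]\astotimes_{\wcO_X}\wcE^*$ is the Rees module of the ILC $\cE^*(D)$ for its trivial filtration: since $\cE$ is $\cO_X$-locally free of finite rank, $\wcE^* \cong \cE^*[z]$ with $\cE^* = \Hom_{\cO_X}(\cE,\cO_X)$ carrying its dual connection; $\cO_X(D)[z] = \cR(\cO_X(D), F^{\mathrm{triv}}_\bullet)$ by the discussion of the relative dualizing module $\omega_{\wcL/\wcL_0}$ preceding Lemma \ref{lemma:rees-ilc}; and the Rees construction commutes with $\otimes_{\cO_X}$ on trivially filtered $\cO_X$-locally free modules, so $\cO_X(D)[z]\astotimes_{\wcO_X}\wcE^* \cong \cE^*(D)[z]$ as graded $\wcV_X^D$-modules. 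Applying Lemma \ref{lemma:rees-ilc} once more, now to the ILC $\cE^*(D)$, shows that $\wcD_X\Lastotimes_{\wcV_X^D}\cE^*(D)[z]$ is concentrated in degree $0$ and equals $\cR(\cD_X\otimes_{\cV_X^D}\cE^*(D), F^{\ord}_\bullet)$. Combining these steps,
$$
\astdD\widetilde{\cM}\ \simeq\ \cR\bigl(\cD_X\otimes_{\cV_X^D}\cE^*(D),\, F^{\ord}_\bullet\bigr).
$$

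From here the conclusion is immediate. The right-hand side is a single module placed in degree $0$, so $\cH^i(\astdD\widetilde{\cM}) = 0$ for $i\neq 0$; being a Rees module it has no $z$-torsion, so $\astdD\widetilde{\cM}$ is strict. Lemma \ref{lemma:8.8.22.MHM} then applies to the holonomic filtered module $(\cD_X\otimes_{\cV_X^D}\cE, F^{\ord}_\bullet)$ and yields simultaneously the Cohen--Macaulay property and the fact that $\cH^0(\astdD\widetilde{\cM})$ is the Rees module of the dual filtered module; since the Rees functor is faithful on strict modules and we have exhibited this Rees module explicitly, the dual filtered module is $(\cD_X\otimes_{\cV_X^D}\cE^*(D), F^{\ord}_\bullet)$.

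The substantive input is already carried by Lemma \ref{lemma:rees-ilc} (the Koszul-pair/Spencer-resolution strictness argument) and by Proposition \ref{prop:A32-grad} (the graded avatar of $\cD$-module/logarithmic-connection duality), so what remains is bookkeeping. The point I would be most careful about is the identification of the twist in the second step: one must check not merely the gradings but also that the $\wcV_X^D$-module structure on $\cO_X(D)[z]\astotimes_{\wcO_X}\wcE^*$ --- the dual connection of $\cE$ tensored with the logarithmic connection on $\cO_X(D)$ --- is compatible with the Rees construction applied to $\cE^*(D)$. A secondary point, used implicitly when invoking Lemma \ref{lemma:8.8.22.MHM}, is the holonomicity of $\cD_X\otimes_{\cV_X^D}\cE$, which holds because its characteristic variety is the logarithmic characteristic variety of the free divisor $D$, of pure dimension $n$ (indeed Lagrangian).
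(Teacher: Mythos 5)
Your proposal is correct and follows essentially the same route as the paper's own proof: identify $\widetilde{\cM}$ with $\wcD_X\Lastotimes_{\wcV_X^D}\wcE$ via Lemma \ref{lemma:rees-ilc}, apply the graded duality isomorphism of Proposition \ref{prop:A32-grad}, identify $\cO_X(D)[z]\astotimes_{\wcO_X}\wcE^*$ with the Rees module of the ILC $\cE^*(D)$, and apply Lemma \ref{lemma:rees-ilc} once more to that ILC. Your extra remarks on the graded twist and on holonomicity are sound but do not change the argument.
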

\begin{proof}

By Lemma \ref{lemma:rees-ilc}, we have
$$
\wcD_X \Lastotimes_{\wcV_X^D} \wcE \xrightarrow{\sim} \wcD_X \astotimes_{\wcV_X^D} \wcE \xrightarrow{\sim} \wcM,
$$
and so
we have
$$
\begin{array}{rclcl}
\astdD_{\wcD_X} \wcM &\cong &\astdD_{\wcD_X} \left(\wcD_X \Lastotimes_{\wcV_X^D} \wcE\right) &\quad \quad \quad \quad & \textup{Lemma \ref{lemma:rees-ilc}, 1.+2. applied to the ILC }\cE \\ \\
&\cong& \wcD_X \Lastotimes_{\wcV_X^D} \left( \cO_X(D)[z] \astotimes_{\wcO_X} \wcE^* \right) && \textup{Proposition \ref{prop:A32-grad}} \\ \\
&\cong& \wcD_X \Lastotimes_{\wcV_X^D} \widetilde{\cE^*(D)} && \\ \\
&\cong&
\wcD_X \astotimes_{\wcV_X^D} \widetilde{\cE^*(D)} && \textup{Lemma \ref{lemma:rees-ilc}, 2. applied to the ILC }\cE^*(D)\\ \\
&\cong &\cR(\cD_X \otimes_{\cV_X^D} \cE^*(D), F_\bullet^{\ord}) && \textup{Lemma \ref{lemma:rees-ilc}, 1. applied to the ILC }\cE^*(D).
\end{array}
$$
Notice that we write $\cE^*=\Hom_{\cO_X}(\cE,\cO_X)$ and similarly
$\wcE^*=\Hom_{\wcO_X}(\wcE,\wcO_X)$.

In conclusion, the dual $\wcD_X$-module of $\wcM$ is strict, and the dual filtration $F^{\dD}_\bullet \; \dD \cM$ is given as the order filtration $F^{\ord}_\bullet$
on $\dD \cM \cong \cD_X\otimes_{\cV^D_X} \cE^*(D)$.
\end{proof}
\begin{remark}
For the case $\cE=\cO_X(D)$, we can directly deduce from the local representation
$$
\cD_{X,p}\otimes_{\cV_{X,p}^D}\cO_{X,p}(D)\cong \cO_{X,p}(*D)\cong \cD_{X,p}/(\delta_1,\ldots,\delta_{n-1},\chi+1)
$$
(see formula \eqref{eq:CyclicPresent-2}) that $\Gr^{F^{\ord}}_\bullet
\left(\cD_X\otimes_{\cV^D}\cO(D)\right)$ is a Cohen-Macaulay $\Gr^F_\bullet \cD_X$-module, since
the symbols in $\Gr^F_\bullet \cD_X$
of the operators $\delta_1,\ldots,\delta_{n-1},\chi+1$
form a regular sequence. However, later (see the proof of Theorem \ref{theo:DualFiltration} below) we need to know  that the dual filtration $F_\bullet^{\dD} \; \dD (\cD_X\otimes_{\cV_X^D}\cO_X(D))$ is
$F_\bullet^{\ord} (\cD_X\otimes_{\cV_X^D}\cO_X)$, which is provided by the proof above.
\end{remark}

The following is an easy variant of Proposition \ref{prop:StrictnessDualOrder} which we will need later in section \ref{sec:HodgeFiltration}.
\begin{corollary}
\label{cor:DualOrderShifted}
Let, as above, $D\subset X$ be a Koszul free divisor and $\cE$ be an ILC. Then for any $k\in \dZ$, the filtered holonomic module $(\cD\otimes_{\cV_X^D}\cE, F^{\ord}_{\bullet+k})$ has the Cohen-Macaulay property, and its dual filtered module is given by $(\cD_X \otimes_{\cV_X^D} \cE^*(D), F^{\ord}_{\bullet-k})$.
\end{corollary}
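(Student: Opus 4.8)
The plan is to obtain this as a purely formal consequence of Proposition \ref{prop:StrictnessDualOrder} together with the shift formula \eqref{eq:ShiftCMProp-2} for dual filtrations, with no new geometric input. First I would recall that the Cohen-Macaulay property is unaffected by shifting the filtration: one has $\gr^{F^{\ord}_{\bullet+k}}(\cD_X\otimes_{\cV_X^D}\cE) \cong \gr^{F^{\ord}_\bullet}(\cD_X\otimes_{\cV_X^D}\cE)$ as $\gr^F_\bullet\cD_X$-modules up to a shift of the grading, and Proposition \ref{prop:StrictnessDualOrder} already shows the latter is Cohen-Macaulay over $\gr^F_\bullet\cD_X$. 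Hence $(\cD_X\otimes_{\cV_X^D}\cE, F^{\ord}_{\bullet+k})$ has the Cohen-Macaulay property, and by Lemma \ref{lemma:8.8.22.MHM} the dual of its Rees module is strict and concentrated in degree $0$, so it is again the Rees module of a well-defined filtered $\cD_X$-module $(\dD\cM, F^\dD_\bullet)$.

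To identify the dual filtration, I would combine Proposition \ref{prop:StrictnessDualOrder} --- which computes the dual filtered module of $(\cD_X\otimes_{\cV_X^D}\cE, F^{\ord}_\bullet)$ to be $(\cD_X\otimes_{\cV_X^D}\cE^*(D), F^{\ord}_\bullet)$ --- with the general relation $(F_{\bullet+k})^\dD = F^\dD_{\bullet-k}$ recorded in \eqref{eq:ShiftCMProp-2}. Applying the latter with $\cM = \cD_X\otimes_{\cV_X^D}\cE$ and $F_\bullet = F^{\ord}_\bullet$ yields that the dual filtration of $F^{\ord}_{\bullet+k}$ on $\cD_X\otimes_{\cV_X^D}\cE$ is $F^{\ord}_{\bullet-k}$ on $\cD_X\otimes_{\cV_X^D}\cE^*(D)$, which is the assertion.

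There is essentially no obstacle here; the entire content is already packaged in Proposition \ref{prop:StrictnessDualOrder} and the formal properties \eqref{eq:ShiftCMProp-1}--\eqref{eq:ShiftCMProp-2} of the graded-duality formalism. The only point that requires care is the direction of the index shift --- that moving $F^{\ord}$ up by $k$ on the module moves the dual filtration down by $k$ --- but this is exactly \eqref{eq:ShiftCMProp-2}, which itself follows from \eqref{eq:ShiftCMProp-1}, namely $\astdD(\wcM(i)) \simeq (\astdD\wcM)(-i)$, applied to the Rees module $\wcM = \cR(\cM,F^{\ord}_\bullet)$ after noting that $\cR(\cM,F^{\ord}_{\bullet+k})$ is obtained from $\cR(\cM,F^{\ord}_\bullet)$ by shifting the grading by $k$.
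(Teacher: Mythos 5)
Your proof is correct and follows essentially the same route as the paper: the authors likewise obtain the corollary by combining Proposition \ref{prop:StrictnessDualOrder} with the shift formula \eqref{eq:ShiftCMProp-2} and the observation that the Cohen-Macaulay property is invariant under shifting the filtration. Your additional unpacking of \eqref{eq:ShiftCMProp-2} via $\astdD(\wcM(i))\simeq(\astdD\wcM)(-i)$ applied to the Rees module, including the check of the sign of the index shift, is exactly the content the paper leaves implicit.
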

\begin{proof}
The statement of the corollary follows simply by combining the proof of Proposition \ref{prop:StrictnessDualOrder} with formula \eqref{eq:ShiftCMProp-2} and the remark that surrounds it.
\end{proof}

\section{Canonical and induced $V$-filtration}
\label{sec:VFilt}

In this section we are discussing in detail the $V$-filtration
on the module $i_{h,+} \cO_X(*D)$, where $i_h$ is the graph embedding for some local reduced equation $h$ of $D\subset X$.
This will be used in the next section in order to obtain information on
the Hodge filtration on $i_{h,+} \cO_X(*D)$, and on
$\cO_X(*D)$ itself.

Recall that for any complex manifold $M$, and
for a divisor $H \subset M$ with $\cI=\cI(H)$, we have the filtration $V^\bullet \cD_M$ defined by
$$
V^k \cD_M :=\left\{P\in \cD_M\,|\, P\cI^i \subset \cI^{i+k}\right\}.
$$
$V^0\cD_M$ is a sheaf of rings, notice that it equals the sheaf of logarithmic differential operators (with respect to $H$), which was denoted by $\cV_X^H$ in the previous chapter. Moreover, all $V^k\cD_M$ are sheaves of $V^0\cD_M$-modules. We will usually suppose that $H$ is smooth, and moreover that it is given by a globally defined equation $t\in \Gamma(M,\cO_M)$.

For
any holonomic $\cD_M$-module $\cM$, and any section $m\in \cM$, we write
$b^{\cM}_m(s)$ for the unique monic polynomial of minimal degree satisfying $b^{\cM}_m(\partial_t t)m\in t V^0\cD_M \cdot m$. Moreover, if $U^\bullet \cM$ is a good $V$-filtration on $\cM$, then it has (locally) a Bernstein polynomial denoted
by $b^{\cM}_{U^\bullet}(s)$, which is the minimal monic polynomial satisfying
$$
b^{\cM}_{U^\bullet}(s)(\partial_t t -k)U^k \cM \subset U^{k+1}\cM.
$$

We start by recalling the canonical $V$-filtration on a holonomic $\cD_M$-module. In general, it is indexed by the complex numbers, and in order to define it,
 one needs to choose an ordering
on $\dC$ such that for all $\alpha,\beta\in \dC$, we have $\alpha < \alpha+1$, $\alpha < \beta \Longleftrightarrow  \alpha+1<\beta+1$ and $\alpha<\beta+m$ for some $m\in \dZ$.
Notice however that for Hodge modules, only rational indices can occur. Moreover, we will later only use the integer parts of this filtration.
\begin{deflemma}\label{def:VFilt}
Let $M$ and $H=\{t=0\}$ as above, and let $\cM$ be a holonomic $\cD_M$-module.
\begin{enumerate}
\item (See \cite[Definition 3.1.1]{Saito1}, \cite[Section 1.2]{Saito94} and \cite[\S~4]{MebkhMais}) Then there exists a filtration $(V^\alpha_{\can} \cM)_{\alpha\in \dC}$ uniquely defined by the following properties:
\begin{enumerate}
    \item $\bigcup_{\alpha\in\dC} V^\alpha \cM = \cM$,
    \item $(V^k\cD_M)\cdot(V^\alpha_{\can}\cM)\subset V^{\alpha+k}_{\can}\cM$,
    \item For all $\alpha\in \dC$, the module $V^\alpha_{\can}\cM$ is $V^0\cD_M$-coherent,
    \item $t\cdot V_{\can}^\alpha\cM=V_{\can}^{\alpha+1}\cM$ for $\alpha>0$,
    \item The action of the operator $\partial_t\cdot t -\alpha$ on $\gr^\alpha_{V_{\can}} \cM := V_{\can}^\alpha \cM / V_{\can}^{>\alpha} \cM$ is nilpotent, where $V_{\can}^{>\alpha} \cM:=\cup_{\beta>\alpha} V_{\can}^\beta\cM$.
\end{enumerate}
$V^\bullet_{\can}\cM$ is called the \emph{canonical} $V$-filtration or Kashiwara-Malgrange filtration on $\cM$ with respect to $H$ or to $t$. It can be characterized by
$$
V^\alpha_{\can} \cM=\left\{
m\in \cM\,|\,
\textup{roots of }b^\cM_m(s) \,\subset\,[\alpha,\infty\}\right\},
$$
where $[\alpha,\infty):=\left\{c\in \dC\,|\,\alpha\leq c\right\}$.
\item
If $\cM = \cD_M/\cI$ is a cyclic $\cD_M$-module, where
$\cI$ is a sheaf of left ideals of $\cD_M$, then
we put for any $k\in \dZ$
$$
V^k_{\ind} \cM := \frac{V^k \cD_M}{\cI \cap V^k \cD_M}
\cong \frac{V^k \cD_M+\cI}{\cI},
$$
and we call the filtration $V^\bullet_{\ind} \cM$ the
\emph{induced} $V$-filtration on $\cM$.
In particular, if $\cM$ is holonomic, then $V^\bullet_{\ind} \cM$ has a (minimal and monic) Bernstein polynomial $b_{V_{\ind}^\bullet}^{\cM}(s)\in\dC[s]$.
\end{enumerate}
\end{deflemma}

We will mainly use the above definitions for the case where $H$ is the divisor $\{t=0\}\subset M=\dC_t\times X$ and where $\cM = i_{h,+}\cO_X(*D)$, $i_h$ being the graph embedding of a defining equation for a divisor $D\subset X$. Using a construction that goes back to Malgrange (see \cite{MalgrBernst}), one can find a cyclic generator for this module, so that it has an induced $V$-filtration. It is essentially well-known that the Bernstein polynomial of this filtration is given by the Bernstein polynomial $b_h(s)$ of the equation $h$ defining $D$, up to a change of variables. However, we recall the proof for the convenience of the reader. Notice also that this result holds quite generally for any divisor, and does not depend on freeness or any Koszul assumption.

We therefore let $D\subset X$ be a divisor defined locally at a point $p\in X$ by a reduced equation $h=0$, and we denote by $i_h:X\rightarrow\dC_t\times X$ its graph embedding. We put
$$
N(h)
:=
\left(i_{h,+}\cO_{X,p}(*D)\right)_{(0,p)},
$$
then $N(h)$ is a holonomic $\cD_{\dC_t\times X,(0,p)}$-module. Since $i_h$ is a closed immersion, we have that
$$
N(h)\cong (i_{h,*}\cO_X(*D))_{(0,p)}[\dd_t]
$$ as
$\cO_{\dC_t\times X,(0,p)}$-modules. It becomes an isomorphism of
$\cD_{\dC_t\times X,(0,p)}$-modules when equipping the right hand side with the
$\cD_{\dC_t\times X,(0,p)}$-structure given by
\label{page:RecallStructure}
\begin{equation}\label{eq:LeftActionGraph}
\begin{array}{ll}
a \cdot (m\, \partial_t^k) = (a\, m)\, \partial_t^k
,&
\partial_{x_i} \cdot (m\, \partial_t^k) =  (\partial_{x_i}\, m)\, \partial_t^k - (h'_{x_i}\, m)\,   \partial_t^{k+1}\;\forall\;i=1,\ldots,n, \\[5pt]
t\cdot (m\, \partial_t^k) = (h\, m)\, \partial_t^k - (k\, m)\, \partial_t^{k-1}, &
\partial_t \cdot (m\, \partial_t^k) =  m\, \partial_t^{k+1},
\end{array}
\end{equation}
for any $m\in\cO_{X,p}(*D)$ and any $a\in \cO_{X,p}$.
\begin{lemma}\label{lem:RootsElement1GraphEmbed}
In the above situation,
write  $j\in \dZ_{>0}$ for the negative of the smallest integer root of $b_h(s)$.
Then we have
$$b_h(-s-j)=b_{V_{\ind}^\bullet}^{N(h)}(s).$$
\end{lemma}
\begin{proof}
Following the constructions in \cite[\S~4]{MalgrBernst}, one can show that
$N(h) \cong (i_{h,*}\cO_X(*D))_{(0,p)}[s]\cdot h^s$, using that $t=h$ is invertible in $N(h)$ and substituting $-\dd_t t$ by $s$, where $h^s$ is a symbol on which tangent fields $\xi$ act as $\xi(h^s)=sh^{-1}\xi(h)\cdot h^s$.

The fact that $-j$ is the smallest integer root of $b_h(s)$ is equivalent to say that $\cO_{X,p}(*D)$ is generated as a $\cD_{X,p}$-module by $h^{-j}$. Therefore, $N(h)$ is generated by $h^{-j}\cdot h^s$, that we will write $h^{s-j}$ from now on.
Hence we can consider the filtration $V^\bullet_{\ind} N(h)= V^\bullet \cD_{\dC_t\times X,(0,p)}\cdot h^{s-j}$. The $V$-filtration on $\cD_{\dC\times X,(0,p)}$ can be written as $V^0 \cD_{\dC\times X,(0,p)}=\cD_{X,p}[\dd_tt]$, $V^k\cD_{\dC\times X,(0,p)}=  V^0 \cD_{\dC\times X,(0,p)} \cdot t^k$ for $k\geq 0$ and $V^k=\sum_{i=0}^{-k} V^0 \cD_{\dC\times X,(0,p)} \cdot \dd_t^i$ for $k\leq0$. Therefore, we obtain that
$$
V^k_{\ind}N(h)=\left\{\begin{array}{ll}
(i_{h,*}\cD_X)_{(0,p)}[s]h^{s-j} &\,k=0,\\
(i_{h,*}\cD_X)_{(0,p)}[s]h^{s-j+k}&\,k>0,\\
\sum_{i=0}^{-k} (i_{h,*}\cD_X)_{(0,p)}[s]s^ih^{s-j-i}&\, k<0,
\end{array}\right.
$$
where we have used that $s=-\dd_tt$ and $t=h$ in our alternative representation of $N(h)$.

As a consequence, we have isomorphisms
\begin{equation}\label{eq:VindFiltIso}
\frac{(i_{h,*}\cD_X)_{(0,p)}h^{s-j+k}}{(i_{h,*}\cD_X)_{(0,p)}h^{s-j+k+1}}\cong \frac{V_{\ind}^k N(h)}{V_{\ind}^{k+1} N(h)}
\end{equation}
for every $k\geq0$. On the other hand, for $k\leq0$, we have that
$$
V_{\ind}^{k-1}N(h)=(i_{h,*}\cD_X)_{(0,p)}[s]s^{-k+1}h^{s-j+k-1}+V_{\ind}^k N(h),
$$
so we obtain  surjections
$$
\varphi_k:(i_{h,*}\cD_X)_{(0,p)}[s]s^{-k+1}h^{s-j+k-1}\twoheadrightarrow \frac{V_{\ind}^{k-1}N(h)}{V_{\ind}^k N(h)}.
$$
Since
$$
(i_{h,*}\cD_X)_{(0,p)}[s]s^{-k+1}h^{s-j+k} \subset (i_{h,*}\cD_X)_{(0,p)}[s]s^{-k}h^{s-j-k}\subset V_{\ind}^k N(h),
$$
we see that $\varphi_k\left((i_{h,*}\cD_X)_{(0,p)}[s]s^{-k+1}h^{s-j+k}\right)=0$.
As a consequence, we obtain a surjection
$$
\overline{\varphi}_k:
\frac{(i_{h,*}\cD_X)_{(0,p)}[s]s^{-k+1}h^{s-j+k-1}}{(i_{h,*}\cD_X)_{(0,p)}[s]s^{-k+1}h^{s-j+k}}
\twoheadrightarrow
\frac{V_{\ind}^{k-1}N(h)}{V_{\ind}^k N(h)}
$$
for any $k\leq 0$.

We finally obtain surjections
\begin{equation}\label{eq:VindFiltSurj}
\frac{(i_{h,*}\cD_X)_{(0,p)}[s]h^{s-j+k-1}}{(i_{h,*}\cD_X)_{(0,p)}[s]h^{s-j+k}}\cong \frac{(i_{h,*}\cD_X)_{(0,p)}[s]s^{-k+1}h^{s-j+k-1}}{(i_{h,*}\cD_X)_{(0,p)}[s]s^{-k+1}h^{s-j+k}} \stackrel{\overline{\varphi}_k}{\twoheadrightarrow}
\frac{V_{\ind}^{k-1}N(h)}{V_{\ind}^k N(h)}
\end{equation}
for any $k\leq0$, where the first isomorphism holds because
for any $l\geq 0$ and any $l'\in \dZ$,
$s$ is a non-zero divisor on the modules
$(i_{h,*}\cD_X)_{(0,p)}[s] s^l h^{s+l'}$.

By the Bernstein functional equation, we know that $b_h(s)$ sends $h^s$ to $\cD_{X,p}[s]h^{s+1}$, hence
$b_h(-\dd_tt-j+k)$ annihilates
the left-hand side of equation \eqref{eq:VindFiltIso}
and $b_h(-\dd_tt-j+k-1)$ annihilates the left-hand side of the surjection \eqref{eq:VindFiltSurj}, so they annihilate the respective right-hand sides as well.
In other words,
$b_h(-\dd_tt-j+k)$ kills the quotient
$V_{\ind}^k N(h)/V_{\ind}^{k+1} N(h)$ for any $k\in \dZ$.
This means precisely that $b_h(-s-j)=b_{V_{\ind}^\bullet}^{N(h)}(s)$, as desired.

\end{proof}

The next result gives a precise description of the canonical $V$-filtration for $N(h)$ for divisors satisfying the additional assumption that the roots of $b_h$ are included in the open interval $(-2,0)$.
\begin{proposition}\label{prop:PreciseDescrCanVFilt}
Let $X$, $D$, $h$ and $N(h)$ be as in the previous lemma. Assume moreover that the roots of $b_h(s)$ are contained in $(-2,0)$. Then for all $k\in \dZ$, we have
\begin{equation}\label{eq:CanVFilt}
V^k_{\can} N(h) =
V_{\ind}^{k+1}N(h) + \prod_{\alpha_i\in B'_h} (\partial_t t-k +\alpha_i)^{l_i} V_{\ind}^k N(h);
\end{equation}
recall from the introduction (see Formula \eqref{eq:RootsMinusOneIntro}) that $B'_h:=\left\{\alpha_i\in \dQ \cap (0,1)\,|\, b_h(\alpha_i-1)=0\right\}$.

In particular, we have
$$
V^k_{\can} N(h) \subset V^k_{\ind} N(h).
$$
for all $k\in \dZ$.
\end{proposition}
\begin{proof}
Since the roots of $b_h(s)$ are in $(-2,0)$, we know thanks to Lemma \ref{lem:RootsElement1GraphEmbed} that the roots of $b^{N(h)}_{V_{\ind}^\bullet}(s)=b_h(-s-1)$ are contained in $(-1,1)$. Now we argue as in the proof of \cite[Theorem I]{KashiwaraVanishingCycles} (cf. also \cite[Proposition 4.2-6]{MebkhMais}):
Let $\lambda_1< \ldots<\lambda_c$ be the set of roots of $b^{N(h)}_{V_{\ind}^\bullet}(s)$ with $-1<\lambda_i<0$, with respective multiplicities $l_1,\ldots,l_c$. We write
$$
b^{N(h)}_{V_{\ind}^\bullet}(s)=\prod_{i=1}^c(s-\lambda_i)^{l_i}\cdot b'(s)
$$
with $b'(\lambda_i)\neq 0$ for all $i\in\{1,\ldots,c\}$.
Then for each $k\in \dZ$ put
\begin{equation}\label{eq:DefOverlineV}
\overline{V}^k N(h) := V_{\ind}^{k+1}N(h) + \prod_{i=1}^c(\partial_t t - k -\lambda_i)^{l_i} V_{\ind}^k N(h) \subset V^k_{\ind} N(h).
\end{equation}
Then $\overline{V}^\bullet N(h)$ is a good $V$-filtration on $N(h)$ and moreover,
$\prod_{i=1}^c(s-(\lambda_i+1))^{l_i}\cdot b'(s)$ is a Bernstein polynomial for $\overline{V}^\bullet N(h)$,
namely we have that
$$
\begin{array}{rcl}
\D \prod_{i=1}^c(\partial_t t-(\lambda_i+1)-k))^{l_i}\cdot b'(\partial_t t-k) \left[ V^{k+1}_{\ind} N(h)\right]  & = &
\D b'(\partial_t t-k) \prod_{i=1}^c(\partial_t t-(\lambda_i+1)-k))^{l_i} V^{k+1}_{\ind} N(h)  \\ \\
\D \stackrel{*}{\subset} b'(\partial_t t-k) \overline{V}^{k+1} N(h) & \subset & \D \overline{V}^{k+1} N(h)
\end{array}
$$
and
$$
\begin{array}{c}
\D \prod_{i=1}^c(\partial_t t-(\lambda_i+1)-k)^{l_i}\cdot b'(\partial_t t-k) \left[\prod_{i=1}^c(\partial_t t - k -\lambda_i)^{l_i} V_{\ind}^k N(h) \right]  \\ \\
=\D \prod_{i=1}^c(\partial_t t-(\lambda_i+1)-k)^{l_i}\cdot b^{N(h)}_{V_{\ind}^\bullet} (\partial_t t -k) V_{\ind}^k N(h)
\\ \\
\D \subset \prod_{i=1}^c(\partial_t t-(\lambda_i+1)-k)^{l_i} V^{k+1}_{\ind} N(h)  \stackrel{*}{\subset}  \overline{V}^{k+1} N(h),
\end{array}
$$
where the two inclusions marked as $\stackrel{*}{\subset}$ are due to the definition of the filtration
$\overline{V}^\bullet N(h)$, i.e., due to formula \eqref{eq:DefOverlineV}. Hence we obtain that
$$
\left(\prod_{i=1}^c(\partial_t t-(\lambda_i+1)-k)^{l_i}\cdot b'(\partial_t t -k)\right) \left(\overline{V}^k N(h)\right) \subset \overline{V}^{k+1} N(h),
$$
that is, $\prod_{i=1}^c(s-(\lambda_i+1))^{l_i}\cdot b'(s)$ is a Bernstein polynomial for $\overline{V}^\bullet N(h)$.
We conclude that $\overline{V}^\bullet N(h)$ is
a good $V$-filtration on $N(h)$ such that its Bernstein polynomial has all its roots in the interval $[0,1)$.

Now by \cite[Proposition 4.3-5]{MebkhMais} we conclude that
$$
V_{\can}^k N(h) = \overline{V}^k N(h).
$$
holds for all $k\in \dZ$, and this shows Formula \eqref{eq:CanVFilt}.

\end{proof}

\begin{remark}
Without the assumption that the roots of $b_h(s)$ are in $(-2,0)$ (i.e.,
for general $h\in \cO_{X,p}$) the previous result is no longer true in general. Namely, since in general the roots of $b_h(s)$ are negative rational numbers, we know by Lemma \ref{lem:RootsElement1GraphEmbed} that the roots of $b_{V_{\ind}^\bullet}^{N(h)}$ are contained in $(-j,\infty)$, where $-j$ is the smallest integer root of $b_h(s)$. It can thus happen that $b_{V_{\ind}^\bullet}^{N(h)}$ has roots bigger than $1$, and this prevents the inclusion
$V^k_{\can} N(h) \subset V^k_{\ind} N(h)$ to hold in general.
The roots of $b_h(s)$ are contained in $(-2,0)$
for the class of SK free divisors (see point 3 of Proposition \ref{prop:SummarySKLCT} and \cite[Theorem 4.1]{nar_symmetry_BS}), but it also holds for instance
when $h$ defines a central hyperplane arrangement by \cite[Theorem 1]{SaitoBSHyperArrange} (which is not necessarily a free divisor).
\end{remark}

In the remaining part of this section, we restrict our attention to SK free divisors. We study more in detail the induced $V$-filtration
on the module $N(h)$, since as we have seen, it helps to understand (at least the integer steps of) the canonical $V$-filtration on that module. In particular,
we will prove (see Proposition \ref{prop:CompatibilityVandF} below) a compatibility property between the induced $V$-filtration
and the order filtration on that module (which is similar to the order filtration $F^{\ord}_\bullet\cO_X(*D)$ studied in section \ref{sec:FFilt}). Its statement is similar to
\cite[Proposition 4.9]{ReiSe3}), although the presentation of the proof is slightly different.

We first give a concrete cyclic presentation of the graph embedding module for equations defining an SK free divisor,
starting from the presentation  for $\cO_X(*D)$ discussed in the previous section (see Equation \eqref{eq:CyclicPresent-1} above).
\begin{lemma}\label{lem:RepresentGraphEmbed}
Let $D\subset X$ be a free divisor and let $p\in D$ be such that $D$ is strongly Koszul at $p$. Let $h\in \cO_{X,p}$ be a reduced equation for $D$ near $p$. Let $i_h:X\hookrightarrow \dC_t\times X$, $x\mapsto (h(x),x)$ be the graph embedding of $h$, then we have an isomorphism
of $\cD_{\dC_t\times X,(0,p)}$-modules
$$
\D (i_{h,+}\cO_X(*D))_{(0,p)} \cong \D \frac{\cD_{\dC_t\times X,(0,p)}}{(t-h,\delta_1,\ldots,\delta_{n-1},\chi+\partial_tt+1)}.
$$
We still denote this module (resp. this cyclic presentation) by $N(h)$.
\end{lemma}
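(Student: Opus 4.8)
The plan is to use the standard local description of the push-forward under a closed embedding and then to carry the cyclic presentation \eqref{eq:CyclicPresent-2} of $\cO_{X,p}(*D)$ through the graph embedding. Recall that for the graph embedding $i_h\colon X\hookrightarrow \dC_t\times X$, and any left $\cD_X$-module $\cM$, one has $i_{h,+}\cM\cong \cM[\partial_t]=\bigoplus_{j\geq 0}\cM\otimes\partial_t^j$, where the $\cD_{\dC_t\times X}$-module structure is given (on page \pageref{page:RecallStructure}) by
$$
t\cdot(m\otimes\partial_t^j)=hm\otimes\partial_t^j-jm\otimes\partial_t^{j-1},\qquad
\partial_t\cdot(m\otimes\partial_t^j)=m\otimes\partial_t^{j+1},
$$
while a vector field $\xi\in\Theta_X$ acts by $\xi\cdot(m\otimes\partial_t^j)=(\xi m)\otimes\partial_t^j-\xi(h)m\otimes\partial_t^{j+1}$. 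Applying this to $\cM=\cO_{X,p}(*D)\cong\cD_{X,p}/\cD_{X,p}(\delta_1,\dots,\delta_{n-1},\chi+1)$ with generator $h^{-1}$, the module $N(h)$ is the cyclic $\cD_{\dC_t\times X,(0,p)}$-module generated by the element corresponding to $h^{-1}\otimes 1$, and the task is to identify the left ideal of relations.

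First I would verify that $h^{-1}\otimes 1$ generates $i_{h,+}\cO_X(*D)$ over $\cD_{\dC_t\times X,(0,p)}$: this is immediate since $\partial_t$ acts by raising the $\partial_t$-degree, so $h^{-1}\otimes\partial_t^j=\partial_t^j\cdot(h^{-1}\otimes 1)$, and $h^{-1}$ already generates $\cO_X(*D)$ over $\cD_{X,p}$. Next I would check that each of the listed operators annihilates $h^{-1}\otimes 1$. For $t-h$: $t\cdot(h^{-1}\otimes 1)=h\cdot h^{-1}\otimes 1=1\otimes 1$ (the $j=0$ term of the $t$-action has no lower term), and $h\cdot(h^{-1}\otimes1)=1\otimes1$, so $(t-h)(h^{-1}\otimes1)=0$. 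For $\delta_i$ with $\delta_i(h)=0$: $\delta_i\cdot(h^{-1}\otimes1)=(\delta_i h^{-1})\otimes 1-\delta_i(h)h^{-1}\otimes\partial_t=(\delta_i h^{-1})\otimes 1=0$ since $\delta_i$ annihilates $h^{-1}$ in $\cO_X(*D)$. For $\chi+\partial_t t+1$: using $\chi(h)=h$, one computes $\chi\cdot(h^{-1}\otimes1)=(\chi h^{-1})\otimes1-h\cdot h^{-1}\otimes\partial_t=(\chi h^{-1})\otimes1-1\otimes\partial_t$; since $\chi h^{-1}=-h^{-1}$ in $\cO_X(*D)$ (as $(\chi+1)h^{-1}=0$) and $\partial_t t\cdot(h^{-1}\otimes1)=\partial_t\cdot(1\otimes1)=1\otimes\partial_t$, we get $(\chi+\partial_t t+1)(h^{-1}\otimes1)=(\chi h^{-1}+h^{-1})\otimes1=0$. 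Hence there is a surjection $N(h)\twoheadrightarrow i_{h,+}\cO_X(*D)$.

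The remaining, and main, point is injectivity, i.e. that these relations generate the full annihilator. The cleanest approach is a $V$-filtration / associated-graded bookkeeping, or equivalently a direct normal-form argument: using the relation $t\equiv h$ one eliminates $h$ (replacing it by $t$), and using $\chi+\partial_t t+1\equiv 0$ together with $\delta_1,\dots,\delta_{n-1}$ one rewrites any element of $\cD_{\dC_t\times X,(0,p)}$ modulo the ideal as $\sum_j a_j(x)\,\partial_t^{\,j}$ with $a_j\in\cO_{X,p}$ — note the substitution $\chi\mapsto -\partial_t t-1$ is exactly what turns the ``extra'' variable $\chi$ into $\partial_t$, matching $\cO_X(*D)[\partial_t]$. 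One then checks that $\sum_j a_j\partial_t^j\cdot(h^{-1}\otimes1)=\sum_j(a_j h^{-1})\otimes\partial_t^j$ vanishes only if all $a_j h^{-1}=0$ in $\cO_{X,p}(*D)$, i.e. all $a_j\equiv 0$ in $\cD_{X,p}/\cD_{X,p}(\delta_1,\dots,\delta_{n-1},\chi+1)$; feeding this back shows the original element lies in the stated left ideal. The place where the strong Koszul hypothesis does real work is in guaranteeing that this normal form is well-defined and that the symbols $\sigma(t-h),\sigma(\delta_1),\dots,\sigma(\delta_{n-1}),\sigma(\chi+\partial_t t+1)$ form a regular sequence in $\gr^F\cD_{\dC_t\times X,(0,p)}$ — this is the graph-embedding counterpart of part (2) of Definition \ref{defi:SK}, and it is what makes the dimension/length count come out exactly, so that the surjection above is forced to be an isomorphism. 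The main obstacle is therefore this regularity/strictness verification at the graded level; I would handle it by adding the variable $t$ and the equation $t-h$ to the strongly Koszul regular sequence $h,\sigma(\delta_i)-\alpha_i s,\dots$ of Definition \ref{defi:SK} (identifying $s$ with $-\partial_t t$, as in Lemma \ref{lem:RootsElement1GraphEmbed}), and invoking that adjoining a variable together with a single linear equation in it preserves regularity.
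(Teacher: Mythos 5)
Your first half is fine: $h^{-1}\otimes 1$ generates $(i_{h,+}\cO_X(*D))_{(0,p)}$, and your computations showing that $t-h$, $\delta_1,\ldots,\delta_{n-1}$ and $\chi+\partial_t t+1$ annihilate it are correct, so you do get a surjection $N(h)\twoheadrightarrow (i_{h,+}\cO_X(*D))_{(0,p)}$. The gap is in the injectivity step. The normal form you assert --- that modulo the ideal every element of $\cD_{\dC_t\times X,(0,p)}$ can be rewritten as $\sum_j a_j(x)\,\partial_t^j$ with $a_j\in\cO_{X,p}$ --- is false: the relations only let you eliminate $t$ and the \emph{logarithmic} fields $\delta_1,\ldots,\delta_{n-1},\chi$, not the remaining generators of $\Theta_{X,p}$. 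Concretely, the classes of your proposed normal forms in $N(h)\cong \cO_{X,p}(*D)[\partial_t]$ are exactly the elements $\sum_j (a_j h^{-1})\otimes\partial_t^j$, whose coefficients have pole order at most one, whereas the class of $\partial_{x_i}$ is $-h'_{x_i}h^{-2}\otimes 1-h'_{x_i}h^{-1}\otimes\partial_t$, of pole order two in general; so no such reduction exists. (Note also the internal mismatch: if $a_j\in\cO_{X,p}$, then $a_jh^{-1}=0$ in $\cO_{X,p}(*D)$ simply means $a_j=0$, so your subsequent appeal to classes in $\cD_{X,p}/\cD_{X,p}(\delta_1,\ldots,\delta_{n-1},\chi+1)$ does not fit your coefficients.) The correct normal form, which is what the paper's one-line proof implicitly rests on, is: modulo the single relation $t-h$ every operator is congruent to $\sum_j \varphi^{-1}(A_j)\partial_t^j$ with $A_j\in\cD_{X,p}$, where $\varphi^{-1}(\partial_{x_i})=\partial_{x_i}+h'_{x_i}\partial_t$; since $\varphi^{-1}(Q)(h^{-1}\otimes1)=(Qh^{-1})\otimes 1$, vanishing of the class forces $A_j\in\cD_{X,p}(\delta_1,\ldots,\delta_{n-1},\chi+1)$, and then $\varphi^{-1}(\delta_i)=\delta_i$, $\varphi^{-1}(\chi+1)=\chi+h\partial_t+1\equiv\chi+\partial_t t+1$ modulo $\cD(t-h)$ puts the original operator in the stated left ideal. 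This is precisely the standard description of the graph-embedding direct image of a cyclic module which the paper quotes, followed by the observation that $h\partial_t\equiv\partial_t t$ modulo $\cD(t-h)$.

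The second problem is your attribution of the main difficulty to the strong Koszul hypothesis. No regularity of the symbols $\sigma(t-h),\sigma(\delta_1),\ldots,\sigma(\chi+\partial_t t+1)$ is needed for this lemma: the identification of the annihilator of $h^{-1}\otimes 1$ holds for the graph-embedding direct image of \emph{any} cyclic $\cD_X$-module, and a surjection of holonomic modules is not ``forced to be an isomorphism'' by a regular-sequence argument unless one actually compares lengths or characteristic cycles, which you do not do. The strong Koszul hypothesis enters only through point 2 of Proposition \ref{prop:SummarySKLCT} (i.e.\ via LCT), which supplies the cyclic presentation \eqref{eq:CyclicPresent-2} of $\cO_{X,p}(*D)$ that you start from; the involutivity/regular-sequence statement you gesture at is the paper's Lemma \ref{lem:Involutive_F-Filt}, which is needed later for the compatibility of the $F$- and $V$-filtrations, not for Lemma \ref{lem:RepresentGraphEmbed}.
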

\begin{proof}
This is a consequence of the logarithmic comparison theorem, as stated in point 2 of Proposition \ref{prop:SummarySKLCT}. Namely, we know that
$$\cO_{X,p}(*D) \cong \frac{\cD_{X,p}}{\left(\delta_1,\ldots,\delta_{n-1},\chi+1\right)}.$$
Now we consider the direct image of this object under the graph embedding.
Then we have
$$
(i_{h,+}\cO_X(*D))_{\dC_t\times X,(0,p)}
\cong
i_{h,+}\left(\frac{\cD_{X,p}}{\left(\delta_1,\ldots,\delta_{n-1},\chi+1\right)}\right)
=\frac{\cD_{\dC_t\times X,(0,p)}}{(t-h,\delta_1,\ldots,\delta_{n-1},\chi+h\partial_t+1)},
$$
by taking into account that $\delta_i(h)=0$ for every $i=1,\ldots,n-1$ and $\chi(h)=h$. The claim now follows from the fact that in $N(h)$ we have $h\dd_t=\dd_t h=\dd_t t$
(here and later on, we sometimes denote operators and their
classes in a cyclic quotient module by the same symbol).
\end{proof}

We will be later interested in calculating the Hodge filtration on a mixed Hodge module
which has $i_{h,+}\cO_{X.x}(*D)$ as underlying $\cD_{\dC_t\times X,(0,p)}$-module. For that purpose, we consider the filtration
$F_\bullet^{\ord} N(h)$ which is induced on $N(h)$ by the filtration $F_\bullet \cD_{\dC_t\times X,(0,p)}$
by the order of differential operators.

For the sake of brevity, let us write $\cO:=\cO_{\dC_t\times X,(0,p)}$ and $\cD:=\cD_{\dC_t\times X,(0,p)}$. Consider the ring
$$
\cG:=\cO_{\dC_t\times X,(0,p)}[T,X_1,\ldots,X_n]=\dC\{t,x_1,\ldots,x_n\}[T,X_1,\ldots,X_n].
$$
Then we have $\cG=\gr_\bullet^F \cD$ (where, as before, $F_\bullet \cD$ is the filtration on $\cD$ by the order). Obviously, $\cG$ is graded by the degree of the variables $T,X_1,\ldots,X_n$, and we write $\cG_l$ for the degree $l$ part.
\begin{lemma}\label{lem:Involutive_F-Filt}
Let $D\subset X$ be a free divisor and let $p\in D$ such that
$D$ is strongly Koszul at $p$ and locally defined by some $h\in\cO_{X,p}$.
Write
$$
I(h):=\left(t-h,\delta_1,\ldots,\delta_{n-1}, \chi+\partial_t t +1\right),
$$
so that $N(h)=\cD / I(h)$.
Then the set $\{t-h,\delta_1,\ldots,\delta_{n-1}, \chi+\partial_t t +1\}$ is an involutive basis of the ideal $I(h)$, that is, we have
the equality
$$
\sigma(I(h)) =
\left(
t-h,\sigma(\delta_1),\ldots,\sigma(\delta_{n-1}), \sigma(\chi+\partial_t t +1)=\sigma(\chi)+T\cdot t
\right)
$$
of ideals in $\cG$. Recall that we denote for an element $P\in \cD$ its symbol in $\cG$ by $\sigma(P)$ and by $\sigma(I(h))$ the ideal $\{\sigma(P)\,|\,P\in I(h)\}$ in $\cG$.
\end{lemma}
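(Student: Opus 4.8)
The plan is to reduce the statement to a regular‑sequence computation in the commutative ring $\cG=\gr^F\cD=\dC\{t,x_1,\ldots,x_n\}[T,X_1,\ldots,X_n]$ and then appeal to a standard lifting principle. The inclusion $\bigl(t-h,\sigma(\delta_1),\ldots,\sigma(\delta_{n-1}),\sigma(\chi)+Tt\bigr)\subseteq\sigma(I(h))$ holds for free, since $t-h$, $\sigma(\delta_i)$ and $\sigma(\chi+\partial_t t+1)=\sigma(\chi)+Tt$ are exactly the symbols of the listed generators. For the reverse inclusion I would invoke the well‑known fact about filtered rings whose associated graded is commutative and Noetherian: if the principal symbols of finitely many operators $P_1,\ldots,P_r\in\cD$ form a regular sequence in $\gr^F\cD$, then $\{P_1,\ldots,P_r\}$ is an involutive basis of the left ideal $\sum_i\cD P_i$ (and $\gr^F\bigl(\cD/\sum_i\cD P_i\bigr)\cong\gr^F\cD/(\sigma(P_1),\ldots,\sigma(P_r))$); this is proved by showing that a Koszul‑type complex on the $P_i$ is strict for the order filtration, its associated graded being the Koszul complex of the $\sigma(P_i)$ over $\gr^F\cD$, which is acyclic precisely when the $\sigma(P_i)$ form a regular sequence. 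Thus the whole lemma reduces to proving that
\[
t-h,\ \sigma(\delta_1),\ \ldots,\ \sigma(\delta_{n-1}),\ \sigma(\chi)+Tt
\]
is a regular sequence in $\cG$.

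To verify this I would peel off the generators in the order written. Since $\cG$ is a domain, $t-h$ is a nonzerodivisor, and the graph substitution $t\mapsto h$ identifies $\cG/(t-h)\cong R[T]$ with $R:=\dC\{x_1,\ldots,x_n\}[X_1,\ldots,X_n]=\gr^F\cD_{X,p}$; under it $\sigma(\delta_i)$ is unchanged and $\sigma(\chi)+Tt$ becomes $\sigma(\chi)+hT\in R[T]$. By Proposition~\ref{prop:SummarySKLCT}(1) we may take $h,\chi,\delta_1,\ldots,\delta_{n-1}$ to be the strongly Euler homogeneous local model, and then the strong Koszul hypothesis at $p$ says precisely (see the discussion following Definition~\ref{defi:SK}, and \cite{nar_symmetry_BS}) that $h,\sigma(\delta_1),\ldots,\sigma(\delta_{n-1})$ is a regular sequence in $R$, in any order. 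Hence $\sigma(\delta_1),\ldots,\sigma(\delta_{n-1})$ is regular in $R$, and remains so in $R[T]$ by flat base change, with quotient $\bar R[T]$, where $\bar R:=R/(\sigma(\delta_1),\ldots,\sigma(\delta_{n-1}))$; and reading the same regular sequence with $h$ last shows that the class $\bar h$ of $h$ in $\bar R$ is a nonzerodivisor. Consequently $\bar h\,T+\overline{\sigma(\chi)}\in\bar R[T]$ is a degree‑one polynomial with a nonzerodivisor as leading coefficient, hence a nonzerodivisor on $\bar R[T]=\cG/(t-h,\sigma(\delta_1),\ldots,\sigma(\delta_{n-1}))$. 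This exhibits the required regular sequence and completes the proof.

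The crux of the argument — and the only place the hypothesis genuinely enters — is the reduction modulo $t-h$, which turns the symbol sequence into (a reordering of) the strong Koszul regular sequence in $\gr^F\cD_{X,p}$; everything else is formal (the graph substitution $\dC\{t,x\}/(t-h)\cong\dC\{x\}$, flatness of $R\hookrightarrow R[T]$, and the behaviour of a linear polynomial with invertible leading term). Two points deserve a word of care: the precise formulation of the ``regular symbols $\Rightarrow$ involutive basis'' principle, which relies on $\gr^F\cD$ being commutative and Noetherian (this is exactly why the order filtration is the right object here), and the permutability of the strong Koszul sequence used above — legitimate because its terms are homogeneous for the order grading on $R$ and lie in the unique maximal graded ideal, or, more simply, because the discussion following Definition~\ref{defi:SK} already records that the ordering is irrelevant. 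As a consistency check, the regular sequence produced has length $n+1=\dim(\dC_t\times X)$, so $\gr^F\bigl(\cD/I(h)\bigr)\cong\cG/\sigma(I(h))$ is Cohen--Macaulay of the expected dimension, in agreement with the holonomicity of $N(h)\cong(i_{h,+}\cO_X(*D))_{(0,p)}$.
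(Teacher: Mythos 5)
Your regular-sequence computation is correct and is essentially the content that the strong Koszul hypothesis is meant to supply here: reducing modulo $t-h$ identifies $\cG/(t-h)$ with $R[T]$, $R=\gr^F\cD_{X,p}$, the permutability of the homogeneous sequence $h,\sigma(\delta_1),\ldots,\sigma(\delta_{n-1})$ is legitimate (all terms lie in the graded maximal ideal of the $*$-local ring $R$), and the leading-coefficient argument for $\overline{\sigma(\chi)}+\bar h T$ is fine. The problem is the reduction step. The principle you invoke --- ``if $\sigma(P_1),\ldots,\sigma(P_r)$ form a regular sequence in $\gr^F\cD$, then $P_1,\ldots,P_r$ is an involutive basis of $\sum_i\cD P_i$'' --- is \emph{false} for a filtered ring with commutative Noetherian associated graded, with no further hypothesis on the $P_i$. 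Take $\cD=\dC\langle x,\partial_x\rangle$ with the order filtration and $P_1=x$, $P_2=\partial_x$: the symbols $x,\xi$ form a regular sequence in $\dC[x,\xi]$, yet $\partial_x\cdot x-x\cdot\partial_x=1$, so $\cD P_1+\cD P_2=\cD$ and $\sigma(I)=\gr^F\cD\neq(x,\xi)$. Relatedly, the ``Koszul-type complex on the $P_i$'' you describe is not even a complex when the $P_i$ do not commute, so there is nothing whose associated graded could be the Koszul complex of the symbols.

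The correct lifting principle (this is what \cite[Proposition 4.1.2]{calde_ens} proves, and what the syzygy-improvement argument of Lemma \ref{lem:reg-seq-Groebner} turns into in the filtered setting) requires, in addition to the regularity of the symbols, that the commutators $[P_i,P_j]$ lie in $\sum_k F_{d_i+d_j-1-d_k}\cD\cdot P_k$; only then do the Koszul syzygies of the symbols lift to (Spencer) syzygies of the operators and allow the order of a representation to be improved. This condition does hold for your generating set, but it must be checked: since $\delta_i(h)=0$ one computes $[\delta_i,\delta_j](h)=[\chi,\delta_i](h)=0$, so these brackets are $\cO_{X,p}$-combinations of $\delta_1,\ldots,\delta_{n-1}$ alone (no $\chi$-component, which matters because $\chi$ itself is not among the generators --- only $\chi+\partial_t t+1$ is); moreover $[\delta_i,t-h]=0$ and $[\chi+\partial_t t+1,t-h]=t-h$. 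Adding this verification, together with the corrected statement of the lifting principle, closes the gap; with it your argument matches the route the paper takes (citation of the logarithmic Spencer-complex result plus the commutative symbol computation, compare the ``Spencer syzygies'' appearing in the proof of Corollary \ref{cor:generators_I(h)_V-filt}).
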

\begin{proof}
The proof basically follows the argument in \cite[Proposition 4.1.2]{calde_ens} once we know that symbols of the generators of $I(h)$, i.e., the elements
$$
t-h,\sigma(\delta_1),\ldots,\sigma(\delta_{n-1}),\sigma(\chi)+T\cdot t
$$
form a regular sequence in the ring $\cG$. Notice also that Lemma \ref{lem:reg-seq-Groebner} below is a similar statement (in a commutative ring though), and we will give some indications of the proof there.

In order to show this regularity statement, first recall that
since $D$ is strongly Koszul at $p$ by assumption, it is in particular Koszul (see Definition \ref{defi:SK} and the subsequent remarks). Hence we know that
$$
\sigma(\delta_1),\ldots,\sigma(\delta_{n-1}),\sigma(\chi)
$$
is a regular sequence in the ring $\gr_\bullet^F(\cD_{X,p})=\dC\{x_1,\ldots,x_n\}[X_1,\ldots,X_n]$.
Then clearly the sequence
$$
\sigma(\delta_1),\ldots,\sigma(\delta_{n-1}),\sigma(\chi),T
$$
is regular in the ring $\dC\{x_1,\ldots,x_n\}[T,X_1,\ldots,X_n]$.
On the other hand, we have the following equality of ideals of
$\dC\{x_1,\ldots,x_n\}[T,X_1,\ldots,X_n]$:
$$
\left(\sigma(\delta_1),\ldots,\sigma(\delta_{n-1}),\sigma(\chi),T\right)
=
\left(\sigma(\delta_1),\ldots,\sigma(\delta_{n-1}),\sigma(\chi)+T\cdot h,T\right)
$$
so that also the sequence
$$
\sigma(\delta_1),\ldots,\sigma(\delta_{n-1}),\sigma(\chi)+T\cdot h,T
$$
is regular in $\dC\{x_1,\ldots,x_n\}[T,X_1,\ldots,X_n]$. Since we have an isomorphism of rings
$$
\dC\{x_1,\ldots,x_n\}[T,X_1,\ldots,X_n] \longrightarrow
\dC\{t,x_1,\ldots,x_n\}[T,X_1,\ldots,X_n]/(t-h),
$$
we obtain that the sequence
$$
t-h,\sigma(\delta_1),\ldots,\sigma(\delta_{n-1}),\sigma(\chi)+T\cdot h,T
$$
is regular in $\cG=\dC\{t,x_1,\ldots,x_n\}[T,X_1,\ldots,X_n]$. Then again by equality of ideals
$$
\left(
t-h,\sigma(\delta_1),\ldots,\sigma(\delta_{n-1}),\sigma(\chi)+T\cdot h,T
\right)
=
\left(
t-h,\sigma(\delta_1),\ldots,\sigma(\delta_{n-1}),\sigma(\chi)+T\cdot t,T
\right)
$$
in $\cG$ we get that
$$
t-h,\sigma(\delta_1),\ldots,\sigma(\delta_{n-1}),\sigma(\chi)+T\cdot t,T
$$
is a regular sequence in $\cG$,
but then also the shorter sequence
$$
t-h,\sigma(\delta_1),\ldots,\sigma(\delta_{n-1}),\sigma(\chi)+T\cdot t
$$
must be regular in $\cG$.
\end{proof}

We denote by $\widetilde{V}^\bullet \cG$ the filtration
induced by $V^\bullet \cD$ (the $V$-filtration on $\cD$ with respect to the divisor
$\{t=0\}\subset \dC_t\times X$) on the ring $\cG$. It can be
described as
$$
\widetilde{V}^d \cG = \left\{ \sum_{\alpha,k,\beta,l} a_{\alpha,k,\beta,l}\, x^\alpha t^k X^\beta T^l \in \cG
\;
\left|
\;
a_{\underline{\alpha},k,\underline{\beta},l}\in \dC,\,k-l\geq d
\right.
\right\}.
$$
For any $f\in \cG$, we write $\ord^{\widetilde{V}}$ for the maximal $b\in \dZ$ such that
$f\in \widetilde{V}^b \cG$. The graded ring of $\cG$ with respect to the filtration $\widetilde{V}$ looks quite similar to $\cG$ itself, namely, we have
$$
\gr^\bullet_{\widetilde{V}} \cG \cong \dC\{x_1,\ldots,x_n\}[t,T,X_1,\ldots,X_n],
$$
since $\widetilde{V}^\bullet$ induces the $t$-adic filtration
on the ring $\dC\{t,x_1,\ldots,x_n\}$.

As usual, for $f\in \cG$ we denote by $\sigma^{\widetilde{V}}(f)\in \gr^\bullet_{\widetilde{V}} \cG$ the symbol with respect to $\widetilde{V}$, i.e.,
the class of $f$ in $\gr^\bullet_{\widetilde{V}} \cG=\oplus_{l\in \dZ} \gr^l_{\widetilde{V}} \cG$.
Now consider an ideal $I=(f_1,\ldots,f_k)\subset \cG$.
We write
$$
\sigma^{\widetilde{V}}(I):=
\left\{\sigma^{\widetilde{V}}(f)\;|\; f\in I\right\}.
$$
Then we have the following fact (which is analogous to the statement of Lemma \ref{lem:Involutive_F-Filt} above, and in fact holds in a more general setting for certain filtered rings, although we do not consider such a generality here).
\begin{lemma}\label{lem:reg-seq-Groebner}
Suppose that $\sigma^{\widetilde{V}}(f_1),\ldots,
\sigma^{\widetilde{V}}(f_k)$ is a regular sequence
in $\gr^\bullet_{\widetilde{V}} \cG$. Then we have the equality
$$
\sigma^{\widetilde{V}}(I) =
\left(\sigma^{\widetilde{V}}(f_1),\ldots,
\sigma^{\widetilde{V}}(f_k)\right)
$$
of ideals in $\gr^\bullet_{\widetilde{V}} \cG$.
\end{lemma}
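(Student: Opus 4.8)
The inclusion $\sigma^{\widetilde{V}}(I)\supseteq(\sigma^{\widetilde{V}}(f_1),\ldots,\sigma^{\widetilde{V}}(f_k))$ is immediate, since $\sigma^{\widetilde{V}}(I)$ is an ideal containing each $\sigma^{\widetilde{V}}(f_j)$. For the reverse inclusion the plan is to run the classical ``standard basis'' argument, using the regularity hypothesis to control cancellation of leading $\widetilde{V}$-symbols through the Koszul syzygies. Throughout I would use that $\gr^\bullet_{\widetilde{V}}\cG\cong\dC\{x_1,\ldots,x_n\}[s,t,X_1,\ldots,X_n]$ is a Noetherian domain, so that $\ord^{\widetilde{V}}$ is additive on products and $\sigma^{\widetilde{V}}(ab)=\sigma^{\widetilde{V}}(a)\,\sigma^{\widetilde{V}}(b)$ for all $a,b\in\cG$, and that the filtration $\widetilde{V}^\bullet\cG$ is exhaustive and separated.

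Now let $0\neq f\in I$ and fix a representation $f=\sum_{i=1}^k g_if_i$ with $g_i\in\cG$. Put $m:=\min_i\bigl(\ord^{\widetilde{V}}(g_i)+\ord^{\widetilde{V}}(f_i)\bigr)$; then $g_if_i\in\widetilde{V}^m\cG$ for all $i$, hence $f\in\widetilde{V}^m\cG$ and $m\leq\ord^{\widetilde{V}}(f)$. Reducing modulo $\widetilde{V}^{m+1}\cG$ and writing $\bar g_i:=\sigma^{\widetilde{V}}(g_i)$ when $\ord^{\widetilde{V}}(g_if_i)=m$ and $\bar g_i:=0$ otherwise, one obtains in $\gr^m_{\widetilde{V}}\cG$ the identity
$$
\sum_{i=1}^k \bar g_i\,\sigma^{\widetilde{V}}(f_i)\;=\;\bigl[\,f\bmod\widetilde{V}^{m+1}\cG\,\bigr].
$$
If $m=\ord^{\widetilde{V}}(f)$, the right-hand side equals $\sigma^{\widetilde{V}}(f)$, which therefore lies in $(\sigma^{\widetilde{V}}(f_1),\ldots,\sigma^{\widetilde{V}}(f_k))$ and we are done. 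If $m<\ord^{\widetilde{V}}(f)$, the right-hand side vanishes, so $(\bar g_1,\ldots,\bar g_k)$ is a graded syzygy (of internal degree $m$) of the sequence $\sigma^{\widetilde{V}}(f_1),\ldots,\sigma^{\widetilde{V}}(f_k)$.

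This is exactly where the hypothesis is used: since $\sigma^{\widetilde{V}}(f_1),\ldots,\sigma^{\widetilde{V}}(f_k)$ is a regular sequence in $\gr^\bullet_{\widetilde{V}}\cG$, its Koszul complex has vanishing homology in positive degrees, so every syzygy is a combination of the trivial (Koszul) ones; thus there exist homogeneous $c_{pq}\in\gr^\bullet_{\widetilde{V}}\cG$ ($p<q$, of the appropriate degree) with $\bar g_i=\sum_{q>i}c_{iq}\,\sigma^{\widetilde{V}}(f_q)-\sum_{p<i}c_{pi}\,\sigma^{\widetilde{V}}(f_p)$ for all $i$. Choosing lifts $\tilde c_{pq}\in\cG$ with $\sigma^{\widetilde{V}}(\tilde c_{pq})=c_{pq}$ (and $\tilde c_{pq}=0$ if $c_{pq}=0$) and setting $h_i:=\sum_{q>i}\tilde c_{iq}f_q-\sum_{p<i}\tilde c_{pi}f_p$, one has $\sum_i h_if_i=0$ automatically by antisymmetry, while the degree-$(m-\ord^{\widetilde{V}}(f_i))$ part of $h_i$ is precisely $\bar g_i$. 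Replacing each $g_i$ by $g_i-h_i$ leaves $f=\sum_i g_if_i$ unchanged but, one checks, raises the new minimal order to at least $m+1$. Since that order is bounded above by $\ord^{\widetilde{V}}(f)$, after finitely many iterations it reaches $\ord^{\widetilde{V}}(f)$, whereupon the previous paragraph concludes.

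The only genuinely non-formal ingredient is that the module of syzygies of the $\sigma^{\widetilde{V}}(f_i)$ is generated by the Koszul relations, i.e. the vanishing of the first Koszul homology — which is exactly equivalent to the regular-sequence assumption; everything else is degree bookkeeping and is where the minor technical care goes (checking the order estimates in the lifting step and the termination of the loop). Equivalently, and perhaps more cleanly, one can filter the Koszul complex $K_\bullet(f_1,\ldots,f_k;\cG)$ by $\widetilde{V}$ with the shifts $\ord^{\widetilde{V}}(f_i)$ placed on the generators: its associated graded is the Koszul complex of $\sigma^{\widetilde{V}}(f_1),\ldots,\sigma^{\widetilde{V}}(f_k)$ over $\gr^\bullet_{\widetilde{V}}\cG$, hence acyclic in positive degrees, and since $\widetilde{V}^\bullet\cG$ is exhaustive and separated the standard ``exact associated graded implies exact and strict'' principle gives at once that $f_1,\ldots,f_k$ is a regular sequence in $\cG$ and that $\gr^\bullet_{\widetilde{V}}(\cG/I)\cong\gr^\bullet_{\widetilde{V}}\cG/(\sigma^{\widetilde{V}}(f_1),\ldots,\sigma^{\widetilde{V}}(f_k))$, which is the asserted equality of ideals.
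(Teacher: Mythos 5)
Your proposal is correct and takes essentially the same route as the paper's proof (which follows \cite[Proposition 4.3.2]{SST}): the identical mechanism of cancelling leading $\widetilde{V}$-symbols via Koszul syzygies of the regular sequence, phrased as an iteration that terminates because the minimal order $m$ of a representation is bounded above by $\ord^{\widetilde{V}}(f)$, whereas the paper chooses a representation with maximal $o(\underline{g})$ and derives a contradiction. Your closing homological reformulation is only an aside, but note that the ``exact associated graded implies exact and strict'' principle generally requires completeness of the filtration rather than just exhaustiveness and separatedness; the elementary finite iteration you (and the paper) use avoids this issue.
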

\begin{proof}
We follow the argument given (in a more algebraic situation though) in \cite[Proposition 4.3.2]{SST}. We obviously have $\sigma^{\widetilde{V}}(I) \supset
\left(\sigma^{\widetilde{V}}(f_1),\ldots,
\sigma^{\widetilde{V}}(f_k)\right)$ and we need
to show the converse inclusion. Assume that it does not hold, that is, take
$f\in I$ (so that $\sigma^{\widetilde{V}}(f)\in\sigma^{\widetilde{V}}(I)$)
such that $\sigma^{\widetilde{V}}(f) \notin \left(\sigma^{\widetilde{V}}(f_1),\ldots,
\sigma^{\widetilde{V}}(f_k)\right)$. For a representation
\begin{equation}\label{eq:NonGroebRep}
f=\sum_{i=1}^k g_i\cdot f_i
\end{equation}
write
$$
o(\underline{g}):=\min_{i=1,\ldots,k}\left(\ord^{\widetilde{V}}(g_i)+\ord^{\widetilde{V}}(f_i)\right),
$$
then clearly $o(\underline{g})\leq \ord^{\widetilde{V}}(f)$.
Choose a representation $f=\sum_{i=1}^k g_i\cdot f_i$ such that $o(\underline{g})$ is maximal.
Since $\sigma^{\widetilde{V}}(f) \notin \left(\sigma^{\widetilde{V}}(f_1),\ldots,
\sigma^{\widetilde{V}}(f_k)\right)$, we have $\ord^{\widetilde{V}}(f) > o(\underline{g})$.
We conclude that
$$
\sum_{i\,:\,\ord^{\widetilde{V}}(g_i)+\ord^{\widetilde{V}}(f_i)=o(\underline{g})} \sigma^{\widetilde{V}}(g_i) \cdot  \sigma^{\widetilde{V}}(f_i) =0.
$$
Now by assumption the symbols $\sigma^{\widetilde{V}}(f_1),\ldots,\sigma^{\widetilde{V}}(f_k)$ form a regular sequence in $\gr^\bullet_{\widetilde{V}}\cG$, and this implies that the module of syzygies between these elements of $\cG$ is generated by the so-called
Koszul relations, i.e.,
$$
\left(-\sigma^{\widetilde{V}}(f_k)\right) \cdot \sigma^{\widetilde{V}}(f_j)
+
\left(\sigma^{\widetilde{V}}(f_j)\right) \cdot \sigma^{\widetilde{V}}(f_k)=0.
$$
In other words, we have an equality
$$
\sum_{i\,:\,\ord^{\widetilde{V}}(g_i)+\ord^{\widetilde{V}}(f_i)=o(\underline{g})} \sigma^{\widetilde{V}}(g_i) \cdot  e_i =
\sum_{1\leq j<l\leq k} h_{jl}\left(\sigma^{\widetilde{V}}(f_j)e_l-\sigma^{\widetilde{V}}(f_l)e_j\right)
$$
of elements of $\cG^k$ (where $e_i$ is the $i$-th canonical generator of $\cG^k$). Reordering the right hand side of the above equation yields
$$
\sum_{i\,:\,\ord^{\widetilde{V}}(g_i)+\ord^{\widetilde{V}}(f_i)=o(\underline{g})} \sigma^{\widetilde{V}}(g_i) \cdot  e_i =
\sum_{i=1}^k\left(
\sum_{l<i} h_{li}\sigma^{\widetilde{V}}(f_l)
-\sum_{i<j} h_{ij}\sigma^{\widetilde{V}}(f_j)\right)e_i,
$$
and hence
$$
\sigma^{\widetilde{V}}(g_i)-
\left(
\sum_{l<i} h_{li}\sigma^{\widetilde{V}}(f_l)
-\sum_{i<j} h_{ij}\sigma^{\widetilde{V}}(f_j)\right)=0
$$
for all $i\in\{1,\ldots,k\}$ such that $\ord^{\widetilde{V}}(g_i)+\ord^{\widetilde{V}}(f_i)=o(\underline{g})$. Hence if we replace those $g_i$ in equation \eqref{eq:NonGroebRep} by $g'_i:=g_i-\left(\sum_{l<i} h_{li} f_l
-\sum_{i<j} h_{ij}f_j\right)$ (and collect all the $f_i$), we obtain a new expression $f=\sum_{i=1}^k g'_i f_i$ such that
$$
\min_{i=1,\ldots,k}\left(\ord^{\widetilde{V}}(g'_i)+\ord^{\widetilde{V}}(f_i)\right) > o(\underline{g})
$$
which contradicts the above choice of a relation $\underline{g}$ with maximal $o(\underline{g})$. Hence
we must have that $\sigma^{\widetilde{V}}(f) \in \left(\sigma^{\widetilde{V}}(f_1),\ldots,
\sigma^{\widetilde{V}}(f_k)\right)$ and so
$\sigma^{\widetilde{V}}(I)=\left(\sigma^{\widetilde{V}}(f_1),\ldots,
\sigma^{\widetilde{V}}(f_k)\right)$, as required.
\end{proof}

We apply the previous lemma in the situation where $I$ is given as the ideal $\sigma(I(h))\subset \cG$ (recall that $\sigma$ denotes the usual symbol with respect to the order filtration $F_\bullet \cD$).
For notational convenience, put
$G_0:=t-h$, $G_1:=\delta_1,\ldots, G_{n-1}:=\delta_{n-1}$ and $G_n:=\chi+\partial_t t +1$, so that $I(h)=(G_0,\ldots,G_n) \subset \cD$. Then we obtain the following consequence.

\begin{corollary}\label{cor:Involutive_V-Filt}
Let $D\subset X$ be a free divisor and let $x\in D$ such that
$D$ is strongly Koszul at $x$ and locally defined by some $h\in\cO_{X,p}$. Then the set
$$
\{\sigma(G_0),\ldots,\sigma(G_n)\}=\{t-h,\sigma(\delta_1),\ldots,\sigma(\delta_{n-1}), \sigma(\chi)+T\cdot t\}
$$ is an involutive basis of the ideal
$\sigma(I(h))\subset \cG$ with respect to the filtration $\widetilde{V}^\bullet \cG$
induced from the filtration $V^\bullet \cD$, that is, we have
$$
\sigma^{\widetilde{V}}(\sigma(I(h)) =
\gr^\bullet_{\widetilde{V}}\cG\cdot(\sigma^{\widetilde{V}}(\sigma(G_0)),\ldots,\sigma^{\widetilde{V}}(\sigma(G_n)))=
\gr^\bullet_{\widetilde{V}}\cG\cdot\left(
h,\sigma^{\widetilde{V}}(\sigma(\delta_1)),\ldots,\sigma^{\widetilde{V}}(\sigma(\delta_{n-1})),\sigma^{\widetilde{V}}(\sigma(\chi))+T\cdot t\right).
$$

As a consequence, any $f\in \sigma(I(h))\subset \cG$ has a standard representation with respect to $\widetilde{V}^\bullet$, that is, there are elements $k_0,k_1,\ldots,k_n\in \cG$ with $f=\sum_{i=0}^n k_i\cdot \sigma(G_i)
=k_0\cdot(t-h)+\sum_{i=1}^{n-1} k_i \cdot \sigma(\delta_i)+k_n\cdot (\sigma(\chi)+T\cdot t)$
such that $\ord^{\widetilde{V}}(k_i)+\ord^{\widetilde{V}}(\sigma(G_i))
\geq \ord^{\widetilde{V}}(f)$ holds for all $i\in\{0,\ldots,n\}$.
\end{corollary}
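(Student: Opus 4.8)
The plan is to obtain the statement as a direct application of Lemma~\ref{lem:reg-seq-Groebner}, with the ideal taken to be $I=\sigma(I(h))\subset\cG$ and the generators $f_i=\sigma(G_i)$ for $i=0,\dots,n$. Two facts feed into this. First, one needs that $\sigma(I(h))$ is actually generated by $\sigma(G_0),\dots,\sigma(G_n)$, i.e. that $\{G_0,\dots,G_n\}$ is an involutive basis of $I(h)$ for the order filtration $F_\bullet\cD$; this is exactly Lemma~\ref{lem:Involutive_F-Filt}, which gives $\sigma(I(h))=\bigl(t-h,\sigma(\delta_1),\dots,\sigma(\delta_{n-1}),\sigma(\chi)+Tt\bigr)$. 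Second, one needs that $\sigma^{\widetilde{V}}(\sigma(G_0)),\dots,\sigma^{\widetilde{V}}(\sigma(G_n))$ is a regular sequence in $\gr^\bullet_{\widetilde{V}}\cG$; this is the hypothesis of Lemma~\ref{lem:reg-seq-Groebner} and the only point that uses the strong Koszul condition in an essential way.

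To establish regularity I would first compute the $\widetilde{V}$-symbols explicitly. Since $t\in\widetilde{V}^1\cG$ while $h$ (a function of $x$ alone) lies in $\widetilde{V}^0\cG\setminus\widetilde{V}^1\cG$, the element $t-h$ has $\ord^{\widetilde{V}}(t-h)=0$ and, $t$ being sent to $0$ in $\gr^0_{\widetilde{V}}\cG$, its $\widetilde{V}$-symbol is $-h$ (a unit times $h$) --- note that the variable $t$ drops out here, which is precisely why the first generator of the symbol ideal is $h$ rather than $t-h$. Next, $\sigma(\delta_i)$ involves neither $t$ nor $T$, so $\ord^{\widetilde{V}}(\sigma(\delta_i))=0$ and $\sigma^{\widetilde{V}}(\sigma(\delta_i))=\sigma(\delta_i)$ for $i=1,\dots,n-1$; and $\sigma(G_n)=\sigma(\chi)+Tt$ has $\ord^{\widetilde{V}}=0$ with $\sigma^{\widetilde{V}}(\sigma(G_n))=\sigma(\chi)-s$, using that the class of $-Tt$ in $\gr^\bullet_{\widetilde{V}}\cG$ is by definition $s$. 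All these symbols lie in the degree-$0$ subring $\Gr^T_\bullet\cD_{X,p}[s]\cong\dC\{x_1,\dots,x_n\}[s,X_1,\dots,X_n]$ of $\gr^\bullet_{\widetilde{V}}\cG$. Now by Proposition~\ref{prop:SummarySKLCT}(1)--(2) the basis $\delta_1,\dots,\delta_{n-1},\chi$ may be chosen with $\delta_i(h)=0$ and $\chi(h)=h$, so that the constants $\alpha_i$ occurring in Definition~\ref{defi:SK} are $\alpha_1=\dots=\alpha_{n-1}=0$ and $\alpha_n=1$; the strong Koszul hypothesis at $p$ then says exactly that $h,\sigma(\delta_1),\dots,\sigma(\delta_{n-1}),\sigma(\chi)-s$ is a regular sequence in $\Gr^T_\bullet\cD_{X,p}[s]$. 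Finally, $\gr^\bullet_{\widetilde{V}}\cG$ is faithfully flat (in fact free) as a module over $\Gr^T_\bullet\cD_{X,p}[s]$ --- it is obtained by adjoining $t$ and $T$ subject only to $Tt=-s$ --- so the sequence remains regular in $\gr^\bullet_{\widetilde{V}}\cG$. Lemma~\ref{lem:reg-seq-Groebner} now yields $\sigma^{\widetilde{V}}(\sigma(I(h)))=\bigl(\sigma^{\widetilde{V}}(\sigma(G_0)),\dots,\sigma^{\widetilde{V}}(\sigma(G_n))\bigr)$, which is the asserted involutive basis identity.

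For the ``consequence'' about standard representations I would run the syzygy argument from the proof of Lemma~\ref{lem:reg-seq-Groebner} once more. Given $f\in\sigma(I(h))$, among all representations $f=\sum_{i=0}^n k_i\sigma(G_i)$ in $\cG$ pick one maximizing $o(\underline{k}):=\min_i\bigl(\ord^{\widetilde{V}}(k_i)+\ord^{\widetilde{V}}(\sigma(G_i))\bigr)$; such a maximum exists because every representation satisfies $o(\underline{k})\le\ord^{\widetilde{V}}(f)$ and $o(\underline{k})\in\dZ$. If $o(\underline{k})<\ord^{\widetilde{V}}(f)$, then the $\widetilde{V}$-degree-$o(\underline{k})$ part of $\sum_i k_i\sigma(G_i)$ vanishes, i.e. $\sum_{i:\,\ord^{\widetilde{V}}(k_i)+\ord^{\widetilde{V}}(\sigma(G_i))=o(\underline{k})}\sigma^{\widetilde{V}}(k_i)\,\sigma^{\widetilde{V}}(\sigma(G_i))=0$ in $\gr^\bullet_{\widetilde{V}}\cG$; since the $\sigma^{\widetilde{V}}(\sigma(G_i))$ form a regular sequence this syzygy is a combination of Koszul syzygies, and subtracting the corresponding correction terms from the $k_i$ produces a representation with strictly larger $o$, contradicting maximality. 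Hence $o(\underline{k})=\ord^{\widetilde{V}}(f)$, i.e. $\ord^{\widetilde{V}}(k_i)+\ord^{\widetilde{V}}(\sigma(G_i))\ge\ord^{\widetilde{V}}(f)$ for all $i$, which is precisely the standard representation.

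The main obstacle is the careful bookkeeping of the second paragraph: one has to pass correctly from the order-filtration symbols $\sigma(G_i)\in\cG$ to their $\widetilde{V}$-symbols (in particular noticing that $t$ disappears from $\sigma^{\widetilde{V}}(t-h)$, and keeping track of the sign relating $-Tt$ to $s$), and then recognize that the resulting sequence is literally the one in the strong Koszul condition for the Euler-homogeneous basis, up to the harmless polynomial extension that passes from $\Gr^T_\bullet\cD_{X,p}[s]$ to $\gr^\bullet_{\widetilde{V}}\cG$. Once Lemmas~\ref{lem:Involutive_F-Filt} and~\ref{lem:reg-seq-Groebner} are available, everything else is formal.
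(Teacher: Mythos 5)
Your proposal is correct and follows essentially the same route as the paper: Lemma \ref{lem:Involutive_F-Filt} to know that the $\sigma(G_i)$ generate $\sigma(I(h))$, the strong Koszul condition in its Euler-basis form ($h,\sigma(\delta_1),\ldots,\sigma(\delta_{n-1}),\sigma(\chi)-s$ regular) to get the regular sequence of $\widetilde{V}$-symbols, and Lemma \ref{lem:reg-seq-Groebner} (whose syzygy argument also yields the standard representations) for the conclusion. You are in fact slightly more careful than the paper at one point: the paper simply invokes $\gr^\bullet_{\widetilde{V}}\cG\cong \Gr^T_\bullet(\cD_{X,p}[s])$, whereas this ring is really only the degree-zero part of $\gr^\bullet_{\widetilde{V}}\cG$, and your freeness/faithful-flatness remark is exactly what transports the regularity of the sequence from $\Gr^T_\bullet(\cD_{X,p}[s])$ to all of $\gr^\bullet_{\widetilde{V}}\cG$.
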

For the last statement, we remind the reader that the filtration $\widetilde{V}^\bullet \cG$ is descending, and that consequently, for $g\in \cG$,  $\ord^{\widetilde{V}}(g)$ denotes the maximum of all $l$ such that $g\in \widetilde{V}^l \cG$.
\begin{proof}
Consider the ring extension
$$
\begin{array}{rcl}
\Gr^T_\bullet(\cD[s])\cong \dC\{x_1,\ldots,x_n\}[s,X_1,\ldots,X_n]
&
\hookrightarrow
&
\Gr^\bullet_{\widetilde{V}}(\cG) \cong
\dC\{x_1,\ldots,x_n\}[t,T,X_1,\ldots,X_n] \\ \\
s & \longmapsto & -T\cdot t
\end{array}
$$
then clearly
$\Gr^\bullet_{\widetilde{V}}(\cG)$ is a flat $\Gr^T_\bullet(\cD[s])$-module,
since for any ring $R$, we have that $R[t,T]$ is a flat (and even free) $R[-T\cdot t]$-module.

From the very definition of the strongly Koszul property (see again Definition \ref{defi:SK}), we know that
$$
h,\sigma(\delta_1),\ldots,\sigma(\delta_{n-1}),\sigma(\chi)-s
$$
is a regular sequence in $\Gr^T_\bullet(\cD[s])$. Now
notice that
the elements $\sigma(\delta_1),\ldots,\sigma(\delta_{n-1}), \sigma(\chi)$ lie in the ring $\dC\{x_1,\ldots,x_n\}[X_1,\ldots,X_n]$, which is a subring of both $\cG$ and of $\Gr^\bullet_{\widetilde{V}}(\cG)$.
Clearly, for $p\in\dC\{x_1,\ldots,x_n\}[X_1,\ldots,X_n]$ we have $\sigma^{\widetilde{V}}(p)=p$, therefore, we know that
$$
h,\sigma^{\widetilde{V}}(\sigma(\delta_1)),\ldots,\sigma^{\widetilde{V}}(\sigma(\delta_{n-1})),\sigma^{\widetilde{V}}(\sigma(\chi))-s
$$
is a regular sequence in $\Gr_\bullet^T(\cD[s])$. Since the ring extension $\Gr_\bullet^T(\cD[s])\subset \Gr^\bullet_{\widetilde{V}}(\cG)$ is flat, it follows (see, e.g., \cite[Proposition 9.6.7 and remark right after it]{BourbakiAlgHom}) that the elements
$$
h,\sigma^{\widetilde{V}}(\sigma(\delta_1)),\ldots,\sigma^{\widetilde{V}}(\sigma(\delta_{n-1})),\sigma^{\widetilde{V}}(\sigma(\chi))+T\cdot t
$$
form a regular sequence in $\Gr^\bullet_{\widetilde{V}}(\cG)$.
Now the assertion of the Corollary follows from Lemma \ref{lem:reg-seq-Groebner} above.
\end{proof}

The next statement is a rather direct consequence of the
previous results. We include it here since it will be used in section \ref{sec:Examples} where we discuss techniques for computation of Hodge ideals.
\begin{corollary}\label{cor:generators_I(h)_V-filt}
Let $D\subset X$ be a free divisor, let $p\in D$ such that $D$ is strongly Koszul at $p$ and locally defined by some $h\in\cO_{X,p}$. Then
$$
I(h)\cap V^0\cD=\left(t-h,\delta_1,\ldots,\delta_{n-1}, \chi+\partial_t t +1\right)\cap V^0\cD=V^0\cD( t-h,\delta_1,\ldots,\delta_{n-1},\chi+\dd_t t+1).
$$
\end{corollary}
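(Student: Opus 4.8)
The inclusion ``$\supseteq$'' is immediate: each generator lies in $V^0\cD$ (indeed $t\in V^1\cD\subset V^0\cD$ and $h\in\cO\subset V^0\cD$, so $t-h\in V^0\cD$; the $\delta_i$ and $\chi$ are vector fields on $X$, hence in $V^0\cD$; and $\partial_t t=t\partial_t+1\in V^0\cD$, so $\chi+\partial_t t+1\in V^0\cD$), hence the left ideal of $V^0\cD$ they generate is contained in $I(h)\cap V^0\cD$. For the reverse inclusion the plan is an induction on the order $d:=\ord^F(P)$ of a nonzero operator $P\in I(h)\cap V^0\cD$, feeding the standard representation of Corollary \ref{cor:Involutive_V-Filt} back into the induction. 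Write as before $G_0:=t-h$, $G_1:=\delta_1,\dots,G_{n-1}:=\delta_{n-1}$, $G_n:=\chi+\partial_t t+1$, so $I(h)=(G_0,\dots,G_n)$.

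Two elementary facts relating $F_\bullet\cD$ and $V^\bullet\cD$ will make the induction work. First, from the explicit local description $V^0\cD=\cO\langle\partial_{x_1},\dots,\partial_{x_n},\,t\partial_t\rangle$ one reads off that $\gr^F_m(V^0\cD)$, regarded inside $\cG_m$, is exactly the span of the monomials $x^\alpha t^a X^\beta T^c$ with $a\geq c$ and $|\beta|+c=m$, which is precisely $\widetilde V^0\cG\cap\cG_m$. In particular $P\in V^0\cD\Rightarrow\sigma(P)\in\widetilde V^0\cG$, and conversely every $F$-homogeneous element of $\widetilde V^0\cG$ of degree $m$ equals $\sigma(K)$ for some $K\in V^0\cD\cap F_m\cD$ (lift $x^\alpha t^aX^\beta T^c$ to $x^\alpha t^{a-c}\partial_x^\beta(t\partial_t)^c$). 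Second, a direct computation gives $\ord^{\widetilde V}(\sigma(G_i))=0$ for every $i$: for $i=0$ because $h\in\cO_{X,p}$ involves no $t$, so $\sigma(G_0)=t-h$ has $\widetilde V$-order $\min(1,0)=0$; for $1\leq i\leq n$ because $\sigma(\delta_i)$ and $\sigma(\chi)+tT$ are $\widetilde V$-homogeneous of order $0$.

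Now the inductive step. Let $P\in I(h)\cap V^0\cD$ be nonzero with $\ord^F(P)=d$; then $\sigma(P)\in\sigma(I(h))$ is $F$-homogeneous of degree $d$ and, by the first fact, lies in $\widetilde V^0\cG$. By Corollary \ref{cor:Involutive_V-Filt} we may write $\sigma(P)=\sum_{i=0}^n k_i\,\sigma(G_i)$ with $\ord^{\widetilde V}(k_i)+\ord^{\widetilde V}(\sigma(G_i))\geq\ord^{\widetilde V}(\sigma(P))$ for all $i$; replacing each $k_i$ by its $F$-homogeneous component of degree $d$ (for $i=0$, as $\sigma(G_0)\in\cG_0$) resp.\ $d-1$ (for $i\geq 1$, as $\sigma(G_i)\in\cG_1$) keeps this a valid expression for $\sigma(P)$ and, since each $\widetilde V^e\cG$ is a homogeneous subspace for the $F$-grading and $\ord^{\widetilde V}(\sigma(G_i))=0$, keeps each $k_i$ in $\widetilde V^0\cG$. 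By the first fact we lift $k_i$ to $K_i\in V^0\cD$ with $\sigma(K_i)=k_i$. Then $P':=P-\sum_{i=0}^n K_i G_i$ belongs to $I(h)$, belongs to $V^0\cD$ (a ring containing all the $K_i$ and all the $G_i$), and satisfies $\ord^F(P')<d$, because $\sum_i K_iG_i$ has order exactly $d$ with $\sigma(\sum_i K_iG_i)=\sum_i\sigma(K_i)\sigma(G_i)=\sum_i k_i\sigma(G_i)=\sigma(P)$. By the induction hypothesis $P'\in V^0\cD\langle G_0,\dots,G_n\rangle$, hence so is $P$; the base case $P=0$ is trivial, so $I(h)\cap V^0\cD\subset V^0\cD\langle G_0,\dots,G_n\rangle$.

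The only genuine point is the pair of compatibility statements in the second paragraph — in essence, that $\gr^F_\bullet(V^0\cD)$ and $\widetilde V^0\cG$ agree as graded subspaces of $\cG$. They are routine from the explicit form of $V^0\cD$ in local coordinates, but they are exactly what converts Corollary \ref{cor:Involutive_V-Filt}, which a priori only controls $F$-symbols in $\cG$, into a statement about actual differential operators lying in the subring $V^0\cD$. Apart from that, the argument is the standard ``lift a standard basis and divide'' scheme, entirely parallel to the proof of Lemma \ref{lem:reg-seq-Groebner} (compare also \cite[Ch.~4]{SST}).
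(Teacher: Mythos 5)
Your argument is correct, and it follows the paper's overall strategy: induction on $\ord^F(P)$, powered by the involutivity statements (Lemma \ref{lem:Involutive_F-Filt} and Corollary \ref{cor:Involutive_V-Filt}) together with the surjectivity of $F_l\cD\cap V^k\cD\to\widetilde V^k\cG_l$, which the paper also records and uses (in the proof of Proposition \ref{prop:CompatibilityVandF}). The one place where you genuinely diverge is the inner step of the induction. The paper starts from an \emph{arbitrary} expression $P=A(t-h)+\sum B_i\delta_i+C(\widetilde\chi+1)$, compares its symbol-level coefficients with the standard representation from Corollary \ref{cor:Involutive_V-Filt}, observes that the difference is a syzygy of the regular sequence $(t-h,\sigma(\delta_1),\dots,\sigma(\chi)+Tt)$, hence a sum of Koszul syzygies, and lifts these to Spencer syzygies of the operators $G_i$ to correct the coefficients; only after this correction does it isolate a remainder $P''$ of strictly smaller order. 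You skip all of that: you simply lift the standard representation of $\sigma(P)$ to operators $K_i\in V^0\cD$ of the right $F$-order and subtract $\sum K_iG_i$ from $P$; since both terms lie in $I(h)\cap V^0\cD$ and their top symbols agree, the difference drops in order and the induction closes. This buys a shorter proof with no syzygy bookkeeping; what it gives up is only the extra structural information (the explicit Spencer-syzygy correction of the coefficients $A,B_i,C$) that the paper's version produces along the way but does not actually need for the corollary. Your two ``compatibility facts'' in the second paragraph are exactly the definition of the induced filtration $\widetilde V^\bullet\cG$ plus the monomial lifting already used in the paper, so nothing is missing there.
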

\begin{proof}
Since the generators of $I(h)$ belong all to $V^0\cD$, which is a ring, the inclusion $I(h)\cap V^0\cD\supset V^0\cD( t-h,\delta_1,\ldots,\delta_{n-1},\chi+\dd_t t+1)$ is trivial. Let us show the reverse one.

Let $P$ be an operator in $I(h)\cap V^0\cD$ and let us prove that it lies within $V^0\cD( t-h,\delta_1,\ldots,\delta_{n-1},\chi+\dd_tt+1)$ by induction on the order of $P$ with respect to  $F_\bullet  \cD$, written $\ord^F(P)$.

If $\ord^F(P)=-1$, the claim is trivial, for then $P=0$. Let us now assume that the statement is true for all $Q\in F_{d-1}\cD$ for a given non-negative integer $d$, and let us prove it for any $P\in F_d\cD$. Since $P\in I(h)$, we have the expression
$$P=A(t-h)+\sum_{i=1}^{n-1} B_i\delta_i+C(\chi+\dd_tt+1).$$
Thanks to Lemma \ref{lem:Involutive_F-Filt}, we know that $A\in F_d\cD$ and the $B_i$ and $C$ are in $F_{d-1}\cD$ and also that $\sigma(P)$, which belongs to $\wt V^0\cG$, can be written as
$$\sigma(P)=\sigma(A)(t-h)+\sum_{i=1}^{n-1}\sigma(B_i)\sigma(\delta_i)+\sigma(C)(\sigma(\chi)+Tt).$$
On the other hand, we know after Corollary \ref{cor:Involutive_V-Filt} that we can choose $\tilde{A}\in\gr^F_d(\cD)\cap\wt V^0\cG$ and some $\tilde B_i,\tilde C\in\gr^F_{d-1}(\cD)\cap \wt V^0\cG$ such that
$$\sigma(P)=\tilde A(t-h)+\sum_{i=1}^{n-1}\tilde B_i\sigma(\delta_i)+\tilde C(\sigma(\chi)+Tt).$$
Then we have that
$$
\varsigma:=(\sigma(A)-\tilde A,\sigma(B_1)-\tilde B_1,\ldots,\sigma(B_{n-1})-\tilde B_{n-1},\sigma(C)-\tilde C) \in \cG^{n+1}
$$
is a syzygy of the tuple $(t-h,\sigma(\delta_1),\ldots,\sigma(\delta_{n-1}),\sigma(\tilde \chi)+Tt).$ By Lemma \ref{lem:Involutive_F-Filt} again, since this tuple is a regular sequence in $\cG$, $\varsigma$ will be a sum of Koszul syzygies (i.e., those of the form $a_ie_j-a_je_i\in \cG^{n+1}$ for some elements $a_i, a_j$ of the regular sequence, $e_i$ and $e_j$ being the corresponding unit vectors).

Now notice that for all $k,l$, the canonical map
$$
F_l \cD \cap V^k \cD \longrightarrow
\widetilde{V}^k \cG_l
$$
is surjective. Hence, we can choose $A'\in F_d\cD\cap V^0\cD$ and $B'_i,C'\in F_{d-1}\cD\cap V^0\cD$ such that $\sigma(A')=\tilde A$, $\sigma(B'_i)=\tilde B_i$ for every $i$ and $\sigma(C')=\tilde C$. Then the tuple $(A-A',B_1-B'_1,\ldots,B_{n-1}-B'_{n-1},C-C')$ can be written as a sum of Spencer syzygies  of $(t-h,\delta_1,\ldots,\delta_{n-1},\chi+\dd_tt+1)$ (that is, the respective lifts of the Koszul ones in $\cG$ that come from the expression of the $[G_i,G_j]$, for any $i,j$, as a linear combination of the form $\sum_k\alpha_kG_k$ with coefficients in $\cO$)  plus another tuple $(A'',B''_1,\ldots,B''_{n-1},C'')$, where now $A''\in F_{d-1}\cD$ and $B''_i,C''\in F_{d-2}\cD$.

Let us call now
$$P'':=A''(t-h)+\sum_{i=1}^{n-1} B''_i\delta_i+C''(\chi+\dd_tt+1).$$
Consequently,
$$P=A'(t-h)+\sum_{i=1}^{n-1} B'_i\delta_i+C'(\chi+\dd_tt+1)+P''.$$
Summing up, we obtain that $P''\in F_{d-1}\cD\cap V^0\cD\cap I(h)$, so by the induction hypothesis, it can be written as a linear combination of $t-h,\delta_1,\ldots,\delta_{n-1},\chi+\dd_tt+1$ with coefficients in $V^0\cD$ and thus $P$ too.
\end{proof}

Now we can show the following result expressing a certain compatibility between
the induced $V$-filtration and the order filtration on $N(h)$.

\begin{proposition}\label{prop:CompatibilityVandF}
For all  $k,l\in \dZ$, the canonical morphism
$$
F_l \cD  \cap V^k \cD
\longrightarrow
F^{\ord}_l N(h) \cap V^k_{\ind} N(h)
$$
is surjective.
\end{proposition}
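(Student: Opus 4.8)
The plan is to unwind the fact that $F^{\ord}_\bullet N(h)$ and $V^\bullet_{\ind}N(h)$ are the images of $F_\bullet\cD$ and $V^\bullet\cD$ in $N(h)=\cD/I(h)$, which reduces the claimed surjectivity to the inclusion of left ideals
\[
F_l\cD \cap \bigl(V^k\cD + I(h)\bigr)\ \subseteq\ \bigl(F_l\cD \cap V^k\cD\bigr) + I(h)
\]
(the reverse inclusion being trivial). So I would take $P\in F_l\cD$, write $P=R+Q$ with $R\in I(h)$ and $Q\in V^k\cD$, and induct on $\max\bigl(\ord^F(Q),l\bigr)$. If $\ord^F(Q)\le l$ there is nothing to do, since then $Q\in F_l\cD\cap V^k\cD$. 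Otherwise set $m:=\ord^F(Q)>l$; as $\ord^F(P)\le l<m$, passing to degree-$m$ symbols in $\cG=\gr^F\cD$ gives $\sigma(R)=-\sigma(Q)$ in $\cG_m$, and in particular $\ord^F(R)=m$.

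The heart of the argument is to bring in the $\widetilde V$-control supplied by Corollary \ref{cor:Involutive_V-Filt}. Since $Q\in V^k\cD\cap F_m\cD$, the definition of the induced filtration gives $\sigma(Q)\in\widetilde V^k\cG$, hence $\ord^{\widetilde V}(\sigma(R))\ge k$. As $\sigma(R)\in\sigma(I(h))$, Corollary \ref{cor:Involutive_V-Filt} provides a standard representation $\sigma(R)=\sum_{i=0}^n k_i\,\sigma(G_i)$ in $\cG$ with $\ord^{\widetilde V}(k_i)+\ord^{\widetilde V}(\sigma(G_i))\ge\ord^{\widetilde V}(\sigma(R))\ge k$ for every $i$; because $\widetilde V^\bullet\cG$ is compatible with the $F$-grading I may take the $k_i$ to be $F$-homogeneous, so that $k_0\in\cG_m$ and $k_i\in\cG_{m-1}$ for $i\ge1$. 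The point is that $\sigma(G_0)=t-h$, $\sigma(\delta_i)$ and $\sigma(\chi)+Tt$ each have $\widetilde V$-order exactly $0$ (in every monomial the $t$-degree and the $T$-degree cancel), so the estimate becomes simply $\ord^{\widetilde V}(k_i)\ge k$ for all $i$.

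Then I would lift and subtract. Using the surjectivity of $F_{m'}\cD\cap V^k\cD\to\widetilde V^k\cG_{m'}$ recorded in the proof of Corollary \ref{cor:generators_I(h)_V-filt}, choose $\tilde k_0\in F_m\cD\cap V^k\cD$ and $\tilde k_i\in F_{m-1}\cD\cap V^k\cD$ for $1\le i\le n$ with symbols $k_0,\dots,k_n$, and set
\[
R_1:=\tilde k_0(t-h)+\sum_{i=1}^{n-1}\tilde k_i\,\delta_i+\tilde k_n(\chi+\partial_t t+1)\ \in\ I(h).
\]
Since $t-h,\delta_1,\dots,\delta_{n-1},\chi+\partial_t t+1$ all lie in $V^0\cD$, one has $R_1\in V^k\cD$; and by construction $\ord^F(R_1)\le m$ with degree-$m$ symbol $k_0(t-h)+\sum_{i=1}^{n-1}k_i\sigma(\delta_i)+k_n(\sigma(\chi)+Tt)=\sigma(R)$. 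Hence $R-R_1\in I(h)$, $Q+R_1\in V^k\cD$, and the degree-$m$ symbol of $Q+R_1$ equals $\sigma(Q)+\sigma(R)=0$, so $\ord^F(Q+R_1)\le m-1$. Rewriting $P=(R-R_1)+(Q+R_1)$ thus produces a decomposition of $P$ whose $V^k$-part has strictly smaller $F$-order, and the induction hypothesis finishes the proof.

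The hard part will be this simultaneous control of the order filtration and the $V$-filtration in the lifting step: one needs the coefficients of a standard $\widetilde V$-representation of $\sigma(R)$ to be realisable by operators that both carry the prescribed symbol (so that $R_1$ has the right top part) and lie in $V^k\cD$ (so that $R_1\in V^k\cD$, using that each $G_i\in V^0\cD$). The first is exactly the surjectivity of $F\cap V$ onto $\widetilde V\cG$; the second hinges on $\ord^{\widetilde V}(\sigma(G_i))=0$ together with the standard-representation estimate of Corollary \ref{cor:Involutive_V-Filt}, which is where the strong Koszul hypothesis enters through Lemma \ref{lem:reg-seq-Groebner}. The remaining steps are routine symbol computations.
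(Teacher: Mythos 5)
Your proof is correct, but it reaches the conclusion by a genuinely different route than the paper. The paper recasts the statement as a compatibility between \emph{three} filtrations on $\cD$ --- $F_\bullet\cD$, $V^\bullet\cD$ and the two-step filtration $J_\bullet\cD$ determined by $I(h)$ --- and then invokes Saito's general criterion \cite[Corollaire 1.2.14]{Saito1} to reduce everything to the surjectivity of $F_l\cD\cap V^k\cD\cap I(h)\longrightarrow \widetilde{V}^k\cG_l\cap\sigma_l(I(h)\cap F_l\cD)$; that symbol-level surjectivity is then proved exactly as in your lifting step, via the standard representation of Corollary \ref{cor:Involutive_V-Filt} and the observation that $\ord^{\widetilde{V}}(\sigma(G_i))=0$. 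You instead bootstrap the symbol-level information by hand: a descending induction on $\ord^F(Q)$ in the decomposition $P=R+Q$, where each step cancels the top $F$-symbol of $Q$ by an element $R_1\in I(h)\cap V^k\cD\cap F_m\cD$ produced by the same standard representation. The technical heart --- controlling simultaneously the principal symbol and the $V$-order of the correction term through the involutive basis, which is where the strong Koszul hypothesis enters --- is identical in both arguments. What your version buys is self-containedness and transparency (it is in effect a division algorithm with remainder in $F_l\cD\cap V^k\cD$), at the cost of redoing in this special case what Saito's compatibility lemma does in general; the paper's version is shorter modulo that black box and situates the result inside Saito's formalism of compatible filtrations. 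One cosmetic remark: the $t$- and $T$-degrees do not cancel monomial by monomial in $\sigma(G_0)=t-h$ (the monomial $t$ alone has $\widetilde{V}$-order $1$); it is the presence of the term $-h$ that forces $\ord^{\widetilde{V}}(t-h)=0$. This does not affect your argument, since only the value $\ord^{\widetilde{V}}(\sigma(G_i))=0$ is used.
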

\begin{proof}
We reformulate the surjectivity  we want to prove as a compatibility property between \emph{three} filtrations on $\cD$, the first two being $F_\bullet \cD$ and $V^\bullet \cD$. The third one is defined (somewhat artificially) as
$$
J_r \cD :=
\left\{
\begin{array}{rcl}
I(h) & \quad & \forall r< 0,\\ \\
\cD & \quad & \forall r\geq 0.
\end{array}
\right.
$$
It is clear that the statement of the proposition is equivalent to the surjectivity of the map
\begin{equation}\label{eq:SurjSaito1}
F_l \cD \cap V^k \cD \cap J_r \cD \longrightarrow {'\!}F_l \gr_r^J \cD \cap {'\!}V^k \gr_r^J \cD
\end{equation}
for $r=0$, where we denote by ${'\!}F_\bullet$ resp. by ${'\!}V^\bullet$ the filtrations induced by $F_\bullet \cD$ resp. $V^\bullet \cD$ on $\gr_r^J\cD$
(notice that for $r=0$, these are nothing but $F^{\ord}_\bullet N(h)$ resp. $V^\bullet_{\ind}N(h)$). Since both the $J$- and the $F$-filtration are exhaustive, we can now apply \cite[Corollaire 1.2.14]{Saito1} from which we conclude that surjectivity of the map \eqref{eq:SurjSaito1} holds (for all $l,k,r$) if and only if the map
\begin{equation}\label{eq:SurjSaito2}
F_l \cD \cap V^k \cD \cap J_r \cD \longrightarrow  \widetilde{V}^k \gr_l^F \cD \cap \widetilde{J}_r \gr_l^F \cD =
\widetilde{V}^k \cG_l \cap \widetilde{J}_r \cG_l.
\end{equation}
is surjective for all $l,k,r$. Again, $\widetilde{J}_\bullet \cG_l$ is the filtration induced
by $J_\bullet \cD$ on $\cG_l$, i.e., $\widetilde{J}_r \cG_l=\cG_l$ if $r\geq 0$ and $\widetilde{J}_r \cG_l=\sigma_l(I(h)\cap F_l\cD)$ if $r< 0$, where, as usual, $\sigma_l:F_l\cD \twoheadrightarrow \cG_l$ denotes the map sending an operator of degree $l$ to its symbol in $\cG_l$. However, the surjectivity of \eqref{eq:SurjSaito2} is a nontrivial condition only for $r<0$ (and it is the same condition for all negative $r$): For $r\geq 0$, it means  that the map
$F_l \cD \cap V^k \cD \longrightarrow
\widetilde{V}^k \cG_l$
is surjective, which is true, as we have already noticed in the proof of Corollary \ref{cor:generators_I(h)_V-filt}.
If $r<0$, the map \eqref{eq:SurjSaito2} is nothing but
$$
F_l \cD \cap V^k \cD \cap I(h) \longrightarrow \widetilde{V}^k \cG_l   \cap \sigma_l(I(h)\cap F_l\cD).
$$
We will see that the surjectivity of the latter map follows from the involutivity properties proved above.
Namely, let $\sigma(P)=\sigma_l(P) \in \widetilde{V}^k\cG_l$ be given, where $P\in I(h)\cap F_l\cD$. Then by Lemma \ref{lem:Involutive_F-Filt} we have an expression
$$
\sigma(P)=\sum_{i=0}^n k_i \cdot \sigma(G_i);
$$
recall that we had put
$G_0:=t-h$, $G_1:=\delta_1,\ldots, G_{n-1}:=\delta_{n-1}$ and $G_n:=\chi+\partial_t t +1$ for the involutive basis of $I(h)$. Since obviously the symbols $\sigma(G_i)$ are homogeneous elements in the ring $\cG=\gr_\bullet^F \cD$, we have that $\deg(k_i) = l-\deg(\sigma(G_i)) = l-\ord(G_i)$.
On the other hand, we know by Corollary \ref{cor:Involutive_V-Filt} that $\ord^{\widetilde{V}}(k_i)\geq \ord^{\widetilde{V}}(\sigma(P))-\ord^{\widetilde{V}}(\sigma(G_i))$, that is,
$$
k_i\in \widetilde{V}^{\ord^{\widetilde{V}}(\sigma(P))-\ord^{\widetilde{V}}(\sigma(G_i))}\cG.
$$
Moreover, it follows from the concrete form of the operators $G_0,\ldots, G_n$ that
$\ord^{\widetilde{V}}(\sigma(G_i))=\ord^{V}(G_i)=0$, hence $k_i\in \widetilde{V}^{\ord^{\widetilde{V}}(\sigma(P))}\cG$. Now choose any lift of $k_i$ to an operator $K_i\in F_{l-\ord(G_i)}\cD\cap V^{\ord^{\widetilde{V}}(\sigma(P))}\cD$. Such a lift exists since, as we have already noticed above,
the map
$$
F_l \cD \cap V^k \cD \longrightarrow
\widetilde{V}^k \cG_l
$$
is surjective for all $l, k$. Then the element $P':=\sum_{i=0}^n K_i \cdot G_i\in I(h)$ is the preimage we are looking for, i.e., $\sigma(P')=\sigma(P)$ and we have
$$
P'\in F_l\cD \cap V^{\ord^{\widetilde{V}}(\sigma(P))} \cD
\subset F_l\cD \cap V^k \cD;
$$
recall that we had chosen $\sigma(P)\in \widetilde{V}^k \cG_l \cap \sigma_l(I(h)\cap F_l\cD)$,  meaning that $k  \leq \ord^{\widetilde{V}}(\sigma(P))$.
Hence we have shown the surjectivity of
$$
F_l \cD \cap V^k \cD \cap I(h) \longrightarrow \widetilde{V}^k \gr_l^F \cD  \cap \sigma_l(I(h)\cap F_l\cD),
$$
and as we said at the beginning of the proof, using \cite[Corollaire 1.2.14]{Saito1}, the surjectivity of
$$
F_l \cD  \cap V^k \cD = F_l \cD \cap V^k \cD \cap J_0
\longrightarrow
F_l \gr_0^J \cD \cap V^k \gr_0^J \cD= F^{\ord}_l N(h) \cap V^k_{\ind} N(h),
$$
as required.

\end{proof}

\section{Description of the Hodge filtration}
\label{sec:HodgeFiltration}

This section is the central piece of the article. We apply the results on the canonical $V$-filtration from the last section to compute the Hodge filtration on the mixed Hodge module which has $\cO_X(*D)$ as underlying $\cD_X$-module. The main result is
Theorem \ref{thm:HodgeOnMeromorphic}, which gives a precise description of $F^H_\bullet \cO_X(*D)$. We also complement it with some statements about the Hodge filtration on the dual module $\cO_X(!D)$, see Theorem \ref{theo:DualFiltration}, which we expect to be useful for future applications. We conjecture (see Conjecture \ref{con:GenLevel}) some bound for the so-called generating level of the Hodge filtration
(see Definition \ref{def:LevelHodgeFiltration} for this notion), which is supported by computation of examples in section \ref{sec:Examples} below.

Notice that some of the results in this section (first part of Theorem \ref{thm:HodgeOnMeromorphic} or Theorem \ref{con:GenLevel}) hold globally on $X$, however, for most of the proofs we will need to require that $D\subset X$ is defined by an equation $h$. As already indicated, we will determine the Hodge filtration using the graph embedding $i_h:X\hookrightarrow\dC_t\times X$ and by considering extensions of Hodge modules from $\dC_t^* \times X$ to $\dC_t\times X$.

Hence, let again $X$ be an $n$-dimensional complex manifold and $D\subset X$ a reduced free divisor,
which is strongly Koszul at each point $p\in D$.
We assume for now (until and including Corollary \ref{cor:PreciseDescrHFilt}) that $D$ is given by a reduced equation $h\in \Gamma(X,\cO_X)$. Notice also that contrary to the last section,
we will consider all the objects as sheaves since this is more convenient when applying the functorial constructions
from the theory of mixed Hodge modules.

Put $U:=X\backslash D$, and consider the following basic diagram:
\begin{equation}\label{eq:BasicDiag}
\begin{tikzcd}
U \arrow{rr}{i'_h}\arrow{dd}{j}  && \dC^*_t \times X\arrow{dd}{j'} \\ \\
X\arrow{rr}{i_h} && \dC_t \times X.
\end{tikzcd}
\end{equation}
Here $j$ and $j'$ are open embeddings, whereas $i_h$ and $i'_h$ (both are given by $\underline{x} \mapsto (\underline{x},h(\underline{x}))$ and may be called
graph embeddings) are closed.

We are considering the pure Hodge module $\dQ^H_U[n]$ on $U$. By the functorial properties
of the category of mixed Hodge modules, we know that there is an object $j_*\dQ^H_U[n]\in \MHM(X)$,
whose underlying $\cD_X$-module is the module $\cO_X(*D)$ of meromorphic functions. We are interested in describing
the Hodge filtration $F^H_\bullet$ on $\cO_X(*D)$.

The $\cD_U$-module underlying $\dQ^H_U[n]$ is simply the structure sheaf $\cO_U$ and we have
$F^H_k\cO_U=0$ for all $k<0$ and $F^H_k\cO_U=\cO_U$ for all $k\geq 0$. We  obviously have
$$
\Theta_U \cong j^*\Der_X(-\log D) = \cO_U \delta_1\oplus\ldots\oplus\cO_U\delta_{n-1}\oplus \cO_U \chi,
$$
recall from Proposition \ref{prop:SummarySKLCT} that the basis $\delta_1,\ldots,\delta_{n-1},\chi$ of $\Der_X(-\log D)$ was chosen such that $\delta_i(h)=0$ for $i=1,\ldots,n-1$ and $\chi(h)=h$. Hence
we have the following presentation
$$
\cO_U\cong \frac{\cD_U}{\left(\delta_1,\ldots,\delta_{n-1},\chi+1\right)},
$$
identifying the class of $1\in \cD_U$ on the right-hand side with the function $h^{-1}\in \cO_U$.
Under this isomorphism, $F^H_\bullet \cO_U \cong F^{\ord}_\bullet \cD_U/(\delta_1,\ldots,\delta_{n-1},\chi+1)$,
here $F^{\ord}_\bullet$ denotes the filtration induced on a cyclic $\cD$-module by the
filtration on $\cD$ by the order of differential operators.

From this presentation, we deduce the following statement.
\begin{lemma}\label{lem:chitilde}
The pure Hodge module $i'_{h,*}\dQ^H_U[n]$ has the underlying $\cD_{\dC_t^* \times X}$-module
$$
i'_{h,+}\cO_U \cong
\frac{\cD_{\dC^*_t\times X}}{\cD_{\dC^*_t\times X}(t-h,\delta_1,\ldots,\delta_{n-1},\widetilde{\chi}+1)},
$$
where $\widetilde{\chi}:=\chi+\partial_t t$.
Under this isomorphism, we have the following description of the Hodge filtration on $i'_{h,*}\dQ^H_U[n]$:
\begin{equation}\label{eq:HodgeFiltOutside}
F^H_k(i'_{h,+}\cO_U) \cong F^{\ord}_{k-1}\left[\frac{\cD_{\dC^*_t\times X}}{\cD_{\dC^*_t\times X}(t-h,\delta_1,\ldots,\delta_{n-1},\widetilde{\chi}+1)}\right],
\end{equation}
where again $F^{\ord}_\bullet$ is the filtration induced from $F_\bullet \cD_{\dC_t^*\times X}$.
\end{lemma}
\begin{proof}
We rewrite the morphism $i'_h$ as the composition $i'_h={^{(2)}}i'_h\circ{^{(1)}}i'_h$, where ${^{(1)}}i'_h:U \rightarrow \dC^*_t\times U$ is the graph embedding
of the restricted morphism $h_{|U}:U\rightarrow \dC^*_t$ (in particular, it is closed)
and where
$$
\begin{array}{rcl}
{^{(2)}}i'_h:\dC^*_t\times U & \longrightarrow  & \dC^*_t\times X   \\
(t,x) & \longmapsto & (t,x)
\end{array}
$$
is the canonical open embedding. Notice however that the composed morphism $i'_h:U\rightarrow
\dC^*_t\times X$ is closed, since
the closure of its image is contained in $\dC^*_t\times U$.

We can extend the above diagram \eqref{eq:BasicDiag} as follows:
$$
\begin{tikzcd}
& \dC^*_t\times U
\arrow{ddr}{^{(2)}i'_h}&\\ \\
U \arrow{uur}{^{(1)}i'_h} \arrow{rr}{i'_h}\arrow{dd}{j}  && \dC^*_t \times X\arrow{dd}{j'} \\ \\
X\arrow{rr}{i_h} && \dC_t \times X.
\end{tikzcd}
$$
We have
$$
^{(1)}i'_{h,+} \cO_U = {^{(1)}}i'_{h,+} \frac{\cD_U}{\left(\delta_1,\ldots,\delta_{n-1},\chi+1\right)} = {^{(1)}}i'_{h,*}\left(\frac{\cD_U}{\left(\delta_1,\ldots,\delta_{n-1},\chi+1\right)}\right)[\partial_t]
=
\frac{\cD_{\dC^*_t\times U}}{\left(t-h,\delta_1,\ldots,\delta_{n-1},\widetilde{\chi}+1\right)}
$$
and (see \cite[Formula 1.8.6]{Saitobfunc})
\begin{align*}
F^H_{k+1}(^{(1)}i'_{h,+}\cO_U)
&=\D \sum_{k_1+k_2=k} {^{(1)}i'_{h,*}}F^H_{k_1}\left(\cD_U/(\delta_1,\ldots,\delta_{n-1},\chi+1)\right)\partial_t^{k_2} \\
&=\D \sum_{k_1+k_2=k} {^{(1)}i'_{h,*}}F^{\ord}_{k_1}\left(\cD_U/(\delta_1,\ldots,\delta_{n-1},\chi+1)\right)\partial_t^{k_2} \\
&=F^{\ord}_k\left(\cD_{\dC^*_t \times U}/(t-h,\delta_1,\ldots,\delta_{n-1}, \widetilde{\chi}+1)\right).
\end{align*}
Again, since
\begin{equation}\label{eq:SuppInU}
\overline{\Supp\left(\cD_{\dC^*_t \times U}/(t-h,\delta_1,\ldots,\delta_{n-1},\widetilde{\chi}+1)\right)}
=\overline{\im(^{(1)}i'_h)} \subset \dC^*_t\times U,
\end{equation}
we obtain
$$
^{(2)}i'_{h,+}
\left(\cD_{\dC^*_t\times U}/(t-h,\delta_1,\ldots,\delta_{n-1},\widetilde{\chi}+1)\right)
=
\cD_{\dC^*_t \times X}/(t-h,\delta_1,\ldots,\delta_{n-1},\widetilde{\chi}+1)
$$
Now we consider the Hodge filtration on the module
$$
i'_{h,+}\cO_U=
(^{(2)}i'_h)_+
\left(\cD_{\dC^*_t \times U}/(t-h,\delta_1,\ldots,\delta_{n-1},\widetilde{\chi}+1)\right)
\cong \cD_{\dC^*_t \times X}/(t-h,\delta_1,\ldots,\delta_{n-1},\widetilde{\chi}+1).
$$
We use once more the inclusion in formula \eqref{eq:SuppInU}, which gives, when applying
\cite[Formula 4.2.1]{Saitobfunc}
to the module
$\cM=\cD_{\dC^*_t\times X}/(t-h,\delta_1,\ldots,\delta_{n-1},\widetilde{\chi}+1)$ and the morphism ${^{(2)}i'_h}$, that
$$
F^H_{k+1}
i'_{h,+}\cO_U=
(^{(2)}i'_h)_*
F_k^{\ord}\left(\cD_{\dC^*_t\times U}/(t-h,\delta_1,\ldots,\delta_{n-1},\widetilde{\chi}+1)\right)
=
F_k^{\ord}\left(\cD_{\dC^*_t\times X}/(t-h,\delta_1,\ldots,\delta_{n-1},\widetilde{\chi}+1)\right),
$$
(the second equality follows since
the closure of $\Supp\left(F_k^{\ord}(\cD_{\dC^*_t\times U}/(t-h,\delta_1,\ldots,\delta_{n-1},\widetilde{\chi}+1))\right)$ in $\dC^*_t\times X$ is contained in $\dC^*_t\times U$, i.e., from formula \eqref{eq:SuppInU}).
\end{proof}

Next we will deduce from this result a description of the Hodge filtration on
the Hodge module $j'_* (i'_h)_* \dQ^H_U[n]$. This is the crucial step towards our first main result (Theorem \ref{thm:HodgeOnMeromorphic} below). We will use the results from the last section concerning the canonical $V$-filtration
of the  graph embedding module $i_{h,+}\cO_X(*D)$ in an essential way. The result we are after can be stated as follows.
\begin{proposition}\label{prop:HodgeInclusionNh}
The mixed Hodge module $j'_* i'_{h,_*} \dQ^H_U[n]$ has
$i_{h,+}\cO_X(*D)$ as underlying $\cD_{\dC_t \times X}$-module.
Under the isomorphism
$$
i_{h,+}\cO_X(*D) \cong
\frac{\cD_{\dC_t\times X}}{\cD_{\dC_t\times X}(t-h,\delta_1,\ldots,\delta_{n-1},\widetilde{\chi}+1)} =: \cN(h)
$$
from Lemma \ref{lem:RepresentGraphEmbed} (which, as indicated at the beginning of this section, we write here as an isomorphism of sheaves of $\cD_{\dC_t\times X}$-modules rather than of germs)
we have the following inclusion of coherent $\cO_{\dC_t\times X}$-modules for all $k\in \dZ$:
\begin{equation}\label{eq:HodgeInOrder}
F^H_k(i_{h,+}\cO_X(*D)) \subset
F^{\ord}_{k-1} \cN(h)
\end{equation}
\end{proposition}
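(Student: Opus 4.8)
The plan is to exploit the strict specializability of the mixed Hodge module $j'_*i'_{h,*}\dQ^H_U[n]$ along the smooth hypersurface $\{t=0\}\subset\dC_t\times X$, feeding in the description of $F^H_\bullet(i'_{h,+}\cO_U)$ from the preceding Lemma (formula \eqref{eq:HodgeFiltOutside}) and the $V$-filtration results of Section \ref{sec:VFilt}; the statement being local on $X$, one may work around a point $p\in D$ where $\cN(h)$ becomes the germ $N(h)$. Since $h$ is a unit in $\cO_X(*D)$, the operator $t$ acts invertibly on $i_{h,+}\cO_X(*D)\cong\cN(h)$, so $\cN(h)$ is the $*$-extension $j'_*$ of its restriction to $\dC^*_t\times X$, and Saito's formula for the Hodge filtration of such an extension (\cite[Formula 4.2.1]{Saitobfunc}, the tool already used in the proof of the preceding Lemma) gives, for all $k\in\dZ$,
\[
F^H_k\cN(h)=\sum_{i\ge 0}\partial_t^{\,i}\bigl(F^H_{k-i}\cN(h)\cap V^0_{\can}\cN(h)\bigr),
\]
the relevant generating step being $V^0_{\can}\cN(h)$ because the roots of $b^{\cN(h)}_{[1]}$ lie in $(-1,1)$ (see the remark after Lemma \ref{lem:RootsElement1GraphEmbed}); moreover $F^H_\bullet$ restricted to $V^0_{\can}\cN(h)$ is the Deligne-type canonical extension across $\{t=0\}$ of the filtration $F^H_\bullet(i'_{h,+}\cO_U)$, which by \eqref{eq:HodgeFiltOutside} coincides with $F^{\ord}_{\bullet-1}$ on $\dC^*_t\times X$.

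Granting the \emph{base inclusion}
\[
F^H_k\cN(h)\cap V^0_{\can}\cN(h)\ \subseteq\ F^{\ord}_{k-1}\cN(h)\qquad(k\in\dZ),
\]
the statement of the Proposition follows from the displayed formula: since $\partial_t\in F_1\cD_{\dC_t\times X}$ we have $\partial_t^{\,i}F^{\ord}_{j}\cN(h)\subseteq F^{\ord}_{j+i}\cN(h)$, so applying the base inclusion with $k$ replaced by $k-i$ yields $\partial_t^{\,i}\bigl(F^H_{k-i}\cN(h)\cap V^0_{\can}\cN(h)\bigr)\subseteq\partial_t^{\,i}F^{\ord}_{k-i-1}\cN(h)\subseteq F^{\ord}_{k-1}\cN(h)$, and summing over $i$ gives $F^H_k\cN(h)\subseteq F^{\ord}_{k-1}\cN(h)$. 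Thus everything reduces to the base inclusion, i.e.\ to the case of sections lying in $V^0_{\can}\cN(h)$.

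For the base inclusion I would combine three ingredients. First, Lemma \ref{lem:VcanInVind} gives $V^0_{\can}\cN(h)\subseteq V^0_{\ind}\cN(h)$. Second, Proposition \ref{prop:CompatibilityVandF} identifies the order filtration restricted to $V^0_{\ind}\cN(h)$ as the naive one: $F^{\ord}_{l}\cN(h)\cap V^0_{\ind}\cN(h)$ is the image of $F_l\cD_{\dC_t\times X}\cap V^0\cD_{\dC_t\times X}$ under the projection $\cD_{\dC_t\times X}\to\cN(h)$. Third, Corollary \ref{cor:PreciseDescrCanVFilt} describes $V^0_{\can}\cN(h)$ as the $V^0\cD_{\dC_t\times X}$-submodule of $V^0_{\ind}\cN(h)$ generated by $V^1_{\ind}\cN(h)=t\,V^0_{\ind}\cN(h)$ together with the single element $\prod_{\alpha_i\in B'_h}(\partial_t t+\alpha_i)^{l_i}[1]$. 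Using the canonical-extension description of $F^H_\bullet$ on $V^0_{\can}\cN(h)$, one checks it is generated over $V^0\cD_{\dC_t\times X}$, in the expected degrees, by these generators: the class $[1]$ generates $F^H_1(i'_{h,+}\cO_U)$ on $\dC^*_t\times X$ by \eqref{eq:HodgeFiltOutside} and lies in $F^{\ord}_0\cN(h)$, so it contributes to $F^H$ in degree $1$; and $\prod_{\alpha_i\in B'_h}(\partial_t t+\alpha_i)^{l_i}[1]$, obtained by applying an operator of order $|B'_h|$ in $V^0\cD_{\dC_t\times X}$, contributes to $F^H$ in degree $1+|B'_h|$ while lying in $F^{\ord}_{|B'_h|}\cN(h)$. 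Matching degrees and invoking Proposition \ref{prop:CompatibilityVandF} once more then yields $F^H_k\cN(h)\cap V^0_{\can}\cN(h)\subseteq F^{\ord}_{k-1}\cN(h)$.

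The main obstacle is precisely this last step: pinning down the Deligne-type canonical extension of $F^{\ord}_{\bullet-1}$ to $V^0_{\can}\cN(h)$ and checking that it does not leave $F^{\ord}_{\bullet-1}\cN(h)$. This is delicate because $\gr^{F^{\ord}}_\bullet\cN(h)$ has $t$-torsion (already when $D$ is a normal crossing divisor), so one cannot simply "extend a section across $\{t=0\}$ keeping the order of a representing operator"; the needed control must be extracted from the involutivity statements of Section \ref{sec:VFilt} --- Lemma \ref{lem:Involutive_F-Filt}, Corollary \ref{cor:Involutive_V-Filt}, and above all Proposition \ref{prop:CompatibilityVandF} --- combined with the explicit generators of $V^0_{\can}\cN(h)$ from Corollary \ref{cor:PreciseDescrCanVFilt}, and one must track the shift by $1$ between $F^H$ and $F^{\ord}$ consistently throughout. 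A secondary point is to make precise Saito's generation formula of the first paragraph for this particular $*$-extension, which follows from strict specializability once $V^0_{\can}\cN(h)$ is recognised as the $V^0\cD$-coherent lattice generating $F^H_\bullet$.
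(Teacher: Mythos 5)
Your skeleton is the right one and matches the paper's in outline: invoke Saito's formula for the $*$-extension along $\{t=0\}$, feed in the identification \eqref{eq:HodgeFiltOutside} of the Hodge filtration on $\dC^*_t\times X$ with $F^{\ord}_{\bullet-1}\cN'(h)$, use $\partial_t^i F^{\ord}_j\subset F^{\ord}_{j+i}$ to reduce everything to a statement about sections lying in $V^0_{\can}\cN(h)$, and bring in Lemma \ref{lem:VcanInVind} and Proposition \ref{prop:CompatibilityVandF}. But the proof does not close, and the gap sits exactly where you flag ``the main obstacle''. Two remarks. First, the formula the paper actually uses from \cite[Proposition 4.2]{Saitobfunc} is $F^H_k\cN(h)=\sum_{i\ge0}\partial_t^i\bigl(V^0_{\can}\cN(h)\cap j'_*(j')^*F^H_{k-i}\cN(h)\bigr)$, and with $(j')^*F^H_{k-i}=F^{\ord}_{k-1-i}\cN'(h)$ the terms to be bounded are $V^0_{\can}\cN(h)\cap j'_*F^{\ord}_{k-1-i}\cN'(h)$, i.e.\ sections of $V^0_{\can}$ whose restriction to $t\neq0$ has order $\le k-1-i$; your ``base inclusion'' $F^H_k\cN(h)\cap V^0_{\can}\subset F^{\ord}_{k-1}\cN(h)$ does not control these terms, and even if you start from your variant of the formula, the only handle you have on $F^H$ near $t=0$ is its restriction to $t\neq0$, so either way everything funnels into the same extension statement: a section of $V^0_{\can}\cN(h)$ (equivalently, after Lemma \ref{lem:VcanInVind}, of $V^0_{\ind}\cN(h)$) whose restriction to $\dC^*_t\times X$ lies in $F^{\ord}_r\cN'(h)$ already lies in $F^{\ord}_r\cN(h)$. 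Second, your sketch for this step — ``Deligne-type canonical extension'' plus degree-matching of the generators $[1]$ and $\beta(\partial_t t)[1]$ of $V^0_{\can}$ — is not an argument: knowing $V^0\cD$-module generators of $V^0_{\can}\cN(h)$ does not determine $F^H\cap V^0_{\can}$ (that identification, $F^H_k\cap V^0_{\can}=F^{\ord}_{k-1}\cap V^0_{\can}$, is exactly the content of Corollary \ref{cor:PreciseDescrHFilt}, whose proof relies on the present Proposition), and you yourself note the $t$-torsion in $\gr^{F^{\ord}}_\bullet\cN(h)$ that blocks the naive extension.

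The missing argument is short but essential, and it is where the paper spends its effort. Given $m\in V^0_{\ind}\cN(h)\cap j'_*F^{\ord}_r\cN'(h)$, pick $l$ with $m\in F^{\ord}_l\cN(h)$; since $m|_{t\neq0}\in F^{\ord}_r\cN'(h)$, the class of $m$ in the $\cO_{\dC_t\times X}$-coherent quotient $F^{\ord}_l\cN(h)/F^{\ord}_r\cN(h)$ is $t$-torsion, so $t^p m\in V^p_{\ind}\cN(h)\cap F^{\ord}_r\cN(h)$ for some $p\ge0$. Now the surjectivity of $F_r\cD\cap V^p\cD\to F^{\ord}_r\cN(h)\cap V^p_{\ind}\cN(h)$ from Proposition \ref{prop:CompatibilityVandF} lifts $t^pm$ to an operator $P'=t^pP$ with $P\in F_r\cD\cap V^0\cD$, and since $t$ acts invertibly on $\cN(h)$ (because $\cO_X(*D)$ is $h$-localized), $m=[P]\in F^{\ord}_r\cN(h)$. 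This coherence-plus-lifting mechanism is precisely the ``control extracted from Proposition \ref{prop:CompatibilityVandF}'' that your proposal postpones; without it the claimed inclusion \eqref{eq:HodgeInOrder} is not established.
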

\begin{proof}

First put for notational convenience
$$
\cN'(h):=\frac{\cD_{\dC^*_t\times X}}{\cD_{\dC^*_t\times X}(t-h,\delta_1,\ldots,\delta_{n-1},\widetilde{\chi}+1)}
\cong
i'_{h,+}\cO_U.
$$
Now it is known (see \cite[Proposition 4.2]{Saitobfunc}) that the Hodge filtration on the Hodge module $j'_* i'_{h,_*} \dQ^H_U[n]
\cong i_{h,*} j_* \dQ^H_U[n]$ has the following description (this is due to the construction of the open direct image for mixed Hodge modules, see \cite[Section 2.7-2.8]{SaitoMHM}):
\begin{equation}\label{eq:SaitoFormula}
F^H_k(i_{h,+}\cO_X(*D)) =
\sum_{i\geq 0}\partial_t^i\left(
V_{\can}^0 \cN(h) \cap j'_* (j')^* F^H_{k-i}
i_{h,+}\cO_X(*D)
\right),
\end{equation}
but since
$$
(j')^* F^H_{k-i}
(i_{h,+}\cO_X(*D))=
F^H_{k-i} i'_{h,+}\cO_U
=
F^{\ord}_{k-1-i} \cN'(h),
$$
we are left to show that we have
$$
\sum_{i\geq 0}\partial_t^i\left(
V_{\can}^0 \cN(h) \cap j'_* F^{\ord}_{k-1-i} \cN'(h)
\right) \subset F^{\ord}_{k-1} \cN(h).
$$
Since
$$
\partial_t^i F^{\ord}_r \cN(h) \subset
F^{\ord}_{r+i} \cN(h)
$$
holds for all $r,i\in\dN$, it is sufficient to show the inclusion
$$
V_{\can}^0 \cN(h) \cap j'_* F^{\ord}_r \cN'(h)
\subset F^{\ord}_r \cN(h)
$$
for all $r\in \dN$. We have seen in Proposition
\ref{prop:PreciseDescrCanVFilt} that
$V_{\can}^k \cN(h) \subset V_{\ind}^k \cN(h)$
holds for all $k\in\dZ$ (recall that since we assume in this section that $D$ is SK free, we know that the roots of $b_h(s)$ are included in $(-2,0)$ by \cite[Theorem 4.1]{nar_symmetry_BS}, so that the assumptions of Proposition \ref{prop:PreciseDescrCanVFilt} are satisfied). Thus it suffices to show that we have
$$
V_{\ind}^0 \cN(h) \cap j'_* F^{\ord}_r \cN'(h)
\subset F^{\ord}_r \cN(h)
$$
for all $r\in \dN$. Let an element
$m\in V_{\ind}^0 \cN(h) \cap j'_* F^{\ord}_r \cN'(h)
$ be given. Then there is some $p\in\dN$ such that $t^p \cdot m \in V^p_{\ind} \cN(h) \cap F_r^{\ord} \cN(h)$ (see also the proof of \cite[Proposition 3.2.2]{Saito1}, especially the implication
(3.2.1.2) $\Rightarrow$ (3.2.2.1)): Since the
filtration $F^{\ord}_\bullet$ is exhaustive on $V^0_{\ind} \cN(h)$ we know that there is some $l\in \dN$ such that $m\in F^{\ord}_l \cN(h) \cap V^0_{\ind}\cN(h)$, if $l\leq r$, we are done by putting $p=0$. Otherwise, since $m\in j'_*F^{\ord}_r \cN'(h)$ (which means by definition that
$m_{|\dC^*_t\times X}\in F^{\ord}_r\cN'(h)$) it follows
that the class of $m$ in the quotient
$F^{\ord}_l \cN(h)/ F^{\ord}_r \cN(h)$ is a $t$-torsion element in that module. Notice that
$F^{\ord}_l \cN(h)/ F^{\ord}_r \cN(h)$ is
$\cO_{\dC_t\times X}$-coherent, hence there
is some $p$ such that the class of $t^p\cdot m$ is zero in this quotient, that is $t^p\cdot m \in
 F^{\ord}_r \cN(h)$, and obviously we have $t^p\cdot m \in V_{\ind}^p \cN(h)$ since
 $m\in V^0_{\ind} \cN(h)$.

Now by Proposition \ref{prop:CompatibilityVandF} we know that there exists an operator $P'\in V^p\cD_{\dC_t\times X}\cap F_r \cD_{\dC_t\times X}$ projecting to
$t^p\cdot m \in V^p_{\ind} \cN(h) \cap F_r^{\ord} \cN(h)$. By definition,
$P'$ can be written as $P'=t^p\cdot P$, where $P\in V^0\cD_{\dC_t\times X}\cap F_r \cD_{\dC_t\times X}$. Then
the class $[P]$ of $P$ in $\cN(h)$ satisfies
$[P]\in F^{\ord}_r \cN(h)$ and obviously we have $[P]=m$. Hence
the inclusion $V_{\ind}^0 \cN(h) \cap j'_* F^{\ord}_r \cN'(h)
\subset F^{\ord}_r \cN(h)$ is proved.
\end{proof}

Using the description of the canonical $V$-filtration on the module $\cN(h)$ along the divisor $\{t=0\}$ from Corollary \ref{prop:PreciseDescrCanVFilt}, we can give a more precise description of the Hodge filtration on that module. We also recall
from such corollary that
$B'_h:=\left\{\alpha\in \dQ \cap (0,1
)\,|\, b_h(\alpha-1)=0\right\}$, and for $\alpha\in B'_h$, we write $l_\alpha$ for the multiplicity of
$\alpha-1$ in $b_h(s)$.

For the sake of brevity, we will sometimes write in the following $F^H\cN(h)$, translating the Hodge filtration on $i_{h,+}\cO_X(*D)$ using the isomorphism $\cN(h)\cong i_{h,+}\cO_X(*D)$.
\begin{corollary}\label{cor:PreciseDescrHFilt}
Under the isomorphism $i_{h,+}\cO_X(*D)\cong \cN(h)$, we have the following (equivalent) descriptions of the Hodge filtration steps $F^H_k \cO_X(*D)$ for all $k\in \dZ$:
\begin{eqnarray}
     \D F_k^H i_{h,+}\cO_X(*D) & = & \label{eq:FormulaHodgeNh_NonRec}\D \sum_{i\geq 0} \partial_t^i\left(V_{\can}^0\cN(h)\cap F^{\ord}_{k-1-i}\cN(h)\right)
     =
      \sum_{i= 0}^{k-1} \partial_t^i\left(V_{\can}^0\cN(h)\cap F^{\ord}_{k-1-i}\cN(h)\right),
     \\      \label{eq:FormulaHodgeNh_Simple}\D F_k^H i_{h,+}\cO_X(*D)& = &\D
\partial_t F^H_{k-1} i_{h,+}\cO_X(*D)  +
V^0_{\can}\cN(h) \cap F^{\ord}_{k-1} \cN(h) \\ & \cong & \label{eq:FormulaHodgeNh}
 \D \partial_t
F^H_{k-1} i_{h,+}\cO_X(*D)  +
\left(
V_{\ind}^1\cN(h) + \prod_{\alpha \in B'_h}(\partial_t t +\alpha)^{l_\alpha} V_{\ind}^0 \cN(h)
\right) \cap F^{\ord}_{k-1} \cN(h).
\end{eqnarray}

\end{corollary}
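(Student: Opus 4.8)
The plan is to feed the explicit description of $V^0_{\can}\cN(h)$ from Corollary~\ref{cor:PreciseDescrCanVFilt} into Saito's formula for the Hodge filtration of an open direct image, which was already recalled inside the proof of Proposition~\ref{prop:HodgeInclusionNh}. Combining \eqref{eq:SaitoFormula} with the identity $(j')^* F^H_{k-i}(i_{h,+}\cO_X(*D)) = F^{\ord}_{k-1-i}\cN'(h)$ from the lemma preceding that proposition, one has
$$
F^H_k(i_{h,+}\cO_X(*D)) = \sum_{i\geq 0}\partial_t^i\Bigl(V^0_{\can}\cN(h) \cap j'_* F^{\ord}_{k-1-i}\cN'(h)\Bigr).
$$
So the key intermediate claim to prove is the equality of $\cO_{\dC_t\times X}$-submodules
$$
V^0_{\can}\cN(h) \cap j'_* F^{\ord}_r\cN'(h) \;=\; V^0_{\can}\cN(h) \cap F^{\ord}_r\cN(h)
\qquad (r\in\dZ).
$$
The inclusion ``$\supseteq$'' is formal: $j'$ is an open immersion, so $(j')^* F^{\ord}_r\cN(h) = F^{\ord}_r\cN'(h)$, and since $\cN(h)\cong\cO_X(*D)[\partial_t]$ has no $t$-torsion the adjunction unit embeds $\cN(h)\hookrightarrow j'_*\cN'(h)$, whence $F^{\ord}_r\cN(h)\subseteq j'_* F^{\ord}_r\cN'(h)$. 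The inclusion ``$\subseteq$'' is precisely the chain of reductions already carried out inside the proof of Proposition~\ref{prop:HodgeInclusionNh}: one descends from $V^0_{\can}$ to $V^0_{\ind}$ via Lemma~\ref{lem:VcanInVind}, multiplies by a suitable power of $t$ to land in $F^{\ord}_r\cN(h)\cap V^p_{\ind}\cN(h)$, lifts through Proposition~\ref{prop:CompatibilityVandF} to an operator in $F_r\cD\cap V^p\cD$ of the form $t^pP$, and reads off that the class $[P]$ lies in $F^{\ord}_r\cN(h)$ — together with the trivial remark that the left-hand side is contained in $V^0_{\can}\cN(h)$ to begin with.

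Granting this equality, substitution gives $F^H_k(i_{h,+}\cO_X(*D)) = \sum_{i\geq0}\partial_t^i\bigl(V^0_{\can}\cN(h)\cap F^{\ord}_{k-1-i}\cN(h)\bigr)$, which is the first equality in \eqref{eq:FormulaHodgeNh_NonRec}. Because $F_{-1}\cD_{\dC_t\times X}=0$, one has $F^{\ord}_j\cN(h)=0$ for $j\leq -1$, so the summand vanishes once $i\geq k$ and the sum truncates to $\sum_{i=0}^{k-1}$, giving the second equality. For the recursive form \eqref{eq:FormulaHodgeNh_Simple} I would isolate the $i=0$ term and reindex the remainder by $j=i-1$:
$$
\sum_{i=1}^{k-1}\partial_t^i\bigl(V^0_{\can}\cN(h)\cap F^{\ord}_{k-1-i}\cN(h)\bigr) = \partial_t\sum_{j=0}^{k-2}\partial_t^j\bigl(V^0_{\can}\cN(h)\cap F^{\ord}_{(k-1)-1-j}\cN(h)\bigr) = \partial_t\,F^H_{k-1}(i_{h,+}\cO_X(*D)),
$$
the last step being \eqref{eq:FormulaHodgeNh_NonRec} applied with $k-1$ in place of $k$. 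Finally, \eqref{eq:FormulaHodgeNh} follows by replacing $V^0_{\can}\cN(h)$ in \eqref{eq:FormulaHodgeNh_Simple} with the expression $V^1_{\ind}\cN(h) + \prod_{\alpha\in B'_h}(\partial_t t+\alpha)^{l_\alpha} V^0_{\ind}\cN(h)$ supplied by Corollary~\ref{cor:PreciseDescrCanVFilt}; although that corollary is stated stalkwise for $N(h)$, it holds verbatim for the sheaf $\cN(h)$ since $D$ is cut out globally by $h$ and all the $V$-filtrations in question are defined locally.

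I do not expect a serious obstacle beyond bookkeeping: essentially all the content sits in the ``$\subseteq$'' half of the intermediate equality, and that argument is already contained in the proof of Proposition~\ref{prop:HodgeInclusionNh}. The only point requiring a little care is to extract from that proof the sharp statement $V^0_{\can}\cN(h)\cap j'_*F^{\ord}_r\cN'(h)\subseteq F^{\ord}_r\cN(h)$ rather than merely the weaker consequence $F^H_k\subseteq F^{\ord}_{k-1}$ for which it was originally invoked, and to keep track of the index shift by one between $F^H_\bullet\cO_X(*D)$ and $F^H_\bullet i_{h,+}\cO_X(*D)$.
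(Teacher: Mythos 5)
Your proof is correct and follows essentially the same route as the paper: both arguments start from Saito's formula \eqref{eq:SaitoFormula}, reduce everything to the equality $V^0_{\can}\cN(h)\cap j'_*F^{\ord}_r\cN'(h)=V^0_{\can}\cN(h)\cap F^{\ord}_r\cN(h)$, substitute the description of $V^0_{\can}\cN(h)$ from Corollary \ref{cor:PreciseDescrCanVFilt}, and obtain the recursive and truncated forms by the same reindexing. The only (cosmetic) difference is that you justify the nontrivial inclusion by re-running the internal steps of the proof of Proposition \ref{prop:HodgeInclusionNh} (via $V^0_{\ind}\cN(h)\cap j'_*F^{\ord}_r\cN'(h)\subset F^{\ord}_r\cN(h)$), whereas the paper deduces it from the stated conclusion $F^H_k i_{h,+}\cO_X(*D)\subset F^{\ord}_{k-1}\cN(h)$ of that proposition by sandwiching $V^0_{\can}\cN(h)\cap j'_*F^{\ord}_{k-1}\cN'(h)$ between $F^H_k$ and $F^{\ord}_{k-1}\cN(h)$.
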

\begin{proof}

From the proof of the last lemma we know already that
\begin{equation}\label{eq:SaitoFormOrd}
F^H_k(i_{h,+}\cO_X(*D)) =
\sum_{i\geq 0}\partial_t^i\left(
V_{\can}^0 \cN(h) \cap j'_* F^{\ord}_{k-1-i} \cN'(h)\right)
\end{equation}
so that in particular $F^H_k i_{h,+}\cO_X(*D) =0$ for all $k<1$ since $F^{\ord}_l \cN'(h)=0$ for negative $l$.
The same holds for $F^{\ord}_l\cN(h)$, hence, the above formulas are proved when $k<1$.
We rewrite formula \eqref{eq:SaitoFormOrd} in a recursive way, namely
\begin{align}
\nonumber F^H_k(i_{h,+}\cO_X(*D)) & =
 \sum_{i >  0}\partial_t^i
 \left(
 V_{\can}^0 \cN(h) \cap j'_* F^{\ord}_{k-1-i} \cN'(h)
\right)+
V_{\can}^0 \cN(h) \cap j'_* F^{\ord}_{k-1} \cN'(h)
\\ \nonumber & =
 \partial_t \left(\sum_{i \geq  0}\partial_t^i
 \left(
 V_{\can}^0 \cN(h) \cap j'_* F^{\ord}_{k-2-i} \cN'(h)
\right)\right)+
V_{\can}^0 \cN(h) \cap j'_* F^{\ord}_{k-1} \cN'(h)
 \\ \label{eq:RecursiveFOnNh}
&=
\partial_t F_{k-1}^H(i_{h,+}\cO_X(*D)) +
V_{\can}^0 \cN(h) \cap j'_* F^{\ord}_{k-1} \cN'(h)
\end{align}
where the last equality uses again formula \eqref{eq:SaitoFormOrd}.

Now we conclude using Proposition \ref{prop:HodgeInclusionNh}: We know from equation \eqref{eq:HodgeInOrder} that $F^H_k(i_{h,+}\cO_X(*D)) \subset F^{\ord}_{k-1} \cN(h)$, hence we obtain
$$
V_{\can}^0 \cN(h) \cap F^{\ord}_{k-1} \cN(h) \subset
V_{\can}^0 \cN(h) \cap j'_* F^{\ord}_{k-1} \cN'(h) \stackrel{*}{\subset}
F^H_k(i_{h,+}\cO_X(*D)) \subset F^{\ord}_{k-1} \cN(h).
$$
since we clearly have
$F^{\ord}_{k-1} \cN(h) \subset j'_* F^{\ord}_{k-1} \cN'(h)$
by the very definition of the functor $(j')_*$
and where the inclusion $\stackrel{*}{\subset}$ follows from equation \eqref{eq:RecursiveFOnNh} above.
As a consequence, the first inclusion
$V_{\can}^0 \cN(h) \cap F^{\ord}_{k-1} \cN(h) \subset
V_{\can}^0 \cN(h) \cap j'_* F^{\ord}_{k-1} \cN'(h)$ is in fact an equality, and therefore equation \eqref{eq:RecursiveFOnNh} becomes
$$
F^H_k(i_{h,+}\cO_X(*D)) = \partial_t F_{k-1}^H(i_{h,+}\cO_X(*D)) +
V_{\can}^0 \cN(h) \cap F^{\ord}_{k-1} \cN(h),
$$
so that the recursive Formula \eqref{eq:FormulaHodgeNh_Simple} is shown.
Formula \eqref{eq:FormulaHodgeNh} follows by replacing
the term $V^0_{\can} \cN(h)$ with the expression from
Proposition \ref{prop:PreciseDescrCanVFilt}. Moreover, it is clear that the non-recursive Formula \eqref{eq:FormulaHodgeNh_NonRec} follows from the recursive one \eqref{eq:FormulaHodgeNh_Simple} by induction.

\end{proof}
Our main purpose is to describe the Hodge filtration on $\cO_X(*D)$.
This description will be obtained as a consequence of Proposition \ref{prop:HodgeInclusionNh} and Corollary \ref{cor:PreciseDescrHFilt}.
Recall that the graph embedding module $\cN(h)$
can be alternatively described as $\cM(h)[\partial_t]$,  where the left action by $\cD_{\dC_t\times X}$ on
$\cM(h)[\partial_t]$ is given by Formula \eqref{eq:LeftActionGraph}.

We then have the following result.
\begin{theorem}\label{thm:HodgeOnMeromorphic}
Consider a strongly Koszul free divisor $D\subset X$.
Then we have the following inclusion of coherent $\cO_X$-modules (which holds globally on $X$ and does not require that $D$ is given by a global equation)
$$
F^H_k \cO_X(*D) \subset F_k^{\ord} \cO_X(*D),
$$
where $F^H_\bullet \cO_X(*D)$ is the filtration such that
the filtered $\cD_X$-module $(\cO_X(*D),F^H_\bullet)$ underlies
the mixed Hodge module $j_* \dQ^H_U[n]$.

If $h=0$ is a local reduced equation for $D$ and using the local isomorphism $\cM(h)\cong  \cO_X(*D)$,  we have the recursive formula \begin{equation}\label{eq:MainFormula}
F_k^H \cO_X(*D) \cong F_k^H \cM(h) \cong
\left[
\partial_t F^H_k \cN(h)+
\left(V_{\ind}^1\cN(h) + \prod_{\alpha \in B'_h}(\partial_t t +\alpha)^{l_\alpha} V_{\ind}^0 \cN(h)
\right)
\right]
\cap
\left( F_k^{\ord}\cM(h)\otimes 1\right),
\end{equation}
where the intersection takes place in $\cM(h)[\partial_t] \cong \cN(h)$.

\end{theorem}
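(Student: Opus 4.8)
The plan is to assemble the statement from the results already established, the only real work being to keep track of the shift \eqref{eq:IntroHodgeNhMh} between $\cO_X(*D)$ and $i_{h,+}\cO_X(*D)$. Throughout I would use the local isomorphisms $\cM(h)\cong\cO_X(*D)$ and $\cN(h)\cong i_{h,+}\cO_X(*D)$, so that \eqref{eq:IntroHodgeNhMh} becomes $F_k^H\cM(h)=F_{k+1}^H\cN(h)\cap(\cM(h)\otimes 1)$, where $\cM(h)\otimes 1$ is the $\cO_X$-submodule of $\cN(h)=\cM(h)[\partial_t]$ attached to the $\partial_t^0$-part, with $t$ acting through $h$; here one records the standard identity $F_k^{\ord}\cN(h)\cap(\cM(h)\otimes 1)=F_k^{\ord}\cM(h)\otimes 1$ for the order filtrations (immediate from the explicit $\cD_{\dC_t\times X}$-action on page \pageref{page:RecallStructure}: rewriting a $\partial_{x_i}$-term to lower the $\partial_t$-degree never raises the order). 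The global inclusion $F_k^H\cO_X(*D)\subset F_k^{\ord}\cO_X(*D)$ is then reduced to a chart where $D=\{h=0\}$, on which it follows at once from \eqref{eq:IntroHodgeNhMh} and formula \eqref{eq:HodgeInOrder} of Proposition \ref{prop:HodgeInclusionNh}; since $F_\bullet^{\ord}\cO_X(*D)$ is defined globally (Definition \ref{def:F-ord}) and $F_\bullet^H$ is intrinsic, this gives the global statement.

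For the recursive formula, I would first specialise the recursion \eqref{eq:FormulaHodgeNh_Simple} of Corollary \ref{cor:PreciseDescrHFilt} to index $k+1$:
\[
F_{k+1}^H\cN(h)=\partial_t F_k^H\cN(h)+\bigl(V_{\can}^0\cN(h)\cap F_k^{\ord}\cN(h)\bigr).
\]
Combining this with $F_k^H\cM(h)=F_{k+1}^H\cN(h)\cap(\cM(h)\otimes 1)$ and with $F_{k+1}^H\cN(h)\subset F_k^{\ord}\cN(h)$ (which is \eqref{eq:HodgeInOrder}), the identity above gives $F_k^H\cM(h)=F_{k+1}^H\cN(h)\cap(F_k^{\ord}\cM(h)\otimes 1)$.

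Finally I would perform a lattice manipulation to pass to the stated form. Put $W:=\partial_t F_k^H\cN(h)+V_{\can}^0\cN(h)$, the bracket in \eqref{eq:MainFormula}; from the recursion above $W\supset F_{k+1}^H\cN(h)$, and from \eqref{eq:HodgeInOrder} one has $\partial_t F_k^H\cN(h)\subset\partial_t F_{k-1}^{\ord}\cN(h)\subset F_k^{\ord}\cN(h)$, so the modular law yields
\[
W\cap F_k^{\ord}\cN(h)=\partial_t F_k^H\cN(h)+\bigl(V_{\can}^0\cN(h)\cap F_k^{\ord}\cN(h)\bigr)=F_{k+1}^H\cN(h).
\]
Intersecting with $F_k^{\ord}\cM(h)\otimes 1\subset F_k^{\ord}\cN(h)$ gives $W\cap(F_k^{\ord}\cM(h)\otimes 1)=F_{k+1}^H\cN(h)\cap(F_k^{\ord}\cM(h)\otimes 1)=F_k^H\cM(h)$, and substituting $V_{\can}^0\cN(h)=V_{\ind}^1\cN(h)+\prod_{\alpha\in B'_h}(\partial_t t+\alpha)^{l_\alpha}V_{\ind}^0\cN(h)$ from Corollary \ref{cor:PreciseDescrCanVFilt} into $W$ reproduces \eqref{eq:MainFormula}. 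I do not anticipate a genuine obstacle: everything substantive sits in Corollaries \ref{cor:PreciseDescrCanVFilt} and \ref{cor:PreciseDescrHFilt} and Proposition \ref{prop:HodgeInclusionNh}, and the present argument is bookkeeping — the points requiring care being the three nested intersections with $F_{k+1}^H\cN(h)$, $F_k^{\ord}\cN(h)$, and $F_k^{\ord}\cM(h)\otimes 1$, and the index shift in \eqref{eq:IntroHodgeNhMh}.
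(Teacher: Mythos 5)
Your proposal is correct and follows essentially the same route as the paper: reduce to the local cyclic presentations, combine the shift formula $F_k^H\cM(h)=F_{k+1}^H\cN(h)\cap(\cM(h)\otimes 1)$ with the inclusion \eqref{eq:HodgeInOrder} and the recursion \eqref{eq:FormulaHodgeNh_Simple}, and then remove the inner intersection with $F_k^{\ord}\cN(h)$. Your modular-law step is exactly the paper's two-inclusion argument (its element computation with $a\in\partial_t F_k^H\cN(h)$, $b$ in the $V$-term is the same observation), so there is nothing substantive to add.
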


Formula \eqref{eq:MainFormula} is recursive despite the appearance of the same index $k$ on both sides:
in order to calculate (locally) the $k$-th filtration step of the Hodge filtration on $\cO_X(*D)$, we need to know the $k$-th filtration step of the Hodge filtration on $\cN(h)$, the knowledge of which is equivalent to the knowledge of  $F^H_{k-1} \cM(h)\cong F^H_{k-1} \cO_X(*D)$ (see the discussion in the proof below, in particular, Formula \eqref{eq:HodgeMhFromHogdeNh}).

\begin{proof}
It is sufficient to prove the second statement, since locally where $D$ is defined by an equation $h=0$, formula \eqref{eq:MainFormula} exhibits $F_k^H\cO_X(*D)$ as a submodule of $F_k^{\ord}\cM(h)$. However, the
statement $F^H_k \cO_X(*D) \subset F_k^{\ord} \cO_X(*D)$
(where $F^{\ord}_\bullet \cO_X(*D)$ is as in Definition \ref{def:F-ord}) holds globally on $X$, once it is shown at each $p\in D$.

First notice that we have the equality
$$
i_{h,*} j_* \dQ^H_U[n] =
j'_* i'_{h,*} \dQ^H_U[n]
$$
of objects in $\MHM(\dC_t\times X)$. It follows that
the filtered module
$(\cN(h),F_\bullet^H)$ (where $F^H_\bullet$ is the filtration considered in Proposition \ref{prop:HodgeInclusionNh} and Corollary \ref{cor:PreciseDescrHFilt}) underlies the mixed Hodge module
$i_{h,*} j_* \dQ^H_U[n]$.
Then \cite[Formula (1.8.6)]{Saitobfunc} yields the following relation between the filtration $F^H_\bullet \cN(h)$ and
the Hodge filtration on $j_* \dQ^H_U[n]$ (which is
$F_\bullet^H \cO_X(*D)$, and again we will freely denote it by $F^H_\bullet \cM(h)$ using the isomorphism $\cM(h)\cong \cO_X(*D)$):
\begin{equation}\label{eq:HodgeNhFromHogdeMh}
F_k^H \cN(h) =
\sum_{j\geq 0} F_{k-1-j}^H \cM(h)\otimes \partial_t^j,
\end{equation}
in other words, we obtain
\begin{equation}\label{eq:HodgeMhFromHogdeNh}
F_{k-1}^H\cO_X(*D) \cong F_{k-1}^H\cM(h) \cong F^H_k\cN(h) \cap \left(\cM(h)\otimes 1\right)
\end{equation}
for all $k>1$. As we have noticed in Corollary \ref{cor:PreciseDescrHFilt}, we have the inclusion
$$
F^H_k i_{h,+}\cO_X(*D) \cong F^H_k \cN(h) \subset F^{\ord}_{k-1} \cN(h),
$$
and since obviously
$F^{\ord}_{k-1} \cN(h)
\cap\left(\cM(h)\otimes 1\right) =
F^{\ord}_{k-1} \cN(h)
\cap\left(F^{\ord}_{k-1}\cM(h)\otimes 1\right)$, we obtain by plugging in formula
\eqref{eq:FormulaHodgeNh} that
$$
F_{k-1}^H\cO_X(*D) \cong
\left[
\partial_t F^H_{k-1}\cN(h)+
\left(
V^1_{\ind}\cN(h)+ \prod_{\alpha \in B'_h}(\partial_t t +\alpha)^{l_\alpha} V_{\ind}^0 \cN(h)
\right)
\cap F^{\ord}_{k-1}\cN(h)
\right]
\cap \left(F^{\ord}_{k-1}\cM(h)\otimes 1\right).
$$
Shifting the indices by one, the inclusion
$$
F_k^H\cO_X(*D) \subset
\left[
\partial_t F^H_k \cN(h)+
\left(V_{\ind}^1\cN(h) + \prod_{\alpha \in B'_h}(\partial_t t +\alpha)^{l_\alpha} V_{\ind}^0 \cN(h)
\right)
\right]
\cap
\left( F_k^{\ord}\cM(h)\otimes 1\right)
$$
is then clear. But we also have the inclusion in the other direction:
Let $a\in \partial_t F_k^H\cN(h)$ and $b\in V_{\ind}^1\cN(h) + \prod_{\alpha \in B'_h}(\partial_t t +\alpha)^{l_\alpha} V_{\ind}^0 \cN(h)$ and assume that $a+b\in F^{\ord}_k\cM(h)\otimes 1\subset F^{\ord}_k\cN(h)$.
Then since $\partial_t F_k^H\cN(h)\subset \partial_t F_{k-1}^{\ord}\cN(h)\subset F_k^{\ord}\cN(h)$, we must necessarily have
$b\in F_k^{\ord}\cN(h)$, so that
$$
a+b\in \left[
\partial_t F^H_k \cN(h)+
\left(V_{\ind}^1\cN(h) + \prod_{\alpha \in B'_h}(\partial_t t +\alpha)^{l_\alpha} V_{\ind}^0 \cN(h)
\right)\cap F^{\ord}_k\cN(h)
\right]
\cap
\left(F_k^{\ord}\cM(h)\otimes 1\right)
\cong F^H_k\cO_X(*D),
$$
as required.

\end{proof}

\begin{remark}\label{rem:HodgeOrderPoleOrder}
(see also subsection \ref{subsec:NCD} below)
As shown in \cite[Proposition 0.9]{Saitobfunc}
we have
$$
F^H_\bullet \cO_X(*D) \subset P_\bullet \cO_X(*D)
$$
for any reduced divisor, where $P_k \cO_X(*D) := \cO_X((k+1)D)$ is the pole order filtration.
In our situation of a strongly Koszul free divisor, we obviously have
$$
F^H_k \cO_X(*D) \subset F^{\ord}_k \cO_X(*D) \subset P_k \cO_X(*D),\ k\geq 0,
$$
with $F^H_0 \cO_X(*D)\subset F^{\ord}_0 \cO_X(*D) = P_0 \cO_X(*D)$. One can thus consider $F^{\ord}_k \cO_X(*D)$, in the current situation, as a better approximation to the Hodge filtration than the pole order filtration $P_\bullet \cO_X(*D)$ (though both are
equal at level $k=0$) .
A very special example is the case where $D$ has normal crossings, then for any local reduced equation $h=0$ for $D$ the only root of $b_h(s)$ is $-1$, and one easily obtains $F_k^H\cO_X(*D)= F_k^{\ord} \cO_X(*D)$. Correspondingly,
we have $F_0^H\cO_X(*D) = F_0^{\ord} \cO_X(*D)=P_0 \cO(*D) = \cO(D)$,
so that $\cI_0(D)=\cO_X$, but the higher Hodge ideals are
non-trivial even for the normal crossing case, precisely because $F_k^{\ord} \cO_X(*D)\subsetneq P_k \cO(*D)$ for $k>0$.
\end{remark}

We have seen so far that $F^H_\bullet \cO_X(*D) \subset F^{\ord}_\bullet \cO_X(*D)$, with equality iff
$-1$ is the only root of $b_h(s)$ for any local reduced equation $h$ of $D$. However, we can actually
give an inclusion in the reverse direction, but with a specific shift, which we conjecture to be the generating level of the Hodge filtration on $\cO_X(*D)$ (see Conjecture \ref{con:GenLevel} below). As a preparation, we need the following result.

\begin{lemma}\label{lem:1InFr}
 Let $(D,p) \subset (X,p)$ be a germ of strongly Koszul free divisor with reduced equation $h=0$, let $\beta(s)=\prod_{\alpha \in B'_h}(s +\alpha)^{l_\alpha}$, where $l_\alpha$ is the multiplicity of $\alpha-1$ in $b_h(s)$ and put $r := \deg\beta(s)$. Consider the generator $[1]\in \cN(h)$, then we have
$$
[1] \in F^H_{r+1} \cN(h).
$$
\end{lemma}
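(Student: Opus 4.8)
The plan is to start from the recursive description of the Hodge filtration on $\cN(h)$ provided by Corollary \ref{cor:PreciseDescrHFilt}, in particular Formula \eqref{eq:FormulaHodgeNh_Simple}:
$$
F^H_k \cN(h) = \partial_t F^H_{k-1}\cN(h) + V^0_{\can}\cN(h)\cap F^{\ord}_{k-1}\cN(h),
$$
and to argue that $[1]$ lies in the second summand already at level $k=r+1$. Concretely, I would aim to show that $[1]\in V^0_{\can}\cN(h)$ and that $[1]\in F^{\ord}_{r}\cN(h)$, which together give $[1]\in V^0_{\can}\cN(h)\cap F^{\ord}_{r}\cN(h)\subset F^H_{r+1}\cN(h)$.

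For the $V$-filtration membership: by Lemma \ref{lem:RootsElement1GraphEmbed} we have $b^{N(h)}_{[1]}(s) = b_h(-s-1)$, whose roots are exactly $\{-1-\alpha_i\}$ for $\alpha_i$ a root of $b_h(s)$. By Proposition \ref{prop:SummarySKLCT}(3) the roots $\alpha_i$ lie in $(-2,0)$, hence the roots of $b^{N(h)}_{[1]}$ lie in $(-1,1)$, and in particular all are $\geq 0$ except possibly the ones coming from roots $\alpha_i\in(-1,0)$ of $b_h$, i.e. from the set $B'_h$ (after the shift $\alpha_i\mapsto\alpha_i-1$, these correspond precisely to $\alpha\in B'_h$ with $b_h(\alpha-1)=0$, contributing roots $-\alpha\in(-1,0)$ to $b^{N(h)}_{[1]}$). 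Using the characterization $V^\alpha_{\can}\cM = \{m \mid \text{roots of } b^\cM_m(s)\subset[\alpha,\infty)\}$, we get $[1]\notin V^0_{\can}\cN(h)$ in general, but $\beta(\partial_t t)[1]\in V^0_{\can}\cN(h)$, where $\beta(s)=\prod_{\alpha\in B'_h}(s+\alpha)^{l_\alpha}$ — because multiplying $[1]$ by $\beta(\partial_t t)$ kills exactly the negative roots of its $b$-function. So the element actually landing in $V^0_{\can}$ is $\beta(\partial_t t)[1]$, not $[1]$; this suggests the genuine argument must instead track $\beta(\partial_t t)[1]$ and then peel off the operator $\beta(\partial_t t)$ using the recursion, noting $\partial_t t = \widetilde\chi+1$ modulo the relations and that applying the recursion $\deg\beta = r$ times lowers the level by $r$ — so from $\beta(\partial_t t)[1]\in V^0_{\can}\cN(h)\cap F^{\ord}_0\cN(h)\subset F^H_1\cN(h)$ one wants to deduce, stepping through $F^H_1\subset F^H_2\subset\cdots$, that $[1]\in F^H_{r+1}\cN(h)$.

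The main obstacle I anticipate is precisely this last peeling step: one needs that whenever $\beta(\partial_t t)m$ is known to lie in $F^H_j\cN(h)$ for a suitable $m$, one can conclude $m\in F^H_{j+r}\cN(h)$, which amounts to an inductive statement using that $\partial_t t$ raised to powers increases the order filtration in a controlled way and that the relevant $V$-filtration intersections behave well under division by the factors $(\partial_t t+\alpha)$. I would handle this by showing, for each root $\alpha\in B'_h$ and each multiplicity, that if $(\partial_t t+\alpha)m'\in V^0_{\can}\cN(h)\cap F^{\ord}_{j}\cN(h)$ with $m'\in V^{-\alpha}_{\can}$ (or the appropriate graded piece), then $m'\in V^0_{\can}\cN(h)$ after raising the level by one — using property (c) of the canonical $V$-filtration (nilpotence of $\partial_t t-\alpha$ on the graded pieces) together with the compatibility of $V^\bullet_{\can}$ and $F^{\ord}_\bullet$ encoded in Proposition \ref{prop:CompatibilityVandF} and Lemma \ref{lem:VcanInVind}. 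Iterating over all $r$ factors of $\beta$ gives the claimed bound. A cleaner alternative, which I would try first, is to observe directly that $\beta(\partial_t t)\in F_r \cD_{\dC_t\times X}$ and that $\beta(\partial_t t)[1]\in V^0_{\can}\cN(h)$, so $\beta(\partial_t t)[1]\in V^0_{\can}\cN(h)\cap F^{\ord}_r\cN(h)\subset F^H_{r+1}\cN(h)$; then since $F^H_\bullet$ is a good filtration and $[1]$ generates $\cN(h)$ over $\cD$ with $\beta(\partial_t t)$ invertible on $V^{>-1}_{\can}$-pieces away from the bad roots, a short argument comparing $F^H_{r+1}\cN(h)$ with $V^0_{\ind}\cap F^{\ord}_r$ via the surjectivity in Proposition \ref{prop:CompatibilityVandF} should directly yield $[1]\in F^H_{r+1}\cN(h)$.
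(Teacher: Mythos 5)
Your first half is right and matches the paper: since $[1]\in V^0_{\ind}\cN(h)$ and $V^0_{\can}\cN(h)=V^1_{\ind}\cN(h)+\beta(\partial_t t)V^0_{\ind}\cN(h)$ by Corollary \ref{cor:PreciseDescrCanVFilt}, one gets $\beta(\partial_t t)[1]\in V^0_{\can}\cN(h)\cap F^{\ord}_r\cN(h)\subset F^H_{r+1}\cN(h)$ directly from Formula \eqref{eq:FormulaHodgeNh_Simple}. The genuine gap is the ``peeling'' step, which you acknowledge is the obstacle but never actually carry out. Neither of your two suggested mechanisms works as stated: nilpotence of $\partial_t t-\alpha$ on $\gr^\alpha_{V_{\can}}$ and ``invertibility of $\beta(\partial_t t)$ away from the bad roots'' are statements about the $V$-graded pieces and say nothing about the level of $F^H_\bullet$; to conclude $m\in F^H_{j}$ from $\beta(\partial_t t)m\in F^H_{j+r}$ you would need a filtered-division (strictness) statement for the action of $\beta(\partial_t t)$ on $(\cN(h),F^H_\bullet)$ that is not available from Proposition \ref{prop:CompatibilityVandF} or Lemma \ref{lem:VcanInVind}. (Also, a small slip: $\beta(\partial_t t)[1]$ lies in $F^{\ord}_r\cN(h)$, not $F^{\ord}_0\cN(h)$.)

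The paper avoids division entirely by a linear-algebra trick you are missing. Write $\beta(s)=a_0+\sum_{i=1}^r a_i s^i$ with $a_0=\beta(0)=\prod_{\alpha\in B'_h}\alpha^{l_\alpha}\neq 0$ (all $\alpha\in(0,1)$). For each $i\geq 1$ one has
$$
(\partial_t t)^i[1]=\partial_t\bigl(t(\partial_t t)^{i-1}[1]\bigr),\qquad t(\partial_t t)^{i-1}[1]\in V^1_{\ind}\cN(h)\cap F^{\ord}_{i-1}\cN(h)\subset V^0_{\can}\cN(h)\cap F^{\ord}_{i-1}\cN(h)\subset F^H_i\cN(h),
$$
the point being that left multiplication by $t$ lands you in $V^1_{\ind}\cN(h)$, which sits inside $V^0_{\can}\cN(h)$ by the same Corollary \ref{cor:PreciseDescrCanVFilt}. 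Hence $(\partial_t t)^i[1]\in\partial_t F^H_i\cN(h)\subset F^H_{i+1}\cN(h)\subset F^H_{r+1}\cN(h)$ for $i=1,\dots,r$. Subtracting these from $\beta(\partial_t t)[1]\in F^H_{r+1}\cN(h)$ leaves $a_0[1]\in F^H_{r+1}\cN(h)$, and $a_0\neq 0$ finishes the proof. So each ``bad'' monomial of $\beta(\partial_t t)$ of positive degree is shown separately to lie in $F^H_{r+1}\cN(h)$, rather than inverting $\beta(\partial_t t)$ as a whole.
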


\begin{proof}
Write
$\beta(s)=\sum_{i=0}^r a_i s^i$ with $a_0,\ldots, a_r\in \dC$. We have
$[t\cdot(\partial_t t)^{i-1}]\in V^1_{\ind}\cN(h)\cap F^{\ord}_{i-1}\cN(h)$ for all $i\in\{1,\ldots,r\}$. Now we may rewrite Formula \eqref{eq:FormulaHodgeNh} as
{\small
$$
F^H_{k+1}\cN(h) \cong \partial_t
\Big(
\partial_t F^H_{k-1}\cN(h)+\left(\beta(\partial_t t)V^0_{\ind}\cN(h)+V^1_{\ind}\cN(h)\right)
\cap F^{\ord}_{k-1}\cN(h)\Big)
+
\left(\beta(\partial_t t)V^0_{\ind}\cN(h)+V^1_{\ind}\cN(h)\right)
\cap F^{\ord}_k\cN(h).
$$
}
Then we have for all $i\in\{1,\ldots,r\}$
$$
[(\partial_t t)^i] = \partial_t \cdot [t(\partial_t t)^{i-1}]
\in \partial_t
\left[
V^1_{\ind}\cN(h)\cap F^{\ord}_{i-1}\cN(h)
\right]\subset F^H_{i+1} \cN(h) \subset F^H_{r+1}\cN(h).
$$
On the other hand, we have $\beta(\partial_t t) \cdot [1] \in \beta(\partial_t t)V^0_{\ind}\cN(h)\cap F^{\ord}_r\cN(h) \subset F^H_{r+1}\cN(h)$. Since $\beta(\partial_t t)-
\sum_{i=1}^r a_i (\partial_t t)^i=a_0$
we find that $a_0[1]\in F^H_{r+1}\cN(h)$.
By definition of the set $B'_h$, we have $\beta(0)\neq 0$ and hence $a_0\neq 0$,
so that finally $[1]\in F^H_{r+1} \cN(h)$, as required.
\end{proof}
We obtain the following two consequences that we will use below to discuss the dual Hodge filtration.
\begin{corollary}\label{cor:Inclu}
With the aforementioned notations, we have
\begin{enumerate}
    \item $[1]\in F^H_r \cM(h)$,
        \item For all $k\in \dZ_{\geq 0}$, we have the inclusion of coherent $\cO_X$-modules $F^{\ord}_{k-r} \cM(h) \subset F^H_k \cM(h)$.
\end{enumerate}
\end{corollary}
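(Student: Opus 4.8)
The plan is to obtain both assertions almost immediately from Lemma \ref{lem:1InFr}, using the shift relation \eqref{eq:HodgeNhFromHogdeMh} between $F^H_\bullet\cN(h)$ and $F^H_\bullet\cM(h)$ together with the fact that $\cM(h)\cong\cO_X(*D)$ is cyclic, generated by $h^{-1}$.

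First I would settle (1). Under the identification $\cN(h)\cong\cM(h)[\partial_t]$ with the $\cD_{\dC_t\times X}$-action recalled on page \pageref{page:RecallStructure}, the cyclic generator $[1]\in\cN(h)$ corresponds to $h^{-1}\otimes 1$: one checks directly that $h^{-1}\otimes 1$ is annihilated by $t-h$, by each $\delta_i$ (for $i=1,\dots,n-1$, using $\delta_i(h)=0$) and by $\widetilde{\chi}+1$ (using $\chi(h)=h$), so that it is the class of $1\in\cD_{\dC_t\times X}$ in the presentation of $\cN(h)$. In particular $[1]\in\cM(h)\otimes 1$. Now I would intersect the inclusion $[1]\in F^H_{r+1}\cN(h)$ of Lemma \ref{lem:1InFr} with $\cM(h)\otimes 1$: by \eqref{eq:HodgeNhFromHogdeMh} we have $F^H_{r+1}\cN(h)=\sum_{j\geq 0}F^H_{r-j}\cM(h)\otimes\partial_t^j$, and the only summand meeting $\cM(h)\otimes 1$ is $F^H_r\cM(h)\otimes 1$, whence $[1]\in F^H_r\cM(h)$, i.e. $h^{-1}\in F^H_r\cO_X(*D)$.

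Then I would deduce (2). By Definition \ref{def:F-ord} in its local form \eqref{eq:CyclicPresent-1}, $F^{\ord}_\bullet\cM(h)$ is the filtration on the cyclic $\cD_X$-module $\cM(h)$ induced from $F_\bullet\cD_X$, so $F^{\ord}_j\cM(h)=(F_j\cD_X)\cdot[1]$ for every $j\in\dZ$, with $[1]=h^{-1}$ the generator. Since $(\cM(h),F^H_\bullet)$ is a filtered $\cD_X$-module, one has $(F_a\cD_X)\cdot(F^H_b\cM(h))\subset F^H_{a+b}\cM(h)$. Combining this with (1), for every $k$:
$$
F^{\ord}_{k-r}\cM(h)=(F_{k-r}\cD_X)\cdot[1]\subset(F_{k-r}\cD_X)\cdot\bigl(F^H_r\cM(h)\bigr)\subset F^H_k\cM(h),
$$
which is the claim; for $k<r$ it is vacuous since $F_{k-r}\cD_X=0$ and $F^{\ord}_{k-r}\cM(h)=0$.

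There is no serious obstacle here, since the genuine content is packaged in Lemma \ref{lem:1InFr}; the only point requiring care is the bookkeeping across the isomorphisms $\cO_X(*D)\cong\cM(h)$, $i_{h,+}\cO_X(*D)\cong\cN(h)\cong\cM(h)[\partial_t]$ and the index shift in \eqref{eq:HodgeNhFromHogdeMh} — specifically, checking that the element called $[1]$ is consistently the same cyclic generator throughout, that it actually lies in the $\partial_t^0$-part $\cM(h)\otimes 1$, and that it generates $\cM(h)$ over $\cD_X$ so that $(F_\bullet\cD_X)\cdot[1]=F^{\ord}_\bullet\cM(h)$.
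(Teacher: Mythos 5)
Your proof is correct and follows essentially the same route as the paper: part (1) is obtained by intersecting the conclusion of Lemma \ref{lem:1InFr} with $\cM(h)\otimes 1$ via the shift relation \eqref{eq:HodgeMhFromHogdeNh}/\eqref{eq:HodgeNhFromHogdeMh}, and part (2) by applying $F_{k-r}\cD_X$ to the cyclic generator $[1]\in F^H_r\cM(h)$ and using that $F^H_\bullet\cM(h)$ is a filtration over $(\cD_X,F_\bullet)$. The extra verification that $[1]\in\cN(h)$ corresponds to $h^{-1}\otimes 1$ and hence lies in $\cM(h)\otimes 1$ is a point the paper treats as obvious, but your bookkeeping is accurate.
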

\begin{proof}
$\mbox{ }$
\begin{enumerate}
    \item Recall Formula \eqref{eq:HodgeMhFromHogdeNh}, which says that
    $$
    F^H_{k-1} \cM(h) \cong F^H_k \cN(h)\cap \left(\cM(h)\otimes 1\right).
    $$
    Now obviously the element $[1]\in \cN(h)$ belongs to the submodule $\cM(h)\otimes 1\subset \cN(h)$,
    hence we obtain from Lemma \ref{lem:1InFr} that
    the element $[1]\in \cM(h)$ lies in $F^H_r\cM(h)$.
            \item The statement is trivial for $k<r$. On the other hand, we have by definition that for any $k\geq r$ any class $[P]\in F^{\ord}_{k-r} \cM(h)$ can be represented by an operator $P\in \cD_X$ of order $k-r$. Moreover,
    $F^H_\bullet \cM(h)$ is a good filtration
     (in particular, compatible with $F_\bullet \cD_X$), so $[P]=P\cdot[1]\in F_{k-r}\cD_X\cdot [1]\subset
    F_{k-r}\cD_X\cdot F^H_r\cM(h)\subset F^H_k\cM(h)$.  Hence $F^{\ord}_{k-r} \cM(h) \subset F^H_k \cM(h)$, as required.
\end{enumerate}
\end{proof}

Let us call $b_D(s)$ the least common multiple of the local $b$-functions of all reduced local equations of $D$ at $p$, for $p\in D$, whenever it exists (it always exists if $D\subset X$ is algebraic or if $X$ is compact),
$B'_D:=\left\{\alpha_i\in \dQ \cap (0,1)\,|\, b_D(\alpha_i-1)=0\right\}$
and $\beta_D(s)=\prod_{\alpha \in B'_D}(s +\alpha)^{l_\alpha}$, where $l_\alpha$ is the multiplicity of $\alpha-1$ in $b_D(s)$ and put $r := \deg\beta_D(s)$. The strong Koszul hypothesis on $D$ then implies the symmetry property $b_D(-s-2) = \pm b_D(s)$ and so  $r=\frac12\left(\deg(b_D(s))-\textup{mult}_{b_D(s)}(-1)\right)$.
The following global result about the divisor $D\subset X$ is a consequence of the first point of Theorem \ref{thm:HodgeOnMeromorphic} and of Corollary \ref{cor:Inclu}.

\begin{corollary} \label{cor:Inclu-global}
Under the above conditions, we have:
\begin{equation}\label{eq:SummaryMainInlcusions}
F^{\ord}_{\bullet-r} \cO_X(*D) \subset
F^H_\bullet \cO_X(*D) \subset
F^{\ord}_\bullet \cO_X(*D).
\end{equation}
\end{corollary}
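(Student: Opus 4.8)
The plan is to treat the two inclusions in \eqref{eq:SummaryMainInlcusions} separately. The right-hand one, $F^H_\bullet \cO_X(*D) \subset F^{\ord}_\bullet \cO_X(*D)$, is exactly the first assertion of Theorem \ref{thm:HodgeOnMeromorphic} and is already known to hold globally on $X$, so nothing new is needed there. All the work goes into the left-hand inclusion $F^{\ord}_{\bullet-r} \cO_X(*D) \subset F^H_\bullet \cO_X(*D)$. Since both sides are coherent subsheaves of $\cO_X(*D)$, I would verify it stalk by stalk: on the open set $X\backslash D$ it is immediate (the localized module is $\cO_X$, both filtrations agree in nonnegative degrees, and $F^{\ord}_{k-r}=0$ for $k<r$), so it suffices to argue at each $p\in D$.

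Fixing $p\in D$ and a local reduced equation $h=0$ of $D$ at $p$, I would use the local identification $\cM(h)\cong \cO_{X,p}(*D)$ carrying $F^{\ord}_\bullet$ and $F^H_\bullet$ on $\cO_{X,p}(*D)$ to the corresponding filtrations on $\cM(h)$. Writing $r_h:=\deg\beta_h(s)$, where $\beta_h(s)=\prod_{\alpha\in B'_h}(s+\alpha)^{l^h_\alpha}$ is built from the \emph{local} Bernstein--Sato polynomial $b_h(s)$ exactly as $\beta_D$ and $r$ are built from $b_D(s)$, Corollary \ref{cor:Inclu}(2) applied to the germ $(D,p)$ gives the inclusion of coherent $\cO_{X,p}$-modules $F^{\ord}_{k-r_h}\cM(h)\subset F^H_k\cM(h)$ for all $k\geq 0$.

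The one point that still needs an argument is that the local exponent $r_h$ does not exceed the global $r$. Here I would use that $b_D(s)$ is, by definition, the least common multiple of the local $b$-functions of the reduced local equations of $D$, so $b_h(s)\mid b_D(s)$; consequently every root $\alpha-1$ of $b_h(s)$ with $\alpha\in(0,1)$ is a root of $b_D(s)$ of at least the same multiplicity. Thus $B'_h\subseteq B'_D$ and $\beta_h(s)\mid\beta_D(s)$, whence $r_h\le r$; since $F^{\ord}_\bullet$ is increasing, this yields $F^{\ord}_{k-r}\cO_{X,p}(*D)\subseteq F^{\ord}_{k-r_h}\cO_{X,p}(*D)\subseteq F^H_k\cO_{X,p}(*D)$ for every $k$. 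As $p\in D$ was arbitrary and the Hodge and order filtrations are local constructions, the stalkwise inclusion globalizes to $F^{\ord}_{\bullet-r}\cO_X(*D)\subset F^H_\bullet\cO_X(*D)$. I do not anticipate a genuine obstacle: the only care needed is the harmless bookkeeping of local versus global $b$-functions (the divisibility $b_h\mid b_D$ giving $r_h\le r$) together with the observation that everything in sight is a subsheaf of $\cO_X(*D)$, so that a stalkwise statement suffices.
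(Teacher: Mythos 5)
Your proposal is correct and follows exactly the route the paper intends: the right-hand inclusion is the first assertion of Theorem \ref{thm:HodgeOnMeromorphic}, and the left-hand one is Corollary \ref{cor:Inclu}(2) applied stalkwise, with the divisibility $b_h(s)\mid b_D(s)$ giving $B'_h\subseteq B'_D$, $\beta_h\mid\beta_D$ and hence $r_h\le r$. The paper states the corollary as an immediate consequence of those two results without spelling out this local-to-global bookkeeping, so your write-up just makes the intended argument explicit.
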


We will finish this section by showing some results about the Hodge filtration on the dual Hodge module $\dD j_* \dQ_U^H[n]$. Its underlying $\cD_X$-module is $\dD\cO_X(*D)$, which we denote by $\cO_X(!D)$. From the logarithmic comparison theorem we know that $\cD_X \otimes_{\cV_X^D} \cO_X(D) \cong \cO_X(*D)$, and so we can apply Proposition \ref{prop:StrictnessDualOrder}
(for the case $\cE=\cO_X(D)$) to obtain that the holonomic filtered module $(\cO_X(*D),F_\bullet^{\ord})$ has the Cohen-Macaulay property and moreover
that the dual filtered module of $(\cO_X(*D),F_\bullet^{\ord})$ is isomorphic to $(\cO_X(!D)\cong\cD_X \otimes_{\cV_X^D} \cO_X, F_\bullet^{\ord})$.
We recall that a similar property holds for the Hodge filtration, more precisely, we have the following.
\begin{theorem}[{\cite[Section 2.4]{Saito1}, \cite[Lemme 5.1.13]{Saito1}, see also  \cite[Theorem 29.3]{Sch14}}]\label{theo:HodgeCM}
Let $(\cM,F_\bullet)$ be a filtered left or right $\cD_X$-module underlying a pure Hodge module $M$ of weight $w$, then $(\cM,F_\bullet)$ has the Cohen-Macaulay property. In particular, its dual module is strict, and underlies a pure Hodge module $\dD M$ of weight $-w$. Similarly, if $(M,W_\bullet M)$ is a mixed Hodge module, then so is $(\dD M, \dD W_{-\bullet} M)$.
In particular, if $(\cM,F_\bullet)$ underlies a mixed Hodge module $M$, then it also fulfills the Cohen-Macaulay property. Its dual filtration $F^{H,\dD}_\bullet(\dD \cM)$ is the Hodge filtration $F^{\bD,H}_\bullet (\dD \cM)$ of $\dD M$ up to a shift, i.e., we have
\begin{equation}\label{eq:DualHodgeFiltrationShift}
F^{H,\dD}_\bullet(\dD \cM) = F^{\bD,H}_{\bullet+\dim(X)} (\dD \cM).
\end{equation}
\end{theorem}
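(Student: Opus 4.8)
\emph{The plan} is to deduce everything from the fundamental stability of $\MHM$ under Saito's filtered duality functor. By Lemma~\ref{lemma:8.8.22.MHM} the Cohen--Macaulay property of $(\cM,F_\bullet)$ is \emph{equivalent} to the assertion that $\astdD\wcM$ is concentrated in degree $0$ and $z$-torsion free, so the whole theorem amounts to: (i) the filtered dual of a filtered $\cD_X$-module underlying a (mixed) Hodge module is again strict --- this is Saito's theorem that $\dD$ preserves $\MHM$ --- together with (ii) the identification of the resulting filtration with the intrinsic Hodge filtration of $\dD M$ up to the bookkeeping shift in \eqref{eq:DualHodgeFiltrationShift}. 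I would run Saito's inductive scheme on $\dim\Supp M$.

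\emph{The pure case.} First I would reduce to a single irreducible support: a polarizable pure Hodge module decomposes as a direct sum of pure Hodge modules with strict support, and both strictness and the Cohen--Macaulay property are additive, so one may assume $M$ is the intermediate extension $j_{!\ast}$ of a polarizable variation of Hodge structure $\mathbb{V}$ of weight $w-\dim Z$ on a dense smooth open $Z^\circ\subset Z:=\Supp M$. The base case is $\mathbb{V}$ on a smooth $Y$: there the underlying filtered module is a flat bundle $\cM=\cO_Y\otimes_\dC\cV$ with the Hodge filtration by sub-bundles, each $\gr^F_p\cM$ is $\cO_Y$-locally free, the Rees module $\wcM$ is $\dC[z]$-flat, and its filtered dual is computed by ordinary $\cO_Y$-duality of bundles; one checks it is again the filtered module of a polarizable VHS, with the dual Hodge structure fibrewise and weight $-(w-\dim Y)$. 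Propagating from $Z^\circ$ to $Z$ and from $Z$ to $X$ rests on the fact that $\dD$ commutes with $j_{!\ast}$ and with closed pushforward; via the construction of $j_{!\ast}$ from $\psi_f$ and $\phi_f$ this reduces strictness of the dual, at each step, to Saito's bistrictness of the $V$-filtration and to the self-duality (up to the appropriate twist and weight shift) of the nearby- and vanishing-cycle functors on $\MHM$.

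\emph{The mixed case and the shift.} For a mixed Hodge module the same argument runs along the weight filtration: $\dD$ sends $W_\bullet M$ to $\dD W_{-\bullet}M$, the graded pieces $\gr^W_k M$ are pure and hence have strict dual by the pure case, and an extension of strict Rees modules is strict (any $z$-torsion of the middle term maps to $z$-torsion of the quotient, hence is zero, hence lies in the strict sub, hence vanishes); therefore $(\dD\cM,F^{H,\dD}_\bullet)$ is strict and underlies $(\dD M,\dD W_{-\bullet}M)\in\MHM(X)$. The normalization constant in \eqref{eq:DualHodgeFiltrationShift} is then pinned down by evaluating both sides on a single object, e.g.\ $\dQ^H_X[\dim X]$, where $\cM=\cO_X$ carries the trivial filtration and $\astdD\wcO_X\simeq\wcO_X$ by \eqref{eq:grdual-O-omega} (using also \eqref{eq:FiltrOmega} for the right-module incarnation); matching this against Saito's Hodge filtration on $\dD\dQ^H_X[\dim X]$ forces the shift by $\dim X$, and it propagates to all of $\MHM(X)$ because both $F^{H,\dD}$ and $F^{\bD,H}$ are compatible with the standard functors and, by \eqref{eq:ShiftCMProp-1}, with Tate twists.

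\emph{The main obstacle.} The genuinely technical heart is the compatibility of filtered duality with the vanishing-cycle gluing, i.e.\ that the filtered dual of the module underlying $j_\ast$ (equivalently $j_{!\ast}$) of a Hodge module is strict; this is precisely the content of Saito's delicate analysis of the duality pairing on $\gr_V$ and of bistrictness in \cite[\S2.4, \S5.1]{Saito1} (see also \cite[\S29]{Sch14}). Everything else --- the decomposition by strict support, the smooth base case, the reduction along the weight filtration, and the determination of the numerical shift --- is comparatively formal.
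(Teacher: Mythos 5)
The paper does not prove this theorem: it is imported verbatim from Saito's work, with the proof delegated entirely to the cited references (\cite[\S 2.4, Lemme 5.1.13]{Saito1}, \cite[Theorem 29.3]{Sch14}). Your proposal is therefore not comparable to a proof in the paper, but as a reconstruction of Saito's argument it is a faithful and essentially correct outline: the reduction of the Cohen--Macaulay property to strictness of $\astdD\wcM$ via Lemma \ref{lemma:8.8.22.MHM}, the strict-support decomposition and the generic-VHS base case in the pure setting, the induction along $W_\bullet$ in the mixed setting (your extension argument is fine, with the small addendum that one first needs the long exact cohomology sequence of the dualized triangle to conclude that $\astdD$ of the middle term is concentrated in degree $0$ before the $z$-torsion argument applies), and the determination of the shift by normalizing on $\dQ^H_X[\dim X]$ via \eqref{eq:grdual-O-omega}. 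You correctly identify that all of this is formal bookkeeping around one genuinely hard input.

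That hard input --- strictness of the filtered dual across the vanishing-cycle gluing, i.e.\ the bistrictness of the $V$-filtration and the duality of $\psi_f$, $\phi_{f,1}$ on the filtered level --- is exactly the content of \cite[\S 5.1]{Saito1}, and your sketch defers it to that reference rather than proving it. There is also a mild circularity you should be aware of: the ``self-duality of the nearby- and vanishing-cycle functors on $\MHM$'' that you invoke in the inductive step is not available as a black box prior to the theorem; in Saito's actual argument it is established simultaneously with the strictness statement by a joint induction on the dimension of the support. Finally, the claim that the shift constant, once pinned down on one object, ``propagates to all of $\MHM(X)$'' deserves a word of justification (e.g.\ that locally every mixed Hodge module is generated from variations of Hodge structure by the standard functors, with which both filtrations are compatible). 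None of this makes the outline wrong, but it does mean your proposal, like the paper, ultimately rests on the cited results rather than replacing them.
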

With these preparations, we obtain the following result concerning the Hodge filtration on the dual module $\cO_X(!D)$.
\begin{theorem}\label{theo:DualFiltration}
Let, as above, $D\subset X$ be a strongly Koszul free divisor,
then we have the following inclusions of coherent $\cO_X$-submodules of $\cO_X(!D)$:
$$
F^{\ord}_\bullet \cO_X(!D) \subset F^{H,\dD}_\bullet \cO_X(!D)
\subset F^{\ord}_{\bullet+r} \cO_X(!D).
$$
In particular, since $F^{\ord}_{-1}\cO(!D)=0$, we obtain the vanishing
$$
F^{H,\dD}_{-r-1} \cO_X(!D) = F_{n-r-1}^{\dD,H}\cO_X(!D)=0.
$$
\end{theorem}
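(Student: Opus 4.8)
The plan is to deduce Theorem~\ref{theo:DualFiltration} by dualizing the chain of inclusions in Corollary~\ref{cor:Inclu-global}, using the fact that both filtrations in play---the order filtration and the Hodge filtration---satisfy the Cohen-Macaulay property and hence behave well under filtered duality. Concretely, write $\cM = \cO_X(*D)$ with its two filtrations $F^{\ord}_\bullet \cM$ and $F^H_\bullet \cM$. By Proposition~\ref{prop:StrictnessDualOrder} (applied with $\cE = \cO_X(D)$), the filtered module $(\cM, F^{\ord}_\bullet)$ is Cohen-Macaulay and its dual filtered module is $(\cO_X(!D), F^{\ord}_\bullet)$, where on the right $\cO_X(!D) \cong \cD_X \otimes_{\cV_X^D} \cO_X$. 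By Theorem~\ref{theo:HodgeCM}, $(\cM, F^H_\bullet)$ is also Cohen-Macaulay, and its dual filtered module, after the shift \eqref{eq:DualHodgeFiltrationShift} by $\dim X = n$, is the Hodge filtration $F^{H,\dD}_\bullet \cO_X(!D)$ (up to the conventions relating $F^{H,\dD}$ and $F^{\bD,H}_{\bullet+n}$ recorded in that theorem).

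The key mechanism is the following general fact about filtered duality: if $(\cM, F_\bullet)$ and $(\cM, G_\bullet)$ are two Cohen-Macaulay filtered structures on the same holonomic $\cD_X$-module with $F_\bullet \subset G_\bullet$, then passing to Rees modules gives an inclusion $\cR_F \cM \hookrightarrow \cR_G \cM$ of strict graded $\wcD_X$-modules with torsion cokernel supported away from the generic point in $z$; applying the (contravariant, exact on the relevant subcategory) duality functor $\astdD$ and using Lemma~\ref{lemma:8.8.22.MHM} reverses the inclusion, yielding $\cR_{G^\dD}(\dD\cM) \hookrightarrow \cR_{F^\dD}(\dD\cM)$, i.e.\ $G^\dD_\bullet \subset F^\dD_\bullet$ on $\dD\cM$. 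I would first record this as a short lemma (or cite it from \cite{Sch14, Saito1}), then apply it twice to the two inclusions
$$
F^{\ord}_{\bullet - r}\cM \subset F^H_\bullet \cM \subset F^{\ord}_\bullet \cM
$$
from Corollary~\ref{cor:Inclu-global}. Dualizing, and using $(\cM, F^{\ord}_\bullet)^\dD = (\cO_X(!D), F^{\ord}_\bullet)$ together with the shift identity \eqref{eq:ShiftCMProp-2} to handle the index shift by $r$ in $F^{\ord}_{\bullet-r}$, gives
$$
F^{\ord}_\bullet \cO_X(!D) \subset F^{H,\dD}_\bullet \cO_X(!D) \subset F^{\ord}_{\bullet+r}\cO_X(!D),
$$
which is exactly the claimed chain. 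The final vanishing is then immediate: $F^{\ord}_{-1}\cO_X(!D) = 0$ because the order filtration on a cyclic presentation $\cD_X/\cI$ starts in degree $0$, so $F^{H,\dD}_{-r-1}\cO_X(!D) \subset F^{\ord}_{-1}\cO_X(!D) = 0$; and then one rewrites the index $-r-1$ in terms of the weight-duality convention as $n - r - 1$ via \eqref{eq:DualHodgeFiltrationShift}, so that $F^{\dD,H}_{n-r-1}\cO_X(!D) = 0$ as well.

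The main obstacle I anticipate is bookkeeping rather than substance: keeping the two shift conventions straight---the shift by $\dim X$ built into \eqref{eq:DualHodgeFiltrationShift} relating $F^{H,\dD}$ to $F^{\bD,H}$, and the shift by $i$ in \eqref{eq:ShiftCMProp-2} that controls how $(F_{\bullet+i})^\dD$ compares to $F^\dD_{\bullet-i}$---and making sure the $r$ lands on the correct side and with the correct sign in the final inequality. I would also need to be slightly careful that the duality functor genuinely reverses inclusions of \emph{filtered} modules (not merely of the underlying $\cD_X$-modules): this is where the Cohen-Macaulay hypothesis on \emph{both} filtrations is essential, since it guarantees via Lemma~\ref{lemma:8.8.22.MHM} that $\astdD$ of each Rees module is again concentrated in degree $0$ and $z$-torsion-free, so that the snake-lemma argument on the short exact sequence $0 \to \cR_F\cM \to \cR_G\cM \to Q \to 0$ (with $Q$ a $z$-torsion $\wcD_X$-module) produces the reversed inclusion with no higher cohomology interfering. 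Once that lemma is in place, the rest is a two-line diagram chase.
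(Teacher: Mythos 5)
Your proposal is correct and follows essentially the same route as the paper: Rees modules of the two inclusions from Corollary \ref{cor:Inclu-global} give short exact sequences with $z$-torsion cokernel, and the Cohen--Macaulay property of both $(\cO_X(*D),F^{\ord}_\bullet)$ (Proposition \ref{prop:StrictnessDualOrder} / Corollary \ref{cor:DualOrderShifted}) and $(\cO_X(*D),F^H_\bullet)$ (Theorem \ref{theo:HodgeCM}) forces $\cH^0\astdD$ of that cokernel to vanish, reversing the inclusions with the shift by $r$ landing where you say it does. The only detail worth writing out is why $\cH^0\astdD\cC=0$ (it is $z$-torsion yet injects into the strict module $\cR_{F^{H,\dD}}\cO_X(!D)$), which your "torsion into torsion-free" mechanism already supplies.
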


\begin{proof}
Equation \eqref{eq:SummaryMainInlcusions} from Corollary \ref{cor:Inclu-global} says
$$
F^{\ord}_{\bullet-r} \cO_X(*D) \subset F^H_\bullet \cO_X(*D) \subset F^{\ord}_{\bullet} \cO_X(*D).
$$
Applying the Rees functor $\cR(-)$ defined on page \pageref{page:defReesFunctor} to the first inclusion yields a short exact sequence of graded $\wcD_X$-modules
\begin{equation}\label{eq:ShortSeqRMod}
0\longrightarrow (\cR_{F^{\ord}} \cO_X(*D))(-r) \longrightarrow \cR_{F^H} \cO_X(*D) \longrightarrow \cC \longrightarrow 0,
\end{equation}
recall that  for a graded $\wcD_X$-module $\wcM$, we write $\wcM(l)$ for the same module with grading $\wcM(l)_k=\wcM_{k+l}$. Notice that the cokernel $\cC$ is a $z$-torsion module, since, e.g., for any section $s\in F_k^H \cO_X(*D)$ we have $s \in F^{\ord}_k \cO_X(*D)$ (due to the second inclusion from Formula \eqref{cor:Inclu-global}). Hence $[s]\cdot z^k\in (\cR_{F^{\ord}}\cO_X(*D))_k= (\cR_{F^{\ord}}\cO_X(*D)(-r))_{k+r}$ and therefore $[s]\cdot z^k$ maps to 0 in $\cC$.

Now apply the duality functor for graded left $\wcD_X$-modules (defined by formula \eqref{eq:DualRMod-left}) to the short exact sequence \eqref{eq:ShortSeqRMod}. From Corollary \ref{cor:DualOrderShifted} we deduce that
$$
\astdD(\cR_{F^{\ord}}\cO_X(*D)(-r))\cong \cH^0\astdD(\cR_{F^{\ord}}\cO_X(*D))(r) = (\cR_{F^{\ord}}\cO_X(!D))(r).
$$
On the other hand, by Theorem \ref{theo:HodgeCM} we
know that the filtered module $(\cO_X(*D), F_\bullet^H)$ satisfies the Cohen-Macaulay property as well, and we write as before $(\cO(!D), F_\bullet^{H,\dD})$ for its dual filtered module.

Hence, the triangle resulting from applying $\astdD$ to the sequence
\eqref{eq:ShortSeqRMod} has the following long exact cohomology sequence:
\begin{equation}\label{eq:DualSequence}
0 \longrightarrow \cH^0 \astdD\cC \longrightarrow \cR_{F^{H,\dD}}\cO(!D) \longrightarrow (\cR_{F^{\ord}}\cO_X(!D))(r) \longrightarrow \cH^1 \astdD\cC \longrightarrow 0.
\end{equation}

The $\wcD_X$-module $\cC$ is a $z$-torsion module, which implies that $\cH^0\astdD\cC$ is so (since $z$ is a central element in $\wcD_X$).
The $\wcD_X$-module $\cR_{F^{H,\dD}}\cO(!D)$ (and similarly $(\cR_{F^{\ord}}\cO_X(!D))(r)$) has no $z$-torsion, since it is the Rees module of a filtered $\cD_X$-module,
hence $\cH^0\astdD\cC=0$. Notice however that $\cH^1\astdD\cC$ does not necessarily vanish, since the homological dimension of the rings $R_U$ defined in Formula \eqref{eq:DefRU} is bounded from above by $2n+1$ only (which is the global homological dimension of $\gr_\bullet^F R_U=\cO_U[z,\xi_1,\ldots,\xi_n]$, where $\xi_i$ is the symbol of $z\partial_{x_i}$), see \cite[Theorem D.2.6 and Theorem D.4.3 (ii)]{Hotta}.
Summing up, the exact sequence \eqref{eq:DualSequence} yields the injective
morphism $\cR_{F^{H,\dD}}\cO(!D) \hookrightarrow (\cR_{F^{\ord}}\cO_X(!D))(r)$ of graded $\wcD_X$-modules, which gives an inclusion
\begin{equation}
F^{H,\dD}_k\cO_X(!D)\subset F^{\ord}_{k+r}\cO_X(!D)
\end{equation}
of coherent $\cO_X$-modules for any $k\in \dZ$.

The very same argument, starting from the inclusion $F^H_\bullet \cO_X(*D)\subset F^{\ord}_\bullet \cO_X(*D)$ (and using the Cohen-Macaulay property of $(\cO_X(*D),F^{\ord}_\bullet)$ proved in Proposition \ref{prop:StrictnessDualOrder} instead of the one
of  $(\cO_X(*D),F^H_\bullet)$ as above) yields that
$$
F^{\ord}_k \cO_X(!D)\subset F^{H,\dD}_k \cO_X(!D)
$$
for all $k\in \dZ$, which completes the proof.
\end{proof}

Finally, we will state a conjecture about the so-called generating level of the Hodge filtration. Let us recall the following definition from \cite{SaitoOnTheHodgeFiltration}.

\begin{definition}\label{def:LevelHodgeFiltration}
Let $X$ be a complex manifold and
let a well filtered module $(\cM,F_\bullet)$ be given. Then we say that the  filtration $F_\bullet \cM$ is generated at level $k$ if
$$
F_l \cD_X\cdot F_k \cM = F_{k+l} \cM
$$
holds for all
$l\geq 0$,
or, equivalently, if for all $k'\geq k$, we have
$$
F_1\cD_X \cdot F_{k'} \cM = F_{k'+1} \cM.
$$
The smallest integer $k$ with this property is called the generating level of $F_\bullet \cM$.
\end{definition}

Based on our calculations in section \ref{sec:Examples}, we conjecture the following bound for the generating level of $F^H_\bullet$ on $\cO_X(*D)$.
\begin{conjecture}\label{con:GenLevel}
Under the assumptions of Theorem \ref{thm:HodgeOnMeromorphic} and Corollary \ref{cor:Inclu-global}, the Hodge filtration $F^H_\bullet \cO(*D)$ is generated at level $r:=\frac12\left(\deg(b_D(s))-\textup{mult}_{b_D(s)}(-1)\right)$.
\end{conjecture}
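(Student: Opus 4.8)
The plan is to reduce, exactly as in the proofs of Theorem \ref{thm:HodgeOnMeromorphic} and Theorem \ref{theo:DualFiltration}, to the local situation: choose a reduced equation $h\in\Gamma(X,\cO_X)$ of $D$, so that $\cO_X(*D)\cong\cM(h)$ and $i_{h,+}\cO_X(*D)\cong\cN(h)$, and set $\beta(s)=\prod_{\alpha\in B'_h}(s+\alpha)^{l_\alpha}$, which has degree $r$. Since $F^H_\bullet\cM(h)$ is a good filtration the inclusion $F_1\cD_X\cdot F^H_k\cM(h)\subseteq F^H_{k+1}\cM(h)$ is automatic, so the content is the reverse inclusion for all $k\ge r$, i.e. that $\gr^{F^H}_\bullet\cM(h)$ is generated in degrees $\le r$ over $\gr^F_\bullet\cD_X$. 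Feeding in $F^{\ord}_{\bullet-r}\cM(h)\subseteq F^H_\bullet\cM(h)\subseteq F^{\ord}_\bullet\cM(h)$ from Corollary \ref{cor:Inclu-global} (equation \eqref{eq:SummaryMainInlcusions}) together with the fact that the order filtration on the cyclic module $\cM(h)$ is generated at level $0$, so that $F^{\ord}_{k+1-r}\cM(h)=F_1\cD_X\cdot F^{\ord}_{k-r}\cM(h)\subseteq F_1\cD_X\cdot F^H_k\cM(h)$, one sees the goal is equivalent to
$$
F^H_{k+1}\cM(h)\ \subseteq\ F^{\ord}_{k+1-r}\cM(h)+F_1\cD_X\cdot F^H_k\cM(h)\qquad(k\ge r),
$$
which in Rees language says that the $z$-torsion cokernel $\cC=\cR_{F^H}\cO_X(*D)/(\cR_{F^{\ord}}\cO_X(*D))(-r)$ of \eqref{eq:ShortSeqRMod} is generated in degrees $\le r$ as a graded $\cR_F\cD_X$-module.

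The approach I would take is to pass to the graph-embedding module and use the recursive description. By \eqref{eq:HodgeMhFromHogdeNh}, $F^H_{k+1}\cM(h)=F^H_{k+2}\cN(h)\cap(\cM(h)\otimes 1)$, and by \eqref{eq:FormulaHodgeNh_Simple} combined with Corollary \ref{cor:PreciseDescrCanVFilt},
$$
F^H_{k+2}\cN(h)=\partial_t F^H_{k+1}\cN(h)+(V^1_{\ind}\cN(h)+\beta(\partial_t t)V^0_{\ind}\cN(h))\cap F^{\ord}_{k+1}\cN(h).
$$
The term $\partial_t F^H_{k+1}\cN(h)$ lies in $F_1\cD_{\dC_t\times X}\cdot F^H_{k+1}\cN(h)$ and, via \eqref{eq:HodgeNhFromHogdeNh} (which rebuilds $F^H_\bullet\cN(h)$ from $F^H_\bullet\cM(h)$ and $\partial_t$), can be carried along in an induction on $k$. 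For the remaining term I would first show that the intersection with $F^{\ord}_\bullet\cN(h)$ distributes over the sum, splitting it as $(V^1_{\ind}\cN(h)\cap F^{\ord}_{k+1}\cN(h))+(\beta(\partial_t t)V^0_{\ind}\cN(h)\cap F^{\ord}_{k+1}\cN(h))$; this should follow from the involutive-basis statements Proposition \ref{prop:CompatibilityVandF} and Corollary \ref{cor:Involutive_V-Filt}, since a standard $\widetilde V^\bullet$-representation separates the generator $t-h$ from the others. The first summand equals $t\cdot(V^0_{\ind}\cN(h)\cap F^{\ord}_{k+1}\cN(h))$ by Proposition \ref{prop:CompatibilityVandF} and is absorbed into lower $F_1\cD$-level terms by induction; the genuinely new contribution is the factor $\beta(\partial_t t)$, of order exactly $r$, in the last summand, where lifting via Corollary \ref{cor:Involutive_V-Filt} produces an operator $\beta(\partial_t t)Q$ with $Q\in V^0\cD_{\dC_t\times X}$ of order $\le k+1-r$.

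The main obstacle is precisely to control this $\beta(\partial_t t)$-term after the intersection with $\cM(h)\otimes 1$: one needs a sharp statement to the effect that multiplication by $\beta(\partial_t t)$ raises the order level and the Hodge level by exactly $r$ and that no part of $F^H_{k+1}\cM(h)$ is lost in the cancellations forced by restricting to $\cM(h)\otimes 1$. In essence this requires understanding the $V$-filtration along $\{t=0\}$ of the submodule $\cM(h)\otimes 1$ itself --- equivalently, the Malgrange $V$-filtration of $\cO_X(*D)$ along $D$ --- and matching its jumps with the roots of $b_h$ collected in $B'_h$; the shift $r=|B'_h|$ (with multiplicity) $=\tfrac12(\deg b_h-\textup{mult}_{b_h}(-1))$ should emerge from the symmetry $b_h(-s-2)=\pm b_h(s)$ of Proposition \ref{prop:SummarySKLCT}(3). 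I expect the bookkeeping can be closed by using the Cohen--Macaulay property of both $(\cM(h),F^{\ord}_\bullet)$ and $(\cM(h),F^H_\bullet)$ (Proposition \ref{prop:StrictnessDualOrder}, Theorem \ref{theo:HodgeCM}) together with the invariance of the characteristic cycle: the latter forces $\cC$ (as above) to be supported in codimension $\ge 1$ inside $\{z=0\}$, and the hard part is to bootstrap this dimension bound into the required bound on the degrees of generation of $\cC$. A complementary route is to dualize: Theorem \ref{theo:DualFiltration} already produces the mirror vanishing $F^{H,\dD}_{-r-1}\cO_X(!D)=0$, and one would want a strictness/duality argument --- again built on the two Cohen--Macaulay properties --- identifying the generating level of $F^H_\bullet\cO_X(*D)$ with the lowest non-vanishing step of $F^{H,\dD}_\bullet\cO_X(!D)$. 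Making this last implication rigorous is the delicate point, since the generating level is not in general literally dual to a vanishing; one must exploit the special structure of $\cO_X(*D)$ for a strongly Koszul free divisor --- the explicit cyclic presentation, the precise form of $V^0_{\can}\cN(h)$ from Corollary \ref{cor:PreciseDescrCanVFilt}, and $[1]\in F^H_r\cM(h)$ from Corollary \ref{cor:Inclu}.

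As a sanity check on the strategy I would first run it in the locally quasi-homogeneous case, where the Euler vector field $\chi$ makes the Hodge grading on $\cN(h)$ directly accessible and the interaction of $\beta(\partial_t t)$ with $F^{\ord}_\bullet$ becomes a weight computation, and in the free hyperplane arrangement case; these should isolate exactly which extra compatibility (beyond Proposition \ref{prop:CompatibilityVandF}) is needed to handle the $\beta(\partial_t t)V^0_{\ind}$-part in general, and I expect that compatibility --- a strictness of the triple $(F_\bullet\cD,\,V^\bullet\cD,\,\beta$-multiplication$)$ on $\cN(h)$ along $\cM(h)\otimes 1$ --- to be the crux of a complete proof.
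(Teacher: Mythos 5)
You should be aware that the statement you are trying to prove is stated in the paper as Conjecture \ref{con:GenLevel}: the paper does not prove it, but only supports it by the partial results (the sandwich $F^{\ord}_{\bullet-r}\cO_X(*D)\subset F^H_\bullet\cO_X(*D)\subset F^{\ord}_\bullet\cO_X(*D)$ of Corollary \ref{cor:Inclu-global} and the dual estimate of Theorem \ref{theo:DualFiltration}) and by the explicit computations in section \ref{sec:Examples}. So there is no proof in the paper to compare yours against, and your text, as written, is a strategy sketch rather than a proof: you yourself flag the two decisive steps as unestablished, and they are exactly the points where the paper stops as well.

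Concretely, the two routes you propose both have genuine gaps. First, in the direct route you need the intersection with $F^{\ord}_{k+1}\cN(h)$ to distribute over the sum $V^1_{\ind}\cN(h)+\beta(\partial_t t)V^0_{\ind}\cN(h)$ and, more seriously, you need to control how multiplication by $\beta(\partial_t t)$ interacts simultaneously with $F^{\ord}_\bullet$, $V^\bullet_{\ind}$ and the subspace $\cM(h)\otimes 1$; neither Proposition \ref{prop:CompatibilityVandF} nor Corollary \ref{cor:Involutive_V-Filt} delivers such a three-filtration strictness statement, and no result of this kind is proved in the paper. Calling it ``the crux of a complete proof'' is accurate, but it leaves the conjecture exactly as open as before. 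Second, the duality route stalls for the reason recorded in Remark \ref{rem:ConseqGenLev}: Saito's criterion from \cite[Lemme 2.5]{Saito94} for bounding the generating level of a Cohen--Macaulay filtered module would require a vanishing of the dual Hodge filtration in a range strictly beyond what Theorem \ref{theo:DualFiltration} provides ($F^{H,\dD}_{-r-1}\cO_X(!D)=0$ is not enough), and, as you note, the generating level is not literally dual to a vanishing; bridging that gap is precisely the missing idea. In short, your proposal correctly assembles the available ingredients and identifies the obstacles, but it does not prove the statement, which remains a conjecture both in the paper and after your argument.
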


\begin{remark}\label{rem:ConseqGenLev}
\begin{enumerate}
\item
As we will see in section \ref{sec:Examples}, this bound for the generating level is not always better than the known general bound, which is $n-2$ (see \cite[Theorem B]{PopaMustata}). For the specific class of \emph{linear free divisors}, it is known however that $\deg(b_h(s))=n$, and in this case the bound $r$ would be a drastic improvement, see subsection \ref{subsec:LFD} below. Notice also that the general bound
$n-1-\lceil \widetilde{\alpha}_D \rceil $ from \cite[Theorem A]{MP-Inv} is not helpful in our situation, since the minimal exponent $\widetilde{\alpha}_D$, which is the negative of the biggest root of the \emph{reduced} Bernstein polynomial
$b_h(s)/(s+1)$, lies in $(0,1)$, so that this bound is again $n-2$.
\item
There is a criterion in \cite[Lemma 2.5]{Saito94} that guarantees generating level smaller or equal to $k$ for a filtered module $(\cM, F_\bullet)$ satisfying the Cohen-Macaulay property in terms of vanishing of the dual filtration. It cannot, however, directly be applied in our situation since for this we would need the vanishing $F^{\dD,H}_{2n-r-1} \cO_X(!D)$ instead of
$F^{\dD,H}_{n-r-1} \cO_X(!D)=0$, which is provided by Theorem \ref{theo:DualFiltration}. One may hope though that some refinement of this argument (see, e.g., \cite[Proposition 3.3]{MP-Inv} for a statement in a related situation) can give the desired result.
\item As an immediate consequence of the conjecture, we would obtain a local vanishing result for a log resolution of an SK-free divisor, as provided by \cite[Theorem 17.1]{PopaMustata}. Namely, for a log resolution
$\pi:\widetilde{X}\longrightarrow X$ which is an isomorphism over $X\backslash D$ and
where $E:=(\pi^{-1}(D))_{\textup{red}}$ (a reduced normal crossing divisor on $\widetilde{X}$) we would have (provided that Conjecture \ref{con:GenLevel} holds true)
$$
R^k\pi_* \Omega^{n-k}_{\widetilde{X}}(\log\, E) = 0
$$
for all $k>r$.
\end{enumerate}
\end{remark}

\section{Computations of Hodge ideals and examples}
\label{sec:Examples}

The purpose of this section is to develop some techniques to calculate Hodge ideals of a strongly Koszul free divisor using our main result (Theorem \ref{thm:HodgeOnMeromorphic}).
We will illustrate these methods by concrete computations of Hodge ideals and of the generating level of the Hodge filtration for some interesting examples. We start with the following preliminary result.
\begin{lemma}\label{lem:reduction_to_D_X[s]_trick}
Let $D\subset X$ be a free divisor, let $p\in D$ such that $D$ is strongly Koszul at $p$ and take
a reduced local defining equation $h\in\cO_{X,p}$ of $(D,p)\subset (X,p)$. Consider the vector fields
$\delta_1,\ldots,\delta_{n-1},\tilde\chi$ as in Lemma \ref{lem:chitilde}.
Let $\beta(s)$ be any polynomial in $\dC[s]$. Then
\begin{equation}
\label{eq:LemmaV0}
V^0\cD_{\dC_t\times X,(0,p)}( t,\beta(\dd_tt),t-h,\delta_1,\ldots,\delta_{n-1},\tilde\chi+1)\cap\cD_{X,p}[\dd_tt] =\cD_{X,p}[\dd_tt]( h,\beta(\dd_tt),\delta_1,\ldots,\delta_{n-1},\tilde\chi+1).
\end{equation}
\end{lemma}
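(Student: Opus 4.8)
The claim is an intersection-of-ideals identity inside $V^0\cD_{\dC_t\times X,(0,p)}$, and both sides obviously contain the right-hand side, so the content is the reverse inclusion. My plan is to reduce it to a Gr\"obner-style standard representation argument of exactly the same flavour as the one used in Corollary \ref{cor:Involutive_V-Filt} and Corollary \ref{cor:generators_I(h)_V-filt}. The first observation I would make is that, just as $V^0\cD_{\dC_t\times X,(0,p)}=\cD_{X,p}\boxtimes\dC\{t\}\langle t\dd_t\rangle$ was used in Lemma \ref{lem:RootsElement1GraphEmbed}, the variable $t$ can be eliminated: modulo $t-h$, every occurrence of $t$ can be rewritten as $h$, and $\dd_t t = t\dd_t + 1$. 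More precisely, I would pass to the quotient by $(t-h)$ inside $V^0\cD$; the image of $V^0\cD$ is (a localization of) $\cD_{X,p}[\theta]$ with $\theta$ the class of $\dd_t t$, the element $t$ maps to $h$, and the generators $t,\beta(\dd_tt),t-h,\delta_1,\ldots,\delta_{n-1},\tilde\chi+1$ map to $h,\beta(\theta),0,\delta_1,\ldots,\delta_{n-1},\chi+\theta+1$ (using $\tilde\chi=\chi+\dd_tt$, hence the image of $\tilde\chi+1$ is $\chi+\theta+1$, while $\tilde\chi+1$ restricted to the ``$\otimes 1$'' slice with $\theta$ acting is $\chi+\theta+1$; here one must match conventions with $\cN(h)$). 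This turns the left-hand side into $\cD_{X,p}[\theta]\langle h,\beta(\theta),\delta_1,\ldots,\delta_{n-1},\chi+\theta+1\rangle$, which after identifying $\theta\leftrightarrow\dd_t t$ and $\chi+\theta+1\leftrightarrow\tilde\chi+1$ is precisely the right-hand side --- provided the elimination of $t-h$ is ``clean'', i.e.\ no spurious relations are introduced.

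That cleanliness is the real point, and it is where the strong Koszul hypothesis enters. The key input is Corollary \ref{cor:Involutive_V-Filt}: the set $\{t-h,\sigma(\delta_1),\ldots,\sigma(\delta_{n-1}),\sigma(\chi)+Tt\}$ is an involutive (Gr\"obner) basis of $\sigma(I(h))$ with respect to $\widetilde V^\bullet\cG$, and $h,\sigma^{\widetilde V}(\sigma(\delta_i)),\sigma^{\widetilde V}(\sigma(\chi))-s$ is a regular sequence in $\gr^\bullet_{\widetilde V}\cG\cong\Gr^T_\bullet(\cD_{X,p}[s])$. I would run the induction on the order $\ord^F$ of an operator $P$ lying in the left-hand side, exactly as in the proof of Corollary \ref{cor:generators_I(h)_V-filt}: write $P$ as a combination of $t,\beta(\dd_tt),t-h,\delta_1,\ldots,\delta_{n-1},\tilde\chi+1$ with $V^0\cD$-coefficients; pass to symbols in $\cG$; use the regular-sequence property to rewrite the symbol-level relation via Koszul syzygies, noting that $\sigma(t)=t$ is redundant once we have $t-h$ and $h$, and that $\sigma(\beta(\dd_tt))$ is a polynomial in $Tt$ alone hence already expressible after adjoining $h$ (this is the step that keeps $\beta(\theta)$ on the right); lift the rewriting back to $V^0\cD$ using surjectivity of $F_l\cD\cap V^k\cD\to\widetilde V^k\cG_l$; subtract to drop the order, and invoke the induction hypothesis. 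The presence of the extra generators $t$ and $\beta(\dd_tt)$ compared to Corollary \ref{cor:generators_I(h)_V-filt} only enlarges the ideal in a controlled way: $t\equiv h$ and $\beta(\dd_tt)\equiv\beta(\theta)$ modulo $(t-h)$, and both $h$ and $\beta(\theta)$ are allowed on the right.

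The main obstacle I anticipate is bookkeeping the interaction between the three filtrations ($F_\bullet$ by order, $V^\bullet$ along $\{t=0\}$, and the ``$\cG$-degree'') when the extra generator $\beta(\dd_tt)$ is thrown in: $\sigma(\beta(\dd_tt))=(Tt)^{\deg\beta}+\cdots$ has $\widetilde V$-order $0$ but mixes orders in $F_\bullet$, so one must check that the involutive-basis/standard-representation machinery still applies to the enlarged generating set $\{t,\beta(\dd_tt),t-h,\sigma(\delta_i),\sigma(\chi)+Tt\}$ --- equivalently, that adjoining $\sigma(t)=t$ and $\sigma(\beta(\dd_tt))$ to a regular sequence still yields an ideal whose symbol ideal is generated by the symbols. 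This should follow because modulo $(t-h)$ we are simply adjoining $h$ (already in the ring) and $\beta$ of the remaining regular-sequence element $\sigma(\chi)-s$; but the details of carrying this through the lexicographic-order Gr\"obner argument, in the non-commutative ring $V^0\cD$ with the auxiliary ring $\cG$, are where care is needed. Once that is in place, restricting the resulting $V^0\cD$-expression to the subring $\cD_{X,p}[\dd_t t]$ (i.e.\ intersecting with the ``$t$-constant'' part, which is legitimate because $\dd_t t$ commutes appropriately and $t-h$ is used to clear all genuine $t$'s) gives the claimed right-hand side.
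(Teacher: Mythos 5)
There is a genuine gap, and it sits exactly where you locate the \emph{easy} part rather than the hard one. The statement is an intersection with the \emph{subring} $\cD_{X,p}[\dd_tt]\subset V^0\cD$, i.e.\ with the $t$-degree-zero part of $V^0\cD\cong\cD_{X,p}[\dd_tt][[t]]$ --- not with a step of the filtrations $F_\bullet$ or $V^\bullet$. The order-induction of Corollary \ref{cor:generators_I(h)_V-filt} that you want to transplant does not close up here: after one reduction step you replace the coefficients by operators with the correct top $F$-symbol and are left with an error term lying in $F_{d-1}\cD\cap\wt\cI$, but that error term has no reason to lie in $\cD_{X,p}[\dd_tt]$ (the corrections involve products with $t-h$, which reintroduce genuine $t$'s in lower order), so the induction hypothesis cannot be invoked. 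Membership in $\cD_{X,p}[\dd_tt]$ is simply not visible on $F$-symbols. Relatedly, your proposed extension of the involutive-basis statement to the enlarged set $\{t,\beta(\dd_tt),t-h,\ldots\}$ would require, e.g., that $\sigma^{\widetilde V}(\sigma(\beta(\dd_tt)))=\pm s^{\deg\beta}\equiv\pm\sigma(\chi)^{\deg\beta}$ remain a nonzerodivisor modulo $(h,\sigma(\delta_1),\ldots,\sigma(\delta_{n-1}))$, which is not part of the strong Koszul hypothesis; and the final step (``restricting the resulting $V^0\cD$-expression to the subring'') is the assertion to be proved, not a consequence of what precedes it.

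The paper's proof is instead elementary and uses neither Gr\"obner bases nor the strong Koszul hypothesis. One first replaces the generators by the $t$-homogeneous set $\{t,\tilde\beta,h,\delta_1,\ldots,\delta_{n-1},\tilde\chi+1\}$ with $\tilde\beta=\beta(-\chi-1)$: this generates the same ideal because $h=t-(t-h)$ and because $\beta(\dd_tt)-\beta(-\chi-1)$ lies in $\cD_{X,p}[\dd_tt]\cdot(\tilde\chi+1)$ (here one uses that $\tilde\chi+1$ commutes with $\chi$). Then, given $P$ in the left-hand side, one expands each coefficient of the representation $P=Qt+R\tilde\beta+Ah+\sum B^i\delta_i+C(\tilde\chi+1)$ as a series $\sum_k(\cdot)_kt^k$ with components in $\cD_{X,p}[\dd_tt]$, uses $t^k(\dd_tt)=(\dd_tt-k)t^k$ to push all powers of $t$ to the right, and compares $t$-degrees: since $P$ has degree zero, all components of positive degree vanish and $P=R_0\tilde\beta+A_0h+\sum B_0^i\delta_i+C_0(\tilde\chi+1)$ with $R_0,A_0,B_0^i,C_0\in\cD_{X,p}[\dd_tt]$. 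Your instinct to eliminate $t$ via $t-h$ is the right one, but the ``cleanliness'' of that elimination is precisely this $t$-adic bookkeeping, not an involutivity or regular-sequence property.
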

\begin{proof}
For the sake of simplicity, let us denote $\cD_{\dC_t\times X,(0,p)}$ just by $\cD$.
Before starting the proof, let us explain something we will take for granted throughout it: we know that $V^0\cD =\dC\{x,t\}[\partial_x,\dd_t t]$ and that any $P\in V^0\cD $ can be expressed in a unique way as a series
$$P= \sum_{|\beta|+ m\leq d} \sum_{\alpha, \ell} a_{\alpha,\ell}^{\beta,m} x^\alpha t^\ell \partial_x^\beta (\dd_t t)^m$$
with constant coefficients, and for each $(\beta, m)$, the series $\sum_{\alpha, \ell} a_{\alpha,\ell}^{\beta,m} x^\alpha t^\ell$ is convergent. Now, by using the identity $t^\ell (\dd_t t)^m = (\dd_t t -\ell)^m t^\ell $ we find another unique formal representation
$$ P =  \sum_{|\beta|+ i\leq d} \sum_{\alpha,\ell} c_{\alpha,\ell}^{\beta,i} x^\alpha \partial_x^\beta (\dd_t t)^i t^\ell
$$
with
$$c_{\alpha,\ell}^{\beta,i} = \sum_{i\leq m\leq d} a_{\alpha,\ell}^{\beta,m} \binom{m}{i}(-\ell)^{m-i},
$$
and one easily sees that for fixed $\beta,i,\ell$ the series $\sum_\alpha c_{\alpha,\ell}^{\beta,i} x^\alpha $ is convergent. So we have a unique formal expression
$$ P = \sum_\ell P_\ell t^\ell \text{, with } P_\ell = \sum_{|\beta|+ i\leq d} \sum_{\alpha} c_{\alpha,\ell}^{\beta,i} x^\alpha \partial_x^\beta (\dd_t t)^i \in \cD_{X,p}[\dd_tt],
$$
and it makes sense to consider $P\in \cD_{X,p}[\dd_tt][[t]]$, where
the last ring is the completion of $\cD_{X,p}[\dd_tt][t]$ with respect to the $( t )$-adic topology. Here one has to use that the left ideal generated by $t$ coincides with the right ideal generated by $t$, and so it is a bilateral ideal, and also that the monomials $t^\ell, \ell\geq 0$, form a basis of $\cD_{X,p}[\dd_tt][t]$ as a left and as a right $\cD_{X,p}[\dd_tt]$-module.

Let us now begin with the actual proof and denote by $\wt\cI$ the ideal $( t,\beta(\dd_tt),t-h,\delta_1,\ldots,\delta_{n-1},\tilde\chi+1)$ of $V^0\cD$. The inclusion $\supset$ in equation \eqref{eq:LemmaV0}  is trivial, let us show the reverse one. We will use a more suitable set of generators of $\wt\cI$, namely $\{t,\tilde\beta,h,\delta_1,\ldots,\delta_{n-1},\chi+\dd_tt+1\}$, where $\tilde\beta=\beta(-\chi-1)$.

Let then $P\in\wt\cI\cap\cD_{X,p}[\partial_t t]$. Then there exist operators $Q,R,A,B^1,\ldots,B^{n-1},C\in V^0\cD$ such that
\begin{equation}\label{eq:reduction_to_D_X[s]_trick}
P=Qt+R\tilde\beta+Ah+\sum_{i=1}^{n-1}B^i\delta_i+C(\tilde\chi+1).
\end{equation}
Our goal is to show that $Q$ must vanish and that the other operators belong actually to $\cD_{X,p}[\dd_tt]$. Let us write $Q=\sum_kQ_kt^k$, with $Q_k\in\cD_{X,p}[\dd_tt]$, and analogously for $R$, $A$, the $B_i$ and $C$. We can express the right-hand side of equation \eqref{eq:reduction_to_D_X[s]_trick} as a new operator $S=\sum_kS_kt^k$, where again $S_k\in \cD_{X,p}[\partial_t t]$. Moreover, since $t$, $\tilde\beta$, $h$, $\delta_1,\ldots, \delta_{n-1}$ and $\wt\chi+1$, when seen as elements of $\cD_{X,p}[\partial_t t][[t]]$, are homogeneous  in $t$, we have for every $k\geq0$ that
$$S_k=Q_{k-1}+R_k\tilde\beta +A_k h+\sum_{i=1}^{n-1}B_k^i\delta_i+C_k(\tilde\chi-k+1).$$
Comparing the $S_k\in\cD_{X,p}[\dd_tt]$ with the terms of degree $k$ in $t$ at the left-hand side (recall that $P\in \cD_{X,p}[\partial_t t]$), we find that $S_k=0$ for every $k>0$.

Let us write now $\bar R=R-R_0$ and similarly with the other operators. Each of them can be written as a series in $t$ and we have checked that
$$Qt+\bar R\tilde\beta+\bar Ah+\sum_{i=0}^{n-1}\bar B^i\delta_i+\sum_{k>0}C_k(\wt\chi-k+1)t^k=0.$$
Therefore, since $\sum_{k>0}C_k(\wt\chi-k+1)t^k=\bar C(\wt\chi+1)$,
$$P=R_0\tilde\beta+A_0h+\sum_{i=0}^{n-1}B_0^i\delta_i+C_0(\wt\chi+1).$$
Summing up, we have been able to write $P$ as a linear combination of $h$, $\tilde\beta$, the $\delta_i$ and $\wt\chi+1$ with coefficients in $\cD_{X,p}[\dd_tt]$, as we wanted.

\end{proof}

\begin{theorem}\label{theo:reduction_to_D_X[s]}
Let $D\subset X$ be a free divisor, let $p\in D$ such that $D$ is strongly Koszul at $p$
and let $h\in \cO_{X,p}$ be a reduced local equation of $D$. Let $\beta(s)=\prod_{\alpha\in B'_h}(s+\alpha)^{l_\alpha}$, where $l_\alpha$ is the multiplicity of $\alpha-1$ in $b_h(s)$, $N(h)=\cN(h)_{(0,p)}$ and $\pi_p:\cD_{X,p}[s]\ra N(h)$ the morphism of $\cD_{X,p}[s]$-modules given by $P(s)\mapsto [P(-\dd_tt)]$
(where we give $N(h)$ the $\cD_{X,p}[s]$-module structure coming from the inclusion $\cD_{X,p}[s]\hookrightarrow \cD_{\dC_t\times X,(0,p)}$ sending $s$ to $-\partial_t t$). Consider also the ideal $\cJ_p$ of $\cD_{X,p}[s]$ defined as
$$\cJ_p:=\cD_{X,p}[s]( h,\beta(-s),\delta_1,\ldots,\delta_{n-1},\chi-s+1).$$
Then,
\begin{enumerate}
    \item $\cD_{X,p}[s]$ can be endowed with a structure of $V^0\cD_{\dC_t\times X,(0,p)}$-module such that $\pi_p$ becomes $V^0\cD_{\dC_t\times X,(0,p)}$-linear. As a consequence, the terms of the total order filtration $T_\bullet \cD_{X,p}[s]$ (see Definition \ref{defi:SK}) are $\cO_{\dC_t\times X,(0,p)}$-modules.
    \item For any $k\geq 0$, we have the following equality of $\cO_{\dC_t\times X,(0,p)}$-modules:
\begin{equation} \label{eq:main-2}
    \left(V_{\can}^0 \cN(h)\cap F^{\ord}_k \cN(h)\right)_{(0,p)}=\pi_p(\cJ_p\cap T_k\cD_{X,p}[s]),
\end{equation}
and so
\begin{align}
\begin{split}\label{eq:recur-FHNh}
F_k^H \cN(h)_{(0,p)} & =
\partial_t  F^H_{k-1}\cN(h)_{(0,p)}  +  \pi_p(\cJ_p\cap T_{k-1}\cD_{X,p}[s]) \\
&=\D \sum_{i=0}^{k-1} \partial_t^k\left(\pi_p(\cJ_p\cap T_{k-1-i}\cD_{X,p}[s])\right).
\end{split}
\end{align}
\end{enumerate}
\end{theorem}
\begin{proof}
Following the same convention as in Lemma \ref{lem:reduction_to_D_X[s]_trick} above, we will write $\cO$ and $\cD$ for $\cO_{\dC_t\times X,(0,p)}$ and $\cD_{\dC_t\times X,(0,p)}$, respectively.

We will define on $\cD_{X,p}[s]$ an action of $\cO$ by putting $t\cdot P(s):=P(s+1)h$ and extending it by linearity. Let us check that such an action is well-defined. Indeed, let $a(x,t)=\sum_{\alpha,k}a_{\alpha k} x^\alpha t^k$ be a series in $\cO$ and let us show that we can multiply by $a$ in $\cD_{X,p}[s]$. As a first step, we will restrict ourselves to consider an element $P\in\cD_{X,p}$. In order to show that $a\cdot P$ lies within $\cD_{X,p}[s]$, it is equivalent by linearity to show it for $P=\dd^\beta$, with $\beta\in\dN^n$. Then we will have
\begin{align}\begin{split}
a\cdot \dd^\beta&:=\sum_{\alpha,k}a_{\alpha k}x^\alpha \dd^\beta h^k
=\sum_{\alpha,k}a_{\alpha k}\sum_{\substack{\gamma\leq\beta\\\gamma\leq\alpha}}(-1)^{|\gamma|}\binom{\beta}{\gamma} \frac{\alpha!}{(\alpha-\gamma)!}\dd^{\beta-\gamma}x^{\alpha-\gamma}h^k\\
&=\sum_{\gamma\leq\beta}(-1)^{|\gamma|}\binom{\beta}{\gamma}\dd^{\beta-\gamma} \sum_{\alpha\geq\gamma,k}\frac{\alpha!}{(\alpha-\gamma)!}a_{\alpha k}x^{\alpha-\gamma}h^k=
\sum_{\gamma\leq\beta}(-1)^{|\gamma|}\binom{\beta}{\gamma}\dd^{\beta-\gamma}\cdot \dd^\gamma(a)(x,h),
\end{split}\end{align}
where we use the common component wise partial ordering and the standard multi-index notation for the factorial and the binomial numbers.
Therefore, $a\cdot \dd^\beta$ is a finite sum of monomials in the $\dd_i$ times certain convergent series, so it belongs to $\cD_{X,p}\subset\cD_{X,p}[s]$.

Consider now $Ps^j\in\cD_{X,p}[s]$. Then,
\begin{align}\begin{split}
a\cdot Ps^j&:=\sum_{\alpha,k}a_{\alpha k}x^\alpha P(s+k)^jh^k
=\sum_{\alpha,k}a_{\alpha k}x^\alpha P\sum_{r=0}^j\binom{j}{r}s^r k^{j-r}h^k\\ &=\sum_{r=0}^j\binom{j}{r}s^r\sum_{\alpha,k}a_{\alpha k}k^{j-r}x^\alpha Ph^k
=\sum_{r=0}^j\binom{j}{r}s^r\left((t\dd_t)^{j-r}(a)\cdot P\right),
\end{split}\end{align}
which is another finite sum of elements in $\cD_{X,p}[s]$. Thus $a\cdot Ps^j$ belongs clearly to $\cD_{X,p}[s]$, as we wanted to show.

Now the action of $\cO$ on $\cD_{X,p}[s]$ can be extended to an action of $V^0\cD$. Indeed, we have $[t,s]\cdot P(s)=P(s+1)h=t\cdot P(s)$ for any $P(s)\in\cD_{X,p}[s]$, so we can take the action of $\dd_tt$ as that of $-s$. Now using that $V^0\cD\cong\cD_{X,p}[s]\otimes_{\cO_{X,p}}\cO$ as $\cO_{X,p}$-modules, we can extend both actions by linearity to get the desired one of $V^0\cD$.

Let us check now that $\pi_p$ becomes $V^0\cD$-linear with the new structure on $\cD_{X,p}[s]$. Indeed, consider an operator $P(s)\in\cD_{X,p}[s]$ and a series $a=\sum_{\alpha,k}a_{\alpha k}x^\alpha t^k\in\cO$ as before. Then,
\begin{align}\begin{split}\label{eq:Action_t}
a\cdot\pi_p(P(s))&=\sum_{\alpha,k}a_{\alpha k}x^\alpha t^k\cdot[P(-\dd_tt)]=
\left[\sum_{\alpha,k}a_{\alpha k}x^\alpha P(-\dd_tt+k)t^k\right]=
\left[\sum_{\alpha,k}a_{\alpha k}x^\alpha P(-\dd_tt+k)h^k\right]\\
&=\pi_p(a\cdot P(s)).
\end{split}\end{align}
Note that we are replacing the powers of $t$ by those of $h$ in the third equality. To justify this step, let us rewrite $$\sum_{\alpha,k}a_{\alpha k}x^\alpha P(-\dd_tt+k)t^k=\sum_{\beta,m}\dd^\beta (\dd_tt)^m p_{\beta,m}(x,t)=: \overline P,$$
the $p_{\beta,m}$ being convergent functions in $\cO$. Now dividing all such functions by $t-h$ we can write them as $p_{\beta m}(x,t)=q_{\beta m}(x,t)(t-h)+r_{\beta m}(x)$. (This is a very particular instance of the Weierstra{\ss} division theorem for convergent power series whose proof is elementary in this case.) Consequently,
$$
\overline P=\left(\sum_{\beta,m}\dd^\beta (\dd_tt)^m q_{\beta m}(x,t)\right) (t-h)+\sum_{\beta,m}\dd^\beta (\dd_tt)^m r_{\beta m}(x)=:\widehat A\cdot (t-h)+ \overline{P}'.
$$
For every $\beta$ and $m$ we know that $r_{\beta m}(x)=p_{\beta m}(x,h)$, hence, we have an expression
$$
\overline{P}'=\sum_{\alpha,k}a_{\alpha k}x^\alpha P(-\dd_tt+k)h^k
$$
and thus $[\overline{P}]=[\overline{P}']$ in $N(h)$ which shows the third equality in
Formula \eqref{eq:Action_t}.

Let us show the second point of the theorem.
Take an element $\xi\in V_{\can}^0N(h)\cap F_k^{\ord}N(h)$. Since by Proposition \ref{prop:PreciseDescrCanVFilt} we have $V_{\can}^0N(h)=V_{\ind}^1N(h) + \beta(\dd_tt) V_{\ind}^0 N(h)$, we know that there is a representative $\widetilde{P}\in\cD$ of $\xi$ and that there are operators $\wt Q,\wt R\in V^0\cD$ and $\widetilde{A},\widetilde{B}_1,\ldots,\widetilde{B}_{n-1},\widetilde{C}\in\cD$ such that
$$
\widetilde{P}=t\wt Q+\beta(\dd_tt)\wt R+
\underbrace{\widetilde{A}(t-h)+\sum_{i=1}^{n-1}\widetilde{B}_i\delta_i+\widetilde{C}(\widetilde\chi+1)}_{\in I(h)}.
$$
It is clear that $t\wt Q$ can be rewritten as $Q't$ for a suitable $Q'\in V^0\cD$. Regarding $\beta(\dd_tt)\wt R$, let us expand $\wt R$ as $\sum_k\wt R_kt^k$ with $\wt R_k\in V^0\cD$ in an analogous fashion as in the proof of Lemma \ref{lem:reduction_to_D_X[s]_trick}. Then,
\begin{align*}
\beta(\dd_tt)\wt R=&\sum_{k\geq0}\wt R_kt^k\beta(\dd_tt+k)=\\
=&\wt R\beta(\dd_tt)+\sum_{k\geq1}\wt R_kt^k(\beta(\dd_tt+k)-\beta(\dd_tt))=\\
=&\wt R\beta(\dd_tt)+\sum_{k\geq1}\wt R_kt^{k-1}(\beta(\dd_tt+k-1)-\beta(\dd_tt-1))t=:\wt R\beta(\dd_tt)+R't,
\end{align*}
where $R'\in V^0\cD$. Therefore, renaming $Q'+R'$ as $\wt Q\in V^0\cD$, we have a new expression
\begin{equation}\label{eq:ExpressionP}
\widetilde{P}=\wt Qt+\wt R\beta(\dd_tt)+\widetilde{A}(t-h)+\sum_{i=1}^{n-1}\widetilde{B}_i\delta_i+\widetilde{C}(\tilde\chi+1).
\end{equation}

Notice that since
$$
\xi \in V^0_{\can} N(h) \cap F_k^{\ord} N(h) \subset
V^0_{\ind} N(h) \cap F_k^{\ord} N(h),
$$
it follows from Proposition \ref{prop:CompatibilityVandF} that we can pick a new representative $\widehat{P} \in F_k\cD\cap V^0\cD$ of $\xi$, that is, we have
$\widehat{P}-\widetilde{P}\in I(h)$.
Moreover, when writing $\widehat{P}$ as an element
in $\cD_{X,p}[\partial_t t][[t]]$, we can replace any
positive power of $t$ by $h$, and this will not change the order. Hence, there exists an operator $P \in F_k\cD\cap \cD_{X,p}[\partial_t t]$ with
$P-\widehat{P}=\widehat{A}(t-h)$.
It follows that
$P-\widetilde{P}\in I(h)$, or, said otherwise, there
are coefficients $A',B'_1,\ldots,B'_{n-1}, C' \in \cD$ such that
$$
P-\widetilde{P}=A'(t-h)+\sum_{i=1}^{n-1}B'_i\delta_i+C'(\widetilde\chi+1).
$$
Hence (by replacing $\widetilde{P}$ with the expression from equation \eqref{eq:ExpressionP}) we obtain that the class $\xi\in V^0_{\can}N(h)\cap F_k^{\ord}N(h)$ can be represented by an operator $P\in F_k\cD\cap \cD_{X,p}[\partial_t t]$ which has an expression
$$
P=Qt+R\beta(\partial_t t) + A(t-h)
+\sum_{i=1}^{n-1} B_i \delta_i + C(\widetilde{\chi}+1),
$$
where $A:=\widetilde{A}+A'$, $B_i:=\widetilde{B}_i+B'_i$ and $C:=\widetilde{C}+C'$.
By construction, the operators $P$, $Q$ and $R$ are elements in $V^0\cD$. Then we apply Corollary \ref{cor:generators_I(h)_V-filt} and conclude that $A$, $B_1,\ldots,B_{n-1}$ and $C$ belong to $V^0\cD$ as well.

Summing up, we can choose $P$ inside
$$V^0\cD( t,t-h,\beta(\dd_tt),\delta_1,\ldots,\delta_{n-1},\tilde\chi+1)\cap F_k\cD\cap\cD_{X,p}[\dd_tt].$$

Applying Lemma \ref{lem:reduction_to_D_X[s]_trick} above, we obtain that
$$
P\in\cD_{X,p}[\dd_tt]( h,\beta(\dd_tt),\delta_1,\ldots,\delta_{n-1},\tilde\chi+1)\cap T_k\cD_{X,p}[\dd_tt],
$$
and replacing $\dd_tt$ by $-s$ provides the desired claim. For the last part, Formula \eqref{eq:recur-FHNh}, we combine Formula
\eqref{eq:FormulaHodgeNh_Simple} (or its non-recursive version, Formula \eqref{eq:FormulaHodgeNh_NonRec}) with Formula \eqref{eq:main-2}.
\end{proof}
As a first application of Theorem \ref{theo:reduction_to_D_X[s]} and Theorem \ref{thm:HodgeOnMeromorphic}, we can give a formula to calculate the zeroth Hodge ideal $\cI_0(D)_p$.
\begin{corollary}\label{coro:formula_I_0}
Under the assumptions of the previous Proposition, let $(\cJ_0)_p$ be the ideal of $\cO_{X,p}$ defined as
$$(\cJ_0)_p:=\cJ_p\cap \cO_{X,p} = \cD_{X,p}[s]( h,\beta(-s),\delta_1,\ldots,\delta_{n-1},\chi-s+1)\cap\cO_{X,p}.$$
Then
\begin{enumerate}
    \item $F_1^H  N(h)=\{[f]\in  N(h)\,:\,f\in  (\cJ_0)_p\}$.
    \item                     $F_0^H M(h)=\{[f]\in M(h)\,:\,f\in  (\cJ_0)_p\}$.
    \item $ \cI_0(D)_p= (\cJ_0)_p$.
\end{enumerate}
\end{corollary}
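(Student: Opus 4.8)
The plan is to derive all three statements from Proposition \ref{prop:reduction_to_D_X[s]} together with the recursive formula \eqref{eq:MainFormula} of Theorem \ref{thm:HodgeOnMeromorphic}, specialized to the bottom step $k=0$ (equivalently $k=1$ for $\cN(h)$), where the recursion degenerates because all lower Hodge filtration steps vanish. First I would treat point (1). By Formula \eqref{eq:FormulaHodgeNh_Simple} in Corollary \ref{cor:PreciseDescrHFilt}, and using that $F^H_0\cN(h) = 0$, we have
$$
F^H_1\cN(h) = \partial_t F^H_0 \cN(h) + V^0_{\can}\cN(h)\cap F^{\ord}_0\cN(h) = V^0_{\can}\cN(h)\cap F^{\ord}_0\cN(h).
$$
Now I would apply point (2) of Proposition \ref{prop:reduction_to_D_X[s]} with $k=0$, which gives
$$
\bigl(V^0_{\can}\cN(h)\cap F^{\ord}_0\cN(h)\bigr)_{(0,p)} = \pi_p(\cJ_p\cap T_0\cD_{X,p}[s]).
$$
Since $T_0\cD_{X,p}[s] = \cO_{X,p}$ (degree-zero part of the total order filtration consists exactly of functions), the right-hand side is $\pi_p(\cJ_p\cap\cO_{X,p}) = \pi_p((\cJ_0)_p)$, and $\pi_p$ sends a function $f$ to its class $[f]\in N(h)$. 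Hence $F^H_1 N(h) = \{[f]\in N(h) : f\in (\cJ_0)_p\}$, which is exactly point (1). The only thing to verify carefully here is the identification $T_0\cD_{X,p}[s]=\cO_{X,p}$, which is immediate from the definition of the total order filtration in Definition \ref{defi:SK}, since both vector fields and $s$ have order $1$.

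Next, point (2) follows from point (1) by the shift relation \eqref{eq:HodgeMhFromHogdeNh}, namely $F^H_0\cM(h)\cong F^H_1\cN(h)\cap(\cM(h)\otimes 1)$. An element $[f]\in N(h)$ with $f\in\cO_{X,p}$ lies in the submodule $\cM(h)\otimes 1\subset\cN(h)$ — because under the description $\cN(h)\cong\cM(h)[\partial_t]$ recalled on page \pageref{page:RecallStructure}, the class of a function is precisely $f\otimes 1$ (no $\partial_t$ appears). Conversely, by point (1) every element of $F^H_1\cN(h)$ is already of the form $[f]$ with $f\in(\cJ_0)_p\subset\cO_{X,p}$, so the intersection with $\cM(h)\otimes 1$ changes nothing. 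Thus $F^H_0\cM(h) = \{[f]\in\cM(h) : f\in(\cJ_0)_p\}$. I would spell out the identification of $[f]$ with $f\otimes 1$ carefully, as this is where the shift convention between the two modules is used.

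Finally, point (3) is a translation of point (2) through the isomorphism $\cM(h)\cong\cO_{X,p}(*D)$ sending $[1]$ to $h^{-1}$, hence $[f]$ to $f\cdot h^{-1}$. By definition of the Hodge ideal, $F^H_0\cO_X(*D) = \cI_0(D)\cdot P_0\cO_X(*D) = \cI_0(D)\cdot\cO_X(D)$, i.e. $F^H_0\cO_{X,p}(*D) = \cI_0(D)_p\cdot h^{-1}$ (since $\cO_{X,p}(D) = h^{-1}\cO_{X,p}$). On the other hand, point (2) says $F^H_0\cO_{X,p}(*D) = \{f\cdot h^{-1} : f\in(\cJ_0)_p\} = (\cJ_0)_p\cdot h^{-1}$. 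Comparing the two descriptions and cancelling $h^{-1}$ (which is legitimate as $\cO_{X,p}(*D)$ is a localization and $h^{-1}$ is a unit there) gives $\cI_0(D)_p = (\cJ_0)_p$, completing the proof. I expect the main obstacle to be purely bookkeeping: keeping the three avatars $N(h)$, $M(h)$, $\cO_{X,p}(*D)$ and the corresponding shift conventions straight, and confirming $T_0\cD_{X,p}[s]=\cO_{X,p}$; there is no genuinely hard analytic or algebraic input left, as everything substantive is already packaged into Proposition \ref{prop:reduction_to_D_X[s]} and Theorem \ref{thm:HodgeOnMeromorphic}.
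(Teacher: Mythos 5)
Your proof is correct and follows essentially the same route as the paper: point (1) is deduced from Corollary \ref{cor:PreciseDescrHFilt} together with Proposition \ref{prop:reduction_to_D_X[s]} at $k=0$ using $T_0\cD_{X,p}[s]=\cO_{X,p}$, point (2) from formula \eqref{eq:HodgeMhFromHogdeNh} by observing that the intersection with $\cM(h)\otimes 1$ is not proper since $(\cJ_0)_p\subset\cO_{X,p}$, and point (3) by unwinding the definition of $\cI_0(D)$ through the isomorphism $\cM(h)\cong\cO_{X,p}(*D)$. The only difference is that you spell out the bookkeeping more explicitly than the paper does.
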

\begin{proof}
Point 1 is a consequence of Corollary \ref{cor:PreciseDescrHFilt} and Theorem \ref{theo:reduction_to_D_X[s]} above. For the second point we use formula \eqref{eq:HodgeMhFromHogdeNh}. We know that $F_0^H  M(h)=F_1^H N(h)\cap( M(h)\otimes 1)$, where $ M(h)$ is seen as a $\cO_{X,p}$-submodule of $N(h)$. However, since $ (\cJ_0)_p\subset \cO_{X,p}$, there is actually no proper intersection and the claim follows.

In order to obtain the formula for the zeroth Hodge ideal we just need to recall its definition and the isomorphism between $\cO_{X,p}(*D)$ and $M(h)$ (see point 2 of Proposition \ref{prop:SummarySKLCT}). Since $F_0^H\cO_{X,p}(*D)=\cI_0(D)_p\cdot\cO_{X,p}(D)$, we just need to multiply by $h$ the elements of $(\cJ_0)_p\cdot h^{-1}\subset \cO_{X,p}(*D)$, but again, $(\cJ_0)_p\subset\cO_{X,p}$, so $I_0(D)_p=(\cJ_0)_p$ and we are done.
\end{proof}

\begin{remark}
Notice that the ideal $\cJ_p$ from Theorem \ref{theo:reduction_to_D_X[s]} equals, up to
changing $s$ by $s+1$ the ideal
$$\widehat{\cJ}_p=\cD_{X,p}[s]( h,\widehat\beta(s),\delta_1,\ldots,\delta_{n-1},\chi-s),$$
where $\widehat\beta(s)=\prod_{\alpha\in B_h}(s-\alpha)^{l_\alpha}$, with $B_h$ and $l_\alpha$ being, respectively, the set of roots of $b_h$ in the interval $(-1,0)$ and the multiplicity of each root $\alpha$.
Then we have
$$
\widehat{\cJ}'_p:=\cD_{X,p}[s]( h,\delta_1,\ldots,\delta_{n-1},\chi-s) \subsetneq \widehat{\cJ}_p
$$
and it is known (see, e.g.,
\cite[Theorem 1.24]{calde_nar_lct_ilc} and \cite[\S~4]{nar_symmetry_BS})) that $\widehat{\cJ}'_p$ is the annihilator of the
class $h^s$ in $\cD_{X,p}[s]\cdot h^s/\cD_{X,p}[s]\cdot h^{s+1}$. Hence, by the Bernstein functional equation, we know that
$b_h(s) \in \widehat{\cJ}'_p$.  The polynomial $\widehat{\beta}(s)$, which is a proper factor of $b_h(s)$, is not an element of $\widehat{\cJ}'_p$ in general.
\end{remark}

\medskip

Our next aim is to describe an algorithm to calculate effectively the filtration steps $F_k^H\cO_X(*D)$ resp. the Hodge ideals $I_k(D)$ for an SK-free divisor. The starting point is Formula \eqref{eq:recur-FHNh}.
Recall (see the explanation before Lemma \ref{lem:RootsElement1GraphEmbed}) that
there is a left $\cD_{\dC_t\times X,(0,p)}$-linear isomorphism
$$
\begin{array}{rcl}
\Psi:N(h) & \longrightarrow & M(h)[\partial_t]  \\
    {[Q]}
     & \longmapsto &  \sum_{k=0}^{\ord(Q)} [Q_k]\partial^k_t
\end{array}
$$
which we need to make explicit in order to describe our algorithmic procedure to obtain the modules $F_k^H \cO_X(*D)$.

Let us consider the $\dC$-algebra automorphism $\varphi: \cD_{\dC_t\times X,(0,p)} \to \cD_{\dC_t\times X,(0,p)}$ defined as
$$
\begin{array}{rll}
\varphi(a) = a &\;\;\forall\;a\in\cO_{X,p},&
\varphi(t) = t +h, \\[5pt]
\varphi(\partial_{x_i}) = \partial_{x_i} - h'_{x_i} \partial_t&\;\;\forall\;i=1,\ldots,n,&
\varphi(\partial_t) = \partial_t.
\end{array}
$$
It is clear that  $\varphi(\delta)=\delta-\delta(h)\partial_t$ and $\varphi^{-1}(\delta)=\delta+\delta(h)\partial_t$ for each $\delta\in \Theta_{X,p}$, and $\varphi^{-1}(t)=t-h$.
Now given a class $[Q]\in N(h)$, by division by $t$ we find unique operators $A\in \cD_{\dC_t\times X,(0,p)}$ and $Q'\in \cO_{X,p}\langle \partial_{x_1},\dots, \partial_{x_n}, \partial_t \rangle$ such that
$$
\varphi(Q) = A\, t + Q'\quad (\ord(Q')\leq \ord(Q)).
$$
Then we write
$$
Q' = \sum_{k=0}^{\ord(Q)} Q_k \partial_t^k,\quad Q_k \in \cD_{X,p},
$$
and taking classes in $M(h)$ we obtain the
element $\sum_{k=0}^{\ord(Q)} [Q_k]\partial_t^k=:\Psi([Q]) \in M(h)[\partial_t]$ which is readily verified to be well-defined. Notice that the inverse map $\Psi^{-1}$ sends an element $\sum_{k=0}^d [Q_k] \partial_t^k \in M(h)[\partial_t]$, with $Q_k\in\cD_{X,p}$,   to
$$
\left[\sum_{k=0}^d \varphi^{-1}(Q_k) \partial_t^k
\right] \in N(h).
$$
Let us define $F^{\ord}_\bullet \left(M(h)[\partial_t]\right) := \Psi\left(F^{\ord}_\bullet N(h) \right)$. Then we clearly have
$$
F_k^{\ord} \left(M(h)[\partial_t]\right) = \bigoplus_{i=0}^k (F_{k-i}^{\ord} M(h))\,  \partial_t^i,\quad k\in \dZ.
$$

Similarly, put $F^H_\bullet \left(M(h)[\partial_t]\right) := \Psi\left(F^H_\bullet N(h) \right)$. Formula \eqref{eq:recur-FHNh}
becomes:
\begin{eqnarray}
F_k^H  \left(M(h)[\partial_t]\right)  & =&
\partial_t  F_{k-1}^H  \left(M(h)[\partial_t]\right) +  \Psibar(\cJ_p\cap T_{k-1}\cD_{X,p}[s]) \\ \nonumber \\
\label{eq:FHMhdt}
& =&
 \sum_{i=0}^{k-1}
\partial_t^i\,    \Psibar(\cJ_p\cap T_{k-1-i}\cD_{X,p}[s]).
\end{eqnarray}
where we put $\Psibar = \Psi \circ \pi_p$.

We denote $\Phi:M(h) \xrightarrow{\sim} \cO_{X,p}(* D)$ the isomorphism of left $\cD_{X,p}$-modules given by $\Phi([Q]) = Q(h^{-1})$, for each $Q\in\cD_{X,p}$ (see formula \eqref{eq:CyclicPresent-1}), and by $\Phibar: M(h)[\partial_t] \xrightarrow{\sim} \cO_{X,p}(* D)[\partial_t]$ the induced isomorphism of $\cD_{\dC_t\times X,(0,p)}$-modules:
$$
\Phibar\left( \sum_{k=0}^d [Q_k] \partial_t^k \right) = \sum_{k=0}^d Q_k(h^{-1}) \partial_t^k.
$$
We recall from Formulas \eqref{eq:HodgeNhFromHogdeMh} and \eqref{eq:HodgeMhFromHogdeNh} that the relationship between the Hodge filtrations
 on $\cO_{X,p}(* D)[\partial_t]\cong N(h)\cong (i_{h,+} \cO(*D))_{(0,p)}$ and on $\cO_{X,p}(* D)$ is given by
\begin{eqnarray*}
& \displaystyle
F^H_k \left( \cO_{X,p}(* D)[\partial_t] \right) =
\bigoplus_{i=0}^{k-1} (F_{k-1-i}^H \cO_{X,p}(* D))\,  \partial_t^i,
&
\\
&\displaystyle
F^H_k \cO_{X,p}(* D) = F^H_{k+1} \left( \cO_{X,p}(* D)[\partial_t] \right) \cap \cO_{X,p}(* D)
\end{eqnarray*}
for all $k\in \dZ$. We obviously have $\Phi(F^H_\bullet M(h)) = F^H_\bullet \cO_{X,p}(*D)$ and $\Phibar(F^H_\bullet ( M(h)[\partial_t])) = F^H_\bullet \cO_{X,p}(*D)[\partial_t]$.

The pole order filtration $P_k \cO_{X,p}(*D) = \cO_{X,p}((k+1)D) $, $k\geq 0$, induces a filtration on $\cO_{X,p}(*D)[\partial_t]$:
$$
P_k \left(\cO_{X,p}(*D)[\partial_t]\right) := \bigoplus_{i=0}^k  \cO_{X,p}((k-i+1)D) \partial_t^i,\quad k\geq 0.
$$
With these preliminary remarks, we can now  explain an effective procedure to compute a system of generators of the $\cO_{X,p}$-modules $F_k^H \cO_{X,p}(*D) $, $k\geq 0$:

\begin{description}
    \item[Step 1:]
        We compute an involutive  basis $\cB = \{P_1,\dots,P_N \}$ of $\cJ_p$ with respect to the total order filtration $T$ on $\cD_{X,p}[s]$, i.e. $P_1,\ldots,P_N$ are such that
        $$
        \gr_\bullet^T (\cJ_p) = \gr_\bullet^T(\cD_{X,p}[s])(\sigma^T(P_1),\ldots,\sigma^T(P_N)),
        $$
        where for $P\in\cD_{X,p}[s]$, $\sigma^T(P)$ denotes
        the symbol of $P$         in $\gr_\bullet^T(\cD_{X,p}[s])$.
        Let us write $\ord_T(P_j) = d_j$.
        Consequently, for each $k\geq 0$, a system of generators of the $\cO_{X,p}$-module $\cJ_p\cap T_k\cD_{X,p}[s]$ is given by
        $$
        \left\{s^l \upartial_x^\alpha P_j\ |\ l + |\alpha| + d_j \leq k
        \right\}.
        $$
    \item[Step 2:]
        After equation \eqref{eq:FHMhdt}, a system of generators of the $\cO_{X,p}$-module $F_{k+1}^H \left(M(h)[\partial_t]\right)$ is
        $$
        \left\{ \partial_t^i\, \Psibar(s^l \upartial_x^\alpha P_j)\ |\ l + |\alpha| + d_j + i\leq k
        \right\}.
        $$
    \item[Step 3:]
        By means of the isomorphism $\Phibar$, we obtain a system of generators of the $\cO_{X,p}$-module $F_{k+1}^H \left(\cO_{X,p}(*D)[\partial_t]\right)$, that we call $\xi_e$, $e=1,\dots,R$. Since $F_{k+1}^H \left(\cO_{X,p}(*D)[\partial_t]\right)$ is a submodule of the rank $(k+1)$ free $\cO_{X,p}$-module $P_k \left(\cO_{X,p}(*D)[\partial_t]\right)$, we have
        $$
        \xi_e = \sum_{i=0}^k \xi_{e,i} \partial_t^i  \in \cO_{X,p}((k+1)D) \oplus \cO_{X,p}(kD) \partial_t \oplus \cdots \oplus \cO_{X,p}(D) \partial_t^k, \ e=1,\dots,R.
        $$
        Finally, a syzygy computation allows us to compute a system of generators of the $\cO_{X,p}$-module
        $$
        \cO_{X,p}\langle\xi_1,\dots,\xi_R\rangle \cap \cO_{X,p}((k+1)D) = F_{k+1}^H \left(\cO_{X,p}(*D)[\partial_t]\right) \cap \cO_{X,p}(*D) =
        F_k^H \cO_{X,p}(*D).
        $$
\end{description}

In the following subsections we apply this algorithm to calculate the Hodge ideals and the generating level for some interesting classes of examples. Since it is known (\cite[Proposition 10.1]{PopaMustata}) that $\cI_0(D)=\cJ((1-\varepsilon)D)$, and since an algorithm for the calculation of multiplier ideals exists already (see \cite{BerkeschLeykin}), our approach first of all provides an alternative way to compute these multiplier ideals. In some cases our methods yield results whereas the algorithm in loc. cit. does not terminate. Moreover, we can effectively calculate higher Hodge ideals, and we indicate them below in some of the cases for which the generating level of the Hodge filtration $F^H_\bullet \cO_X(*D)$ is positive.

We also determine the generating level of the Hodge filtration, always confirming Conjecture \ref{con:GenLevel}. In many cases, the Hodge filtration is even generated at level zero. This is not always true, however, and we show in particular that
for linear free divisors, the conjectured bound from \ref{con:GenLevel} is sharp. Examples with low generating level are interesting because of the local vanishing conjecture (see point 3 of Remark \ref{rem:ConseqGenLev} as well as \cite[Theorem 17.1]{PopaMustata}). Notice that
only  plane curves, normal crossing divisors (cf. the introduction of \cite{PopaMustata}) and surfaces with rational singularities (\cite[Corollary B]{MP-Inv}) were previously known to have  generating level equal to zero.

\subsection{Divisors with normal crossings}
\label{subsec:NCD}
If $X$ is any complex manifold and $D\subset X$ is a reduced divisor with only normal crossings, then it is elementary to check that $D$ is free and strongly Koszul at each of its points. As we have already pointed out in Remark \ref{rem:HodgeOrderPoleOrder},
it easily follows from the fact that $-1$ is the only root of $b_D(s)$ that
we have
$$
F_\bullet^H \cO_X(*D) = F_\bullet^{\ord} \cO_X(*D).
$$
Consequently, since obviously the order filtration is generated at level $0$, so is
the Hodge filtration (a fact that is of course well known from Saito's theory).
For the Hodge ideals themselves, we obtain that $\cI_k(D)\cdot F_k^{\ord} \cO_X(*D) =
P_k \cO_X(*D)$, and an easy local calculation shows that this coincides with
\cite[Proposition 8.2]{PopaMustata}.

Summarizing, if the divisor $D$ has only normal crossings, then our results easily imply the known facts
about the Hodge filtration resp. the Hodge ideals for such divisors.

\subsection{Surfaces}

We consider two kinds of free surfaces in $\dC^3$, namely (two-dimensional hyper-)plane arrangements and Sekiguchi's free divisors (\cite{Sekiguchi}).
Let $D$ be a central hyperplane arrangement $\bigcup_{i=1}^k H_i\subset X=\dC^n$. It is a longstanding question to detect when an arrangement is a free divisor; for a recent account, see \cite[Chapter 8]{DimcaHypArrangementsBook}. However, if $D$ is free, then it is strongly Koszul since it is locally quasi-homogeneous (see, e.g., \cite{calde_nar_compo} for this implication), so that the methods from this paper do apply.

Below are some results for low-dimensional free arrangements:

$$
\begin{array}{c|c|c}
\textbf{\textup{Name}} & \textbf{\textup{Equation}} & \mathbf{\cI_0(D)}  \\ \hline
A_2 & (x-y)(x-z)(y-z) & (y-z,x-z) \\ \hline
D_3 & (x^2-y^2)(x^2-z^2)(y^2-z^2) &
(y^2z-z^3,x^2z-z^3,y^3-yz^2,xy^2-xz^2,x^2y-yz^2,x^3-xz^2)
\end{array}
$$

In both cases the Hodge filtration on $\cO_X(*D)$ is generated at level zero. Of course, these results do not depend on
the fact that $\dim(D)=2$, similar calculations are possible for other arrangements,
but the ideal $\cI_0(D)$ becomes difficult to print.

Let us consider now the examples of divisors in $\dC^3$ that are discussed in the paper \cite{Sekiguchi}. We refer to loc. cit. for details on their construction and only produce the results of the calculation of $\cI_0(D)$ here. All these examples are given by weighted homogeneous equations in three variables, and we write $(d;d_x,d_y,d_z)$ for a weight vector, where $d_x$, $d_y$ and $d_z$ are the weights of $x$, $y$ and $z$ and $d$ is the degree of the defining equation for the divisor $D\subset \dC^3$.
{\small $$
\begin{array}{c|c|l}
\textbf{\textup{\normalsize Name}} & \textbf{\textup{\normalsize Weight vector}} & \textbf{\textup{\normalsize Equation}}/\text{\normalsize $\mathbf{\cI_0(D)}$}   \\ \hline
A_1 & (12;2,3,4) & \begin{array}{rcl} \\ h&=&16x^4z-4x^3y^2-128x^2z^2+144xy^2z\\&&-27y^4+256z^3\\ \\ \mathbf{\cI_0(D)}&:&(9y^2-32xz,x^2+12z)\\ { }\end{array}\\ \hline
A_2 & (12;2,3,4)& \begin{array}{rcl} \\ h &=&2x^6-3x^4z+18x^3y^2-18xy^2z+27y^4+z^3\\ \\ \mathbf{\cI_0(D)}&:& (y^2,x^2-z)\\ { }\end{array} \\ \hline
B_1 & (9;1,2,3) & \begin{array}{rcl} \\ h&=&z(x^2y^2-4y^3-4x^3z+18xyz-27z^2) \\ \\ \mathbf{\cI_0(D)}&:& (y^2-3xz,xy-9z,x^2z-3yz)\\ { }\end{array}
\\ \hline
B_3 & (9;1,2,3) & \begin{array}{rcl} \\ h&=&z(-2y^3+9xyz+45z^2) \\ \\ \mathbf{\cI_0(D)}&:& (z,y^2)\\ { }\end{array}
\\ \hline
H_2 & (15;1,3,5) & \begin{array}{rcl} \\ h&=&100x^3y^4+y^5+40x^4y^2z-10xy^3z+4x^5z^2\\&&-15x^2yz^2+z^3 \\ \\ \mathbf{\cI_0(D)}&:& (y^2-xz,12x^2yz-z^2,12x^3z-yz)\\ { }\end{array}
\\ \hline
H_5 & (15;1,3,5) & \begin{array}{rcl} \\ h&=&x^3y^4-y^5+3xy^3z+z^3 \\ \\ \mathbf{\cI_0(D)}&:& (y^2,yz,z^2)\\ { }\end{array}
\end{array}
$$}

We have managed to perform the analogous calculations for all of Sekiguchi's examples but $H_1$. The Hodge filtration on $\cO_{\dC^3}(*D)$ is generated at level $0$ for all of them.

\subsection{Linear free divisors}
\label{subsec:LFD}

The paper \cite{BM} introduced a large class of examples of free divisors which appear as discriminants in pre-homogenous vector spaces. Since the module $\Theta(-\log\, D)$ has a basis of linear (in the global coordinates) vector fields in these examples, the divisor $D$ is called \emph{linear free}. A rich source of linear free divisors comes from representation of quivers. It can be shown that the strong Koszul assumption is satisfied in these cases if the underlying graph of the quiver is of ADE-type. Quivers of type A yield free divisors with normal crossings, so the first non-trivial example is the discriminant in the representation space of the $D_4$-quiver. Here we have
$$
D\subset \textup{Mat}(2\times 3,\dC)=
\left\{
\begin{pmatrix}
a_{11} & a_{12} & a_{13} \\
a_{21} & a_{22} & a_{23}
\end{pmatrix}\,|\, a_{ij}\in \dC
\right\},
$$
where $D=V(h)$ and $h=\Delta_1\cdot\Delta_2\cdot\Delta_3$, with $\Delta_1$, $\Delta_2$ and $\Delta_3$ being the three maximal minors of
an element of $\textup{Mat}(2\times 3, \dC)$ (hence, $h$ is a homogenous equation of degree $6$). In this case,
we have $b_h(s)=(s+1)^4(s+2/3)(s+4/3)$ and hence we consider
$$
\cJ=( h,s-1/3,\delta_1,\ldots,\delta_5,\chi-s+1)\subset \cD_{\dC^6}[s].
$$
The intersection with $\cO_{\dC^6}$ (which can equivalently be calculated as $\cD_{\dC^6}
( h,\delta_1,\ldots,\delta_5,\chi+2/3)
\cap \cO_{\dC^6}$) is
$$
\cI_0(D)=(a_{13}a_{22}-a_{12}a_{23},a_{13}a_{21}-a_{11}a_{23},a_{12}a_{21}-a_{11}a_{22}).
$$

The Hodge filtration on $\cO_X(\*D)$ is generated at level 1, and the first Hodge ideal is minimally generated by 13 polynomials in the variables $a_{ij}$ of degree 7 at most (for typsetting reasons, we refrain to print it here).

This discriminant is a first example of the series where the underlying graph
of the quiver is of $D_n$-type. The next example is the $D_5$-quiver, where the discriminant in the $10$-dimensional representation space yields a hypersurface of
degree $10$. A defining equation $h\in \dC[a,b,c,k,e,f,g,h,i,j]$ for $D$ is given by

{\scriptsize \begin{align*}
h&=a^2ke^3ghi^2-a^2ce^2fghi^2+2abke^2fghi^2-2abcef^2ghi^2+b^2kef^2ghi^2-b^2cf^3ghi^2-a^2cke^2h^2i^2-abk^2e^2h^2i^2+a^2c^2efh^2i^2\\
&-b^2k^2efh^2i^2+abc^2f^2h^2i^2+b^2ckf^2h^2i^2-a^2ke^3g^2ij+a^2ce^2fg^2ij-2abke^2fg^2ij+2abcef^2g^2ij-b^2kef^2g^2ij+b^2cf^3g^2ij\\
&+a^2c^2keh^2ij+2abck^2eh^2ij+b^2k^3eh^2ij-a^2c^3fh^2ij-2abc^2kfh^2ij-b^2ck^2fh^2ij+a^2cke^2g^2j^2+abk^2e^2g^2j^2-a^2c^2efg^2j^2\\
&+b^2k^2efg^2j^2-abc^2f^2g^2j^2-b^2ckf^2g^2j^2-a^2c^2keghj^2-2abck^2eghj^2-b^2k^3eghj^2+a^2c^3fghj^2+2abc^2kfghj^2+b^2ck^2fghj^2
\end{align*}}

and we obtain
$$
\cI_0(D)=(aei+bfi-acj-bkj, aeg+bfg-acr-bkr, keri-cfri-kegj+cfgj).
$$
Notice that for this example, the computation of the multiplier ideal
$\cJ((1-\varepsilon)D)$ with the methods from \cite{BerkeschLeykin} does not terminate. The generating level of the Hodge filtration on $\cO_X(*D)$ is two here. The first Hodge ideal $\cI_1(D)$ has a minimal generating set consisting of $24$ polynomials of degree $13$ at most, whereas $\cI_2(D)$ is minimally generated by $124$ polynomials, their maximal degree being $22$.

In fact, Conjecture \ref{con:GenLevel} would yield the following estimate for the generating level of the full $D_n$-series.
\begin{conjecture}
Let $D\subset X=\dC^{4n-10}$ be the linear free divisor $D_n$. Then, the Hodge filtration on $\cO_X(*D)$ is generated at level $n-3$.
\end{conjecture}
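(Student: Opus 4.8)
The plan is to derive this conjecture from Conjecture~\ref{con:GenLevel} together with a sufficiently precise knowledge of the Bernstein--Sato polynomial of the $D_n$-discriminant. Since $D=V(h)\subset X=\dC^{4n-10}$ is globally defined by a single homogeneous polynomial $h$ and, being a linear free divisor whose underlying graph is a Dynkin diagram, is locally quasi-homogeneous and hence strongly Koszul, the first observation is that the global and local $b$-functions agree: every local $b$-function of a reduced local equation of $D$ divides the $b$-function $b_h(s)$ computed in the Weyl algebra $\dW$ over $\dC[x_1,\dots,x_{4n-10}]$, and, $h$ being homogeneous with $0\in D$, the $b$-function of the germ of $h$ at the origin equals $b_h(s)$; hence $b_D(s)=b_h(s)$ (which exists, $D$ being algebraic). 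By Conjecture~\ref{con:GenLevel} it then suffices to prove that $r:=\tfrac12\bigl(\deg b_h(s)-\textup{mult}_{b_h(s)}(-1)\bigr)=n-3$.

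The computation of $r$ splits into two inputs. The easy one is the degree: for linear free divisors it is known (see the discussion in Remark~\ref{rem:ConseqGenLev}) that $\deg b_h(s)=\dim X$, so here $\deg b_h(s)=4n-10$. The real content is the claim
\[
\textup{mult}_{b_h(s)}(-1)=2n-4=\tfrac{\dim X}{2}+1 .
\]
Granting this, point 3 of Proposition~\ref{prop:SummarySKLCT} shows that the $4n-10$ roots of $b_h$ consist of $-1$ with multiplicity $2n-4$ together with $2n-6=2(n-3)$ further roots in $(-2,0)$, symmetric around $-1$, hence $n-3$ symmetric pairs; and $r=\tfrac12\bigl((4n-10)-(2n-4)\bigr)=n-3$, as desired. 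The cases already treated are consistent with this: $b_h(s)=(s+1)^4(s+2/3)(s+4/3)$ for $D_4$ (so $\textup{mult}(-1)=4=2\cdot 4-4$, $r=1$), and $\textup{mult}(-1)=6=2\cdot 5-4$ for $D_5$ (so $r=2$).

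For the crucial claim I see two natural strategies. The representation-theoretic route: the representation space of the $D_n$-quiver for the relevant dimension vector is a prehomogeneous vector space under a reductive group (a product of general linear groups modulo a torus), $h$ being a reduced equation for the complement of the open orbit; one would then compute $b_h(s)$ via the theory of $b$-functions of prehomogeneous vector spaces, reducing to a regular reduced representation by castling transformations, and extract the multiplicity of $-1$. The inductive route: use that the $D_n$ Dynkin diagram is obtained from $D_{n-1}$ by attaching one more node to the tail, and establish a recursion $b_h^{D_n}(s)=b_h^{D_{n-1}}(s)\cdot(s+1)^2\cdot q_n(s)$ with $q_n(s)$ a symmetric pair of linear factors, anchored at the explicit $D_4$-computation; controlling that the $(-1)$-multiplicity increases by exactly $2$ at each step is the heart of the matter. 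I expect this determination of $b_h(s)$ for the whole series --- rather than any part of the Hodge-theoretic machinery, which is already available through Theorem~\ref{thm:HodgeOnMeromorphic}, Corollary~\ref{cor:Inclu-global} and Conjecture~\ref{con:GenLevel} --- to be the main obstacle, since no closed formula is known beyond the low-dimensional cases.

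Two caveats should be stated. First, the whole argument is conditional on Conjecture~\ref{con:GenLevel}. Second, Conjecture~\ref{con:GenLevel} only yields the bound ``generated at level $\le n-3$''; to see that $n-3$ is the exact generating level one would additionally exhibit a section of $F^H_{n-3}\cO_X(*D)$ not lying in $F_1\cD_X\cdot F^H_{n-4}\cO_X(*D)$, for instance by a local computation at a generic point of the deepest stratum of $D$ using Theorem~\ref{thm:HodgeOnMeromorphic}, or by a semicontinuity argument along the $D_n$-series bootstrapped from the verified cases $n=4,5$; this is secondary to the $b$-function computation above.
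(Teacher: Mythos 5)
Your reduction is exactly the paper's: the statement is conditional on Conjecture \ref{con:GenLevel}, and granting that conjecture it suffices to check that $r=\tfrac12\left(\deg b_D(s)-\textup{mult}_{b_D(s)}(-1)\right)=n-3$; your identification of $b_D$ with the $b$-function of the global homogeneous equation $h$, and the arithmetic ($\deg b_h=4n-10$, multiplicity $2n-4$ at $-1$, hence $r=n-3$) are correct and consistent with the $D_4$ and $D_5$ data and with the symmetry from Proposition \ref{prop:SummarySKLCT}. The one substantive input, however, you leave unestablished: you assert that no closed formula for $b_h(s)$ is known beyond low dimensions and propose two strategies (prehomogeneous vector spaces with castling transformations, or induction along the $D_n$-series) without carrying either out. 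In fact the formula is already in the literature: \cite[Table 4.1]{Sev1} gives
$b_h(s)=\left(s+\tfrac{4}{3}\right)^{n-3}(s+1)^{2n-4}\left(s+\tfrac{2}{3}\right)^{n-3}$
for the whole series, and the paper's proof consists precisely of citing this together with the dimension count from \cite{dGMS}. So your argument, as written, has a gap at what you call the ``main obstacle,'' but it is filled by a reference rather than by new mathematics; once that citation is supplied your proof coincides with the paper's. Your second caveat is not needed for the statement as formulated: ``generated at level $n-3$'' in the sense of Definition \ref{def:LevelHodgeFiltration} is the upper-bound assertion, which is exactly what Conjecture \ref{con:GenLevel} delivers, while the sharpness of $n-3$ is a separate expectation the paper only states informally (verified for $n=4,5$).
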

\begin{proof}[Proof assuming Conjecture \ref{con:GenLevel}]
The dimension of the divisors $D_n$ can be found in \cite[p.~1347]{dGMS}. The result then is just a consequence of applying the statement of Conjecture \ref{con:GenLevel}, once we know a general formula for the Bernstein-Sato polynomials of the divisors $D_n$. It can be found at \cite[Table 4.1]{Sev1} that they are, respectively,
$$\left(s+\frac{4}{3}\right)^{n-3}(s+1)^{2n-4}\left(s+\frac{2}{3}\right)^{n-3}.$$
\end{proof}

Note that this conjecture, if true, would give a bound for the generating level which is exactly a quarter of the general one given in \cite[Theorem B]{PopaMustata}. We also believe that such bounds are attained for all of the $D_n$ (i.e. that the generating level is exactly $n-3$), as we have checked for $n=4,5$.

Another typical example of a linear free divisor in low dimensions is the  discriminant in the space of binary cubics (see, e.g., \cite[Examples 1.4. (2)]{GMNS}). Here we have
$D=\cV(h)$, where $h=-y^2z^2+4xz^3+4y^3w-18xyzw+27x^2w^2\in \dC[x,y,z,w]$, and we obtain that
$$
I_0(D)=(z^2-3yw,yz-9xw,y^2-3xz).$$
In this case the Hodge filtration on $\cO_{\dC^4}(*D)$ is generated in level one, with first Hodge ideal
\begin{align*}
\cI_1(D)=&(y^2z^2-4xz^3-4y^3w+18xyzw-27x^2w^2,2z^5-15yz^3w+27y^2zw^2+27xz^2w^2-81xyw^3,\\
&yz^4-27xz^3w-18y^3w^2+135xyzw^2-243x^2w^3, 2xz^4-y^3zw-9xyz^2w+27xy^2w^2-27x^2zw^2,\\ &xyz^3-2y^4w+9xy^2zw-27x^2z^2w+27x^2yw^2, y^4z-18x^2z^3-27xy^3w+135x^2yzw-243x^3w^2,\\ &2y^5-15xy^3z+27x^2yz^2+27x^2y^2w-81x^3zw).
\end{align*}

Notice that similar computations are possible up to some extent for other linear free divisors of SK-type, such as those given by the $E_6$- and $E_7$-quivers. Under the assumption of Conjecture \ref{con:GenLevel}, one could give generating level bounds, using the calculation of Bernstein polynomials from \cite{ThesisBarz}.

We finish this subsection with some examples of non-reductive linear free divisors in low dimension, as found in \cite[Example 5.1, Table 6.1]{GMNS}, that we summarize in the table below.

$$
\begin{array}{c|c|c}
\textbf{\textup{Name}} & \textbf{\textup{Equation}} & \mathbf{\cI_0(D)}  \\ \hline
 \dim 3 & (y^2+xz)z & (y,z) \\ \hline
 \dim 4\text{, case }1 & (y^2+xz)zw & (y,z) \\ \hline
 \dim 4\text{, case }2 & (yz+xw)zw & (z,w) \\ \hline
 \dim 4\text{, case }3 & x(y^3-3xyz+3x^2w) & (x,y^2) \\ \hline
 \operatorname{Sym}_3(\dC) & x\begin{vmatrix}x&y\\y&u\end{vmatrix}\begin{vmatrix}x&y&z\\y&u&v\\z&v&w\end{vmatrix} & (xv-yz,xu-y^2,xzu-xyv)
\end{array}
$$

In all those cases, the Hodge filtration on the corresponding ring of meromorphic functions is generated at level 0.

\subsection{The Whitney umbrella and the cross caps}
\label{subsec:Whitney}

We finish this section by mentioning two examples that slightly fall out of the general setup of this paper. Namely, consider the Whitney umbrella
$D=\cV(h)=\cV(x^2-y^2z)\subset \dC^3$. This is not a free divisor, indeed, we have $\Theta(-\log\,D)=\sum_{i=1}^4\cO_{\dC^3} \delta_i$ where
$$
\delta_1=y\partial_y-2z\partial_z,\delta_2=
-yz\partial_x-x\partial_y, \delta_3=
 -y^2\partial_x-2x\partial_z,\delta_4=\chi=
(1/2)x\partial_x+(1/3)y\partial_y+(1/3)z\partial_z,
$$
so that $\Theta(-\log\,D)$ is not $\cO_{\dC^3}$-locally free. However, the isomorphism
$$
\cD_{\dC^3}/(\delta_1,\delta_2,\delta_3,\chi+1)\cong \cO_{\dC^3}(*D)
$$
of left $\cD_{\dC^3}$-modules still holds true, and we find that $b_h(s)=(s+1)^2(s+3/2)$. According to Lemma \ref{lem:RootsElement1GraphEmbed} from above, we have
$b_{V_{\ind}^\bullet}^{N(h)}=s^2(s-1/2)$. One checks that the main results of this paper can also be applied to this example. In particular, it follows that the integer $r$ appearing in Lemma
\ref{lem:1InFr} is zero, and hence we deduce from Corollary \ref{cor:Inclu} that
$$
F^H_\bullet \cO_{\dC^3}(*D) \cong F^{\ord}_\bullet\cO_{\dC^3}(*D).
$$
In particular, in this example the generating level of the Hodge filtration is zero since $F^{\ord}_\bullet \cO_{\dC^3}(*D)$ is generated at level $0$.

A similar reasoning applies to the higher dimensional example
$$
D=\cV(h):=\cV(x_1x_2x_3^2x_4+x_3^3x_4^2-x_1^2x_3^2x_5+x_2^3x_4-x_1x_2^2x_5+3x_2x_3x_4x_5-2x_1x_3x_5^2-x_5^3)\subset \dC^5,
$$
called cross-cap, which again is not free (the module of logarithmic vector fields has $9$ generators), here we have
$$
b_h(s)=(s+1)^3(s+3/2)(s+4/3)(s+5/3).
$$
As a conclusion, we obtain that for both the Whitney umbrella and the cross-cap we have $\cI_0(D)=\cO_X$, but $\cI_k(D) \subsetneq \cO_X$ for all $k>1$.

Actually, these two examples are the first two of a whole series, which is discussed in \cite{houston2009vector}. However, it is unclear at this point whether we always have the equality
$F^H_\bullet \cO_{\dC^n}(*D) \cong F^{\ord}_\bullet\cO_{\dC^n}(*D)$ since
this needs the fact that the roots of $b_h(s)$ are contained
in $(-2,-1]$, and a general formula for the Bernstein polynomial for the elements in this series is not known (actually, it seems computational impossible to obtain $b_h(s)$ even for the next example, which is a divisor in $\dC^7$).

\bibliographystyle{amsalpha}

\begin{thebibliography}{CJNMM96}

\bibitem[Bar18]{ThesisBarz}
Christian Barz, \emph{Differential invariants of prehomogeneous vector spaces},
  Ph.D. thesis, Technische Universität Chemnitz, 2018, available at
  \url{https://www.logos-verlag.de/cgi-bin/engbuchmid?isbn=4894&lng=deu&id=}.

\bibitem[BL10]{BerkeschLeykin}
Christine Berkesch and Anton Leykin, \emph{Algorithms for {B}ernstein-{S}ato
  polynomials and multiplier ideals}, I{SSAC} 2010---{P}roceedings of the 2010
  {I}nternational {S}ymposium on {S}ymbolic and {A}lgebraic {C}omputation, ACM,
  New York, 2010, pp.~99--106.

\bibitem[Bou07]{BourbakiAlgHom}
N.~Bourbaki, \emph{\'{E}l\'{e}ments de math\'{e}matique. {A}lg\`ebre.
  {C}hapitre 10. {A}lg\`ebre homologique}, Springer-Verlag, Berlin, 2007,
  Reprint of the 1980 original.

\bibitem[BM06]{BM}
Ragnar-Olaf Buchweitz and David Mond, \emph{Linear free divisors and quiver
  representations}, Singularities and computer algebra, London Math. Soc.
  Lecture Note Ser., vol. 324, Cambridge Univ. Press, Cambridge, 2006,
  pp.~41--77.

\bibitem[BS05]{SaitoBudur}
Nero Budur and Morihiko Saito, \emph{Multiplier ideals, {$V$}-filtration, and
  spectrum}, J. Algebraic Geom. \textbf{14} (2005), no.~2, 269--282.

\bibitem[CM99]{calde_ens}
Francisco~J. Calder\'on~Moreno, \emph{Logarithmic differential operators and
  logarithmic de {R}ham complexes relative to a free divisor}, Ann. Sci.
  \'Ecole Norm. Sup. (4) \textbf{32} (1999), no.~5, 701--714.

\bibitem[CMNM02]{calde_nar_compo}
Francisco~J. Calder\'on~Moreno and Luis Narv\'aez~Macarro, \emph{The module
  {$\mathcal{D}f^s$} for locally quasi-homogeneous free divisors}, Compositio
  Math. \textbf{134} (2002), no.~1, 59--74.

\bibitem[CMNM05]{calde_nar_fou}
Francisco~J. Calder\'{o}n~Moreno and Luis Narv\'{a}ez~Macarro,
  \emph{Dualit\'{e} et comparaison sur les complexes de de {R}ham
  logarithmiques par rapport aux diviseurs libres}, Ann. Inst. Fourier
  (Grenoble) \textbf{55} (2005), no.~1, 47--75.

\bibitem[CMNM09]{calde_nar_lct_ilc}
Francisco~J. Calder\'on~Moreno and Luis Narv\'aez~Macarro, \emph{On the
  logarithmic comparison theorem for integrable logarithmic connections.},
  Proc. London Math. Soc. \textbf{98} (2009), no.~3, 585--606.

\bibitem[CJNMM96]{CaNaMo_LCT}
Francisco~J. Castro-Jim\'{e}nez, Luis Narv\'{a}ez-Macarro, and David Mond,
  \emph{Cohomology of the complement of a free divisor}, Trans. Amer. Math.
  Soc. \textbf{348} (1996), no.~8, 3037--3049.

\bibitem[Che99]{Chemla-1999}
Sophie Chemla, \emph{A duality property for complex {L}ie algebroids}, Math. Z.
  \textbf{232} (1999), no.~2, 367--388.

\bibitem[DSS{\etalchar{+}}13]{UliEtAlLocCohom}
G.~Denham, H.~Schenck, M.~Schulze, M.~Wakefield, and U.~Walther, \emph{Local
  cohomology of logarithmic forms}, Ann. Inst. Fourier (Grenoble) \textbf{63}
  (2013), no.~3, 1177--1203.

\bibitem[Dim17]{DimcaHypArrangementsBook}
Alexandru Dimca, \emph{Hyperplane arrangements. {A}n introduction},
  Universitext, Springer, Cham, 2017.

\bibitem[Dut20]{Dutta}
Yajnaseni Dutta, \emph{{Vanishing for Hodge ideals on toric varieties}},
  Mathematische Nachrichten \textbf{293} (2020), no.~1, 79--87.

\bibitem[FF74]{Foss-Fox-1974}
Robert Fossum and Hans-Bj{\o}rn Foxby, \emph{The category of graded modules},
  Math. Scand. \textbf{35} (1974), 288--300.

\bibitem[GMNS09]{GMNS}
Michel Granger, David Mond, Alicia Nieto, and Mathias Schulze, \emph{Linear
  free divisors and the global logarithmic comparison theorem.}, Ann. Inst.
  Fourier (Grenoble) \textbf{59} (2009), no.~1, 811--850.

\bibitem[GS10]{granger_schulze_rims_2010}
Michel Granger and Mathias Schulze, \emph{On the symmetry of {$b$}-functions of
  linear free divisors}, Publ. Res. Inst. Math. Sci. \textbf{46} (2010), no.~3,
  479--506.

\bibitem[GMS09]{dGMS}
Ignacio~de Gregorio, David Mond, and Christian Sevenheck, \emph{Linear free
  divisors and {F}robenius manifolds}, Compositio Mathematica \textbf{145}
  (2009), no.~5, 1305--1350.

\bibitem[HTT08]{Hotta}
Ryoshi Hotta, Kiyoshi Takeuchi, and Toshiyuki Tanisaki,
  \emph{{$\mathcal{D}$}-modules, perverse sheaves, and representation theory},
  Progress in Mathematics, vol. 236, Birkh\"auser Boston Inc., Boston, MA,
  2008, Translated from the 1995 Japanese edition by Takeuchi.

\bibitem[HL09]{houston2009vector}
Kevin Houston and Daniel Littlestone, \emph{Vector fields liftable over corank
  1 stable maps}, 2009, Preprint 0905.0556 [math.DG].

\bibitem[JKYS19]{SaitoJungKimYoon}
Seung-Jo Jung, In-Kyun Kim, Youngho Yoon, and Morihiko Saito, \emph{Hodge
  ideals and spectrum of isolated hypersurface singularities}, Preprint
  arXiv:1904.02453, to appear in ``Annales de l’Institut Fourier'', 2019.

\bibitem[Kas83]{KashiwaraVanishingCycles}
M.~Kashiwara, \emph{Vanishing cycle sheaves and holonomic systems of
  differential equations}, Algebraic geometry ({T}okyo/{K}yoto, 1982) (Michel
  Raynaud and Tetsuji Shioda, eds.), Lecture Notes in Math., vol. 1016,
  Springer, Berlin, 1983, pp.~134--142.

\bibitem[Laz04]{Lazarsfeld2}
Robert Lazarsfeld, \emph{Positivity in algebraic geometry. {II}}, Ergebnisse
  der Mathematik und ihrer Grenz\-gebiete. 3. Folge. A Series of Modern Surveys
  in Mathematics, vol.~49, Springer-Verlag, Berlin, 2004, Positivity for vector
  bundles, and multiplier ideals.

\bibitem[Loo84]{Looijenga}
Eduard J.~N. Looijenga, \emph{Isolated singular points on complete
  intersections}, London Mathematical Society Lecture Note Series, vol.~77,
  Cambridge University Press, Cambridge, 1984.

\bibitem[Mac05]{Mack-2005}
Kirill C.~H. Mackenzie, \emph{General theory of {L}ie groupoids and {L}ie
  algebroids}, London Mathematical Society Lecture Note Series, vol. 213,
  Cambridge University Press, Cambridge, 2005.

\bibitem[MM04]{MebkhMais}
Philippe Maisonobe and Zoghman Mebkhout, \emph{Le th\'{e}or\`eme de comparaison
  pour les cycles \'{e}vanescents}, \'{E}l\'{e}ments de la th\'{e}orie des
  syst\`emes diff\'{e}rentiels g\'{e}om\'{e}triques, S\'{e}min. Congr., vol.~8,
  Soc. Math. France, Paris, 2004, pp.~311--389.

\bibitem[Mal75]{MalgrBernst}
Bernard Malgrange, \emph{Le polyn\^{o}me de {B}ernstein d'une singularit\'{e}
  isol\'{e}e}, Fourier integral operators and partial differential equations
  ({C}olloq. {I}nternat., {U}niv. {N}ice, {N}ice, 1974), Lecture Notes in
  Math., vol. 459, Springer, Berlin, 1975, pp.~98--119.

\bibitem[MP19a]{PopaMustata}
Mircea Musta\c{t}\u{a} and Mihnea Popa, \emph{Hodge ideals}, Mem. Amer. Math.
  Soc. \textbf{262} (2019), no.~1268, v+80.

\bibitem[MP19b]{MP-QDiv-I}
\bysame, \emph{Hodge ideals for {${\bf Q}$}-divisors: birational approach}, J.
  \'{E}c. polytech. Math. \textbf{6} (2019), 283--328.

\bibitem[MP20a]{MP-Inv}
\bysame, \emph{Hodge filtration, minimal exponent, and local vanishing},
  Invent. Math. \textbf{220} (2020), no.~2, 453--478.

\bibitem[MP20b]{MP-QDiv-II}
\bysame, \emph{{H}odge ideals for {${\bf Q}$}-divisors, {$V$}-filtration, and
  minimal exponent}, Forum Math. Sigma \textbf{8} (2020), e19, 41.

\bibitem[NM15]{nar_symmetry_BS}
Luis Narv\'aez~Macarro, \emph{A duality approach to the symmetry of
  {B}ernstein-{S}ato polynomials of free divisors}, Adv. Math. \textbf{281}
  (2015), 1242--1273.

\bibitem[PS08]{PeSt}
Chris A.~M. Peters and Joseph H.~M. Steenbrink, \emph{Mixed {H}odge
  structures}, Ergebnisse der Mathe\-matik und ihrer Grenzgebiete. 3. Folge. A
  Series of Modern Surveys in Mathematics, vol.~52, Springer-Verlag, Berlin,
  2008.

\bibitem[RS20]{ReiSe3}
Thomas Reichelt and Christian Sevenheck, \emph{Hypergeometric {H}odge modules},
  Algebr. Geom. \textbf{7} (2020), no.~2, 263--345.

\bibitem[Rin63]{Rinehart-1963}
George~S. Rinehart, \emph{Differential forms on general commutative algebras},
  Trans. Amer. Math. Soc. \textbf{108} (1963), 195--222.

\bibitem[SS19]{MHM}
Claude Sabbah and Christian Schnell, \emph{The {MHM} {P}roject},
  \url{http://www.cmls.polytechnique.fr/perso/sabbah/MHMProject/mhm.html},
  2019.

\bibitem[Sai80]{KS1}
Kyoji Saito, \emph{Theory of logarithmic differential forms and logarithmic
  vector fields}, J. Fac. Sci. Univ. Tokyo Sect. IA Math. \textbf{27} (1980),
  no.~2, 265--291.

\bibitem[Sai88]{Saito1}
Morihiko Saito, \emph{Modules de {H}odge polarisables}, Publ. Res. Inst. Math.
  Sci. \textbf{24} (1988), no.~6, 849--995 (1989).

\bibitem[Sai90]{SaitoMHM}
\bysame, \emph{Mixed {H}odge modules}, Publ. Res. Inst. Math. Sci. \textbf{26}
  (1990), no.~2, 221--333.

\bibitem[Sai93]{Saitobfunc}
\bysame, \emph{On {$b$}-function, spectrum and rational singularity}, Math.
  Ann. \textbf{295} (1993), no.~1, 51--74.

\bibitem[Sai94]{Saito94}
\bysame, \emph{On microlocal {$b$}-function}, Bull. Soc. Math. France
  \textbf{122} (1994), no.~2, 163--184.

\bibitem[Sai09]{SaitoOnTheHodgeFiltration}
\bysame, \emph{On the {H}odge filtration of {H}odge modules}, Mosc. Math. J.
  \textbf{9} (2009), no.~1, 161--191.

\bibitem[Sai16]{SaitoBSHyperArrange}
\bysame, \emph{Bernstein-{S}ato polynomials of hyperplane arrangements},
  Selecta Math. (N.S.) \textbf{22} (2016), no.~4, 2017--2057.

\bibitem[SST00]{SST}
Mutsumi Saito, Bernd Sturmfels, and Nobuki Takayama, \emph{Gr\"obner
  deformations of hypergeometric differential equations}, Algorithms and
  Computation in Mathematics, vol.~6, Springer-Verlag, Berlin, 2000.

\bibitem[Sch14]{Sch14}
Christian Schnell, \emph{An overview of {M}orihiko {S}aito's theory of mixed
  {H}odge modules}, Preprint arXiv:1405.3096, 2014.

\bibitem[Sek09]{Sekiguchi}
Jiro Sekiguchi, \emph{A classification of weighted homogeneous {S}aito free
  divisors}, J. Math. Soc. Japan \textbf{61} (2009), no.~4, 1071--1095.

\bibitem[Sev11]{Sev1}
Christian Sevenheck, \emph{Bernstein polynomials and spectral numbers for
  linear free divisors}, Ann. Inst. Fourier (Grenoble) \textbf{61} (2011),
  no.~1, 379--400.

\bibitem[vS95]{DucoSpaceCurve}
Duco van Straten, \emph{A note on the discriminant of a space curve},
  Manuscripta Math. \textbf{87} (1995), no.~2, 167--177.

\bibitem[Zha18]{Zhang}
Mingyi Zhang, \emph{Hodge filtration and {H}odge ideals for
  $\mathbb{Q}$-divisors with weighted homogeneous isolated singularities},
  Preprint arXiv:1810.06656, to appear in ``Asian Journal of Mathematics'',
  2018.

\end{thebibliography}
\newcommand{\etalchar}[1]{$^{#1}$}
\def\cprime{$'$}
\providecommand{\bysame}{\leavevmode\hbox to3em{\hrulefill}\thinspace}
\providecommand{\MR}{\relax\ifhmode\unskip\space\fi MR }
\providecommand{\MRhref}[2]{  \href{http://www.ams.org/mathscinet-getitem?mr=#1}{#2}
}
\providecommand{\href}[2]{#2}

\vspace{1cm}

Alberto Casta\~no Dom\'{\i}nguez  \\
Departamento de \'{A}lgebra\\
Facultad de Matem\'{a}ticas\\
Universidad de Sevilla\\
41012 Sevilla\\
Spain

albertocd@us.es

\vspace{1cm}

\nd
Luis Narv\'{a}ez Macarro \\
Departamento de \'{A}lgebra \& Instituto de Matem\'{a}ticas (IMUS)\\
Facultad de Matem\'{a}ticas\\
Universidad de Sevilla\\
41012 Sevilla\\
Spain

narvaez@us.es
\vspace{1cm}

\nd
Christian Sevenheck\\
Fakult\"at f\"ur Mathematik\\
Technische Universit\"at Chemnitz\\
09107 Chemnitz\\
Germany\\
christian.sevenheck@mathematik.tu-chemnitz.de

\end{document}